\documentclass[12pt]{amsart}

\textwidth14cm
\textheight21cm
\evensidemargin1.1cm
\oddsidemargin1.1cm

\addtolength{\headheight}{5.2pt}

\usepackage{tikz,amsthm,amsmath,amstext,amssymb,amscd,epsfig,euscript, mathrsfs, dsfont,pspicture,multicol,graphpap,graphics,graphicx,times,enumerate,subfig,sidecap,wrapfig,color,pict2e}

\usepackage[colorlinks,citecolor=cyan,pagebackref,hypertexnames=false]{hyperref}

\usepackage{enumerate}

 \numberwithin{equation}{section}


\def\bB{{\mathbb{B}}}
\def\bC{{\mathbb{C}}}
\def\bD{{\mathbb{D}}}
\def\bR{{\mathbb{R}}}
\def\R{{\mathbb{R}}}

\def\bZ{{\mathbb{Z}}}

\def\bN{{\mathbb{N}}}

\def\cB{{\mathscr{B}}}
\def\cC{{\mathscr{C}}}
\def\cD{{\mathscr{D}}}

\def\cF{{\mathscr{F}}}
\def\cG{{\mathscr{G}}}
\def\cH{{\mathscr{H}}}

\def\cS{{\mathscr{S}}}
\def\cT{{\mathscr{T}}}

\def\cW{{\mathscr{W}}}

\def\one{\mathds{1}}

\def\ve{\varepsilon}

\renewcommand{\d}{{\partial}}
\def\lec{\lesssim}
\def\gec{\gtrsim}

\DeclareMathOperator{\diam}{diam}
\def\BMO{\mathop\mathrm{BMO}} 					
\def\dist{\mathop\mathrm{dist}} 						
\def\loc{\mathop\mathrm{loc}}						

\newcommand{\ps}[1]{\left( #1 \right)}

\newcommand{\ck}[1]{\left\{#1 \right\}}
\newcommand{\av}[1]{\left| #1 \right|}

\newcommand{\cnj}[1]{\overline{#1}}

\newcommand{\nrm}[1]{\left|\left| #1 \right|\right|}

\def\warrow{\rightharpoonup}

\def\Xint#1{\mathchoice
{\XXint\displaystyle\textstyle{#1}}%
{\XXint\textstyle\scriptstyle{#1}}%
{\XXint\scriptstyle\scriptscriptstyle{#1}}%
{\XXint\scriptscriptstyle\scriptscriptstyle{#1}}%
\!\int}
\def\XXint#1#2#3{{\setbox0=\hbox{$#1{#2#3}{\int}$ }
\vcenter{\hbox{$#2#3$ }}\kern-.58\wd0}}

\def\avint{\Xint-}
%


\def\grad{\nabla}

\theoremstyle{plain}

\newtheorem{theorem}{Theorem}

\newtheorem{corollary}[theorem]{Corollary}

\newtheorem{lemma}[theorem]{Lemma}

\theoremstyle{definition}

\newtheorem{definition}[theorem]{Definition}
\newtheorem{remark}[theorem]{Remark}

\numberwithin{equation}{section}
\numberwithin{theorem}{section}

\newtheorem{main}{Theorem}





  \DeclareFontFamily{U}{mathb}{\hyphenchar\font45} 
\DeclareFontShape{U}{mathb}{m}{n}{
      <5> <6> <7> <8> <9> <10> gen * mathb
      <10.95> mathb10 <12> <14.4> <17.28> <20.74> <24.88> mathb12
      }{}
\DeclareSymbolFont{mathb}{U}{mathb}{m}{n}


\DeclareMathSymbol{\toitself}{3}{mathb}{"FD}  


\makeatletter
\def\@tocline#1#2#3#4#5#6#7{\relax
  \ifnum #1>\c@tocdepth 
  \else
    \par \addpenalty\@secpenalty\addvspace{#2}%
    \begingroup \hyphenpenalty\@M
    \@ifempty{#4}{%
      \@tempdima\csname r@tocindent\number#1\endcsname\relax
    }{%
      \@tempdima#4\relax
    }%
    \parindent\z@ \leftskip#3\relax \advance\leftskip\@tempdima\relax
    \rightskip\@pnumwidth plus4em \parfillskip-\@pnumwidth
    #5\leavevmode\hskip-\@tempdima
      \ifcase #1
       \or\or \hskip 1em \or \hskip 2em \else \hskip 3em \fi%
      #6\nobreak\relax
    \dotfill\hbox to\@pnumwidth{\@tocpagenum{#7}}\par
    \nobreak
    \endgroup
  \fi}
\makeatother

\begin{document}

\def\Top{{\rm Top}}
\def\Tree{{\rm Tree}}
\def\Next{{\rm Next}}
\def\HD{{\rm HD}}
\def\LD{{\rm LD}}
\def\BLWG{{\rm BLWG}}
\def\Floor{{\rm Floor}}
\def\Bad{{\rm Bad}}
\def\Stop{{\rm Stop}}
\def\ND{{\rm ND}}

\def\WHSA{{\rm WHSA}}
\def\BAUP{{\rm BAUP}}
\def\CDHM{{\rm CDHM}}

\title{Harmonic Measure and the Analyst's Traveling Salesman Theorem}

\author[Azzam]{Jonas Azzam}

\address{Jonas Azzam\\
School of Mathematics \\ University of Edinburgh \\ JCMB, Kings Buildings \\
Mayfield Road, Edinburgh,
EH9 3JZ, Scotland.}
\email{j.azzam "at" ed.ac.uk}

\begin{abstract}
We study how the multiscale-geometric structure of the boundary of a domain $\Omega\subseteq\mathbb{R}^{d+1}$ relates quantitatively to the behavior of its harmonic measure $\omega_{\Omega}$. This has been well-studied in the case that the domain has boundary is Ahlfors regular and is uniformly rectifiable, a property that assumes scale-invariant estimates on the multi-scale flatness of the boundary, measured by the so-called Jones $\beta$-numbers. In this note we approach the same problem but without uniform estimates on either the measure of the boundary or on the $\beta$-numbers. Firstly, we generalize a result of Garnett, Mourgoglou and Tolsa by showing that domains in $\R^{d+1}$ whose boundaries are just lower $d$-content regular admit Corona decompositions for harmonic measure if and only if the square sum of the generalized Jones $\beta$-numbers is finite. Secondly, for semi-uniform domains with Ahlfors regular boundaries, it is known that uniform rectifiability implies harmonic measure is $A_{\infty}$ for semi-uniform domains, but now we give more explicit dependencies on the $A_{\infty}$-constant in terms of the uniform rectifiability constant. This follows from a more general estimate that does not assume the boundary to be uniformly rectifiable that relates a log integral of the Poisson kernel to the square sum of $\beta$-numbers. For general semi-uniform domains, we also show how to bound the harmonic measure of a subset in terms of that sets Hausdorff measure and the square sum of $\beta$-numbers on that set. 

Using these results, we give estimates on the fluctuation of Green's function in a uniform domain in terms of the $\beta$-numbers. As a corollary, for bounded NTA domains , if $B_{\Omega}=B(x_{\Omega},c\diam \Omega)$ is so that $2B_{\Omega}\subseteq \Omega$, we obtain that 
\[
(\diam \d\Omega)^{d} + \int_{\Omega\backslash B_{\Omega}} \av{\frac{\grad^2 G_{\Omega}(x_{\Omega},x)}{G_{\Omega}(x_{\Omega},x)}}^{2} \dist(x,\Omega^c)^{3} dx \sim  \cH^{d}(\d\Omega).
\]

\end{abstract}
\dedicatory{Dedicated to John Garnett on the occasion of his retirement}

\maketitle

\tableofcontents
\section{Introduction}

\def\hm{\omega_{\Omega}}

\subsection{Background}
Let $\Omega\subseteq \R^{d+1}$ be a domain and $\omega_{\Omega}$ denote its harmonic measure. This paper continues a long trend of trying to understand the quantitative relationship between the behavior of $\hm$ and the geometry of the boundary. 

One geometric feature of the boundary which has a well-established connection with $\hm$ is rectifiability. We will say that a measure $\mu$ is {\it  $d$-rectifiable} if it may be covered up to $\mu$ measure zero by $d$-dimensional Lipschitz graphs, and a set $E\subseteq \R^{n}$ is {\it $d$-rectifiable} if $\cH^{d}|_{E}$ is a $d$-rectifiable measure, where $\cH^{d}$ denotes $d$-dimensional Hausdorff measure. In one direction, the most general qualitative result says that for $\Omega\subseteq \R^{d+1}$ and $E\subseteq \d\Omega$, then $\hm|_{E}\ll \cH^{d}|_{E}$ implies $\hm|_{E}$ is $d$-rectifiable, and in fact there is $E'\subseteq E$ that is $d$-rectifiable and $\hm(E\backslash E')=0$ \cite{AHMMMTV16}, which was previously only known for simply connected planar domains \cite{Pom86}. In the reverse direction, there isn't a result quite as general: for rectifiability to imply absolute continuity, some fatness condition on the boundary is required. 

\begin{definition}\label{d:LBB}
A domain $\Omega\subset \bR^{d+1}$ is said to have {\it large complement} if there is $c>0$ so that 
\begin{equation}\label{e:Omdcontent}
\cH^{d}_{\infty} ( B\backslash \Omega)\geq c r_{B}^{d} \mbox{ for all $B$ centered on $\d\Omega$ with $0<r_{B}<\diam \d\Omega$}.
\end{equation}
We will say that $E\subseteq \R^{n}$ is {\it lower $d$-content regular} if there is $c>0$ so that 
\begin{equation}\label{e:Edcontent}
\cH^{d}_{\infty} ( B\backslash E)\geq c r_{B}^{d} \mbox{ for all $B$ centered on $E$ with $0<r_{B}<\diam E$}.
\end{equation}
\end{definition} 

A converse to the aforementioned theorem holds in this setting: if $\Omega\subseteq \R^{d+1}$ has large complement and $\hm|_{E}\subseteq \d\Omega$ is $d$-rectifiable for some $E\subseteq \d\Omega$, then $\hm|_{E}\ll \cH^{d}$ \cite{AAM19} (see also \cite{Wu86}). We must caution here that the definition of rectifiability of measures we are using here in describing these results is not standard: {\it Federer} rectifiability says a measure $\mu$ is covered up to $\mu$-measure zero by $d$-dimensional Lipschitz {\it images} of $\R^{d}$, as opposed to Lipschitz graphs. This is a really subtle point: When $\mu=\cH^{d}|_{E}$, then these two notions are equivalent, but this is not so for general measures, even for quite well-behaved measures like doubling measures \cite{GKS10}. It is not true that Federer rectifiability implies $\hm$ is absolutely continuous, although it does hold for simply connected planar domains \cite{BJ90}. To guarantee that $\cH^{d}|_{\d\Omega} \ll \hm|$, it is sufficient that $\d\Omega$ is rectifiable and to assume that the interior is not collapsing near $\d\Omega$, i.e. $\limsup_{r\rightarrow 0} |\Omega\cap B(x,r)|/|B(x,r)|>0$ for $\cH^{d}$-a.e. $x\in \d\Omega$. \cite{ABHM16}.

Our interest will be on the {\it quantitative} relationship between $\hm$ and the geometry of $\Omega$, and in fact many of the results above are deduced using quantitative methods. We will give a short synopsis of these results below, but for a good survey on the state-of-the-art concerning quantitative absolute continuity of harmonic measure, see \cite{Hof19}.

For quantitative results, it has been natural to work in the Ahlfors regular setting, since then many of the classical harmonic analytic techniques in Euclidean space can be repeated in this setting. 

\def\AR{{\rm AR}}
\begin{definition}
We say $E\subseteq \R^{d+1}$ is {\it $C$-Ahlfors $d$-regular} (or  $C$-AR) if 
\begin{equation}
\label{e:ar}
C^{-1} r^{d} \leq \cH^{d}(E\cap B(x,r))\leq Cr^{d} \;\; \mbox{ for all }x\in E, \;\; 0<r<\diam E.
\end{equation}

\end{definition}

The quantitative analogue of absolute continuity in this setting are the  $A_{\infty}$ and weak-$A_{\infty}$ conditions. 

\begin{definition}
If $\Omega\subseteq \R^{d+1}$ and $\d\Omega$ is AR, we will say $\hm\in A_{\infty}$ (resp. weak-$A_{\infty}$) if for every $\ve>0$ there is $\delta>0$ so that whenever $B$ is a ball centered on $\d\Omega$ with $0<r_{B}<\diam \d\Omega$ and $F\subseteq \d\Omega\cap B$, then $\cH^{d}(F)<\delta r_{B}^{d}$ implies $\hm^{x}(F)<\ve \hm^{x}(B)$ (resp. $\hm^{x}(F)<\ve \hm^{x}(2B)$) whenever $x\in \Omega\backslash 4B$.
\end{definition}

The quantitative analogue of rectifiability is uniform rectifiability. 

\begin{definition}
\label{d:UR}
A set $E\subset\R^{n}$ is  {\it uniformly  $d$-rectifiable} (UR) if it is $d$-AR and there are constants $\theta, M >0$ such that for all $x \in E$ and all $0<r\leq \diam E$ there is an $M$-Lipschitz mapping $g:B_d(0,r)\subseteq \R^{d}\rightarrow \R^{n}$ such that
$$
\cH^{d} (E\cap B(x,r)\cap g(B_d(0,r)))\geq \theta r^{n}.$$
\end{definition}

UR sets were  introduced by David and Semmes in connection to singular integrals on Ahlfors regular sets (see \cite{DS} and \cite{of-and-on}), however they appear very naturally in the study of harmonic measure as we shall see below.

It also turns out that, for quantitative results about harmonic measure, the connectivity of the domain plays an important role. We review some various kinds of connectivity. 

\begin{definition}
Let $\Omega\subseteq \bR^{d+1}$ be an open set. 
\begin{enumerate}
	\item For $x,y\in \cnj{\Omega}$, we say a curve $\gamma\subseteq \cnj{\Omega}$ is a {\it $C$-cigar curve} from $x$ to $y$ if $\min\{\ell(x,z),\ell(y,z)\}\leq C \dist(z,\Omega^{c})$ for all $z\in \gamma$, where $\ell(a,b)$ denotes the length of the sub-arc in $\gamma$ between $a$ and $b$.  We will also say it has {\it bounded turning} if  $\ell(\gamma)\leq C |x-y|$. 
\item If there is $x\in \Omega$ such that every $y\in \Omega$ is connected to $x$ by a curve$\gamma$ so that $\ell(y,z)\leq C \dist(z,\Omega^{c})$ for all $z\in \Gamma$,  we say $\Omega$ is {\it $C$-John}.
\item If every pair $x\in \Omega$ and $\xi\in \d\Omega$ are connected by a $C$-cigar with bounded turning, then we say $\Omega$ is {\it $C$-semi-uniform (SU)}.
\item One says that that $\Omega$ satisfies the {\it weak local John condition} (WLJC) with parameters $\lambda,\theta,\Lambda$ if there are constants $\lambda,\theta\in (0,1)$ and $\Lambda \geq 2$ such that for every $x\in\Omega$ there is a Borel subset $F\subset B(x,\Lambda \delta_\Omega(x))\cap \partial \Omega)$ with $\cH^{d}(F)\geq \theta\,\cH^{d}(B(x,\Lambda \delta_\Omega(x))\cap \partial \Omega)$ such that every $y\in F$ can be joined to $x$ by a $\lambda$-cigar curve.
\item If every $x,y\in \Omega$ are connected by a $C$-cigar of bounded turning, we say $\Omega$ is {\it uniform}. 
\item For a ball $B$ of radius $r_{B}$ centered on $\d\Omega$, we say $x\in B$ is an {\it interior/exterior $c$-corkscrew point}  or that $B(x,cr_{B})$ is an  {\it interior/exterior $c$-corkscrew ball} for $\Omega\cap B$if $B(x,2cr_{B})\subseteq B\cap \Omega$ (or $B(x,2cr_{B})\subseteq B\backslash \Omega$) . We say $\Omega$ satisfies the interior {\it $c$-Corkscrew condition} if every ball $B$ on $\d\Omega$ has a interior (or exterior) $c$-corkscrew point.
\item A uniform domain with exterior corkscrews is {\it nontangentially accessible} (NTA). 
\item An NTA with AR boundary is a {\it chord-arc domain} (CAD).
\end{enumerate}	
\end{definition}

The notion of an NTA domain was introduced by Jerison and Kenig in \cite{JK82}; there they codified many scale invariant properties for harmonic measure that had been known for Lipschitz domains, but their crucial observation was that it was not the Lipschitz structure but the nontangential connectedness that was guaranteeing these properties. Later, Aikawa and Hirata also observed that many of these properties {\it implied} good connectivity of the domain as well \cite{Aik06,Aik08,AH08}.

Various sufficient \cite{Dah77,DJ90, Sem90} and necessary \cite{HM14,HMU14, HM15,MT15,AHMNT17,HLMN17} conditions have been given for the $A_{\infty}$ and weak-$A_{\infty}$ to hold, and we will discuss more of these below. Particular mention should go to \cite{Dah77}, who showed that $\hm\in A_{\infty}$ when $\Omega$ is a Lipschitz domain, by which we mean $\hm = kd\sigma$ where $\sigma = \cH^{d}|_{\d\Omega}$ and
\begin{equation}
\label{e:expafin}
[\omega]_{A_{\infty}}:=\sup_{B} \exp\ps{\avint_{B}\log \frac{1}{k}d\sigma }\avint_{B} k d\sigma <\infty
\end{equation}
where the supremum is over all balls $B$ centered on $\d\Omega$ with $0<r_{B}<\diam \d\Omega$. This is equivalent to the more familiar definition of $A_{\infty}$ by \cite{Hru84} (see also \cite[V.6.6.3]{Big-Stein}). In fact, Dahlberg showed the stronger reverse H\"older inequality
\[
\ps{\avint_{B} k^2 d\sigma}^{\frac{1}{2}}\lec \avint_{B} kd\sigma.
\]
The works of David and Jerison \cite{DJ90}  and Semmes \cite{Sem90} who proved $\hm\in A_{\infty}$ when $\Omega$ is a CAD. In their proofs, they actually reduce things to Dahlberg's theorem by approximating a CAD domain quantitatively from within by Lipschitz domains.

Very recently, however, building on the techniques in these papers, a complete characterization has been obtained. In  \cite{Azz17}, we showed $\omega\in A_{\infty}$ if and only if $\Omega$ is SU and $\d\Omega$ is UR, although shortly after the definitive characterization of weak-$A_{\infty}$ was obtained (and implies the SU case). 

\begin{theorem}\label{t:AMT-HM} \cite{AHMMT19}
Let $\Omega\subset\R^{d+1}$, $d\geq2$, be an open set with AR boundary and interior corkscrew condition. Then $\d\Omega$ is UR and $\Omega$ satisfies the WLJC if and only if $\omega\in {\rm weak}-A_{\infty}$. 
\end{theorem}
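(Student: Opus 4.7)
The plan is to prove the two implications separately; the harder direction is UR + WLJC $\Rightarrow$ weak-$A_{\infty}$. Both directions reduce, in different ways, to Dahlberg's theorem on Lipschitz subdomains and transfer estimates via Harnack chains and the comparison principle, exploiting the same corona/sawtooth machinery that has driven the CAD theory since \cite{DJ90,Sem90}.

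For UR + WLJC $\Rightarrow$ weak-$A_{\infty}$, fix a surface ball $B$ centered on $\d\Omega$, a pole $x_0 \in \Omega \setminus 4B$, and $F \subseteq B \cap \d\Omega$ with $\sigma(F) < \delta r_B^{d}$; the target is $\omega^{x_0}(F) < \ve\,\omega^{x_0}(2B)$. Using UR I would invoke a David--Semmes corona decomposition of $\d\Omega$: the ``bad'' dyadic cubes form a Carleson family, and the remaining cubes organize into stopping-time regions $S$ on which $\d\Omega$ is uniformly close to a Lipschitz graph $\Gamma_S$. On each $S$, WLJC is used to build a sawtooth Lipschitz subdomain $\Omega_S \subseteq \Omega$ modelled on $\Gamma_S$ whose boundary meets $\d\Omega$ in a set of large $\sigma$-measure that is reachable from a distinguished corkscrew pole via cigar curves. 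Dahlberg's theorem then gives the reverse Hölder inequality for $\omega_{\Omega_S}$; the comparison principle together with the cigar/Harnack chain structure lets one pull this estimate back to $\omega_\Omega$ on $\d\Omega_S \cap \d\Omega$. A summation up the corona tree, controlled by the Carleson packing of bad cubes and transition generations, then converts the local estimates into the weak reverse Hölder bound equivalent to weak-$A_{\infty}$ by \cite{Hru84}.

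For the converse weak-$A_{\infty}$ $\Rightarrow$ UR + WLJC, the weak-$A_{\infty}$ hypothesis together with interior corkscrews yields an $L^{2}$-type reverse Hölder inequality for the Poisson kernel on every surface ball. Following the strategy developed in \cite{HMU14,AHMNT17,HLMN17}, one uses the Green function representation to deduce $L^{2}(\sigma)$-boundedness of the Riesz transform on $\d\Omega$, which by the Nazarov--Tolsa--Volberg theorem characterizes UR. For the WLJC, a pigeonhole argument is natural: for each $x \in \Omega$, the estimate $\omega^{x}(\Delta) \gtrsim 1$ on a surface ball $\Delta \subseteq B(x, \Lambda \delta_\Omega(x)) \cap \d\Omega$, combined with the exterior corkscrews produced by weak-$A_{\infty}$ and the quantitative Carleson measure estimate on bounded harmonic functions that $A_{\infty}$ implies, forces a $\sigma$-positive fraction of $\Delta$ to be joinable to $x$ by uniformly non-tangential cigar curves.

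The main obstacle, as usual, is the sawtooth construction in the forward direction. WLJC only guarantees cigar curves to a $\theta$-fraction of a given boundary ball rather than to every point, so the ``roof'' of $\Omega_S$ must be designed to preserve enough boundary overlap with $\d\Omega$ while retaining uniform Lipschitz constants that depend only on the UR and WLJC parameters. Propagating the local reverse Hölder estimates up the corona tree without incurring losses in the constants requires a careful Carleson packing argument together with a quantitative comparison principle on the sawtooth domains; producing cigar access from \emph{some} corkscrew in $\Omega_S$ to the $\omega_\Omega$-mass on $\d\Omega$, rather than merely from the global pole $x_0$, is the most delicate geometric step.
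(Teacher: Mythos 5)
First, a point of order: the paper does not prove this statement. Theorem \ref{t:AMT-HM} is imported verbatim from \cite{AHMMT19} and used as a black box (in fact, in the only place it is invoked, the proof of the low-density estimate inside Lemma \ref{l:totalsum}, the domains in question are chord-arc, so \cite{DJ90} already suffices). There is therefore no in-paper proof to compare against; what follows measures your sketch against the actual arguments of \cite{AHMMT19} and \cite{HLMN17}.

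Your outline of the direction UR $+$ WLJC $\Rightarrow$ weak-$A_\infty$ is aligned in spirit with \cite{AHMMT19} — they too reduce to Dahlberg's theorem on interior approximating domains and transfer via the maximum principle — but the step you defer as ``the most delicate geometric step'' is essentially the entire content of that paper: one must show that UR, interior corkscrews and WLJC together imply that $\Omega$ has \emph{interior big pieces of chord-arc subdomains}, and this requires a genuine self-improvement argument upgrading cigar access from one corkscrew to a $\theta$-fraction of a single ball into chord-arc subdomains whose boundaries carry a uniform proportion of $\sigma$ at every scale and location. ``Designing the roof of the sawtooth carefully'' does not supply this; without it the corona summation has nothing to sum. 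In the converse direction there are two concrete errors. First, weak-$A_\infty$ does \emph{not} produce exterior corkscrews: take $\Omega=\R^{d+1}\setminus P$ for a $d$-plane $P$, whose boundary is AR and UR, whose harmonic measure is weak-$A_\infty$, and whose complement has empty interior; so the pigeonhole argument you propose for WLJC cannot even start, and indeed the proof of weak-$A_\infty\Rightarrow$ WLJC in \cite{AHMMT19} is a direct harmonic-measure argument using Bourgain-type estimates, not exterior geometry. Second, the implication weak-$A_\infty\Rightarrow$ UR in \cite{HLMN17} does not go through Riesz transforms and Nazarov--Tolsa--Volberg; obtaining $L^2(\sigma)$ bounds for the Riesz transform from a weak reverse H\"older inequality alone is not available in this generality. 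The actual route — the one this paper itself adapts in Lemma \ref{l:WHSA<CDHM} — is to show that the weak reverse H\"older inequality forces $\nabla^2 G$ to be small off a Carleson family of cubes, so that the boundary satisfies the WHSA packing condition of \cite{HM15}, which characterizes UR.
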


Here, weak-$A_{\infty}$ means that there is $q>0$ so that if $\omega_{\Omega}^{x} = kd\cH^{d}|_{\d\Omega}$ and $x\in \Omega\backslash 4B$ where $B$ is centered on $\d\Omega$, and $\sigma = \cH^{d}|_{\d\Omega}$, then 
\[
\ps{\avint_{B} k^{1+q}d\sigma }^{\frac{1}{q}}\lec \avint_{2B} kd\sigma.
\]

\def\VMO{{\rm VMO}}
\def\BMO{{\rm BMO}}

With better information about harmonic measure comes better information about the geometry of the boundary. In particular, there is a class of ``small constant" results that effectively say that if harmonic measure is ``very" much like surface measure, then the boundary must be ``very" flat. In \cite{KT97} and \cite[Main Theorem]{KT03}, for example (and after reviewing the discussion after \cite[Definition 4.1.9]{HMT10}, which explains how some definitions in these results are equivalent) Kenig and Toro show that if a domain $\Omega\subseteq \R^{d+1}$ whose boundary is Ahlfors regular and sufficiently flat\footnote{The flatness condition originally stipulated that the boundary was Reifenberg flat and that the unit normal had sufficiently small BMO norm, although Bortz and Engelstein showed that this latter property implies the former \cite{BE17}} and $\omega = kd\cH^{d}|_{\d\Omega}$, then $\log k\in \VMO$ if and only if the unit normal vector on $\d\Omega$ is in $\VMO$. This in turn implies that the boundary has {\it very} big pieces of Lipschitz graphs with small Lipschitz constant \cite[Theorem 4.2.4]{HMT10}. In fact, in \cite{KT97}, Kenig and Toro also show that for all $\delta>0$, if $\d\Omega$ is Reifenberg flat and $\sigma(B)\leq (1+\ve)(2r_{B})^{d}$, then for $\ve>0$ small enough, there is $q>0$ so that 
\[
\ps{\avint_{B}k^{1+q}}^{\frac{1}{1+q}}\leq (1+\delta) \avint_{B} k.
\]

Moreover, with additional smoothness on $\log k$ comes additional smoothness of the boundary, see for example \cite{Eng16}.

In total, the (weak-)$A_{\infty}$ and VMO conditions on harmonic measure are well-studied, and UR plays a crucial role. We mention one last way of describing the quantitative relationship between the geometry and harmonic measure when it is not $A_{\infty}$. It may seem a bit ad hoc but is a very convenient form of quantitative absolute continuity.  For this result below, we will refer to Christ-David cubes, so see Theorem \ref{t:Christ} below if you are not familiar with these. For a measure $\mu$ and a set $A$, define
\[
\Theta_{\mu}^{d}(A) = \frac{\mu(A)}{(\diam A)^{d}}.
\]

\begin{definition}[Corona Decomposition for Harmonic Measure (CDHM)]
\label{d:CDHM}
Let $\Omega\subseteq \R^{d+1}$ be a domain and $E=\d\Omega\subseteq \R^{d+1}$ be lower $d$-content regular, $A> 1>\tau>0$, $\lambda\geq 1$, and let $Q_0\in \cD$  Suppose there are cubes $\Top$ in $Q_0$ and a partition $\{\Tree(R):R\in \Top\}$ of the cubes in $Q_0$ into stopping-time regions so that for each $R\in \Top$, there is a (interior) corkscrew ball $B(x_R,c\ell(R))\subseteq B_{R}\cap \Omega$ so that for all $Q\in \Tree(R)$,
\[
\tau \Theta_{\omega}^{x_{R}}(\lambda B_R) \leq \Theta_{\omega^{x_{R}}}^{d}(\lambda B_{Q})\leq A\Theta_{\omega}^{x_{R}}(\lambda B_R).
\]
For $Q_0\in \cD$, we let 
\[
\CDHM(Q_0,\lambda, A,\tau ) =\inf  \sum_{R\in \Top}\ell(R)^{d}.
\]
where the infimum is over all possible decompositions $\{\Tree(R):R\in \Top\}$ satisfying the conditions above.
\end{definition}

In \cite[Theorem 1.3]{GMT18}, Garnett, Mourgoglou and Tolsa showed that if $E=\d\Omega$ is Ahlfors regular and $\Omega$ has the interior corkscrew property, then for all $\lambda>1$ there are $A,\tau$ so that $E$ is UR if and only if 
\[
\CDHM(Q_0,\lambda, A,\tau )\lec \ell(Q_0)^{d} \;\; \mbox{ for all }Q_0\in \cD.
\]
In other words, while harmonic measure may not be $A_{\infty}$ in this theorem, the $\CDHM$ is the strongest statement one can make in the AR setting about how the density of $\hm$ behaves if the boundary is UR in the absence of any assumptions about connectivity.\\

The objective of this paper is to try and further quantify the behavior of harmonic measure, both in AR and (more importantly) non-AR settings. In particular, the results mentioned above usually assume some {\it uniform} control on harmonic measure or the boundary: the boundary is {\it uniformly} rectifiable, or the surface measure satisfies \eqref{e:ar} {\it uniformly} over all balls. The (weak)-$A_{\infty}$ condition on harmonic measure and the BMO/VMO conditions on $\log k$ are also statements that hold uniformly over all balls. We would instead like to study harmonic measure when either there the surface measure, the rectifiable structure, or our estimates on harmonic measure is allowed to vary between balls.

 As a motivating example, note that if $\Omega$ is a CAD so that $B(0,c)\subseteq \Omega$ and $\d\Omega\subseteq B(0,C)$ for some constants $c,C>0$, say, then the results of David, Jerison and Semmes (and using the scale invariant estimate Lemma \ref{l:w/w} below) imply that $\omega_{\Omega}^{0}$ is $A_{\infty}$. Now if $\Omega$ is simply uniform with Ahlfors regular boundary but not a CAD, then this can fail since $\hm$ may not be $A_{\infty}$. For instance, recall the {\it Garnett example} or {\it 4-corner Cantor set}: For $k=0,1,2,3$, let  $f_{k}(x)=(x/4+e^{i\pi/4}\frac{1}{2\sqrt{2}}) e^{ik\pi/2}$. Let $K_0=[-1/2,1/2]^{2}$ and for $j>0$ set
\[
K_j = \bigcup_{i=0}^{3} f_i(K_{j-1}). 
\]
Then $K:= \bigcap_{j=0}^{\infty}K_j$ is an Ahlfors $1$-regular set, and $\Omega= K^{c}$ is a uniform domain so that $\omega_{\Omega}\perp \cH^{1}$. See Figure \ref{f:cantor}.

\begin{figure}
\includegraphics[width=300pt]{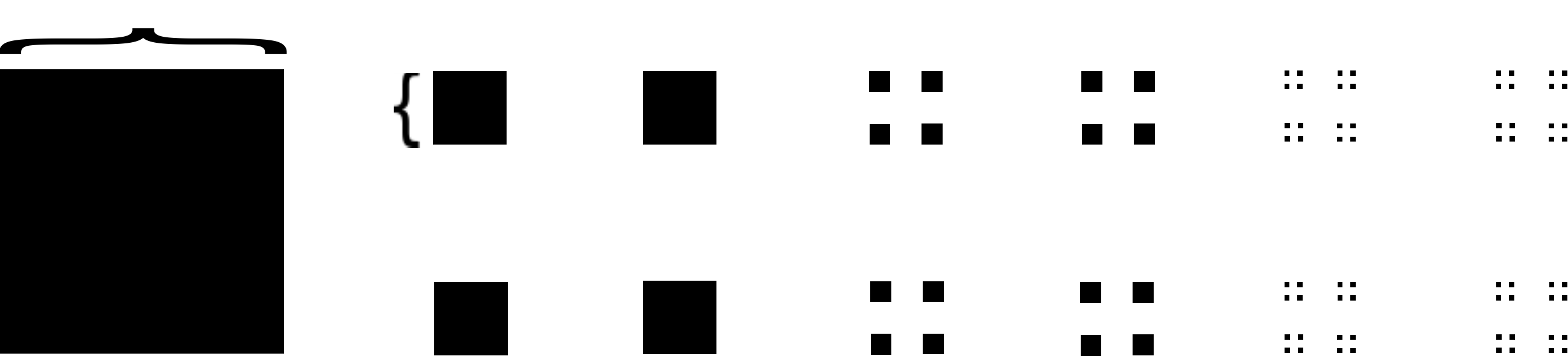}
\begin{picture}(0,0)(300,0)
\put(25,-10){$K_{0}$}
\put(105,-10){$K_{1}$}
\put(190,-10){$K_{2}$}
\put(250,-10){$K=\lim K_{j}$}
\put(22,65){$1$}
\put(65,45){$\frac{1}{4}$}
\end{picture}
\caption{The Garnett example, positioned so that 0 is at the center.}
\label{f:cantor}
\end{figure}

Note however that each of the $K_j^{c}$ are CADs that are uniformly AR and uniform, but the exterior corkscrew constant is  worsening with $j$. In particular, if $\omega_{j}=\omega_{K_{j}^{c}}^{0}$ are the respective harmonic measures for these domains with Radon-Nikodym derivatives $k_j$, then 
\begin{equation}
\label{e:cantor-int}
\exp \ps{ \avint_{\d K_{j}^{c} } \log \frac{1}{k_j} dx} \avint_{\d K_{j}^{c} } k_jdx
\end{equation}
must be going to infinity since the limit of these measures is $\omega_{K^{c}}\not\in A_{\infty}$. How fast should it go to infinity? If we took our domain to instead be the complement of a finite union of squares covering $K$ of different sizes instead of all being of size $4^{-j}$, how does this affect the integral? 

Another natural set of questions is what happens in the non-AR or non-uniform settings? The integrals above may not be useful anymore, but is there another way of quantitatively describing the behavior of harmonic measure? There are plenty of qualitative results in this setting about when $\hm$ is absolutely continuous or not, but less about the quantitative behavior. 

A first question may be what is the analogue of UR for sets that aren't AR? Or what geometric quantity or property of the boundary should we use in studying harmonic measure? What we found to be appropriate were $\beta$-numbers, which we now describe.

We first recall Jones' $\beta$-numbers: If $I$ is a cube in $\R^{d}$ and $E\subseteq \R^{d}$ is compact, define
\[
\beta_{E}(I)=\inf_{L}\sup_{x\in E\cap I}\dist(x,L)/\ell(I)
\]
where the infimum is over all lines. This measures in a scale invariant way how close $E$ is to being contained in a line in $I$. Jones showed in \cite{Jon90} for $d=2$ (and Okikiolu for general $d$ \cite{Oki92}) that the quantity
\[
\diam E+\sum_{I\cap E\neq\emptyset}\beta_{E}(3I)^{2} \ell(I)
\]
is comparable to the length of the shortest curve containing $E$ (where the sum is over all dyadic cubes $I$). This is called the {\it analyst's traveling salesman theorem}. There isn't a perfect analogue of this result for higher dimensional sets, first of all because it is not clear what the analogue of a curve should be, though there are some results that do generalize this in some sense. For technical reasons, a more suitable $\beta$-number is required (see the introduction to \cite{AS18} for a further discussion about why one is needed).

For arbitrary sets $E$ and $B$, $p>0$, and a $d$-dimensional plane $L$, define

\[
\beta_{E}^{d,p}(B,L)= \ps{\frac{1}{r_{B}^{d}}\int_{0}^{1}\cH^{d}_{\infty}(\{x\in B\cap E: \dist(x,L)>t r_{B}\})t^{p-1}dt}^{\frac{1}{p}}\]
where $2r_B=\diam B$, and set 
\[
\beta_{E}^{d,p}(B)=\inf\{ \beta_{E}^{d,p}(B,L): L\mbox{ is a $d$-dimensional plane in $\bR^{n}$}\}.\]

Note that if $E$ is Ahlfors regular and $B$ is a ball, then $\cH^{d}_{\infty}\sim \cH^{d}$, and the above integral is comparable to 
\[
\beta_{E}^{d}(B,L)\sim \inf_{L} \ps{\avint_{B}\ps{\frac{\dist(x,L)}{r_{B}}}^{2} d\cH^{d}|_{E}(x)}^{\frac{1}{2}},
\]
that is, the $L^{2}$-average distance to a $d$-dimensional plane. 

These numbers are variants of $\beta$-numbers defined by David and Semmes in \cite{DS}, where they showed that, if $E$ is Ahlfors $d$-regular, then $E$ is UR if and only if, for every $R\in \cD$,
\[
\beta_{E}(R):=\ell(R)^{d}+\sum_{Q\subseteq R} \beta_{E}^{d,2}(3B_Q)^{2}\ell(Q)^{d} \lec \ell(R)^{d}.
\]

We will call the sum the {\it linear deviation} of $E$ in $R$, since it measures how non-flat the set $E$ is at all scales and locations in $R$. Thus, UR is equivalent to having uniform control on the linear deviation inside every cube $R$. However, even if a set $E$ is not UR, this quantity can still be finite. For example, 
\begin{equation}
\label{e:nta-case}
\cH^{d}(Q_0)\sim \beta_{\d\Omega}^{d}(Q_0) \;\; \mbox{ if $\Omega$ is NTA and $Q_0\subseteq \d\Omega$}
\end{equation}
This follows from \cite[Corollary III]{AS18}. Moreover, in \cite{Vil19}, Villa extends this to more general surfaces than boundaries of NTA domains. Thus, this more resembles the original traveling salesman theorem of Jones mentioned earlier. 

The quantity $\beta_{E}(Q_0)$ still has some significance, even if $E$ is a set where $\beta_{E}(Q_0)$ and $\cH^{d}(Q_0)$ are not comparable. In fact, in \cite{AV19}, the author and Villa show that the linear deviation is comparable to many other quantities measuring how much $E$ deviates from satisfying certain geometric properties over all scales and locations. The main results below can be seen as a companion to this paper, where instead we now show how the linear deviation compares to different quantities involving harmonic measure and Green's function, and it seems to be a natural way of extending some results about harmonic measure to the non-AR setting. So instead of dealing with UR sets below, we will work with the class of sets $E$ where $\beta_{E}(R)$ is just finite for each cube $R$.

\subsection{Main Results}

Our first result is a version of Garnett, Mourgoglou and Tolsa's theorem for lower regular sets.

\begin{main}
\label{t:CDHM}
Let $\Omega\subseteq \R^{d+1}$ be a interior $c$-corkscrew domain with lower $d$-content regular boundary and let $\cD$ be the Christ-David cubes for $\d\Omega$. Then for all $\lambda \geq 1$ and for all $A,\tau^{-1}$ sufficiently large depending on $\lambda$, and for all $Q_0\in \cD$,
\begin{equation}
\label{e:CDHM}
\beta_{\d\Omega}(Q_0)\sim_{\lambda,A,\tau,c}  \CDHM(Q_0,\lambda, A,\tau )
\end{equation}
\end{main}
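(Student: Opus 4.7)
The plan is to prove the two halves of \eqref{e:CDHM} separately, adapting the corona machinery of \cite{GMT18} but replacing every appeal to Ahlfors regularity with appeals to lower $d$-content regularity and to the generalized $\beta$-numbers $\beta_{\partial\Omega}^{d,p}$.

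For the upper bound $\CDHM(Q_0,\lambda,A,\tau) \lesssim \beta_{\partial\Omega}(Q_0)$, I would construct an explicit decomposition via a stopping-time argument. Starting from the top $R=Q_0$ with its interior corkscrew point $x_R$, declare $Q \in \Tree(R)$ as long as the following three conditions hold for every strict ancestor $Q \subsetneq Q' \subseteq R$ of $Q$: (a) $\tau \Theta_\omega^{x_R}(\lambda B_R) \le \Theta_\omega^{x_R}(\lambda B_{Q'})$, (b) $\Theta_\omega^{x_R}(\lambda B_{Q'}) \le A \Theta_\omega^{x_R}(\lambda B_R)$, and (c) $\beta_{\partial\Omega}^{d,p}(\Lambda B_{Q'})$ is below a fixed small threshold. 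When one of (a)--(c) fails at $Q$, make $Q$ the top of a new tree and iterate. The tops created because of (c) pack directly into $\beta_{\partial\Omega}(Q_0)$ via Chebyshev, losing only a constant. The tops of type (a) and (b) are the real point: for them I would prove a single-scale statement saying that if the corkscrew point is well-placed and the boundary is $\beta$-flat at $Q$, then $\Theta_\omega^{x_R}(\lambda B_{Q})$ can change by at most a factor close to $1$ when passing from parent to child. This is a quantitative Dahlberg/Jerison--Kenig style estimate for lower $d$-content regular boundaries; applying it inductively, a density change of the magnitude required to violate (a) or (b) forces a nontrivial sum $\sum_{Q' \supseteq Q} \beta_{\partial\Omega}^{d,p}(\Lambda B_{Q'})^2 \ell(Q')^d \gtrsim \ell(Q)^d$, and summing over the tops of type (a)/(b) then yields the desired bound.

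For the lower bound $\beta_{\partial\Omega}(Q_0) \lesssim \CDHM(Q_0,\lambda,A,\tau)$, I would fix an (almost) optimal decomposition $\{\Tree(R): R \in \Top\}$. On each tree, the density $\Theta_\omega^{x_R}(\lambda B_Q)$ is pinched between $\tau$ and $A$ times the density at the top, i.e.\ it is essentially constant. I would then argue that constancy of this density along a tree forces the boundary to be quantitatively flat in the tree. A convenient route is a Bourgain-type lower bound for $\omega^{x_R}$ on flat pieces together with a maximum principle argument that upgrades density constancy to approximation by a plane: if $\partial\Omega$ in $\lambda B_Q$ deviates appreciably from every $d$-plane, one can locate a corkscrew ball in a half-space complement and use Bourgain's estimate to produce a density jump inconsistent with (a)--(b). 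This yields a single-tree Carleson packing
\[
\sum_{Q \in \Tree(R)} \beta_{\partial\Omega}^{d,p}(3B_Q)^2 \ell(Q)^d \lesssim \ell(R)^d,
\]
and summing over $R \in \Top$ together with $\diam(\partial\Omega\cap Q_0)^d \lesssim \ell(Q_0)^d \le \CDHM(Q_0,\lambda,A,\tau)$ (since $Q_0$ is itself a legitimate top) gives $\beta_{\partial\Omega}(Q_0) \lesssim \CDHM$.

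The main obstacle will be the one-scale, non-AR density-change estimate needed for both halves: relating the ratio $\Theta_\omega^{x_R}(\lambda B_Q)/\Theta_\omega^{x_R}(\lambda B_{Q^*})$ of a child to its parent to $\beta_{\partial\Omega}^{d,p}(\Lambda B_Q)$ in the lower content regular setting. In the AR case of \cite{GMT18} this is handled by comparing the harmonic measure on $B_Q$ to the one of a Lipschitz subdomain and invoking Dahlberg's reverse Hölder inequality, both of which are unavailable here. I expect the replacement will need to combine (i) a Bourgain-type lower bound for $\omega$ on balls where $\partial\Omega$ is $\beta$-flat and lower content regular, (ii) a tangent-plane approximation argument compatible with Hausdorff content rather than $\cH^d$, and (iii) a careful choice of the exponent $p$ in $\beta_{\partial\Omega}^{d,p}$ (as explained in \cite{AS18}) so that the square sum telescopes correctly. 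Once this single-scale inequality is in place, the corona bookkeeping is standard.
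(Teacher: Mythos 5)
Your corona skeleton is the right one, but the two analytic mechanisms you propose to make it work are not available, and one of them is simply false. For the direction $\CDHM(Q_0,\lambda,A,\tau)\lesssim\beta_{\partial\Omega}(Q_0)$, you want a single-scale estimate saying that when the boundary is $\beta$-flat the density $\Theta_{\omega^{x_R}}^{d}(\lambda B_Q)$ changes by a factor close to $1$ from parent to child, so that violating the HD/LD thresholds forces $\sum_{Q'\supseteq Q}\beta^2\ell(Q')^d\gtrsim\ell(Q)^d$. No choice of $p$ rescues this: even when $\partial\Omega$ is a hyperplane, the Poisson kernel with pole at a fixed corkscrew point of $B_R$ decays polynomially away from the pole, so low-density stopping cubes occur on perfectly flat boundaries and carry no $\beta$-mass. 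The correct mechanism is measure-theoretic packing rather than a pointwise ratio bound: the $\HD$ cubes satisfy $\sum\ell(Q)^d\lesssim A^{-1}\ell(R)^d$ by a Vitali covering argument together with the fact that $\omega^{x_R}$ has total mass $1$; and the $\LD$ cubes require building an interior chord-arc subdomain $\Omega_R\subseteq\Omega$ (via the David--Toro parametrization, since no interior big-pieces-of-Lipschitz structure is assumed), applying the maximum principle to compare $\omega_{\Omega_R}$ with $\omega_\Omega$, and invoking the reverse H\"older inequality for $\omega_{\Omega_R}$ on the CAD $\Omega_R$ to conclude $\sum_{\LD}\ell(Q)^d\lesssim\tau^{\theta}\ell(R)^d$. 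Your proposal contains none of this construction, and without it the HD/LD tops cannot be packed.

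For the direction $\beta_{\partial\Omega}(Q_0)\lesssim\CDHM(Q_0,\lambda,A,\tau)$ there are two gaps. First, since $\partial\Omega$ is only lower content regular, $\beta_{\partial\Omega}(Q_0)\gtrsim\cH^{d}(Q_0)$ while $\cH^{d}(Q_0)$ is \emph{not} controlled by $\ell(Q_0)^d$; your step ``$\ell(Q_0)^d\le\CDHM$'' does not address this, and one needs a separate lemma $\cH^{d}(Q_0)\lesssim\CDHM$ proved by covering the stopping cubes and using the mass bound $\omega^{x_R}(5\lambda B_R)\le 1$. Second, the pointwise implication ``$\partial\Omega$ deviates from every plane in $\lambda B_Q$ $\Rightarrow$ density jump at $Q$'' is not true at a single scale, so the single-tree Carleson packing of non-flat cubes cannot be extracted cube by cube. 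The working argument is integrated: the density pinching gives $G(x_R,\cdot)\sim\Theta_{\omega^{x_R}}^{d}(2B_R)\,\delta_\Omega(\cdot)$ throughout a sawtooth region over $\Tree(R)$, one integrates $|\nabla^2 G|^2 G\,\delta_\Omega^3$ by parts over that region (which must first be arranged to have Ahlfors regular boundary, using the corona approximation of lower regular sets by AR sets), obtaining a Carleson bound on the set of cubes where $G$ is not approximately affine; a compactness lemma then shows approximate affineness forces the $\WHSA$ condition, and $\WHSA$ is converted to the $\beta$-sum through the $\BAUP$ characterization. Your sketch skips all of these steps, and the one concrete claim it rests on in each direction does not hold.
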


This has some nicer looking consequences when we know more about our domain. For example, we have the following:

\begin{main}
\label{t:SU-version}
Let $\Omega\subseteq \R^{d+1}$ be a SU domain with lower $d$-content regular boundary and let $\cD$ be the Christ-David cubes for $\d\Omega$. Then for $\lambda\geq 1$, for $M,A,\tau^{-1}$ large enough, and $Q_{0}\in \cD$, if  $\omega=\omega_{\Omega}^{x_{{0}}}$ where $x_{0}\in \Omega\backslash M\lambda B_{Q_{0}}$, there is a partition of the cubes in $Q_0$ into stopping-time regions $\{\Tree(R); R\in \Top\}$ so that 
\begin{enumerate}
\item $\tau \Theta_{\omega}^{d}(\lambda B_R)\leq \Theta_{\omega}^{d}(\lambda B_Q) \leq A \Theta_{\omega}^{d}(\lambda B_R)$ for all $Q\in \Tree(R)$.
\item We have 
\begin{equation}
\label{e:SU-top-sum}
\sum_{R\in \Top} \ell(R)^{d}\sim \beta_{\d\Omega}(Q_0).
\end{equation}
\end{enumerate}
That is, if the domain is SU, then we can improve over Theorem \ref{t:CDHM} by keeping the pole for $\omega$ in our trees the same.

If $\beta_{\d\Omega}(Q_0)<\infty$, then we can find trees so that the following holds: 
\begin{enumerate}
\setcounter{enumi}{2}
\item If $\Stop(R)$ denotes the minimal cubes of $\Tree(R)$ and $Q\in \Stop(R)$, then either $\Theta_{\omega}^{d}(Q)\sim \tau \Theta_{\omega}^{d}(\lambda B_R)$ or $\Theta_{\omega}^{d}(\lambda B_Q)\sim A \Theta_{\omega}^{d}(\lambda B_R)$ (with implied constants independent of $\tau,\lambda$ and $A$).
\end{enumerate}

\end{main}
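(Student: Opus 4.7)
The plan is to derive Theorem \ref{t:SU-version} by coupling Theorem \ref{t:CDHM} with a change-of-pole estimate that holds in semi-uniform domains, so that a decomposition built around the corkscrew poles $x_R$ can be re-expressed using a single distant pole $x_0$.

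\textbf{Starting point.} First I would apply Theorem \ref{t:CDHM} to $Q_0$ to obtain a family $\{\Tree(R):R\in\Top\}$ with interior corkscrew points $x_R\in cB_R\cap\Omega$ satisfying
\begin{equation*}
\tau\,\Theta_{\omega^{x_R}}^{d}(\lambda B_R)\leq\Theta_{\omega^{x_R}}^{d}(\lambda B_Q)\leq A\,\Theta_{\omega^{x_R}}^{d}(\lambda B_R),\qquad Q\in\Tree(R),
\end{equation*}
and $\sum_{R\in\Top}\ell(R)^d\lec\beta_{\d\Omega}(Q_0)$. I would use this same partition for Theorem \ref{t:SU-version}, only replacing the pole by $x_0$.

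\textbf{Change of pole.} The key technical step is the estimate
\begin{equation*}
\omega^{x_0}(\lambda B_Q)\;\sim\;\omega^{x_0}(\lambda B_R)\cdot\omega^{x_R}(\lambda B_Q)\qquad\text{for all }Q\in\Tree(R),
\end{equation*}
valid provided $x_0\in\Omega\setminus M\lambda B_{Q_0}$ for $M$ chosen large enough depending on $\lambda$. This is a Carleson-type comparison principle: both sides should be comparable to $G_\Omega(x_0,x_R)\cdot\omega^{x_R}(\lambda B_Q)$ up to a controlled power of $\ell(R)$. I would establish it via the SU cigar curves from $x_R$ out to $\lambda B_R$ combined with the Bourgain-type lower bound $\omega^x(B)\gec 1$ available under lower $d$-content regularity, both of which serve as substitutes for the NTA exterior corkscrew hypothesis in the classical comparison principle.

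\textbf{Deducing the three items.} Dividing the previous display by $\ell(Q)^d$ and using $\omega^{x_R}(\lambda B_R)\sim 1$ (a consequence of the maximum principle and the interior corkscrew condition), the density bounds with pole $x_R$ translate directly into the bounds demanded by item (1) for $\omega=\omega^{x_0}$, at the cost of absorbing bounded multiplicative factors into $\tau$ and $A$ (which is permitted since Theorem \ref{t:CDHM} allows $A$ and $\tau^{-1}$ to be any sufficiently large constants). For item (2), the upper bound $\sum\ell(R)^d\lec\beta_{\d\Omega}(Q_0)$ is inherited from the starting point; the matching lower bound follows by reversing the change-of-pole: any decomposition satisfying the fixed-pole bounds automatically satisfies the varying-pole bounds of Definition \ref{d:CDHM} with comparable constants, so its top-sum is at least $\CDHM(Q_0,\lambda,A',\tau')\sim\beta_{\d\Omega}(Q_0)$ by Theorem \ref{t:CDHM}. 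For item (3), assuming $\beta_{\d\Omega}(Q_0)<\infty$, I would refine the stopping rule inside each $\Tree(R)$ so that we stop at the first cube $Q$ whose $\omega$-density lies within a fixed (universal) factor of either $\tau\,\Theta_\omega^d(\lambda B_R)$ or $A\,\Theta_\omega^d(\lambda B_R)$; because neighbouring cubes have comparable $\omega$-densities (by Harnack and the bounded geometry of the Christ--David cubes), the dichotomy is realised on the stopping cubes, and the finiteness of $\beta_{\d\Omega}(Q_0)$ guarantees that the refined top-collection remains summable.

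\textbf{Main obstacle.} The chief difficulty lies in justifying the change-of-pole formula under only semi-uniform connectivity and lower $d$-content regularity. The usual comparison-principle proofs rely on exterior corkscrews and on nontangential access for interior pairs of points, neither of which is available here; they must be replaced by the SU cigar connectivity (which joins interior to boundary), the interior corkscrew condition, and the Bourgain estimate from lower content regularity. One must track carefully how far $x_0$ sits from $\lambda B_{Q_0}$ (this is what dictates the size of $M$) so that Harnack chains from $x_R$ to $x_0$ do not degrade the estimates, and verify that the comparison survives at the scale of each $\lambda B_Q$ deep inside $\Tree(R)$, not merely at the root scale.
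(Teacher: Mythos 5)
Your overall strategy (start from the varying-pole decomposition and change the pole to $x_0$) is the right one, and it is essentially what the paper does, but there is a genuine gap in the key step. The change-of-pole comparison you propose,
\begin{equation*}
\omega^{x_0}(\lambda B_Q)\;\sim\;\omega^{x_0}(\lambda B_R)\cdot\omega^{x_R}(\lambda B_Q)\qquad (Q\in\Tree(R)),
\end{equation*}
with the \emph{same} corkscrew point $x_R$ on both sides, is not available in a semi-uniform domain. The comparison principle that actually holds here (Lemma \ref{l:w/w}, i.e.\ Theorem II of \cite{Azz17}) only produces two possibly \emph{different} corkscrew points $x_1,x_2\in\lambda B_R\cap\Omega$ with $\omega^{x_1}(E)\lec\omega^{x_0}(E)/\omega^{x_0}(\lambda B_R)\lec\omega^{x_2}(E)$; one may take $x_1=x_2$ an arbitrary corkscrew only when $\Omega$ is \emph{uniform}. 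In an SU domain two corkscrew points of the same ball need not be joined by a Harnack chain inside a bounded dilate of that ball, so $\omega^{x_1}$ and $\omega^{x_R}$ need not be comparable on subsets of $\lambda B_R$, and your display can fail. Consequently, if you treat Theorem \ref{t:CDHM} as a black box, the upper (HD) bound you inherit is stated at some corkscrew $x_R$ that need not coincide with the $x_2$ produced by the change-of-pole lemma, and item (1) does not follow. The same issue defeats your "reverse the change of pole" argument for the lower bound in item (2): from the fixed-pole bounds you recover the upper density bound at $x_1$ and the lower density bound at $x_2$, but Definition \ref{d:CDHM} demands both bounds at a \emph{single} corkscrew ball, so the fixed-pole decomposition is not automatically a competitor for $\CDHM$.

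The paper repairs this by not black-boxing Theorem \ref{t:CDHM}: the trees are rebuilt from scratch on cubes where the boundary is bilaterally $\ve$-flat in $MB_R$, and the HD/LD stopping conditions are imposed at \emph{both} corkscrew points $x_R^{\pm}$ lying on either side of the approximating plane $P_R$. The flatness guarantees that any corkscrew point of $\lambda B_R$ (in particular the $x_1,x_2$ from Lemma \ref{l:w/w}) is Harnack-connected to one of $x_R^{\pm}$, which is exactly what makes the two one-sided comparisons combine into item (1) (this is Corollary \ref{c:SU-cor}). For the matching lower bound in \eqref{e:SU-top-sum} the paper does not reverse the pole change at all; it reruns the WHSA/Green's function argument of Lemma \ref{l:WHSA<CDHM} directly with $g=G_\Omega(x_0,\cdot)$, substituting the doubling property of $\omega^{x_0}$ (Lemma \ref{l:doubling}) for the varying-pole estimates (Lemma \ref{l:beta<CDHM-SU}). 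Your refinement of the stopping rule for item (3) is fine once items (1)--(2) are in place, and matches the second stopping-time in Corollary \ref{c:SU-cor}.
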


We can use this and Theorem \ref{t:SU-version} to show how the $\beta$-numbers give estimates on the Poisson kernel for SU domains with AR boundary in the following theorem, which can be seen as an integral form of Theorem \ref{t:SU-version}.

\begin{main}
\label{t:Afin}
Let $\Omega\subseteq \R^{d+1}$ be a semi-uniform domain with Ahlfors regular boundary and let $\cD$ be the Christ-David cubes. There is $M>0$ depending on the semi-uniformity and Ahlfors regularity constants so that the following holds. For $Q_0\in \cD$, let $x_0\in  \Omega\backslash MB_{Q_{0}}$ and $\omega=\omega_{\Omega}^{x_0}$, and suppose $\omega|_{Q_{0}} \ll \cH^{d}$. Let $k$ be the Radon-Nikodym derivative of $\omega$ in $Q_0$. Then 
\begin{equation}
\label{e:log1/k}
1+ \avint_{Q_{0}}  \log \frac{1}{k}  d\cH^{d} +\log\ps{\avint_{Q_{0}} k d\cH^{d}} \sim \frac{\beta_{\d\Omega}(Q_0)}{\ell(Q_{0})^{d}}
\end{equation}
Above, the constants only depend on the semi-uniformity and Ahlfors regularity.
\end{main}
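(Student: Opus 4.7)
The plan is first to observe that the left-hand side of \eqref{e:log1/k} equals $1+J(k)$, where
\[
J(k):=\log\avint_{Q_{0}}k\,d\cH^{d}-\avint_{Q_{0}}\log k\,d\cH^{d}\ge 0
\]
is the Jensen gap of $k$ on $Q_{0}$. Since $\beta_{\d\Omega}(Q_{0})\ge \ell(Q_{0})^{d}$ by construction of the linear deviation, the target comparison reduces to $1+J(k)\sim \beta_{\d\Omega}(Q_{0})/\ell(Q_{0})^{d}$. The main vehicle will be the corona decomposition of Theorem \ref{t:SU-version}.

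\textbf{Corona setup and upper bound.} I would apply Theorem \ref{t:SU-version} with $\lambda,A,\tau^{-1},M$ sufficiently large to produce a partition $\{\Tree(R):R\in\Top\}$ of the cubes below $Q_{0}$ satisfying $\sum_{R\in\Top}\ell(R)^{d}\sim \beta_{\d\Omega}(Q_{0})$, where $f_{Q}:=\Theta_{\omega}^{d}(\lambda B_{Q})$ is pinched between $\tau f_{R}$ and $A f_{R}$ whenever $Q\in\Tree(R)$. By Lebesgue differentiation and Ahlfors regularity, $k(x)\sim f_{R(x)}$ for $\cH^{d}$-a.e.\ $x\in Q_{0}$, where $R(x)$ is the deepest top cube containing $x$, and $\avint_{Q_{0}}k\,d\cH^{d}\sim f_{Q_{0}}$. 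Telescoping $\log(f_{Q_{0}}/f_{R(x)})$ along the ancestor chain of top cubes above $R(x)$---each consecutive jump being $\lec \log(A/\tau)$ by Theorem \ref{t:SU-version}(1)---and interchanging the sum and integral gives
\[
\int_{Q_{0}}\log\frac{f_{Q_{0}}}{f_{R(x)}}\,d\cH^{d}(x)\lec \sum_{R'\in\Top}\cH^{d}(R')\lec \sum_{R'\in\Top}\ell(R')^{d}\sim \beta_{\d\Omega}(Q_{0}),
\]
because a given top $R'$ appears in the ancestor chain of $R(x)$ precisely when $x\in R'$ and $R'\neq R(x)$. Dividing by $\cH^{d}(Q_{0})\sim\ell(Q_{0})^{d}$ yields $J(k)\lec \beta_{\d\Omega}(Q_{0})/\ell(Q_{0})^{d}$, hence the ``$\lec$'' direction of \eqref{e:log1/k}.

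\textbf{Lower bound.} For the converse I may assume $\beta_{\d\Omega}(Q_{0})\gg \ell(Q_{0})^{d}$, since otherwise the bound follows from $1+J(k)\ge 1$. This forces many non-root top cubes, so Theorem \ref{t:SU-version}(3) applies: every $R\in\Top\setminus\{Q_{0}\}$ arises from a stopping cube of its parent tree $R'$ on which $f_{R}\sim \tau f_{R'}$ or $f_{R}\sim A f_{R'}$. Setting $p_{R}:=\cH^{d}(E_{R})/\cH^{d}(Q_{0})$ with $E_{R}:=\{R(x)=R\}$, we have $\sum_{R}p_{R}=1$ and, from $\avint k\sim f_{Q_{0}}$, the mass-conservation identity $\sum_{R}p_{R}f_{R}\sim f_{Q_{0}}$, while
\[
J(k)\sim \log\ps{\sum_{R}p_{R}f_{R}}-\sum_{R}p_{R}\log f_{R}.
\]
The rigidity of the $\tau/A$ dichotomy combined with mass conservation forces low-density branches $(f_{R}\ll f_{Q_{0}})$ to be balanced by high-density branches $(f_{R}\gg f_{Q_{0}})$, and each such pair contributes an order-one amount to the discrete Jensen gap per unit of mass $p_{R}$. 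A Carleson-type packing of $\Top$ organized by generation then converts this per-top contribution into the global lower bound $J(k)\gec \sum_{R}\ell(R)^{d}/\ell(Q_{0})^{d}-O(1)$, which by Theorem \ref{t:SU-version}(2) closes the estimate.

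\textbf{Main obstacle.} The hard step is rigorously excluding cancellations in the discrete Jensen gap, since a balanced alternation of $\tau$- and $A$-steps can return $\log f_{R}$ near $\log f_{Q_{0}}$ along deep branches while $\sum_{R}\ell(R)^{d}$ remains large. Addressing this likely requires a recursive argument on generations or a refined, single-sign selection inside the freedom afforded by Theorem \ref{t:SU-version}, but in either case the mass-conservation identity $\sum_{R}p_{R}f_{R}\sim f_{Q_{0}}$ does the heavy lifting: it prevents the weighted average of $\log f_{R}$ from matching $\log f_{Q_{0}}$ unless the spread of the values $f_{R}$ is small, i.e.\ unless only $O(1)$ of the mass $p_{R}$ is carried by top cubes far from $Q_{0}$ in the tree, which in turn constrains $\beta_{\d\Omega}(Q_{0})/\ell(Q_{0})^{d}$ to be $O(1)$.
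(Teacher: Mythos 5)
Your reformulation via the Jensen gap and your upper bound $J(k)\lec \beta_{\d\Omega}(Q_0)/\ell(Q_0)^d$ are essentially the easy half of the paper's argument: the paper also writes $\log k-\log\theta_\omega^d(Q_0)=\sum_{R\in\Top}f_R$ with each $f_R$ supported on $R$ and bounded by $\log(A/\tau)$ plus doubling constants, and integrates. (Two technical points you gloss over: absolute integrability to justify the interchange, and the fact that the dichotomy in Theorem \ref{t:SU-version}(3) is only available once $\beta_{\d\Omega}(Q_0)<\infty$ is known — the paper runs a separate stopping-time argument in the case where only the log-integral is assumed finite.)

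The genuine gap is the lower bound, and you have correctly located it but not closed it. Your proposed mechanism — that mass conservation $\sum_R p_R f_R\sim f_{Q_0}$ forces paired high/low branches each contributing a definite amount to the Jensen gap — is not how the cancellation is excluded, and as stated it would not survive the "balanced alternation" scenario you yourself raise. The missing idea is an \emph{asymmetry in surface measure} between the two stopping alternatives: if $Q\in\HD(R)$ then $\Theta_\omega^d(Q)\sim A\,\Theta_\omega^d(R)$ forces
\[
\sum_{Q\in \HD(R)}\cH^{d}(Q)\;\lec\; A^{-1}\,\Theta_\omega^d(R)^{-1}\sum_{Q\in\HD(R)}\omega(Q)\;\leq\; A^{-1}\cH^{d}(R),
\]
so the high-density stopping cubes occupy a vanishing fraction of $R$ and contribute at most $C\frac{\log(C_0A)}{A}\cH^d(R)$ to $\int_R f_R$. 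Consequently every top cube $R$ falls into one of three cases: (i) the LD cubes carry a fixed fraction (say $1/4$) of $\cH^d(R)$, in which case $\int_R f_R\leq \tfrac14\log(C_0\tau)\cH^d(R)\leq -c\,\cH^d(R)$ once $\tau$ is small and $A$ is large relative to $\tau^{-1}$; or (ii) the residual set $F(R)=R\setminus\bigcup_{Q\in\Next(R)}Q$ carries at least half of $\cH^d(R)$ — and since the sets $F(R)$ are pairwise disjoint, $\sum$ of such $\cH^d(R)$ is $\lec\cH^d(Q_0)$. There is no third case because HD cannot carry a fixed fraction. This single-sign selection (each top contributes either a definite negative amount or is Carleson-packed) is exactly the "refined selection" you speculate about in your obstacle paragraph; without the HD measure estimate it cannot be carried out, and your closing claim that mass conservation alone constrains $\beta_{\d\Omega}(Q_0)/\ell(Q_0)^d$ to be $O(1)$ is unjustified.
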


We prove a slightly more general statement than Theorem \ref{t:Afin} in Theorem \ref{t:Afin2} below that allows for the scenario when $\omega|_{Q_{0}}\not\ll \cH^{d}$.

Indeed, a word of caution: the above theorems do {\it not} say that, if $\beta_{\d\Omega}(Q_0)<\infty$, then $\hm\ll \cH^{d}$. In \cite{AMT15}, we showed with Mourgoglou and Tolsa that there was an NTA domain $\Omega\subseteq \R^{d+1}$ with $\cH^{d}(\d\Omega)<\infty$ so that $\hm\not\ll \cH^{d}$, meaning there is $E\subseteq \d\Omega$ with $\hm(E)>0=\cH^{d}(E)$. Since $ \beta_{\d\Omega}(R)\sim \cH^{d}(\d\Omega)<\infty$ by \cite[Corollary III]{AS18}, this means we can still find trees satisfying the conclusions of the above theorems, but the densities $\Theta_{\omega}^{d}(\lambda B_{Q})$ can still diverge on a set of positive harmonic measure. Thus, Theorems \ref{t:CDHM} and \ref{t:SU-version} cannot be improved to imply absolute continuity of harmonic measure. This counterexample, however, isn't relevant to Theorem \ref{t:Afin} since its boundary is not Ahlfors regular. Nonetheless, we show that we still cannot conclude absolute continuity in this setting due to the following example:

\begin{main}
\label{t:example}
There is a domain $\Omega\subseteq \bC$ so that $\d\Omega$ is $1$-Ahlfors regular, $\beta_{\d\Omega}(\d\Omega)<\infty$, and there is $E\subseteq \d\Omega$ so that $\omega(E)>0=\cH^{1}(E)$. In particular, $\Omega$ is a uniform domain with $1$-rectifiable and $1$-Ahlfors regular boundary whose harmonic measure has a singular set. 
\end{main}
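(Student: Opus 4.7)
The plan is to construct $\Omega$ as a variant of the Garnett $4$-corner Cantor example in which the singular structure producing $\omega\perp\cH^{1}$ is superimposed on a rectifiable, Ahlfors regular skeleton. Starting from a bounded simply connected base domain $\Omega_{0}$ (say the unit disk), I would inductively remove, at each generation $j\geq 1$, roughly $N^{j}$ closed line segments $S_{j,k}$ of length $r_{j}$, arranged at the vertices of a self-similar $N$-adic Cantor-type pattern (with $N=4$ as in the Garnett construction). The parameters $N$ and $r_{j}$ are tuned so that the slits accumulate on a Cantor-type limit set $K$ of Hausdorff dimension in $(0,1)$, in particular with $\cH^{1}(K)=0$, but with positive logarithmic capacity. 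Setting $\Omega:=\Omega_{0}\setminus\bigcup_{j,k}S_{j,k}$, the boundary is $\d\Omega=\d\Omega_{0}\cup\bigcup_{j,k}S_{j,k}\cup K$.

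The verification splits into four parts. \emph{Rectifiability:} $\d\Omega$ is a countable union of rectifiable pieces together with the $\cH^{1}$-null set $K$, hence $1$-rectifiable. \emph{Ahlfors regularity:} balancing $N^{j}r_{j}\sim 1$ together with the self-similar placement yields $\cH^{1}(\d\Omega\cap B(x,r))\sim r$ on all admissible scales. \emph{Uniformity:} since the slits are $1$-dimensional and the planar complement remains $2$-dimensionally connected with room at every scale, cigar curves with bounded turning between any two points of $\Omega$ can be constructed by explicit routing around the slits. \emph{Singularity:} the slit configuration funnels a positive fraction of $\omega_{\Omega}^{x_{0}}$ for any $x_{0}\in\Omega$ onto $K$ (by a comparison with the harmonic measure of the Cantor complement, using the maximum principle and the positive capacity of $K$), and then the Cantor self-similarity forces $\omega_{\Omega}^{x_{0}}|_{K}\perp\cH^{1}|_{K}$; in particular there is $E\subseteq K$ with $\omega_{\Omega}^{x_{0}}(E)>0=\cH^{1}(E)$. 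Finiteness of $\beta_{\d\Omega}(\d\Omega)$ is then automatic: since $\d\Omega$ is $1$-AR, bounded, and locally consists of at most $O(1)$ segments per ball at each scale, one obtains $\sum_{Q\subseteq Q_{0}}\beta_{\d\Omega}^{1,2}(3B_{Q})^{2}\ell(Q)\lec \cH^{1}(\d\Omega\cap Q_{0})<\infty$ by direct estimation.

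\textbf{Main obstacle.} The main difficulty is balancing all four properties simultaneously: having many rectifiable pieces in $\d\Omega$ tends to force $\omega\ll\cH^{1}$ via F.\ and M.\ Riesz-type arguments on simply connected subregions near the slits, while producing a singular part of $\omega$ requires enough inaccessibility at the Cantor limit $K$. The resolution is that $K$ is $\cH^{1}$-null by construction, so $\omega$ can charge $K$ without conflicting with absolute continuity on the remainder of $\d\Omega$. The subtle point is calibrating the slit scales $r_{j}$ and multiplicities $N^{j}$ so that $K$ retains positive capacity (hence positive harmonic measure) while $\d\Omega$ remains $1$-AR and the surrounding rectifiable structure dominates in $\cH^{1}$-mass; this parallels the construction of \cite{AMT15} but with the additional AR constraint, which is what makes the example delicate.
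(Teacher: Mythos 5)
Your outline has the right flavour (a rectifiable skeleton accumulating on a null set that carries harmonic measure), but two of your four verification steps fail as stated, and the failures are exactly where the real work lies.

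First, the claim that finiteness of $\beta_{\d\Omega}(\d\Omega)$ is ``automatic'' from Ahlfors regularity plus the boundary being locally a bounded number of segments is false: the $4$-corner Cantor set is $1$-AR with finite $\cH^{1}$-measure, yet $\beta_{\d(K_{j}^{c})}(K_{j})\sim j\to\infty$ (this is noted in the introduction of the paper you are working from). Near the accumulation set $K$ your slit pattern is Cantor-like at every scale, so $\beta^{d,2}(3B_{Q})\gec 1$ for all cubes $Q$ meeting $K$, and the sum $\sum\beta^{2}\ell(Q)$ diverges unless the total length of the ``non-flat'' cubes at generation $n$ decays summably in $n$. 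Second, your calibration cannot work with a single self-similar scaling: taking $N^{j}r_{j}\sim 1$ with $N=4$ gives $\dim K=1$ and $\cH^{1}(K)>0$, while taking $r_{j}\ll 4^{-j}$ to force $\cH^{1}(K)=0$ destroys the lower Ahlfors bound in balls centered on $K$ at intermediate scales (and also makes $\omega(K)>0$ doubtful, since Brownian motion must thread infinitely many generations of slits; ``positive logarithmic capacity hence positive harmonic measure'' is not a valid implication — a subset of the circle of positive capacity and zero length has zero harmonic measure for the disk).

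The paper resolves both tensions at once by \emph{not} taking the singular set to be the whole self-similar limit set. It keeps the genuine $4$-corner Cantor set $K$ (which is $1$-AR with $\cH^{1}(K)>0$) and uses Batakis's density-increase lemma to run a stopping-time argument on the Cantor squares: the density $\omega(I)/\cH^{1}(I\cap K)$ increases by a definite factor every $N$ generations along a positive-$\omega$-measure set of branches, and a Cauchy--Schwarz/martingale computation produces a set $G\subseteq K$ with $\omega(G)>0$ but $\cH^{1}(G)=0$, together with stopped squares $\Stop(n)$ of \emph{varying} generations whose dilates $\eta I$ form the complement of the new domain $\hat\Omega$. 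Ahlfors regularity of $\d\hat\Omega$ is inherited from that of $K$ because the stopped squares tile $K$ up to the null set $G$; the positivity $\hat\omega(G)>0$ is proved by a lower-function/maximum-principle comparison with $\omega_{K^{c}}$ (not by a capacity argument); and the finiteness of the $\beta$-sum comes precisely from the quantitative decay $\sum_{J\in\Top(I,n)}|J|\leq |I|\,4^{-nN\tau}$, which bounds the measure of the cubes where non-flatness persists to depth $n$ and yields $\sum_{n}n\,4^{-nN\tau}<\infty$. Without an analogue of that decay estimate, your construction has no mechanism to make the $\beta$-sum converge.
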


However, we do have that, under the assumptions of the Theorem \ref{t:CDHM} (or any of the theorems above), if $\beta_{\d\Omega}(R)<\infty$ for all $R\in \cD$, then $  \cH^{d}|_{\d\Omega} \ll \omega_{\Omega}$. This follows by the main result of \cite{ABHM16} and the fact that $\d\Omega$ is rectifiable by \cite[Theorem II]{AS18}.

Theorem \ref{t:Afin} implies that, if $A=e^{-1}$, there are constants  $C_1,C_2>0$ so that 
\[
A\exp\ps{C_1\frac{\beta_{\d\Omega}(Q_0)}{\ell(Q_{0})^{d}}}\leq \exp\ps{\avint_{Q_{0}}\log\frac{1}{k}  d\cH^{d} }\avint_{Q_{0}} k d\cH^{d}
\leq A\exp\ps{C_2\frac{\beta_{\d\Omega}(Q_0)}{\ell(Q_{0})^{d}}}
\]
and this bounds the familiar term that formerly characterized the $A_{\infty}$ condition. In particular, if $\omega\ll \cH^{d}$ and $\beta_{\d\Omega}(Q_0)\leq C_0 \cH^{d}(Q_0)$ for all $Q_0\in \cD$, then this shows $\omega\in A_{\infty}$ in a much longer way, but now it is more transparent to see how the $A_{\infty}$-constant in \eqref{e:expafin} depends on the Carleson constant $C_0$: we see $[\omega]_{A_{\infty}}\lec  \exp(C_2C_0)$ where $C_2$ depends only on the Ahlfors regularity of $\d\Omega$ and the SU constants.

Even if the boundary is not UR, the $\beta$-numbers now allow us to estimate how badly harmonic measure fails to be $A_{\infty}$: recalling Garnett's example, one can show $\beta_{\d (K_j^{c})}(K_{j})\sim j$, and so the above theorem implies that \eqref{e:cantor-int} must be growing exponentially in $j$.

%
We also obtain a local version of Theorem \ref{t:Afin}, which allows us to estimate the harmonic measure of a subset of the boundary of a SU domain directly in terms of that set's Hausdorff content and the $\beta$-numbers around that set:

\begin{main}
\label{t:hruscev}
Let $\Omega$ be a SU domain with LCR boundary. Let $Q_0\in \cD$ and $E\subseteq Q_0$. Then for $M$ large enough, $x_0\in \Omega\backslash MB_{Q_{0}}$, and $\omega=\omega_{\Omega}^{x}$
\[
\frac{\omega(E)}{\omega(Q_{0})}
\geq \exp\ps{ -C\frac{\ell(Q_{0})^{d}}{\cH_{\infty}^{d}(E)} \exp\ps{C\frac{\sum_{Q\subseteq Q_0\atop Q\cap E\neq\emptyset}\beta_{\d\Omega}(MB_{Q})\ell(Q)^{d}}{\cH_{\infty}^{d}(E)}}}.
\]
\end{main}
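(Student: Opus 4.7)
My plan is to adapt Hru\v{s}\v{c}ev's classical lower bound for $A_{\infty}$ weights to the non-AR setting, using the corona decomposition from Theorem \ref{t:SU-version} to control the oscillation of the harmonic-measure density on $E$ by the localized $\beta$-sum
\[
S:=\sum_{Q\subseteq Q_{0},\, Q\cap E\neq\emptyset}\beta_{\d\Omega}(MB_{Q})\ell(Q)^{d},
\]
and Frostman's lemma to replace surface measure (which need not be compatible with $\omega$ in the LCR setting) by a $d$-dimensional Radon measure $\mu$ on $E$.

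First I would apply Theorem \ref{t:SU-version} to $Q_{0}$ with pole $x_{0}$ and suitable parameters $\lambda, A, \tau^{-1}$, producing top cubes $\Top$ and trees $\{\Tree(R):R\in\Top\}$ with $\Theta_{\omega}^{d}(\lambda B_{Q})\in[\tau\Theta_{R},A\Theta_{R}]$ for every $Q\in\Tree(R)$, where $\Theta_{R}:=\Theta_{\omega}^{d}(\lambda B_{R})$.  By reopening the stopping-time construction from Theorem \ref{t:CDHM}, a top $R$ with $R\cap E\neq\emptyset$ is created only because $\beta$-mass accumulates on cubes meeting $R$, and those cubes also meet $E$; so one expects the localized Carleson packing
\[
\sum_{R\in\Top,\, R\cap E\neq\emptyset}\ell(R)^{d}\lec S.
\]
Next I would use Frostman's lemma to pick a positive Radon measure $\mu$ supported on $E$ with $\mu(B(x,r))\leq r^{d}$ for every ball and $\mu(E)\gec \cH_{\infty}^{d}(E)$, and set $n(x):=\#\{R\in\Top:x\in R\}$.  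Since $\mu(R)\leq \ell(R)^{d}$, Fubini combined with the localized packing above gives $\int_{E} n\,d\mu\lec S$, hence $\avint_{E}n\,d\mu\lec S/\cH_{\infty}^{d}(E)$.

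Iterating property (1) of Theorem \ref{t:SU-version} along the chain of tops $Q_{0}=R_{0}\supsetneq R_{1}\supsetneq\cdots\supsetneq R_{n(x)}\ni x$, at every sufficiently small scale the density $\omega(B(x,r))/r^{d}$ is $\gec \Theta_{0}\tau^{n(x)}$, where $\Theta_{0}\sim \omega(Q_{0})/\ell(Q_{0})^{d}$.  Because $\mu(B(x,r))\leq r^{d}$, the same lower bound holds for $\omega(B(x,r))/\mu(B(x,r))$, and differentiating the Lebesgue decomposition $\omega=\omega_{\mathrm{ac}}+\omega_{\mathrm{sing}}$ of $\omega$ with respect to $\mu$ gives $d\omega_{\mathrm{ac}}/d\mu\gec \Theta_{0}\tau^{n}$ $\mu$-a.e.\ on $E$.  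Jensen's inequality then yields
\[
\omega(E)\geq \omega_{\mathrm{ac}}(E)\geq \mu(E)\exp\!\ps{\avint_{E}\log\frac{d\omega_{\mathrm{ac}}}{d\mu}\,d\mu}\gec \cH_{\infty}^{d}(E)\,\Theta_{0}\,\exp\!\ps{-C\frac{S}{\cH_{\infty}^{d}(E)}}.
\]
Dividing by $\omega(Q_{0})\sim \Theta_{0}\ell(Q_{0})^{d}$, taking logarithms, and using $\log x\geq -1/x$ on $(0,1]$,
\[
\log\frac{\omega(E)}{\omega(Q_{0})}\gec -\frac{\ell(Q_{0})^{d}}{\cH_{\infty}^{d}(E)}-C\frac{S}{\cH_{\infty}^{d}(E)},
\]
and since $\cH_{\infty}^{d}(E)\leq \ell(Q_{0})^{d}$ the right-hand side is bounded below by $-C\ell(Q_{0})^{d}\cH_{\infty}^{d}(E)^{-1}\exp(CS/\cH_{\infty}^{d}(E))$, which is the claimed estimate.

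The main obstacle is the localized packing step: establishing $\sum_{R\in\Top,\,R\cap E\neq\emptyset}\ell(R)^{d}\lec S$ rather than the global bound $\beta_{\d\Omega}(Q_{0})$ requires reopening the stopping-time construction used for Theorem \ref{t:CDHM} and verifying that every top reaching $E$ can be charged to $\beta$-mass on cubes that also reach $E$.  A secondary technical point is making the Lebesgue-decomposition and differentiation argument rigorous when $\omega|_{E}$ is genuinely singular, but this is standard once one has the $\mu$-a.e.\ pointwise lower bound on $d\omega_{\mathrm{ac}}/d\mu$.
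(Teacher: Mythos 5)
Your proposal diverges from the paper's proof in a way that should itself be a warning sign: if every step went through, you would obtain the single-exponential bound $\omega(E)/\omega(Q_0)\gec\exp\ps{-C(\ell(Q_0)^d+S)/\cH^d_\infty(E)}$, strictly stronger than the stated double exponential. The paper's second exponential has a specific source: it does \emph{not} use the classical Frostman lemma, but instead builds a discrete, tree-compatible Frostman-type measure $\nu$ by iterated renormalization, and Lemma \ref{l:nu>exp} shows this construction can surrender all but an $\exp(-C\beta(\cT)/\cH^d_\infty(E))$ fraction of the mass of $E$; that loss then multiplies $\log(\omega(Q_0)/\omega(E))$ in Hru\v{s}\v{c}ev's inequality. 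The tree-compatibility (Lemma \ref{l:thetanu}) is what makes the dyadic Hru\v{s}\v{c}ev argument run; your genuine Frostman measure $\mu$ has no such compatibility, and you compensate with Besicovitch differentiation. That substitution is exactly where the trouble hides.

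The concrete gap is the interaction between your two key inputs. (1) The Besicovitch/Jensen step needs the lower density bound $\liminf_{r\to0}\omega(B(x,r))/r^d\gec\Theta_0\tau^{Cn(x)}$ at \emph{all} scales $r\to 0$, which forces you to apply Theorem \ref{t:SU-version} to the full family of cubes in $Q_0$, with trees going down to scale zero. (2) For that full-depth decomposition, the localized packing $\sum_{R\in\Top,\,R\cap E\neq\emptyset}\ell(R)^d\lec \ell(Q_0)^d+S$ is precisely the statement you flag but do not prove, and "reopening the stopping-time construction" does not deliver it: the tops are generated not only by the $\HD/\LD/B\beta$ stoppings (whose packings do restrict harmlessly to cubes meeting $E$) but also by the cubes in $\Bad$, where the \emph{bilateral} number $b\beta_{\d\Omega}(MB_Q)$ is large. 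Their packing, \eqref{e:sumbad}, comes from the traveling salesman theorem applied globally to $Q_0$ and is bounded by $\beta_{\d\Omega}(Q_0)=\cH^d(Q_0)+\sum_{Q\subseteq Q_0}\beta(MB_Q)^2\ell(Q)^d$; a cube meeting $E$ can fail bilateral flatness because of holes or stray pieces of $\d\Omega$ inside $MB_Q$ that contribute nothing to the unilateral $\beta$-numbers of cubes meeting $E$, so there is no cube-by-cube or localized version of this estimate available. The paper escapes this by truncating: its stopping region $\cT$ bottoms out at the scale of a finite cover of $E$, so Lemma \ref{l:CDHM<beta} packs the tops by $\beta(\cT)$ — but that truncation destroys the small-scale density control your differentiation argument requires, and one is pushed back to the discretized Hru\v{s}\v{c}ev argument at the truncation scale, where the mass-loss exponential reappears. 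Until you either prove the localized packing for the untruncated decomposition or find another route to the $\mu$-a.e.\ density bound, the proof is incomplete.
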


Finally, we give a continuous version of the above results for uniform domains which relates the linear deviation of the boundary of a uniform domain to the {\it affine} deviation of Green's function in the domain

\begin{main}
\label{t:green}
Let $\Omega\subseteq \R^{d+1}$ be a bounded uniform domain with lower $d$-content regular boundary. Let $B_{\Omega} = B(x_{\Omega},c\diam \d\Omega)$ be so that $2B_{\Omega}\subseteq \Omega$ and $g=G_{\Omega}(x_{\Omega},\cdot)$. Then
\begin{multline}
(\diam \d\Omega)^{d} + \int_{\Omega\backslash B_{\Omega} } \av{\frac{\grad^2 g(x)}{g(x)}}^{2} \dist(x,\Omega^c)^{3} dx \\
\sim (\diam \d\Omega)^{d} + \sum_{Q\subseteq \d\Omega} \beta_{\d\Omega}^{2}(3B_{Q})^{2}\ell(Q)^{d}.
\end{multline}
In particular, if $\Omega$ is an NTA domain, then
\begin{equation}
(\diam \d\Omega)^{d} + \int_{\Omega\backslash B_{\Omega}} \av{\frac{\grad^2 g(x)}{g(x)}}^{2} \dist(x,\Omega^c)^{3} dx \sim  \cH^{d}(\d\Omega)
\end{equation}

\end{main}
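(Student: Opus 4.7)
The plan is to combine the Corona decomposition of Theorem \ref{t:SU-version} (available because every uniform domain is semi-uniform) with a Whitney decomposition of $\Omega\backslash B_{\Omega}$, and use interior regularity of harmonic functions to compare $g$ with a piecewise affine model whose error is controlled by the $\beta$-numbers. The guiding heuristic is that $\left|\nabla^{2}g/g\right|^{2}\dist^{3}$ on a Whitney cube is a scale-invariant measure of how far $g$ deviates from being affine there, and by the maximum principle $g$ is nearly affine near a boundary cube $Q$ precisely when $\partial\Omega$ is nearly flat in $3B_{Q}$.

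For the upper bound, I first apply Theorem \ref{t:SU-version} to a top cube $Q_{0}$ essentially covering $\partial\Omega$, producing trees $\{\Tree(R):R\in\Top\}$ along which $\omega=\omega_{\Omega}^{x_{\Omega}}$ has nearly constant surface density and $\sum_{R\in\Top}\ell(R)^{d}\sim(\diam\partial\Omega)^{d}+\beta_{\partial\Omega}(Q_{0})$. For each Whitney cube $I\subseteq\Omega\backslash B_{\Omega}$, let its shadow $Q=Q(I)\in\cD$ be the cube with $\ell(Q)\sim\ell(I)\sim\dist(I,\partial\Omega)$ closest to $I$, and say $Q\in\Tree(R)$. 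Let $L_{Q}$ be a $d$-plane realizing $\beta_{\partial\Omega}^{d,p}(3B_{Q})$ and form the affine approximant $A_{Q}(x)=\kappa_{Q}\dist^{\pm}(x,L_{Q})$, with $\kappa_{Q}$ chosen so $A_{Q}$ matches $g$ at a corkscrew point $y_{Q}$. Using boundary Harnack for uniform domains, the CFMS-type estimate $g(y_{Q})\sim\omega(B_{Q})\ell(Q)^{1-d}$, and the near-constancy of $\omega$ along $\Tree(R)$ (which stabilizes $\kappa_{Q}$ inside the tree), I aim to establish $\|g-A_{Q}\|_{L^{\infty}(2I)}\lesssim\beta_{\partial\Omega}(3B_{Q})\cdot g|_{I}$. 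Interior regularity for the harmonic function $g-A_{Q}$ then gives $|\nabla^{2}g(x)|\lesssim\ell(I)^{-2}\|g-A_{Q}\|_{L^{\infty}(2I)}$ on $I$, whence
\[
\int_{I}\left|\frac{\nabla^{2}g}{g}\right|^{2}\dist(x,\partial\Omega)^{3}\,dx\lesssim\beta_{\partial\Omega}(3B_{Q})^{2}\ell(Q)^{d}.
\]
Summing over Whitney cubes with shadow $Q$ (there are $O(1)$ each), then over $Q\in\Tree(R)$, and finally over $R\in\Top$ (using the tree-sum bound from Theorem \ref{t:SU-version}) produces the upper bound. For the lower bound I run the chain in reverse: if $\int_{W(Q)}|\nabla^{2}g/g|^{2}\dist^{3}\le\eta\ell(Q)^{d}$ over a Whitney region $W(Q)$ of cubes of scale $\ell(Q)$ above $Q$, then integrating $\nabla^{2}g$ along paths between corkscrews shows $g$ lies within $C\sqrt{\eta}\,g(y_{Q})$ of some affine function on a corkscrew ball; since $g=0$ on $\partial\Omega$ and $g\sim|\nabla g|\dist$ in NTA fashion, the zero set of $g$ must lie within $C\sqrt{\eta}\ell(Q)$ of a plane, and lower $d$-content regularity upgrades this to $\beta_{\partial\Omega}^{d,p}(3B_{Q})\lesssim\sqrt{\eta}$. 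A standard Carleson-embedding argument summing these local estimates gives the converse inequality.

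The main obstacle is the linear-in-$\beta$ approximation $\|g-A_{Q}\|_{L^{\infty}(2I)}\lesssim\beta_{\partial\Omega}(3B_{Q})g|_{I}$ in the upper bound. It must simultaneously exploit two forms of smallness: the geometric flatness of $\partial\Omega$ encoded by $\beta$, and the harmonic-measure density stability along $\Tree(R)$ supplied by Theorem \ref{t:SU-version}; the latter is needed to ensure that the comparison constant $\kappa_{Q}$ does not drift between corkscrew balls of different cubes in the same tree, which would spoil the quadratic Carleson summation. This is a quantitative Kenig--Toro style boundary-to-interior approximation adapted to the lower content regular setting and executed through the boundary Harnack principle. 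Once this is in hand, the NTA corollary is immediate: Theorem II of \cite{AS18} gives $\beta_{\partial\Omega}(\partial\Omega)\sim\cH^{d}(\partial\Omega)$ for NTA domains, so substituting into the main estimate collapses the $\beta$-sum to $\cH^{d}(\partial\Omega)$.
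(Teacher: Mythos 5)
There is a genuine gap in both halves of your argument, and in both cases it sits exactly at the step you flag as the crux. For the upper bound, the claimed local estimate $\nrm{g-A_{Q}}_{L^{\infty}(2I)}\lec\beta_{\d\Omega}(3B_{Q})\,g|_{I}$ is false as stated: take a cube $Q$ where $\d\Omega\cap 3B_{Q}$ is exactly a piece of a hyperplane, so that $\beta_{\d\Omega}(3B_{Q})=0$, while the domain is curved at coarser scales. Then $g$ is not affine on the Whitney cubes over $Q$ (its second derivatives there are of size roughly $g\,\delta_{\Omega}/R^{2}$, where $R$ is the scale at which flatness first fails), so no choice of $A_{Q}$ makes the left side vanish. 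The deviation of $g$ from affine at scale $\ell(Q)$ is governed by the geometry at all coarser scales, not by the single number $\beta_{\d\Omega}(3B_{Q})$, so a pointwise bound linear in $\beta$ cannot feed the quadratic Carleson sum. The paper's upper bound avoids any such local claim: it uses $2\av{\grad^{2}g}^{2}=\triangle\av{\grad g}^{2}$ and integrates by parts over each sawtooth region $\Omega_{R}$ attached to a tree of the corona decomposition, bounds the resulting boundary term by $\cH^{d}(\d\Omega_{R})\sim\ell(R)^{d}$ using only the comparability $g/\delta_{\Omega}\sim\Theta_{\omega}^{d}(2B_{R})$ throughout the tree, and then the $\beta$-sum enters solely through the packing $\sum_{R\in\Top}\ell(R)^{d}\lec\beta_{\d\Omega}(Q_{0})$ of the tops. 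The smallness is harvested globally per tree, never cube by cube.

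The lower bound has a parallel problem. The step ``lower content regularity upgrades this to $\beta_{\d\Omega}^{d,p}(3B_{Q})\lec\sqrt{\eta}$'' requires locating $\d\Omega$ from the values of $g$ in the interior, and $g$ is only H\"older continuous up to the boundary (Lemma \ref{l:holder}). Pushing the affine approximation from a Whitney cube at distance $\ell(Q)/K$ down to $\d\Omega$ costs an error of order $K^{-\alpha}\ell(Q)$, so one can only conclude that $\d\Omega$ is $\delta$-close to the zero plane of $A_{Q}$ for a fixed small $\delta$ (after choosing $K$ large and $\eta$ small), never $\sqrt{\eta}$-close. A direct Carleson embedding of local $\beta^{2}$'s is therefore unavailable; the paper instead runs a stopping time on the condition $\sum\gamma(T)^{2}\geq\ve^{2}$, builds David--Toro surfaces for each resulting tree (Lemma \ref{l:DT-dyadic}), and bounds $\beta_{\d\Omega}(Q_{0})$ by the packing of the tops via the corona lemma from \cite{AV19}. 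Your guiding intuition --- that the affine deviation of $g$ should match the flatness of $\d\Omega$ --- is the right one, but both quantitative bridges have to be built at the level of square sums over stopping-time regions rather than by single-scale estimates.
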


The left side measures affine deviation as $\av{\frac{\grad^2 g}{g\dist(x,\Omega^c)^2}} $ measures in a scale and dilation invariant way how close $g$ is to being linear around $x$. This has an analogy with a result of Bishop and Jones in the complex plane: For a conformal mapping $\phi:\bD\rightarrow \Omega$, its Schwarzian derivative is defined as 
\[
\mathcal{S}\phi  = \ps{\frac{\phi''}{\phi'}}^{'} -\frac{1}{2}\ps{\frac{\phi''}{\phi'}}^{2}\\
 = g''-\frac{1}{2} (g')^{2} \;\;\; \mbox{where} \;\;\;
g=\log \phi'. 
\]

Much like how $\grad^{2}g/g$ measures how affine $g$ is (in the sense that $g$ is affine if this is zero), the Schwarzian derivative of a conformal map $\phi$ measures how close it is to being a M\"obius transformation (that is, $\phi$ is a M\"obius transformation if $S\phi$ is identically zero). 

The following result of Bishop and Jones (see also \cite[Chapter X, Lemma 6.1 and Theorem 6.2]{Harmonic-Measure} relates this quantity measuring deviation from being a M\"obius map to the size of the boundary. 

\begin{theorem}
\label{t:BJ94}
 \cite{BJ94} Let $\Omega\subseteq \bC$ be a simply connected planar domain and $\phi:\bD\rightarrow \Omega$ be the Riemann mapping. If $\d\Omega$ is rectifiable, then
\[
\diam \Omega+\int_{\Omega}|\phi'(z)| |\mathcal{S}\phi(z)|^{2} \dist(z,\Omega^{c})^{3}dz \lec \cH^{1}(\d\Omega). 
\]
The opposite inequality holds if $\Omega$ is uniform. 
\end{theorem}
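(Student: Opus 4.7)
\medskip

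\noindent\textbf{Proof proposal for Theorem \ref{t:BJ94}.}

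The plan is to move the integral from $\Omega$ to the disk by conformal change of variables, then relate the disk-side Schwarzian integral to Jones $\beta$-numbers of $\d\Omega$ via a Carleson decomposition, and finally invoke the analyst's traveling salesman theorem (TST) to compare to $\cH^{1}(\d\Omega)$.

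First, using the substitution $z=\phi(w)$ for $w\in\bD$ together with Koebe's distortion theorem $|\phi'(w)|(1-|w|^{2})\sim\dist(\phi(w),\d\Omega)$, I would rewrite the $\Omega$-side integral $\int_{\Omega}|\phi'||\mathcal{S}\phi|^{2}\dist(z,\Omega^{c})^{3}\,dz$ as a multiple of
\[
\int_{\bD}|\mathcal{S}\phi(w)|^{2}(1-|w|^{2})^{3}\,dA(w),
\]
so the problem reduces to controlling the ``hyperbolic $L^{2}$-norm'' of the Schwarzian of $\phi$. This is a routine Jacobian and Koebe computation since the quantity $|\mathcal{S}\phi(w)|^{2}(1-|w|^{2})^{3}\,dA(w)$ is essentially the conformally natural one.

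Second, I would localize using dyadic arcs $I\subseteq\d\bD$ and their Carleson top regions $T(I)\subseteq\bD$. The crucial technical comparison, which is the heart of the Bishop--Jones approach, is
\[
\int_{T(I)}|\mathcal{S}\phi(w)|^{2}(1-|w|^{2})^{3}\,dA(w)\sim \beta_{\d\Omega}(B_{I})^{2}\,\diam\phi(I),
\]
where $B_{I}$ is a disk of radius comparable to $\diam\phi(I)$ containing $\phi(I)$. The mechanism is that $|\mathcal{S}\phi(w)|(1-|w|^{2})^{2}$ measures in a scale-invariant way how far $\phi$ differs from its best M\"obius approximation on a hyperbolic ball around $w$; if this quantity is small on $T(I)$, then integrating twice (together with Koebe) shows $\phi$ is close to a M\"obius transformation on $T(I)$, whence $\phi(I)$ lies close to a circular arc of diameter $\sim\diam\phi(I)$, giving a matching $\beta_{\d\Omega}(B_{I})$ upper bound. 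The reverse direction comes from observing that if $\phi(I)$ is genuinely flat at some scale, the Schwarzian must correspondingly be small. Summing over a Whitney-type decomposition of $\bD$ into such $T(I)$ yields
\[
\int_{\bD}|\mathcal{S}\phi(w)|^{2}(1-|w|^{2})^{3}\,dA(w)\sim \sum_{I}\beta_{\d\Omega}(B_{I})^{2}\,\diam\phi(I).
\]

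Third, I would invoke Jones's TST: $\diam\Gamma+\sum\beta_{\Gamma}(3I)^{2}\ell(I)\sim\cH^{1}(\Gamma)$ for any connected rectifiable set $\Gamma\subseteq\bR^{2}$. For the upper bound, when $\d\Omega$ is rectifiable there is a connected $\Gamma\supseteq\d\Omega$ with $\cH^{1}(\Gamma)\sim\cH^{1}(\d\Omega)$, so $\beta_{\d\Omega}(B_{I})\le\beta_{\Gamma}(B_{I})$ and TST applied to $\Gamma$ gives $\diam\Omega+\sum\beta_{\d\Omega}(B_{I})^{2}\diam\phi(I)\lec\cH^{1}(\d\Omega)$. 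For the reverse inequality under uniformity, uniform domains have $\d\Omega$ which is itself a connected set (and in the planar case essentially a quasi-arc admissible for TST), so the TST applied directly to $\d\Omega$ gives $\cH^{1}(\d\Omega)\lec\diam\Omega+\sum\beta_{\d\Omega}(B_{I})^{2}\diam\phi(I)$, matching the LHS.

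The main obstacle, in my view, is step two: the precise matching of local Schwarzian $L^{2}$-mass on $T(I)$ with $\beta_{\d\Omega}$ of an image ball. The upper direction requires quantitative control on the error in approximating $\phi|_{T(I)}$ by a M\"obius map (obtained by antidifferentiating $\mathcal{S}\phi$ twice), and the lower direction requires that flatness of the image arc is detectable by the Schwarzian, which uses that $\phi$ has no internal ``wiggling'' it can hide behind. A secondary subtlety is the reverse TST direction under uniformity: one must verify that the images $\phi(I)$ for dyadic arcs $I$ provide a multi-scale family adapted to $\d\Omega$, which is where the uniform (and in particular John) property of $\Omega$ is used to ensure conformal distortion remains bounded.
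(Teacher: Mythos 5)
You should first note that the paper does not prove Theorem \ref{t:BJ94} at all: it is quoted from Bishop--Jones \cite{BJ94} (see also \cite[Ch.~X]{Harmonic-Measure}) purely as motivation for Theorem \ref{t:green}, so your sketch has to be judged against that literature rather than against an argument in this paper. Your skeleton --- Carleson boxes in $\bD$, comparison of the scale-invariant Schwarzian with $\beta$-numbers of the image, and the traveling salesman theorem --- is indeed the Bishop--Jones circle of ideas, and your closing remarks about why uniformity is needed for the reverse inequality (the arcs $\phi(I)$ must see every location and scale of $\d\Omega$) identify the right issue.

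However, two steps fail as stated. First, the change of variables in step one is wrong: the correct disk-side quantity is $\int_{\bD}|\mathcal{S}\phi(w)|^{2}(1-|w|^{2})^{3}\,|\phi'(w)|\,dA(w)$, with one power of $|\phi'|$ surviving. Dropping it is fatal, not cosmetic: your reduced quantity is invariant under dilations $\Omega\mapsto\lambda\Omega$ (the Schwarzian does not change), while $\cH^{1}(\d\Omega)$ scales like $\lambda$, so no two-sided comparison with the boundary length can hold; moreover the surviving $|\phi'|$ is exactly what converts $\ell(I)$ into $\diam\phi(I)\sim|\phi'(z_{I})|\,\ell(I)$ in the box-by-box bookkeeping. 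Second, the asserted equivalence $\int_{T(I)}|\mathcal{S}\phi|^{2}(1-|w|^{2})^{3}\sim\beta_{\d\Omega}(B_{I})^{2}\diam\phi(I)$, which you call the heart of the argument, is false in both directions. Taking $\Omega=\bD$ gives $\mathcal{S}\phi\equiv0$ while $\beta_{\d\Omega}(B_{I})\sim1$ for top-scale boxes: smallness of the Schwarzian only forces proximity to a M\"obius image of a line, i.e.\ to a circle, so at scales comparable to the radius of curvature no $\beta$ bound follows. Conversely, flatness of $\d\Omega$ in the single ball $B_{I}$ yields, via a compactness/normal-families argument (the analogue of Lemma \ref{l:halfspace} here), only a bound of the form $|\mathcal{S}\phi|(1-|w|^{2})^{2}\lec\beta^{\alpha}$ with an unspecified $\alpha<1$, which is not square-summable against the traveling salesman sum; the sharp estimate with the power preserved (up to error terms summed over nearby scales) is precisely the delicate content of \cite{BJ94} and of \cite[Ch.~X, Lemma 6.1]{Harmonic-Measure}, and it is not obtained by ``antidifferentiating $\mathcal{S}\phi$ twice.'' (The multiplicity issue you elide --- several arcs $I$ mapping near the same plane square --- is in fact harmless, by injectivity of $\phi$ and Koebe: images of distinct Whitney boxes are disjoint and each contains a disk of comparable radius.) As it stands, then, the proposal is a plausible roadmap but not a proof: the two displayed comparisons it rests on are respectively false by scaling and false box-by-box.
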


In fact, the papers \cite{BJ90,BJ94} are the first to explore the relationship between $\beta$-numbers and harmonic measure (and conformal mappings). 

Also see \cite{GEM18}, where they also study the fluctuation of a certain smoothed form of the distance to the boundary and relate that to the (uniform) rectifiable structure of the boundary for domains whose boundaries have higher codimension. \\

The motivation for comparing all these quantities to the $\beta$-numbers as opposed to some other quantity is twofold. Firstly, the $\beta$-numbers are more precise to estimate, and they give the most useful information (for example, one can use them directly to construct bi-Lipschitz David-Toro parametrizations, see Lemma \ref{l:DT-dyadic} below). Secondly, in \cite{AV19},  we show that the linear deviation is comparable to a trove of other quantities that measure the multiscale geometry of a set. For example, below we will use one such result that the linear deviation is comparable to the sum of cubes where the BAUP condition fails that were originally developed and studied by David and Semmes for uniformly rectifiable sets \cite{DS,of-and-on}. Thus, one can more easily estimate harmonic measure or Green's function using the above theorems if one knows something about the multiscale geometry. We already saw how this allowed us to compute the integral in \eqref{e:log1/k} in the case of Garnett's example. More generally, due to the BAUP estimate in Theorem \ref{t:BAUP} helow and Theorem \ref{t:Afin} above, for every cube $Q\in \cD$ around which $\d\Omega$ does not resemble a finite union of planes, that contributes approximately $\ell(Q)^{d}$ to the size of the integral in \eqref{e:log1/k}.

\subsection{Outline}

Theorem \ref{t:CDHM} and \ref{t:SU-version} will be proven in Sections \ref{s:I-II-Part-I} and \ref{s:I-II-Part-II} below. In fact, Theorem \ref{t:SU-version} can be seen as a corollary of the proof of \ref{t:CDHM} and will be proven simultaneously. Showing the $\gec$ estimate in Theorem \ref{t:CDHM} is the focus of Section \ref{s:I-II-Part-I}. Observe that in the proof of the analogus result in \cite{GMT18} that UR implies the CDHM, they instead prove that  $\ve$-approximability implies the CDHM, and then use the result of Hofmann, Martell and Mayboroda \cite{HMM14} that UR implies $\ve$-approximability (see these papers for the definition of $\ve$-approximability). Instead, our main idea is similar to the proofs \cite{Sem90} and \cite{DJ90}: we carefully construct some chord-arc subdomains where we know the $A_{\infty}$ property holds for harmonic measure and apply the maximum principle to get estimates on how the densities behave. However, some care is needed since unlike \cite{DJ90}, we will require infinitely many such subdomains, and we don't have nice corona decompositions by interior Lipschitz domains as in \cite{Sem90} or \cite{HMM14} to work with, for example, since we aren't assuming the boundary is UR. To overcome this, we use the David-Toro parametrizations from \cite{DT12} to build them from scratch. Ultimately, the set-up for the stopping-time procedure is similar to that in \cite{GMT18}, but we need to add in a few more stopping-time conditions. 

The $\lec$ estimate in Theorem \ref{t:CDHM} is based on the proof of the main result in \cite{HLMN17}, however some care is needed since in our setting we do not assume Ahlfors regularity. We will need to use a few new results from \cite{AV19}: the corona decomposition of lower regular sets by Ahlfors regular sets (Lemma \ref{l:corona} below) and a generalization of David and Semmes {\it bilateral approximation by planes} estimate (Theorem \ref{t:BAUP} below). The former result allows us to effectively pretend that our setting is Ahlfors regular (or at least partition the surface cubes into trees where we can pretend).

The proof of Theorem \ref{t:Afin} is given in Section \ref{s:III}. To prove this, we use Theorem \ref{t:SU-version} and perform a  martingale-type decompositions similar to those that appear \cite{FKP91} when Fefferman, Kenig, and Pipher study dyadic $A_{\infty}$-weights. We prove Theorem \ref{t:hruscev} in the following section and it has a similar proof, but care is needed to account for the lack of Ahlfors regularity. We then move on to Theorem \ref{t:example} in Section \ref{s:example}  by adapting the techniques of Batakis in \cite{Bat96}.

Finally, we prove Theorem \ref{t:green} in Section \ref{s:IV}. The $\lec$ is mostly the same as the proof of the second part of Theorem \ref{t:CDHM}, although things are simpler since we assume uniformity). The $\gec$ estimate is shown in Section \ref{s:IV-Part-II}, and this requires more work. We actually prove a result that holds for more general functions than Green functions, see Theorem \ref{t:affinedeviation} below.\\

\noindent {\bf Acknowledgements.} We would like to thank Mihalis Mourgoglou, Raanan Schul, Xavier Tolsa, and Michele Villa for their useful comments and advice, and John Garnett for his encouragement and explaining to him Schwarzian derivatives.

\section{Preliminaries}

\subsection{Notation}
We will write $a\lesssim b $ if there is a constant $C>0$ so that $a\leq C b$ and $a\lesssim_{t} b$ if the constant depends on the parameter $t$. As usual we write $a\sim b$ and $a\sim_{t} b$ to mean $a\lesssim b \lesssim a$ and 
$a\lesssim_{t} b \lesssim_{t} a$ respectively. We will assume all implied constants depend on $d$ and hence write $\sim$ instead of $\sim_{d}$.

Whenever $A,B\subset\mathbb{R}^{d+1}$ we define
\[
\mbox{dist}(A,B)=\inf\{|x-y|;\, x\in A, \, y\in B\}, \, \mbox{and}\, \, \mbox{dist}(x,A)=\mbox{dist}(\{x\}, A). 
\]
Let $\diam A$ denote the diameter of $A$ defined as
\[
\diam A=\sup\{|x-y|;\, x,y\in A\}.
\]

For a domain $\Omega$ and $x\in \Omega$, we will write
\[
\delta_{\Omega}(x) =\dist(x,\d\Omega).
\]

We let $B(x,r)$ denote the open ball centered at $x$ of radius $r$. For a ball $B$, we will denote its radius by $r_{B}$. 

Given  two closed sets $E$ and $F$, and $B$ a set we denote
\[
d_{B}(E,F)=\frac{2}{\diam B}\max\left\{\sup_{y\in E\cap B}\dist(y,F), \sup_{y\in F\cap  B}\dist(y,E)\right\}\]

\subsection{Christ-David Cubes}

\begin{theorem}
Let $X$ be a doubling metric space. Let $X_{k}$ be a nested sequence of maximal $\rho^{k}$-nets for $X$ where $\rho<1/1000$ and let $c_{0}=1/500$. For each $n\in\bZ$ there is a collection $\cD_{k}$ of ``cubes,'' which are Borel subsets of $X$ such that the following hold.
\begin{enumerate}
\item For every integer $k$, $X=\bigcup_{Q\in \cD_{k}}Q$.
\item If $Q,Q'\in \cD=\bigcup \cD_{k}$ and $Q\cap Q'\neq\emptyset$, then $Q\subseteq Q'$ or $Q'\subseteq Q$.
\item For $Q\in \cD$, let $k(Q)$ be the unique integer so that $Q\in \cD_{k}$ and set $\ell(Q)=5\rho^{k(Q)}$. Then there is $\zeta_{Q}\in X_{k}$ so that
\begin{equation}\label{e:containment}
B_{X}(\zeta_{Q},c_{0}\ell(Q) )\subseteq Q\subseteq B_{X}(\zeta_{Q},\ell(Q))
\end{equation}
and
\[ X_{k}=\{\zeta_{Q}: Q\in \cD_{k}\}.\]
\end{enumerate}
\label{t:Christ}
\end{theorem}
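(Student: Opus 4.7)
The plan is to follow Christ's classical construction of dyadic cubes on a doubling metric space, adapted to the nested family of nets $\{X_k\}$ already specified. The two main ingredients are a consistent parent map between scales and a Borel assignment of each point of $X$ to a cube at every scale. First, since $X_k \subseteq X_{k+1}$, fix a well-ordering of each $X_k$ and define a parent map $\pi_k : X_{k+1} \to X_k$ by setting $\pi_k(\zeta) = \zeta$ for $\zeta \in X_k$, and otherwise letting $\pi_k(\zeta)$ be the nearest point of $X_k$ to $\zeta$ (with ties broken by the well-ordering). Maximality of $X_k$ as a $\rho^k$-net gives $d(\zeta, \pi_k(\zeta)) \leq \rho^k$, and composition produces ancestor maps $\pi_k^n : X_n \to X_k$ for $n \geq k$ with the telescoping estimate
\[
d(\eta, \pi_k^n(\eta)) \leq \sum_{j=k}^{n-1} \rho^j \leq \frac{\rho^k}{1-\rho}.
\]

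Next, for each scale $n$, let $\phi_n : X \to X_n$ be the Voronoi assignment sending $x$ to its unique nearest point in $X_n$ (ties broken by the well-ordering of $X_n$). I would argue that for each fixed $k$ and $x \in X$, the iterate $\pi_k^n(\phi_n(x)) \in X_k$ stabilizes as $n \to \infty$: because distinct points of $X_k$ are separated by at least $\rho^k$, once $\rho^n$ is small enough relative to the Voronoi margin of $x$ at scale $k$, successive parent projections no longer change the scale-$k$ ancestor along the trail. Define $\zeta_k(x)$ to be the eventual value and set $Q_\zeta^k := \zeta_k^{-1}(\zeta)$. This yields a Borel partition of $X$ indexed by $X_k$, and the nesting $Q_\eta^{k+1} \subseteq Q_{\pi_k(\eta)}^k$ is built directly into the definition since $\pi_k^n = \pi_k \circ \pi_{k+1}^n$.

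It remains to verify the two-sided ball containments with $\ell(Q) = 5\rho^k$ and $c_0 = 1/500$. The outer inclusion $Q_\zeta^k \subseteq B(\zeta, \ell(Q))$ follows from the telescoping bound: any $x \in Q_\zeta^k$ has $\phi_n(x) \in (\pi_k^n)^{-1}(\zeta)$ for large $n$, so $d(x, \zeta) \leq \rho^n + \rho^k/(1-\rho) < 2\rho^k < 5\rho^k = \ell(Q)$, using $\rho < 1/1000$. For the inner inclusion $B(\zeta, c_0 \ell(Q)) \subseteq Q_\zeta^k$, any $x$ with $d(x, \zeta) < \rho^k/100$ has $\zeta$ as its unique closest $X_k$-point by a margin of at least $99\rho^k/100$, so the trail $\phi_n(x)$ also has $\zeta$ as its eventual scale-$k$ ancestor. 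The main obstacle is carefully verifying that the trail stabilizes simultaneously across all intermediate scales so that $\zeta_k$ is well-defined and Borel measurable; the generous ratio $\ell(Q)/(c_0 \ell(Q)) = 500$ together with $\rho < 1/1000$ provides substantial slack in all these estimates and allows the exceptional set of $x$ where the trail would be ambiguous to be absorbed into the partition consistently using the well-ordering.
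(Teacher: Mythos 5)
Your construction of the parent maps, the outer containment $Q\subseteq B_{X}(\zeta_{Q},\ell(Q))$, and the nesting are all fine, but the inner containment $B_{X}(\zeta_{Q},c_{0}\ell(Q))\subseteq Q$ --- which is the one genuinely delicate point of this theorem --- does not follow from the margin argument you give. The obstruction is quantitative: passing from $\eta=\phi_n(x)\in X_n$ to its scale-$k$ ancestor costs $d(\eta,\pi_k^n(\eta))\le \sum_{j=k}^{n-1}\rho^{j}$, and this drift is already at least $\rho^{k}$ (the $j=k$ term alone), i.e.\ at least the separation of the net $X_k$. Concretely, if $d(x,\zeta)<\rho^{k}/100$ and $\pi_k^n(\phi_n(x))=\zeta'\ne\zeta$, the triangle inequality only gives $d(\zeta,\zeta')\le \rho^{k}/100+\rho^{n}+\rho^{k}/(1-\rho)$, which is entirely compatible with the separation bound $d(\zeta,\zeta')\ge\rho^{k}$; no choice of $c_0$ and no smallness of $\rho$ closes this gap, because the leading term of the drift already matches the separation. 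So the fact that $\zeta$ is the nearest point of $X_k$ to $x$ by a large margin does not force the ancestor trail of $\phi_n(x)$ to land on $\zeta$. The same computation shows that your stabilization claim for $\pi_k^n(\phi_n(x))$ is unproven (and for points near the interface of two cubes it is genuinely false: the nearest net point at scale $n$ can alternate between descendants of different scale-$k$ points infinitely often), so the ``exceptional set'' you propose to absorb by the well-ordering is not controlled, and reassigning it ad hoc would also require re-checking the nesting in (2).

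This is precisely the difficulty that the published constructions (Christ, David, Hyt\"onen--Kairema) are built to circumvent, and it requires an additional idea rather than more slack in the constants. One either (i) chooses the parent relation by a more elaborate consistent procedure --- not the greedy nearest-point map --- which guarantees that \emph{every} net point of \emph{every} finer scale lying in $B_{X}(\zeta,\tfrac12\rho^{k})$ is a descendant of $\zeta$, so that the inner ball is captured by construction; or (ii) defines the cubes as disjointified closures of the sets of descendant net points and derives the interior ball from small-boundary/measure estimates. As written, your proposal establishes (1), (2) and the outer half of (3), but the inner half of \eqref{e:containment} is exactly the nontrivial content of the theorem and is missing.
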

\def\Child{{\rm Child}}
If $Q\in \cD_{k}$, we let 
\[
\Child(Q)=\{R\in \cD_{k+1}:R\subseteq Q\}.
\]

We recall some facts about stopping-time regions, which can be found in \cite{of-and-on}.

\begin{definition}
A {\it tree} or {\it stopping-time region} is a subcollection $S\subseteq \cT$ with a maximal cube $Q(S)\in S$ so that if $Q\in S$ and $Q\subseteq T\subseteq Q(S)$, then $T\in S$, and if whenever a child of $Q\in S$ is not in $S$, then no children of $Q$ are in $S$.
\end{definition}

We will usually construct stopping-time regions as follows. 

\begin{lemma}
\label{l:ds-cover}
Let $R$ be a cube, $\cC$ a (possibly empty) collection of subcubes properly contained in $R$, let $\Stop(R)$ be the maximal cubes in $R$ that contain a child in $\cC$, and let $\cT$ be those cubes in $\cT$ that are not properly contained in a cube from $\Stop(R)$. Then $\cT$ is a stopping-time region. 
\end{lemma}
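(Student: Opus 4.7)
The plan is to verify the two defining conditions of a stopping-time region directly from the definition of $\cT$, exploiting the Christ-David nesting property that two cubes either are disjoint or one contains the other.

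First I would observe that $R \in \cT$ (since $R$ is not properly contained in any subcube of itself) and any cube in $\cT$ is contained in $R$, so $R$ is the maximal cube $Q(\cT)$.

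Second, I would verify the convexity condition: if $Q \in \cT$ and $Q \subseteq T \subseteq R$, then $T \in \cT$. Suppose for contradiction that $T \notin \cT$. Then by definition there is some $S \in \Stop(R)$ with $T \subsetneq S$. But then $Q \subseteq T \subsetneq S$, so $Q$ is properly contained in $S \in \Stop(R)$, contradicting $Q \in \cT$.

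Third, I would verify the all-or-nothing condition for children. Suppose $Q \in \cT$ and some child $Q'$ of $Q$ is not in $\cT$; so there is $S \in \Stop(R)$ with $Q' \subsetneq S$. The main step is to show $S = Q$. Since $Q' \subseteq Q \cap S \neq \emptyset$, by Theorem \ref{t:Christ}(2) either $Q \subseteq S$ or $S \subseteq Q$. If $S \subsetneq Q$, then $S$ would sit strictly between $Q$ and its child $Q'$ in the dyadic hierarchy, which is impossible since cubes at generation $k(Q')=k(Q)+1$ that contain $Q'$ must equal $Q'$, while cubes at generation $\leq k(Q)$ that are contained in $Q$ must equal $Q$. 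If $Q \subsetneq S$, then $Q$ is properly contained in $S \in \Stop(R)$, contradicting $Q \in \cT$. Hence $Q = S \in \Stop(R)$. Then every child $Q''$ of $Q$ satisfies $Q'' \subsetneq Q = S \in \Stop(R)$, so no child of $Q$ lies in $\cT$, as required.

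No real obstacle is expected; the whole argument is a bookkeeping exercise in the cube nesting property, and the only substantive point is ruling out the intermediate case $S \subsetneq Q$ via the one-generation-at-a-time structure of the Christ-David cubes.
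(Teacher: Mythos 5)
Your proof is correct and follows essentially the same route as the paper's: the first two properties are immediate, and the substance is the all-or-nothing condition for children, which both arguments reduce to the observation that a stop cube properly containing a child of $Q$ must (by the nesting property and the absence of intermediate generations) contain $Q$ itself. You make explicit the step of ruling out $Q' \subsetneq S \subsetneq Q$, which the paper's terser proof leaves implicit, but the underlying argument is the same.
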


\begin{proof}
The first two properties of being a stopping-time are immediate, so we just verify the last one. Let $Q\in \cT$, then there is $S\in \Stop(R)$ with $S\subset Q$. Suppose $Q'$ was a child of $Q$ that was also in $\cT$. Then $Q'$ is not properly contained in a cube from $\Stop(R)$, but then neither can any of its siblings, so all of its siblings are in $\cT$. 
\end{proof}

The last property in the definition may seem odd, but it is to guarantee the following property about minimal cubes. Recall that $Q\in \cT$ is a {\it minimal cube} for $\cT$ if it does not properly contain any cubes from $\cT$. Let $z(\cT)$ be those points in $T$ not contained in any minimal cube. In particular, for $\cT$ defined as in the previous lemma, the minimal cubes are exactly $\Stop(R)$.

\begin{lemma}
\label{l:minimal-partition}
Let $\cT$ be a stopping-time region with top cube $T$. Then for all $x\in T$, there is a smallest cube in $\cT$ containing $x$ or there are infinitely many cubes from $T$ containing $x$, and $z(\cT)$ along with the set of all such minimal cubes partition $T$.  
\end{lemma}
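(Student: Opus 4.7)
The plan is to exploit the fact that, for fixed $x \in T$, the collection $\cT_x := \{Q \in \cT : x \in Q\}$ is totally ordered by inclusion, since any two cubes in $\cD$ are either disjoint or nested, and any two cubes in $\cT_x$ share the point $x$. Moreover, since at most one cube per generation $\cD_k$ contains $x$, distinct cubes in $\cT_x$ have distinct scales $k(Q)$. The dichotomy in the statement then falls out immediately: a finite chain of sets has a minimum, while an infinite chain of Christ-David cubes containing $x$ must contain cubes of arbitrarily small diameter, so no smallest element can exist.

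The main obstacle, and the step that really uses the precise definition of a stopping-time region, is showing that when $\cT_x$ is finite its smallest element $Q$ is minimal \emph{in $\cT$} rather than merely minimal among cubes that contain $x$. To achieve this I would argue as follows. The unique child $C$ of $Q$ that contains $x$ cannot lie in $\cT$, for otherwise $C \in \cT_x$ would contradict the minimality of $Q$ in $\cT_x$. The third defining property of a stopping-time region then forces \emph{every} child of $Q$ to lie outside $\cT$. Combined with the upward-closure property, this rules out any proper subcube of $Q$ belonging to $\cT$: any such $Q' \subsetneq Q$ would sit inside some child $C'$ of $Q$, and upward closure applied to $Q' \subseteq C' \subseteq Q(\cT)$ would force $C' \in \cT$, a contradiction. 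Hence $Q$ is minimal in $\cT$.

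The partition claim then follows formally. Every $x \in T$ either has $\cT_x$ finite, in which case it lies in the minimal cube supplied by the previous step, or has $\cT_x$ infinite, in which case no minimal cube of $\cT$ can contain $x$ (any such cube would be an element of $\cT_x$ admitting no strictly smaller cube of $\cT$ below it, contradicting the infinitude of $\cT_x$), so $x \in z(\cT)$ by definition. Disjointness of two distinct minimal cubes follows from the nested-or-disjoint structure of $\cD$ together with minimality: if they met, one would properly contain the other, contradicting minimality of the larger one. Finally, $z(\cT)$ is disjoint from every minimal cube by the very definition of $z(\cT)$, completing the partition.
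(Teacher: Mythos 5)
Your proof is correct and is essentially the standard argument: the paper itself does not prove this lemma but simply cites \cite[p.~56]{of-and-on}, so there is no in-paper proof to diverge from. You correctly isolate the one nontrivial point, namely that the minimum of the chain $\{Q\in\cT: x\in Q\}$ is minimal in all of $\cT$, and your use of the ``all children or none'' property together with coherence (upward closure) to rule out any proper subcube of $\cT$ inside that minimum is exactly right.
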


See \cite[Page 56]{of-and-on}.

\subsection{Quantitative Rectifiability}

In this section, we recall a few preliminaries about quantitative rectifaibility. We first recall the Analyst's traveling salesman theorem proven in \cite{AS18} (however see \cite{AV19} for this statement):

\begin{theorem}
Let $1\leq d<n$ and $E\subseteq \bR^{n}$ be lower $(c,d)$-lower content regular and let $\cD$ denote the Christ-David cubes for $E$. For a ball $B$ centered on $E$ and a $d$-dimensional plane $P$, let 

\[
b\beta_{E}^{d}(B,P) = d_{B}(E,P), \;\; b\beta_{E}^{d}(B) = \inf_{P}b\beta(B,P)
\]
where the infimum is over all $d$-dimensional planes $P$. Let ${\rm BLWG}(\ve,C_0)=\{Q: b\beta_{E}^{d}(C_0B_Q)\geq \ve)$. For $R\in \cD$,
    \[
  {\rm BLWG}(R)=  {\rm BLWG}(R,\ve,C_0)=\sum_{ Q\in {\rm BLWG}(\ve,C_0) \atop Q\subseteq R } \ell(Q)^{d}. 
    \]
    and for $M\geq 3$,
        \[
    \beta_{E,M}(R) :=\ell(R)^{d}+\sum_{Q\subseteq R} \beta_{E}^{d,p}(MB_Q)^{2}\ell(Q)^{d}.
    \]

    Then for $R\in \cD$,
\begin{equation}
\label{e:tst}
\cH^{d}(R)+  {\rm BLWG}(R,\ve,C_0)
 \sim_{n,c,M,C_0 , \ve}    \beta_{E}(R).
 \end{equation}
\label{t:TST}
\end{theorem}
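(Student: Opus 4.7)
The plan is to prove the two directions of the equivalence separately, with the main technical input being a careful comparison between the bilateral $b\beta$ numbers and the integral-form unilateral $\beta_E^{d,p}$ numbers, exploiting lower content regularity.

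First I would handle the $\lesssim$ direction, $\cH^{d}(R)+{\rm BLWG}(R,\ve,C_0)\lesssim \beta_E(R)$. For the BLWG part, I would show that whenever $b\beta_E^{d}(C_0 B_Q)\geq \ve$, one has $\beta_E^{d,p}(MB_Q)\gtrsim_{C_0,\ve} 1$, provided $M$ is chosen large enough relative to $C_0$. The point is that $b\beta$ large splits into two cases: either there is a point of $E\cap C_0 B_Q$ far from every approximating plane (which already forces $\beta^{d,p}$ large by a direct Chebyshev estimate), or the plane has a point far from $E$; in the latter case lower $d$-content regularity around a point of $E$ close to that offending plane point guarantees a definite amount of $E$-mass lying far from the plane at a comparable scale, again forcing $\beta_E^{d,p}$ to be large. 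Thus cubes in BLWG each contribute at least a constant amount to $\beta_E(R)$. For the $\cH^{d}(R)$ bound, I would invoke the David--Toro dyadic parametrization (Lemma \ref{l:DT-dyadic} as referenced in the outline): inside a stopping region where the $\beta$-sum is small, $E$ lies in a bi-Lipschitz image of a $d$-plane whose $\cH^{d}$-measure is bounded by $\ell(R)^{d}$ plus the tail of the $\beta^{2}$ Carleson sum, and stopping cubes are either BLWG cubes (already counted) or cubes with a large $\beta^{d,p}$.

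For the $\gtrsim$ direction, $\beta_E(R)\lesssim \cH^{d}(R)+{\rm BLWG}(R,\ve,C_0)$, I would pass to the corona decomposition of lower content regular sets by Ahlfors regular pieces from \cite{AV19} (Lemma~\ref{l:corona}). The stopping-time that produces the trees stops when either a BLWG cube is encountered, or Ahlfors regularity at a prescribed threshold fails, or the best approximating $d$-plane rotates too much. Inside each resulting tree the restriction of $E$ is $d$-Ahlfors regular with uniform constants and stays bilaterally close to a fixed plane $P$, so the classical Dorronsoro / David--Semmes $L^{2}$ $\beta$-estimate applied on $P$ yields
\[
\sum_{Q\in\Tree(R_0)}\beta_E^{d,p}(MB_Q)^{2}\ell(Q)^{d}\lesssim \cH^{d}(R_0).
\]
Summing over the trees, the Carleson packing of the tops against $\cH^{d}(R)+{\rm BLWG}(R,\ve,C_0)$ (which is exactly the content of the corona decomposition) closes the estimate. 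The $\ell(R)^{d}$ term in $\beta_E(R)$ is absorbed trivially by lower content regularity ($\ell(R)^{d}\lesssim \cH^{d}_{\infty}(R)\le \cH^{d}(R)$).

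The main obstacle, and where most of the care is needed, is the bookkeeping in the stopping-time construction in the non-AR setting: one has to choose the parameters $\ve$, $C_0$, $M$, the AR threshold, and the rotation threshold consistently so that (i) the top cubes of the trees satisfy a Carleson packing that is controlled by $\cH^{d}(R)+{\rm BLWG}(R,\ve,C_0)$, and (ii) the Dorronsoro-type bound inside a tree genuinely delivers $\beta_E^{d,p}(MB_Q)$ rather than just a bilateral quantity. The conversion from the bilateral flatness of $E$ near $P$ inside a tree to a martingale/telescoping control of $\beta_E^{d,p}(MB_Q)$ at deeper scales is the delicate step, but this is exactly what \cite{AS18} and the cleaner reformulation in \cite{AV19} accomplish, so the argument essentially reduces to invoking those constructions and tracking the dependencies on $\ve$, $C_0$, and $M$.
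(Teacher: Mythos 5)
This statement is not proved in the paper: Theorem \ref{t:TST} is quoted from \cite{AS18} (restated as in \cite{AV19}), so there is no internal proof to compare against. Your sketch of the direction $\beta_{E}(R)\lesssim \cH^{d}(R)+{\rm BLWG}(R,\ve,C_0)$ does follow the strategy of those papers (stop at non-flat cubes, reduce to Ahlfors regular, flat trees, run a Dorronsoro-type square-function estimate on the approximating surface, pack the tops), and deferring the bilateral-to-unilateral bookkeeping to the cited constructions is defensible.

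The other direction, however, contains a genuine error where you attempt a self-contained argument. You claim that $b\beta_{E}^{d}(C_0B_Q)\geq \ve$ forces $\beta_{E}^{d,p}(MB_Q)\gtrsim_{\ve,C_0}1$, so that ${\rm BLWG}$ cubes are packed cube-by-cube by the square sum of the $\beta$'s. This is false, and it is exactly why the bilateral weak geometric lemma is strictly harder than the unilateral one. Take $E=\{x\in \R^{d}\times\{0\}^{n-d}:x_1\geq 0\}$, a closed half-plane: $E$ is lower $d$-content regular and $\beta_{E}^{d,p}(B)=0$ for every ball (take $L=\R^{d}\times\{0\}^{n-d}$), yet every ball centered on the edge $\{x_1=0\}$ satisfies $b\beta_{E}^{d}(B)\gtrsim 1$, since any candidate plane either deviates from the points of $E$ or must contain points in the empty half-ball at distance $\gtrsim r_B$ from $E$. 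So there are ${\rm BLWG}$ cubes at every scale along the edge while all $\beta$-numbers vanish; their packing sum is finite only because it is absorbed by the $\cH^{d}(R)$ term, which in your argument does no work. Your case analysis breaks precisely at the second alternative: if a point of the approximating plane is far from $E$, there is no point of $E$ near it, so lower content regularity gives nothing and no $E$-mass need lie far from the plane. (The first alternative is fine: a point of $E$ at distance $\geq\ve r$ from the near-optimal plane does give $\beta_{E}^{d,p}(MB_Q)\gtrsim \ve^{(d+p)/p}$ via LCR and Chebyshev.) The actual proof of the packing ${\rm BLWG}(R)\lesssim \cH^{d}(R)+\sum\beta^{2}\ell^{d}$ in \cite{AS18} is global, not pointwise: inside a tree where the $\beta$'s have small square sum, $E$ lies unilaterally close to a David--Toro surface $\Sigma$, the ${\rm BLWG}$ cubes of the tree correspond to holes of $E$ in $\Sigma$, and these are counted against $\cH^{d}(\Sigma)\lesssim \ell(R)^{d}$ per tree before summing the tops. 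That counting argument cannot be replaced by the cube-by-cube estimate you propose.
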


Note that as these values are comparable for all $M$, we will denote
\[
    \beta_{E}(R) = \beta_{E,3}(R)\sim_{M} \beta_{E,M}(R) \;\; \mbox{ for all }M\geq 3.
    \]

This is a version of the original traveling salesman theorem of Jones \cite{Jon90}, which instead had an $L^{\infty}$-$\beta$-number, and their square sum was comparable to the shortest curve containing $E$. This was originally shown in the plane, but was subsequently generalized to Euclidean space \cite{Oki92} and Hilbert space \cite{Sch07-TST}. 

\begin{remark}
To avoid some confusion with notation, the reader will find it helpful to remember that, given a set $S$, $\beta(S)$ will denote the usual $\beta$-number if $S$ is a ball and will denote a sum of cubes in $S$ as $\beta_{E,M}(R)$ above if $S$ is a cube.
\end{remark}

The following is the main lemma from \cite{AV19}.

\begin{lemma}
\label{l:corona}
Let $k_0>0$, $\vartheta>0$, $d>0$ and $E$ be a closed set that is lower $d$-content $c$-regular with $\diam E\sim 1$. Let $\cD_k$ denote the Christ-David cubes on $E$ of scale $k$ and $\cD=\bigcup_{k\in\bZ} \cD_{k}$. Let $Q_{0}\in \cD_{0}$ and $\cD(k_0)=\bigcup_{k=0}^{k_0}\{Q\in \cD_{k}:Q\subseteq Q_0\}$. Then we may partition $\cD(k_{0})$ into stopping-time regions $Tree(R)$ for $R$ from some collection $Top(k_{0})\subseteq \cD(k_{0})$ with the following properties: 
\begin{enumerate}
\item We have 
\begin{equation}
\label{e:ADR-packing}
\sum_{R \in Top(k_{0})} \ell(R)^{d} \lec_{c,d} \cH^{d}(Q_0).
\end{equation}

\item Given $R\in \Top(k_{0})$ and a stopping-time region $\cT\subseteq \Tree(R)$ with maximal cube $T$, let  $\cF$ denote the minimal cubes of $\cT$ and 
\[
d_{\cF}(x)=\inf_{Q\in \cF} (\ell(Q)+\dist(x,Q)).
\]
For $C_{0}>4$ and $0<\vartheta\ll C_{0}^{-1}$, there is a collection  $\cC$ of disjoint dyadic cubes covering $C_{0}B_{T}\cap E$ so that 
if 
\[
E(\cT)=\bigcup_{I\in \cC} \d_{d} I,\]
where $\d_{d}I$ denotes the $d$-dimensional skeleton of $I$, then the following hold:
\begin{enumerate}[(a)]
\item $E(\cT)$ is Ahlfors regular, that is,
\begin{equation}
\label{e:ETregular}
\cH^{d}(B(x,r)\cap E(\cT))\sim_{C_{0},\vartheta,d,c} r^{d} \;\; \mbox{ for all }x\in E(\cT), \;\; 0<r<\diam E(\cT).
\end{equation}
\item We have the containment
\begin{equation}
\label{e:contains}
C_{0}B_{T}\cap E \subseteq \bigcup_{I\in \cC} I\subseteq 2C_{0}B_{T}.
\end{equation}

%
\item $E$ is close to $E(\cT)$ in $C_{0}B_{T}$ in the sense that
\begin{equation}
\label{e:adr-corona}
\dist(x,E(\cT))\lec  \vartheta d_{\cF}(x) \;\; \mbox{ for all }x\in E\cap C_{0}B_{T}.
\end{equation}
\item The cubes in $\cC$ satisfy
\begin{equation}
\label{e:whitney-like}
\ell(I)\sim \vartheta \inf_{x\in I} d_{\cF}(x) \mbox{ for all }I\in \cC.
\end{equation}
\end{enumerate}
\end{enumerate}
\end{lemma}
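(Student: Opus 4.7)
The plan is to prove \Lemma{corona} in two stages: first construct the family of tops $\Top(k_0)$ and the tree partition $\{\Tree(R)\}$ by a density stopping-time and verify the packing bound \eqref{e:ADR-packing}; then, for each subtree $\cT \subseteq \Tree(R)$, build the approximating $d$-dimensional set $E(\cT)$ from a Whitney-type decomposition in the ambient $\bR^n$ and verify the four properties (a)--(d). The pattern here follows the corona/David--Toro philosophy: at the tops of trees we have ``fat'' behavior of $\cH^d$, inside each tree we can safely approximate $E$ by a piecewise flat object built from ambient dyadic cubes, and the tops are controlled by a Carleson-type packing.

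For the stopping-time: fix a small constant $\eta>0$ (to be chosen depending on the lower regularity constant $c$). Call $Q$ \emph{good} if $\cH^d(Q) \geq \eta \ell(Q)^d$. Declare $Q_0$ the first top. Given a top $R$, let $\Stop(R)$ be the maximal cubes $Q \subsetneq R$ that are not good, and set $\Tree(R)$ to be all $Q \subseteq R$ not properly contained in a cube of $\Stop(R)$. The cubes in $\Stop(R)$ become the next generation of tops. The packing estimate $\sum_{R \in \Top(k_0)} \ell(R)^d \lec_{c,d} \cH^d(Q_0)$ follows from a standard Carleson-type argument: every top $R$ other than $Q_0$ arises as a stopping cube of its parent tree, so $\cH^d(R) < \eta \ell(R)^d$, but on the other hand $R$ itself is good when viewed as a top of its own tree, giving $\cH^d(R) \geq \eta \ell(R)^d$. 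Summing $\eta \ell(R)^d \leq \cH^d(R)$ over a single generation uses disjointness of tops at that generation, and across generations the geometric decrease of $\ell(R)^d$ coming from $\cH^d(R) < \eta \ell(R)^d$ plus the lower content regularity $\cH^d_\infty(R) \gec \ell(R)^d$ lets one absorb the telescoping error into $\cH^d(Q_0)$ via a standard reverse-Hölder/Carleson iteration.

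For the approximating set: fix a subtree $\cT \subseteq \Tree(R)$ with top $T$ and minimal cubes $\cF$, and the Lipschitz function $d_\cF$. Build a Whitney decomposition of $2C_0 B_T$: start from the standard dyadic grid on $\bR^n$ and take the collection $\cC$ of maximal dyadic cubes $I$ satisfying $\ell(I) \leq \vartheta \inf_{x \in I} d_\cF(x)$, restricted to those $I$ meeting $C_0 B_T \cap E$. Properties (b) and (d) are then immediate from the construction: (b) because $d_\cF(x) \lec \ell(T)$ on $C_0 B_T$ so Whitney cubes stay inside $2C_0 B_T$, and (d) because $\vartheta$-Whitney cubes satisfy $\ell(I) \sim \vartheta d_\cF$ by the $1$-Lipschitz nature of $d_\cF$. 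Define $E(\cT) = \bigcup_{I \in \cC} \d_d I$. Property (c) follows because each $x \in E \cap C_0 B_T$ lies in some $I \in \cC$ with $\ell(I) \sim \vartheta d_\cF(x)$, and the $d$-skeleton of $I$ is within distance $\ell(I)$ of $x$. Ahlfors regularity (a) is obtained by separately analyzing two regimes: for $r \leq \ell(I)$ with $x \in \d_d I$, the set $B(x,r) \cap \d_d I$ is a flat $d$-dimensional polyhedral region and $\cH^d(B(x,r) \cap \d_d I) \sim r^d$; for $r \geq \ell(I)$, the ball $B(x,r)$ meets $\sim (r/\ell(I))^n$ Whitney cubes in the vicinity of $E$, and each contributes $\sim \ell(I)^d$ worth of $d$-skeleton, giving the upper and lower bounds $\sim r^d$ after using the Whitney property and the $d$-content lower regularity of $E$.

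The main obstacle, as usual in such constructions, is the Ahlfors regularity of $E(\cT)$, specifically the lower bound at intermediate scales: one must show that the Whitney cubes near $E$ pack with density $\sim 1$ in the transverse direction. This is where the lower $d$-content regularity of $E$ inside the tree is essential, and it is the role of choosing $\vartheta$ small enough relative to $C_0^{-1}$ that the Whitney construction does not develop isolated or thin components. The verification is combinatorial but standard, and I would carry it out by a packing lemma comparing the number of Whitney cubes of size $s$ meeting a ball $B(x,r)$ to $(r/s)^d$ using the lower regularity of $E$ and the Lipschitz control on $d_\cF$. A secondary technical point is handling the boundary cubes in $\Stop(R)$ so that $E(\cT)$ does not accumulate too many skeleton faces near $\cF$; this is resolved by restricting $\cC$ to cubes meeting $E \cap C_0 B_T$ and by the lower bound on $d_\cF$ that keeps Whitney cubes a definite distance from minimal cubes of $\cT$.
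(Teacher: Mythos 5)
First, a point of bookkeeping: the paper does not prove Lemma \ref{l:corona} at all; it is quoted as the main lemma of \cite{AV19}, so there is no internal proof to compare against. Judged on its own terms, your per-tree construction of $E(\cT)$ (maximal dyadic cubes $I$ with $\ell(I)\leq \vartheta\inf_{I}d_{\cF}$ that meet $C_{0}B_{T}\cap E$, followed by taking $d$-skeletons) is the right object, and your verifications of (b), (c), (d) are essentially correct.

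The genuine gap is in the stopping time and, consequently, in part (2a). Your packing argument asserts that a top $R\neq Q_{0}$ satisfies both $\cH^{d}(R)<\eta\ell(R)^{d}$ (because it was stopped for being ``not good'') and $\cH^{d}(R)\geq\eta\ell(R)^{d}$ (``good when viewed as a top''); these cannot both hold, and since goodness was defined intrinsically, being declared a top confers nothing. Worse, lower $d$-content regularity already gives $\cH^{d}(Q)\geq\cH^{d}_{\infty}(Q)\gec_{c}\ell(Q)^{d}$ for \emph{every} cube, so for $\eta$ small (as you choose it) no cube is ever ``not good'': the algorithm never stops, $\Top(k_{0})=\{Q_{0}\}$, and $\Tree(Q_{0})=\cD(k_{0})$. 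The packing bound is then trivially true but the decomposition does no work, and (2a) fails: the difficult half of the Ahlfors regularity here is the \emph{upper} bound (you identify the lower bound as the main obstacle, which is backwards). Concretely, if $E\subseteq\R^{2}$, $d=1$, and $E\supseteq[0,1]^{2}$ (a lower $1$-content regular set), taking $\cT=\Tree(Q_{0})$ with $\cF=\cD_{k_{0}}$ makes $d_{\cF}\sim\rho^{k_{0}}$ near $E$, so $\cC$ consists of roughly $(\vartheta\rho^{k_{0}})^{-2}$ cubes of side $\sim\vartheta\rho^{k_{0}}$, and $\cH^{1}(E(\cT))\sim(\vartheta\rho^{k_{0}})^{-1}\to\infty$, violating \eqref{e:ETregular}. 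The count of Whitney cubes of scale $s$ meeting $B(x,r)$ must be $\lec(r/s)^{d}$, not $(r/s)^{n}$, and this requires an upper density bound of the form $\cH^{d}(2B_{Q})\lec\ell(Q)^{d}$ for the cubes of the tree at the relevant scales. The stopping time must therefore stop at \emph{high}-density cubes, so that trees carry the upper density control and the tops are disjoint cubes each carrying a definite amount of $\cH^{d}$-measure; the Carleson packing of the tops is then extracted by summing over generations, replacing your telescoping step, which leans on the false inequality $\cH^{d}(R)<\eta\ell(R)^{d}$.
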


Finally, we recall the David-Toro parametrization theorem \cite{DT12}. We state only a consequence of their result, since their full result is more general. There they used planes associated to balls in their statements, but we would like to use planes associated to cubes. Converting between the two has been done in several papers \cite{AS18,ATT18,AT15}, but here we state a converted version for cubes, hopefully so that it doesn't have to be re-converted in the future. We prove this reformulation in the appendix:

\begin{lemma}
\label{l:DT-dyadic}
Let $E\subseteq \R^{n}$ be some set with Christ cubes $\cD$ in some set $E$. Declare $R\sim Q$ if $C_{2}^{-1}\ell(Q)\leq \ell(R)\leq C_{2}\ell(Q)$ and $\dist(Q,R)\leq C_{2}\min \{\ell(Q),\ell(R)\}$. For $\ve^{-1}\gg C_{1}, C_{2}$, the following holds. Let $S$ be a stopping-time region with top cube $Q(S)$ so that $\zeta_{Q(S)}=0$ and for all $Q\in S$, there is a $d$-plane $P_{Q}$ such that 
\begin{equation}
\label{e:distQtoP_Q}
\dist(\zeta_{Q},P_{Q})< \ve \ell(Q) \mbox{ for all } Q\in S.
\end{equation}
Moreover, if 
\[
\ve(Q) = \max_{R\sim Q} \ell(Q)^{-1}\ps{\sup_{x\in P_{Q}\cap C_{1} B_{Q}}\dist(x,P_{R}) + \sup_{x\in P_{R}\cap C_{1} B_{Q}}\dist(x,P_{Q})}
\]
and 
\begin{equation}
\label{e:epsilonsum}
\sum_{Q\subseteq R\subseteq Q(S)} \ve(R)^{2} <\ve^2. 
\end{equation}
Then for $\ve>0$ small enough, there is $g:\R^{n}\rightarrow \R^{n}$ that is $C$-bi-Lipschitz on $\R^{n}$ and $(1+C\ve^{2})$-bi-Lipschitz when restricted to $P_{Q(S)}$ and 
\begin{equation}
\label{e:gz-z}
|g(z)-z|\lec \ve \ell(R) \;\; \mbox{ for all }z\in \R^{n}.
\end{equation}
 The surface $g(P_{R})=:\Sigma_{S}$ is $C\ve$-Reifenberg flat so that 
\begin{equation}
\label{e:dtjones}
\dist(Q,\Sigma_{Q})\lec \ve \ell(Q) \;\; \mbox{ for all }Q\in S.
\end{equation}
If 
\begin{equation}
\label{e:C1beta}
\sup_{x\in 2C_{1}B_{Q}\cap E} \dist(x,P_{Q})<\ve \ell(Q) \mbox{ for all }Q\in S,
\end{equation}
then
\begin{equation}
\label{e:close-to-P_Q}
\sup_{z\in C_{1}B_{Q}\cap E} \dist(z,\Sigma_{R})\lec \ve \ell(Q)\mbox{ for all }Q\in S
\end{equation}
If 
\begin{equation}
\label{e:C2beta}
\sup_{x\in 2C_{1}B_{Q}\cap P_Q}\dist(x,E)<\ve {\ell(Q)} \mbox{ for all }Q\in S,
\end{equation}
then
\begin{equation}
\label{e:close-to-E}
\sup_{z\in C_{1}B_{Q}\cap \Sigma_{R}} \dist(z,E)\lec \ve \ell(Q)\mbox{ for all }Q\in S
\end{equation}
\end{lemma}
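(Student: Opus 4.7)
The strategy is to reduce to the ball-indexed David--Toro parametrization theorem of \cite{DT12} by constructing, for each dyadic scale $k$ and each point $x$ in a neighborhood of $E$, a $d$-plane $P_{x,k}$ built from the cube data, and then verifying the Carleson-type flatness condition that David--Toro require. The overall shape of the argument has been done before (e.g.\ in \cite{AS18,AT15,ATT18}), so the task here is really a careful bookkeeping of constants so that the stopping region $S$, the cube-by-cube planes $P_Q$, and the cube neighbor relation $\sim$ translate cleanly into a ball-by-ball statement.

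The plan is as follows. First I would enlarge $S$ slightly to a family $\widehat S$ of cubes in which each $Q \in S$ has all its $\sim$-neighbors, with planes chosen to be either $P_Q$ itself or the plane of a $\sim$-closest element of $S$; outside $\widehat S$ I extend the assignment $Q \mapsto P_Q$ by freezing at the minimal cubes and at $Q(S)$, so that a plane is defined at every scale in a fixed neighborhood of $Q(S)$. Next, for each integer $k$ and each $x$ in a neighborhood of $E \cap 2B_{Q(S)}$, I pick a cube $Q(x,k) \in \widehat S$ with $\ell(Q(x,k)) \sim \rho^k$ and $\dist(x,Q(x,k)) \lesssim \rho^k$ (chosen via the net $X_k$ of centers), and set $P_{x,k} := P_{Q(x,k)}$. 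Because any two admissible choices are $\sim$-neighbors, the ambiguity is controlled by $\ve(Q(x,k))$.

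The heart of the argument is verifying the David--Toro coherence inequality: for $(x,k)$ and $(y,k')$ with $|x-y| + (\rho^k + \rho^{k'}) \lesssim \rho^{\min(k,k')}$, one must show that the Hausdorff distance between $P_{x,k}$ and $P_{y,k'}$ inside $B(x,C_1\rho^k)$ is $\lesssim \ve(Q(x,k))\,\rho^k$, and then verify the Carleson packing
\[
\int \sum_{k} \ve_k(x)^2\,\one_{B(x,\rho^k)}\,\frac{dx\,dk}{\rho^{k(d+1)}} \lesssim \ve^2.
\]
The coherence follows from iterating the $\sim$-neighbor estimate \eqref{e:distQtoP_Q} and the triangle inequality, since a chain of a bounded number of $\sim$-neighbors connects $Q(x,k)$ to $Q(y,k')$ when the scales and positions are comparable. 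For the packing, I would discretize the integral against the cubes: each pair $(x,k)$ with $\ve_k(x)$ large is accounted for by a cube $Q \in \widehat S$, and lower $d$-content regularity together with \eqref{e:containment} gives
\[
\int \ve_k(x)^2\,\one_{B(x,\rho^k)}\,dx \lesssim \sum_{Q \in \widehat S \cap \cD_k}\ve(Q)^2\,\ell(Q)^{d+1},
\]
so summing over $k$ and using $|\widehat S \setminus S| \lesssim_{C_2} |S|$ reduces everything to \eqref{e:epsilonsum}.

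Once the coherence and packing are in place, David--Toro produces a biLipschitz $g:\R^n \to \R^n$ with $|g(z)-z| \lesssim \ve r$ at every scale $r$, whose restriction to $P_{Q(S)}$ is $(1+C\ve^2)$-biLipschitz, and such that $\Sigma_S := g(P_{Q(S)})$ is $C\ve$-Reifenberg flat with $\dist(p,\Sigma_S) \lesssim \ve\,\rho^k$ for every $p$ with $\dist(p,P_{x,k}) \lesssim \ve\,\rho^k$. Applying this at $p = \zeta_Q$ and using \eqref{e:distQtoP_Q} gives \eqref{e:dtjones} at every $Q \in S$, and hence \eqref{e:gz-z} follows by estimating $|g(z)-z|$ at the scale of the smallest cube containing $z$. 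The two one-sided approximation statements \eqref{e:close-to-P_Q} and \eqref{e:close-to-E} are immediate from the corresponding two-sided approximation in David--Toro, applied in the balls $C_1 B_Q$ with the assumption \eqref{e:C1beta} or \eqref{e:C2beta}.

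The main obstacle is the coherence estimate between $P_{x,k}$ and $P_{y,k'}$ at neighboring but distinct scales, because the cube $Q(x,k)$ at scale $\rho^k$ and the cube $Q(y,k')$ at scale $\rho^{k'}$ need not be $\sim$-neighbors unless the constant $C_2$ is chosen sufficiently large relative to $C_1$; this forces the quantitative choice $\ve^{-1} \gg C_1, C_2$ in the statement, and it is where the proof has to be most careful in matching constants. Once this is arranged, the rest is a direct transcription of the ball-indexed David--Toro theorem into the cube language used elsewhere in the paper.
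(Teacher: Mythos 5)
Your overall architecture matches the paper's: pass from cubes to a net of centers at each scale $r_k$, project onto the cube planes to get points $x_{j,k}$ and planes $P_{j,k}$, verify the separation and coherence conditions of David--Toro, invoke their theorem, and then translate the approximation statements back to cubes. However, there is a genuine gap at what you call the heart of the argument. To obtain the \emph{bi-Lipschitz} parametrization with the $(1+C\ve^{2})$ constant on $P_{Q(S)}$, one must invoke \cite[Theorem 2.5]{DT12}, whose hypothesis is the \emph{pointwise} square-function bound $\sum_{k\geq 0}\ve_{k}(g(z))^{2}\lec\ve^{2}$ for \emph{every} $z$ (their equation (2.19)). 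You instead propose to verify an integrated Carleson packing condition
\[
\int \sum_{k}\ve_{k}(x)^{2}\,\one_{B(x,\rho^{k})}\,\frac{dx\,dk}{\rho^{k(d+1)}}\lesssim \ve^{2},
\]
and to reduce it to \eqref{e:epsilonsum} by summing over cubes. That condition is strictly weaker: in David--Toro a Carleson packing hypothesis only yields conclusions of the type ``big pieces of bi-Lipschitz images'' or a chord-arc surface with small constant, not a globally $(1+C\ve^{2})$-bi-Lipschitz map. The hypothesis \eqref{e:epsilonsum} is precisely a pointwise bound along each branch of the tree $\{R: Q\subseteq R\subseteq Q(S)\}$, and the correct (and in fact easier) step is to fix $y=g(z)$, locate for each $k\leq k(y)$ a cube $Q_{k}(x')\in S$ near $y$ at scale $r_{k}$, observe that these cubes (or their ancestors) form a single chain above a fixed $Q$, and conclude $\sum_{j}\ve_{j}(y)^{2}\lec\sum_{Q\subseteq R\subseteq Q(S)}\ve(R)^{2}<\ve^{2}$ directly. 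Your integrated condition does follow from \eqref{e:epsilonsum}, but it does not suffice for the conclusion you need, so the reduction runs in the wrong direction.

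Two smaller points. First, the auxiliary enlargement of $S$ to $\widehat S$ and the freezing of planes below the minimal cubes is not needed: the paper works only with centers of cubes of $S$ at the scales $s(k)$, and the stopping-time structure of $S$ already guarantees that the nets $X_{k}'$ satisfy the nesting condition $X_{k}'\subseteq V_{k-1}^{2}$. Second, the final approximation statements \eqref{e:close-to-P_Q} and \eqref{e:close-to-E} are not ``immediate'' from a two-sided statement in \cite{DT12}; they require chaining through the intermediate surfaces $\Sigma_{\ell}$ and the maps $\sigma_{\ell}$ (their equations (5.3), (5.6), (5.11)) together with the hypotheses \eqref{e:C1beta} or \eqref{e:C2beta} applied in suitably enlarged balls, and a separate treatment of the top scales $k\leq k_{0}$. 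These are fixable details, but the substitution of the Carleson condition for the pointwise one is the substantive error to repair.
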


\subsection{Harmonic Measure}

For background on harmonic measure and Green's function, we refer the reader to \cite{AG}. 
\begin{definition}
For $K\subset \d\Omega$, we say that $\Omega$ has the {\it capacity density condition (CDC) in $K$} if $ \textup{cap}({B}(x,r) \cap \Omega^c, B(x,2r)) \gtrsim r^{d-1}$, for every $x \in K$ and $r<\diam K$, and that $\Omega$ has the {\it capacity density condition} if it has the CDC in $K=\d\Omega$.  Here, cap$(\cdot, \cdot)$ stands for the variational $2$--capacity of the condenser $(\cdot, \cdot)$ (see \cite[p. 27]{HKM} for the definition).
\end{definition}

\begin{lemma}[{\cite[Lemma 11.21]{HKM}}]\label{l:bourgain}
Let $\Omega\subset \bR^{d+1}$ be any domain satisfying the CDC condition,  $B$ a ball centered on $\d\Omega$ so that $\Omega\backslash 2B\neq\emptyset$. Then 
\begin{equation}\label{e:bourgain}
\omega_{\Omega}^{x}(2B)\geq c >0 \;\; \mbox{ for all }x\in \Omega\cap B.
\end{equation}
where $c$ depends on $d$ and the constant in the CDC. 
\end{lemma}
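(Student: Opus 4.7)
The plan is to give the classical Bourgain--Wiener barrier argument that underlies the cited result \cite[Lemma 11.21]{HKM}. Write $B=B(\xi,r)$ with $\xi\in\d\Omega$ and fix $x\in B\cap\Omega$. The first step is to reduce to the subdomain $U:=\Omega\cap 2B$. By monotonicity of harmonic measure under domain inclusion on the common boundary $\d\Omega\cap \overline{2B}$ (an application of the maximum principle in $\Omega$),
\[
\omega_{\Omega}^{x}(\d\Omega\cap 2B)\;\geq\;\omega_{U}^{x}(\d\Omega\cap\overline{2B}),
\]
so it suffices to bound the right-hand side from below by a constant depending only on $d$ and the CDC constant.

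Next I would set $u(y):=1-\omega_{U}^{y}(\d\Omega\cap\overline{2B})$, which is a non-negative harmonic function on $U$, bounded by $1$, and vanishing quasi-everywhere on $K:=\overline{B}\setminus\Omega$. The main device is a barrier of the form $1-c\,p$, where $p$ is the $2$-capacitary potential of $K$ relative to $2B$ (so $p=1$ q.e.\ on $K$, $p=0$ on $\d(2B)$, and $0\leq p\leq 1$). Since $u+cp\leq 1$ on $\d U$ when $c\leq 1$ (split $\d U$ into $\d\Omega\cap\overline{2B}$, where $u=0$ q.e.\ and $p\leq 1$, and $\d(2B)\cap\Omega$, where $p=0$ and $u\leq 1$), the maximum principle yields $u\leq 1-cp$ on $U$. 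Thus the problem is reduced to showing $p(x)\gec 1$ for $x\in B\cap\Omega$.

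The final step is the lower bound on $p(x)$. Representing $p$ via the Green function $G_{2B}$ of $2B$ and the equilibrium measure $\mu$ of $K$, one has
\[
p(x)=\int G_{2B}(x,y)\,d\mu(y),\qquad \mu(K)=\Cap(K,2B)\gec r^{d-1},
\]
where the lower bound on $\mu(K)$ is exactly the CDC. For $x,y\in\overline{B}$ one has the standard lower bound $G_{2B}(x,y)\gec r^{-(d-1)}$ (the Newtonian kernel at scale $r$, minus a regular harmonic correction which is $\lec r^{-(d-1)}$ on $\overline{B}$, dominates by a uniform constant depending only on $d$). Multiplying these two bounds the scale factors $r^{\pm(d-1)}$ cancel and we obtain $p(x)\gec 1$, hence $u(x)\leq 1-c$, which is the claimed $\omega_{\Omega}^{x}(2B)\geq c$.

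The main obstacle, conceptually, is making the comparison $u+cp\leq 1$ on $\d U$ rigorous at those boundary points of $U$ that are not regular: both $u$ and $p$ achieve their prescribed boundary values only quasi-everywhere, so one must use the fine-topology/Perron framework of \cite{HKM} and the Kellogg property to turn this q.e.\ agreement into a pointwise inequality on $U$. Once that framework is in place, the geometric content of the lemma is entirely encapsulated by the two ingredients above: the CDC as a lower bound on $\Cap(K,2B)$, and the standard Green-function lower bound on $2B$.
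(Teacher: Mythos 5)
The paper offers no proof of this lemma at all — it is quoted verbatim from \cite[Lemma 11.21]{HKM} — so there is nothing internal to compare against; your write-up is the standard Bourgain-type capacitary barrier argument and is sound, with the two quantitative inputs (the CDC as a lower bound on the mass of the equilibrium measure, and the interior lower bound $G_{2B}(x,y)\gec r^{-(d-1)}$ for $x,y\in \overline{B}$) combined exactly as in the cited source. The only cosmetic issue is that your first reduction lower-bounds $\omega_{\Omega}^{x}(\d\Omega\cap\overline{2B})$ rather than $\omega_{\Omega}^{x}(\d\Omega\cap 2B)$; this is repaired either by running the barrier with $\tfrac{3}{2}B$ in place of $2B$, or by noting that the comparison $u+cp\le 1$ already holds q.e.\ on $\d U$ when one takes $E=\d\Omega\cap 2B$, since the remaining portion of $\d U$ lies on $\d(2B)$, where $p=0$.
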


Using the previous lemma and iterating, it is possible to obtain the following lemma.

\begin{lemma}\label{l:holder} \cite[Lemma 2.3]{AM18}
Let $\Omega\subseteq \bR^{d+1}$ be a domain with the CDC, $\xi\in \d\Omega$ and $0<r<\diam \d\Omega/2$. Suppose $u$ is a non-negative function that is harmonic in $B(\xi,r)\cap \Omega$ and vanishes continuously on $\d\Omega\cap B(\xi,r)$. Then
\begin{equation}\label{e:holder}
u(x) \lec  \ps{\sup_{y\in B(\xi,r)\cap \Omega} u} \ps{\frac{|x-\xi|}{r}}^{\alpha}
\end{equation}
where $\alpha>0$ depends on the CDC constant and $d$. 
\end{lemma}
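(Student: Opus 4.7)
\medskip

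The plan is to iterate Bourgain's estimate (Lemma \ref{l:bourgain}) dyadically to obtain a geometric decay of the supremum of $u$ on concentric balls around $\xi$. Set $\rho_{k} = 2^{-k} r$ and
\[
M_{k} = \sup_{B(\xi,\rho_{k})\cap \Omega} u,
\]
so in particular $M_{0} = M := \sup_{B(\xi,r)\cap \Omega} u$. The crux is the following one-step decay claim: there exists $\eta \in (0,1)$, depending only on $d$ and the CDC constant, such that $M_{k} \leq \eta M_{k-1}$ for every $k\geq 1$.

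Granting the claim, iteration yields $M_{k} \leq \eta^{k} M$. For any $x\in B(\xi,r)\cap \Omega\setminus\{\xi\}$, choose $k$ so that $\rho_{k+1} < |x-\xi|\leq \rho_{k}$; then $u(x)\leq M_{k} \leq \eta^{k} M$, and setting $\alpha = -\log_{2}\eta > 0$ gives $u(x)\lec (|x-\xi|/r)^{\alpha} M$, which is the desired bound. The case $|x-\xi|\geq r/2$ is trivial from the definition of $M$.

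To prove the one-step claim, I would apply the maximum principle on the auxiliary subdomain $V_{k} := \Omega \cap B(\xi,\rho_{k-1})$. Since $u$ is harmonic on $V_{k}$, vanishes continuously on $\d\Omega\cap \overline{B(\xi,\rho_{k-1})}$, and is bounded by $M_{k-1}$ on $\Omega\cap \d B(\xi,\rho_{k-1})$, the maximum principle gives, for $x\in V_{k}$,
\[
u(x)\leq M_{k-1}\bigl(1-\omega^{x}_{V_{k}}(\d\Omega \cap \overline{B(\xi,\rho_{k-1})})\bigr).
\]
Hence it suffices to show $\omega^{x}_{V_{k}}(\d\Omega\cap \overline{B(\xi,\rho_{k-1})})\geq c > 0$ for every $x\in B(\xi,\rho_{k})\cap V_{k}$, with $c$ depending only on the CDC constant and $d$. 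The domain $V_{k}$ inherits the CDC at every point of $\d\Omega\cap \overline{B(\xi,\rho_{k-1})}$, because near such points $V_{k}$ and $\Omega$ locally coincide and the capacity condition only uses the local complement. Consequently Lemma \ref{l:bourgain}, applied to $V_{k}$ with the ball $B(\xi,\rho_{k}/2)$ (which is centered on $\d V_{k}$ since $\xi\in \d\Omega\subseteq \d V_{k}$), produces the desired lower bound $\omega^{x}_{V_{k}}(B(\xi,\rho_{k}))\geq c$ for all $x\in V_{k}\cap B(\xi,\rho_{k}/2)$.

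The main technical obstacle is the non-emptiness hypothesis $V_{k}\setminus B(\xi,\rho_{k})\neq \emptyset$ required to invoke Bourgain's lemma on $V_{k}$. This must be verified uniformly in $k$ and is not automatic when $\Omega$ is very thin near $\xi$. I would address this in the standard way: either note that if the hypothesis fails at some scale then $\Omega$ has a connected component contained in $B(\xi,\rho_{k-1})$, in which case one can replace $V_{k}$ by a fixed larger domain $\Omega\cap B(\xi,r)$ and run the same comparison; or, more cleanly, invoke the CDC directly at $\xi$ to bound $\omega^{x}_{V_{k}}(\d\Omega\cap \overline{B(\xi,\rho_{k-1})})$ below via a potential-theoretic estimate (Wiener's criterion in its quantitative form). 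In either case, since $\rho_{k-1}< r \leq \diam \d\Omega/2$ ensures $\d\Omega$ extends far beyond $B(\xi,\rho_{k-1})$, the constant $c$ can be taken independent of $k$, which is what the iteration requires. Once this is in place, the proof is completed by the geometric iteration described above.
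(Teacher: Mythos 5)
Your proof is correct and is exactly the argument the paper intends: Lemma \ref{l:holder} is quoted from \cite{AM18} with the one-line indication that it follows ``using the previous lemma and iterating,'' which is precisely your dyadic maximum-principle iteration of Lemma \ref{l:bourgain} on the truncated domains $V_k$ (and your worry about $V_k\setminus B(\xi,\rho_k)\neq\emptyset$ is resolved even more simply than you suggest: since $\Omega$ is connected and $\diam\d\Omega>2r$, $\Omega$ must meet $\d B(\xi,\rho_k)$). The only cosmetic point is that invoking Bourgain with the ball $B(\xi,\rho_k/2)$ controls $u$ only on $B(\xi,\rho_k/2)\cap\Omega$, so the decay you literally obtain is $M_{k+1}\le \eta M_{k-1}$ rather than $M_k\le \eta M_{k-1}$; this halves $\alpha$ and changes nothing else.
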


%


There are two key facts we will use about Green's function.

\begin{lemma}
\cite[Lemma 1]{Aik08}
For $x\in \Omega\subseteq \bR^{d+1}$ and $\phi\in C_{c}^{\infty}(\bR^{d+1})$, 
\begin{equation}
\label{e:ibp}
\int \phi d\omega_{\Omega}^{x} = \int_{\Omega} \triangle \phi(y) G_{\Omega}(x,y)dy + \phi(x).
\end{equation}
\end{lemma}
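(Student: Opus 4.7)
The identity is Green's second identity applied in the distributional setting, and the plan is to reduce to the smooth case by exhausting $\Omega$ from within. First I would fix a sequence of bounded open sets $\Omega_n$ with smooth boundary so that $\overline{\Omega_n}\subseteq \Omega_{n+1}$, $\bigcup_n \Omega_n = \Omega$, and $x\in \Omega_n$ for $n$ large; this is possible by standard mollification of the distance function $\delta_{\Omega}$. Since $\phi\in C_c^\infty(\mathbb{R}^{d+1})$, its support is fixed, so we only need to understand $\Omega_n \cap \supp\phi$.

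On each smooth $\Omega_n$ the Green's function $G_n := G_{\Omega_n}(x,\cdot)$ satisfies $-\triangle G_n = \delta_x$ in the distributional sense, vanishes on $\partial\Omega_n$, and the harmonic measure satisfies $d\omega^x_{\Omega_n} = -\partial_\nu G_n\, d\sigma_n$ where $\nu$ is the outer unit normal. Applying the classical Green identity with $u=\phi$, $v=G_n$,
\[
\int_{\Omega_n}\!\bigl(\phi\,\triangle G_n - G_n\,\triangle \phi\bigr)\,dy = \int_{\partial\Omega_n}\!\bigl(\phi\,\partial_\nu G_n - G_n\,\partial_\nu \phi\bigr)\,d\sigma_n,
\]
and using $G_n|_{\partial\Omega_n}=0$ together with $\triangle G_n = -\delta_x$, rearranging gives the identity for $\Omega_n$:
\[
\int \phi\,d\omega_{\Omega_n}^{x} = \phi(x) + \int_{\Omega_n} G_n(y)\,\triangle\phi(y)\,dy.
\]

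Next I would pass to the limit $n\to\infty$. For the left-hand side, the function $y\mapsto \int\phi\,d\omega^y_\Omega$ is the Perron--Wiener--Brelot solution to the Dirichlet problem with continuous data $\phi$; by monotone exhaustion one has $\int \phi\,d\omega^x_{\Omega_n}\to \int\phi\,d\omega^x_\Omega$ (this is standard for increasing unions; cf.\ the PWB machinery in \cite{AG}). For the right-hand side, one extends $G_n$ by zero outside $\Omega_n$; the maximum principle gives $G_n \leq G_{n+1}\leq G_\Omega(x,\cdot)$, so $G_n \uparrow G_\Omega(x,\cdot)$ pointwise on $\Omega$. Splitting $\triangle\phi = (\triangle\phi)_+ - (\triangle\phi)_-$ and applying monotone convergence separately to each piece (each integral being finite because $\supp\phi$ is compact and $G_\Omega(x,\cdot)\in L^1_{\mathrm{loc}}(\Omega)$, using $d\geq 1$ so that $G_\Omega(x,y)\lesssim |x-y|^{1-d}$ away from $x$, and the standard estimate near $x$), one obtains
\[
\int_{\Omega_n} G_n\,\triangle\phi\,dy \;\longrightarrow\; \int_\Omega G_\Omega(x,y)\,\triangle\phi(y)\,dy.
\]
Combining these limits yields the claimed identity.

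The main obstacle will be the justification of the two convergences, and in particular ensuring that the integral $\int_\Omega G_\Omega(x,y)\triangle\phi(y)\,dy$ is absolutely convergent so that monotone/dominated convergence can be invoked; this comes down to the local $L^1$ bound on $G_\Omega(x,\cdot)$ away from the pole plus the integrable singularity at $x$, both of which are classical.
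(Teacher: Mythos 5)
The paper does not prove this lemma; it is quoted verbatim from \cite[Lemma 1]{Aik08}, so there is no in-paper argument to compare against. Your exhaustion proof is the standard route to this identity and is essentially correct: Green's second identity on smooth bounded approximating domains, the representation $d\omega^x_{\Omega_n}=-\partial_\nu G_n\,d\sigma_n$, monotone convergence of $G_n\uparrow G_\Omega(x,\cdot)$, and convergence of the PWB solutions $H_{\Omega_n}\phi(x)\to H_\Omega\phi(x)$ for an increasing exhaustion are all classical ingredients, and the splitting of $\triangle\phi$ into positive and negative parts correctly reduces the limit on the right-hand side to local integrability of the Green function.

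Two cosmetic points worth tightening. First, the bound $G_\Omega(x,y)\lesssim|x-y|^{1-d}$ degenerates to a constant when $d=1$ (ambient dimension $2$), where the fundamental solution is logarithmic; local integrability still holds, but you should either restrict the displayed estimate to $d\geq 2$ or state the planar case separately (and note that in the plane the identity presupposes $\Omega$ is Greenian, which is implicit in the existence of $G_\Omega$). Second, for unbounded $\Omega$ the left-hand limit uses the PWB solution on $\partial_\infty\Omega$; since $\phi$ has compact support this causes no harm, but it deserves a sentence. Neither point is a gap in the argument.
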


\begin{lemma}
Let $\Omega\subset\bR^{d+1}$ be a CDC domain. Let $B$ be a ball centered on $\d\Omega$ and $0<r_{B}<\diam \d\Omega $. Then,
 \begin{equation}\label{e:w>G}
 \omega^{x}(4B)\gtrsim r_{B}^{d-1}\, G_{\Omega}(x,y)\quad\mbox{
 for all $x\in \Omega\backslash  2B$ and $y\in B\cap\Omega$,}
 \end{equation}
 The opposite inequality holds if $\Omega$ is also uniform.
\end{lemma}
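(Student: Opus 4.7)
The plan is to run a maximum principle comparison between $u(z) := \omega_\Omega^z(4B)$ and $v(z) := G_\Omega(z,y)$, both positive harmonic functions of $z$ on the open set $U := \Omega \setminus \overline{2B}$; since the pole $y$ of $v$ lies in $B$, it is outside $U$, so harmonicity of $v$ on $U$ is preserved. The target is to show $u - c\, r_B^{d-1}\, v \geq 0$ on $U$ for a small $c>0$ depending only on $d$ and the CDC constant, which is equivalent to \eqref{e:w>G}.

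First I would check boundary behavior on $\d U = (\d(2B) \cap \overline{\Omega}) \cup (\d\Omega \setminus 2B)$. On $\d\Omega \setminus 2B$, both $u$ and $v$ decay continuously to zero: $v$ does because the CDC condition makes every boundary point regular, as formalized by Lemma \ref{l:holder}; and $u$ vanishes on $\d\Omega\setminus\overline{4B}$, while on the annular piece $\d\Omega \cap (\overline{4B}\setminus 2B)$ one just uses $u \geq 0$ to keep the comparison. On $\d(2B) \cap \Omega$, Bourgain's estimate (Lemma \ref{l:bourgain}), applicable because $r_B < \diam \d\Omega$ forces $\Omega\setminus 4B \neq \emptyset$, yields $u \gec 1$; meanwhile domination of $G_\Omega$ by the fundamental solution together with $|z-y|\geq r_B$ on $\d(2B)$ gives $v \lec r_B^{1-d}$. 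Choosing $c$ small enough then produces $u - c\, r_B^{d-1}\, v \gec 1$ on $\d(2B) \cap \Omega$. If $\Omega$ is unbounded, both functions also tend to zero at infinity. The maximum principle on $U$ then yields the first inequality.

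For the opposite inequality under uniformity, the plan is to reverse the comparison using an interior corkscrew $y_B \in B$ with $\dist(y_B,\d\Omega) \sim r_B$ provided by uniformity. One aims to show $C\, r_B^{d-1}\, G_\Omega(\cdot,y_B) - \omega_\Omega^{\cdot}(4B) \geq 0$ on $\Omega \setminus 2B$ via the maximum principle. The boundary ingredients become: $\omega^z(4B) \leq 1$ everywhere and vanishes on $\d\Omega \setminus \overline{4B}$; on $\d(2B) \cap \Omega$ one needs a lower bound $G_\Omega(z,y_B) \gec r_B^{1-d}$, obtained by combining Harnack inequalities along an interior chain from $y_B$ to such $z$ with the standard behavior of $G_\Omega$ near its pole. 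To transfer from the corkscrew $y_B$ to a general $y \in B\cap \Omega$, one uses that in a uniform domain any two interior points are connected by a Harnack chain of controlled length, giving $G_\Omega(x,y) \sim G_\Omega(x,y_B)$ for $x \in \Omega \setminus 2B$.

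The main obstacle I expect is verifying the boundary vanishing cleanly enough to invoke the maximum principle on the unbounded or non-smooth domain $U$. This is precisely what CDC delivers through Lemma \ref{l:holder}: quantitative H\"older decay of both $u$ and $v$ along $\d\Omega \setminus 2B$, ruling out pathological behavior at irregular boundary points. For the reverse inequality, the analogous subtle point is producing the lower bound $G_\Omega(z,y_B) \gec r_B^{1-d}$ uniformly on $\d(2B) \cap \Omega$: this cannot hold in general CDC domains and genuinely uses Harnack chains, which is where uniformity enters in an essential way.
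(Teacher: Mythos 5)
Your argument for the main inequality \eqref{e:w>G} is correct and is exactly the route the paper indicates (the paper only sketches it and defers to \cite{AH08} and \cite{AHMMMTV16}): a maximum principle comparison of $\omega^{z}(4B)$ and $G_{\Omega}(z,y)$ on $\Omega\setminus \overline{2B}$, with Lemma \ref{l:bourgain} giving $\omega^{z}(4B)\gec 1$ on $\d(2B)\cap\Omega$, the bound $G_{\Omega}(z,y)\lec r_{B}^{1-d}$ there, and Lemma \ref{l:holder} handling the boundary behaviour. (One small caveat: "domination by the fundamental solution" proves $G_{\Omega}(z,y)\lec |z-y|^{1-d}$ only when $d\geq 2$; in the plane the bound $G_{\Omega}(z,y)\lec 1$ for $|z-y|\geq r_{B}$ itself uses the CDC, e.g.\ via the maximum principle applied to $G_{\Omega}(\cdot,y)$ outside $B(y,r_{B}/2)$.)

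Your plan for the reverse inequality has a genuine gap. You propose to show $C r_{B}^{d-1}G_{\Omega}(\cdot,y_{B})-\omega^{\cdot}(4B)\geq 0$ on $\Omega\setminus 2B$ by the maximum principle, which requires $G_{\Omega}(z,y_{B})\gec r_{B}^{1-d}$ for \emph{every} $z\in \d(2B)\cap\Omega$. This is false: points of $\d(2B)\cap\Omega$ can be arbitrarily close to $\d\Omega$, where $G_{\Omega}(\cdot,y_{B})$ vanishes (by the CDC and Lemma \ref{l:holder}) while $\omega^{\cdot}(4B)\gec 1$ there by Lemma \ref{l:bourgain}; Harnack chains cannot rescue this, since the chain from $y_{B}$ to $z$ has length growing like $\log(r_{B}/\delta_{\Omega}(z))$. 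The standard proof instead integrates a bump function $\varphi$ with $\one_{4B}\leq\varphi\leq\one_{8B}$, $|\triangle\varphi|\lec r_{B}^{-2}$, against $\omega^{x}$ and uses \eqref{e:ibp} to get $\omega^{x}(4B)\lec r_{B}^{d-1}\sup_{8B\cap\Omega}G_{\Omega}(x,\cdot)$, and then a Carleson-type estimate $\sup_{8B\cap\Omega}G_{\Omega}(x,\cdot)\lec G_{\Omega}(x,y_{B})$; it is in this last step that uniformity (Harnack chains plus the H\"older decay of Lemma \ref{l:holder}) genuinely enters. The same degeneration shows your final "transfer to a general $y\in B\cap\Omega$" cannot work: $G_{\Omega}(x,y)\to 0$ as $y\to\d\Omega$, so the opposite inequality can only hold for $y$ a corkscrew point (equivalently, for $y$ with $\delta_{\Omega}(y)\gec r_{B}$), which is how the statement is meant and used.
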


This follows quickly from the maximum principle, Lemma \ref{l:bourgain}, and the fact that, for $x\in \d 2B\cap \Omega$ and $y\in B$, $r_{B}^{d-1}G_{\Omega}(x,y)\lec 1$. For proofs, see \cite[Lemma 3.5]{AH08} or \cite[Lemma 3.3]{AHMMMTV16}.

The following lemma was first shown by Aikawa and Hirata for John domains with the CDC \cite{AH08}; with a minor adjustment, the John condition can be removed \cite[Theorem I]{Azz17}

\begin{lemma}\label{l:doubling}
Let $\Omega\subseteq \bR^{d+1}$ be a CDC domain. Then the following are equivalent:
\begin{enumerate}
\item $\omega_{\Omega}$ is {\it doubling}, meaning there is a constant $A\geq 2$ and a function $C:(0,\infty)\rightarrow (1,\infty)$ so that, for any ball $B$ centered on $\d\Omega$  and $\alpha>0$, 
\begin{equation}\label{doubling}
\omega_{\Omega}^{x}(2B)\leq C(\alpha) \omega_{\Omega}^{x}(B) \mbox{ for all $x$ such that $\dist(x, AB\cap \d\Omega)\geq \alpha |x-x_{B}|$}.
\end{equation}
\item $\Omega$ is semi-uniform.
\end{enumerate}
\end{lemma}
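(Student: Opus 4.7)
The plan is to split the equivalence into its two implications; both are essentially results from the literature, so the proposal is to reduce the statement to them and account for the removal of the John hypothesis.

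For the direction $(2) \Rightarrow (1)$, the approach is the classical Harnack-chain/change-of-pole argument. Fix a ball $B$ centered on $\d\Omega$, a constant $\alpha>0$, and a point $x\in\Omega$ satisfying $\dist(x,AB\cap\d\Omega)\geq \alpha|x-x_B|$. Using the semi-uniformity hypothesis, I would join $x$ to an interior corkscrew point $y_B\in \tfrac12 B$ of $B$ (produced from the semi-uniformity plus CDC) by a $C$-cigar curve of bounded turning; along such a curve one can build a Harnack chain of length controlled by $\alpha$, $A$, and the semi-uniformity constant. Harnack's inequality then gives $\omega_\Omega^x(E)\sim \omega_\Omega^{y_B}(E)$ for $E=B$ and $E=2B$, with constants depending only on $\alpha$ and the geometric data. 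By Lemma \ref{l:bourgain}, $\omega_\Omega^{y_B}(B)\gtrsim 1$, while trivially $\omega_\Omega^{y_B}(2B)\leq 1$. Combining these bounds yields $\omega_\Omega^x(2B)\lesssim_\alpha \omega_\Omega^x(B)$, proving doubling in the required sense.

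For the reverse direction $(1) \Rightarrow (2)$, the plan is to invoke the Aikawa--Hirata machinery, which extracts geometric connectivity from the doubling of harmonic measure. Given $x\in\Omega$ and $\xi\in\d\Omega$ to be joined by a cigar curve, one selects a sequence of balls $B_k$ centered near $\xi$ at geometrically decreasing scales, produces corkscrew-like interior points $y_k$ using the CDC, and uses the doubling property together with Bourgain's estimate and the maximum principle to control the size of the harmonic measure at successive scales. This control translates into uniform quantitative estimates on $\delta_\Omega(y_k)$ relative to $|y_k-\xi|$, which are exactly what is needed to piece the $y_k$ together, via Harnack chains, into a cigar curve with bounded turning from $x$ to $\xi$. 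In \cite{AH08} this procedure was implemented under the additional John hypothesis, which served to provide a global ``spine'' in $\Omega$; the plan is to cite \cite[Theorem I]{Azz17}, where this hypothesis is eliminated by carrying out the corkscrew extraction purely locally from CDC and rebuilding the global cigar from such local pieces.

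The main obstacle is the $(1)\Rightarrow(2)$ implication: once one wants to deduce \emph{geometric} connectivity from the \emph{potential-theoretic} doubling, one has to iterate the doubling estimate in a way that keeps track of how the corkscrews cluster along an actual curve. This is precisely the content that \cite[Theorem I]{Azz17} provides as an improvement over \cite{AH08}, and it is the step I would reference in full rather than redo. The direction $(2)\Rightarrow(1)$ is comparatively routine, resting only on Harnack chains along the cigar curves and Lemma \ref{l:bourgain}.
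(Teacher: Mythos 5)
Your proposal is correct and matches the paper's treatment: the paper gives no proof of this lemma, attributing it to Aikawa--Hirata \cite{AH08} for John domains with the CDC and to \cite[Theorem I]{Azz17} for the removal of the John hypothesis, which is exactly the reduction you make for the hard direction $(1)\Rightarrow(2)$. Your sketch of the routine direction $(2)\Rightarrow(1)$ via Harnack chains along cigar curves and Lemma \ref{l:bourgain} is also the standard argument and is sound.
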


\begin{lemma} \cite[Theorem II]{Azz17}
\label{l:w/w}
Let $\Omega$ be a semi-uniform CDC domain, $B$ a ball centered on $\d\Omega$, and $E\subseteq B\cap \d\Omega$. Then there is $M>0$ depending on the CDC and semi-uniformity constants and corkscrew points $x_{1}$ and $x_{2}$ in $B\cap \Omega$ so that 
\[
\omega_{\Omega}^{x_{1}}(E)
\lec \frac{\omega_{\Omega}^{x}(E)}{\omega_{\Omega}^{x}(B)}
\lec \omega_{\Omega}^{x_{2}}(E)
\mbox{ for all }x\in \Omega\backslash MB.
\]
If $\Omega$ is uniform, then we can take $x_1=x_2$ to be any corkscrew point in $B$. 
\end{lemma}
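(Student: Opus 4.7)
The plan is to reduce the inequality to a boundary–Harnack–type comparison between the two harmonic functions $u(y)=\omega_\Omega^y(E)$ and $v(y)=\omega_\Omega^y(B)$, and to extract that comparison from the doubling of $\omega_\Omega$ (Lemma \ref{l:doubling}), Bourgain's lower bound (Lemma \ref{l:bourgain}), and the Hölder decay at the boundary (Lemma \ref{l:holder}).

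First I would fix two interior points $x_1,x_2\in B\cap\Omega$ with $\delta_\Omega(x_i)\sim r_B$; their existence is a consequence of combining a cigar curve in $\Omega$ terminating at $x_B\in\d\Omega$ with the CDC (the complement has enough thickness that the curve must pass through an interior point at height $\sim r_B$ inside $B$). The selection of which such point plays the role of $x_1$ and which plays the role of $x_2$ will be made at the end: $x_2$ will be chosen so that $\omega^{x_2}(E)$ is essentially the largest among such corkscrews, and $x_1$ so that $\omega^{x_1}(E)$ is essentially the smallest. Lemma \ref{l:bourgain} applied to $2B$ gives $v(x_i)\gtrsim 1$, so these normalizations are harmless.

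The core step is the claim
\[
\frac{u(y)}{v(y)} \;\sim\; \omega_\Omega^{x_*}(E)\qquad \text{for } y\in\Omega\setminus MB,
\]
where $x_*$ is any corkscrew of $B$. To get the upper direction, I would set $w(y)=u(y)-C\,v(y)\,\omega^{x_2}(E)$ and apply the maximum principle in $\Omega\setminus\overline{2B}$. The function $w$ vanishes on $\d\Omega\setminus\overline{2B}$ (using continuity at the boundary, which follows from CDC), so it suffices to control $w$ on $\d(2B)\cap\Omega$. There, Lemma \ref{l:holder} bounds $u(y)\lesssim (\delta_\Omega(y)/r_B)^{\alpha}\sup_{\Omega\cap 4B}u$, while the doubling property (\ref{doubling}), applied iteratively along a sequence of balls shrinking toward $x_B$, lets me transfer $\sup_{\Omega\cap 4B}u$ to $\omega^{x_2}(E)$ up to a multiplicative factor depending on the nontangential parameter $\alpha$. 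Choosing $C$ correspondingly, $w\le 0$ on the inner boundary, and hence throughout $\Omega\setminus 2B$. The lower bound is symmetric (or is obtained by applying the upper bound to $B\setminus E$ and rearranging), yielding $\omega^{x_1}(E)\lec \omega^x(E)/\omega^x(B)$.

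The main obstacle is that points on $\d(MB)\cap\Omega$ may sit arbitrarily close to $\d\Omega$, and for those the nontangential condition $\dist(y,AB\cap\d\Omega)\ge\alpha|y-x_B|$ underlying (\ref{doubling}) is not automatic. The fix is to take $M$ large enough that the Hölder factor $(\delta_\Omega(y)/r_B)^{\alpha}$ dominates the doubling constant $C(\alpha)$ that appears when one iterates the doubling across the nontangential regime; this quantitatively pins down the dependence of $M$ on the CDC and semi-uniformity constants. Finally, in the uniform case, any two corkscrews $x_1,x_2\in B\cap\Omega$ are joined by a cigar curve that stays at distance $\gtrsim r_B$ from $\d\Omega$, and Harnack's inequality along this curve gives $\omega^{x_1}(E)\sim\omega^{x_2}(E)$, so the two corkscrews collapse to one.
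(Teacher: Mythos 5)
The paper does not actually prove this lemma: it is imported verbatim from \cite{Azz17} (Theorem II there), so there is no internal argument to compare yours against. Judged on its own, your outline has the right skeleton — a maximum principle comparison in $\Omega\setminus\overline{2B}$, Bourgain's lemma to normalize at the corkscrews, and doubling to pass between $\omega^x(CB)$ and $\omega^x(B)$ (and indeed, for $x\notin MB$ with $M\geq 2A$ the nontangentiality hypothesis in \eqref{doubling} holds automatically with $\alpha=1/2$, so that conversion is fine). But the crucial step is not supplied.

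The gap is the boundary estimate $\sup_{z\in\d(2B)\cap\Omega}\omega^{z}(E)\lesssim\omega^{x_2}(E)$ for a \emph{single} corkscrew $x_2\in B\cap\Omega$. You propose to obtain it from Lemma \ref{l:holder} together with ``doubling applied iteratively along balls shrinking toward $x_B$''. Two problems. First, Lemma \ref{l:holder} only applies in balls on whose boundary portion $u=\omega^{(\cdot)}(E)$ vanishes; since $E\subseteq B$, such balls must be disjoint from $B$, so the Hölder bound reads $u(z)\lesssim(\delta_\Omega(z)/r_B)^{\alpha}\sup_{B(\xi,cr_B)\cap\Omega}u$ with $\xi$ the boundary point nearest $z$ — not $\sup_{4B\cap\Omega}u$, which is anyway $\approx 1$ near $E$ and useless. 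Second, and more seriously, passing from that local supremum to the value of $u$ at one fixed corkscrew is a Carleson-type estimate for the harmonic \emph{function} $y\mapsto\omega^{y}(E)$; it is not a consequence of the doubling of the \emph{measure} $\omega^{x}$ on $\d\Omega$, and its standard proof (iterating Hölder decay against Harnack chains) requires uniformity. In a genuinely semi-uniform domain two points of $\d(2B)\cap\Omega$ lying on ``opposite sides'' of the boundary need not be joined by any Harnack chain inside $CB\cap\Omega$, and $\omega^{z}(E)$ can differ between them by an unbounded factor; this is precisely why the statement produces two distinct points $x_1,x_2$ and why it is the main theorem of a separate paper rather than a remark. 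A smaller but real issue: the lower bound is not ``symmetric'', and deducing it by applying the upper bound to $(B\cap\d\Omega)\setminus E$ and rearranging fails because the multiplicative constants are not $1$; it needs its own argument, e.g.\ writing $\omega^{x}(E)=\int_{\d(2B)\cap\Omega}\omega^{z}(E)\,d\omega^{x}_{\Omega\setminus\overline{2B}}(z)$ and exhibiting a subset of $\d(2B)\cap\Omega$ of proportional $\omega^{x}_{\Omega\setminus\overline{2B}}$-measure on which $\omega^{z}(E)\gtrsim\omega^{x_1}(E)$.
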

The last line of the lemma is due to Jerison and Kenig \cite{JK82} for NTA domains, and for general uniform domains this follows from the work of Aikaha and Hirata \cite{AH08}. 

%
%
%
%
%
%
%
%
%
%

\section{Proof of Theorems \ref{t:CDHM} and \ref{t:SU-version}, Part I}
\label{s:I-II-Part-I}

The aim of this section is to prove the following lemma:
\def\BTM{{\rm BTM}}
\begin{lemma}
\label{l:CDHM<beta}
Let $\Omega\subseteq \R^{d+1}$ be an interior corkscrew domain with LCR boundary. Let $\cT$ be a stopping-time region with top cube $Q_0$, and $\BTM$ (for ``bottom") be the (possibly empty) set of children of the minimal cubes for $\cT$. 
For $\lambda\geq 1$, and for $A,\tau^{-1}$ sufficiently large, we may find cubes $\Top$ contained in $Q_0$ and a partition of $\cT$ into trees $\{\Tree(R):R\in \Top\}$ so that for each $R\in \Top$, there is a corkscrew ball $B(x_R,c\ell(R))\subseteq B_{R}\cap \Omega$ so that for all $Q\in \Tree(R)$,
\[
\tau \Theta_{\omega^{x_{R}}}^{d}(\lambda B_R) \leq \Theta_{\omega^{x_{R}}}^{d}(\lambda B_{Q})\leq A\Theta_{\omega^{x_{R}}}^{d}(\lambda B_R).
\]
and for $M$ large enough,
\begin{equation}
\label{e:btm+top<b}
\sum_{R\in \BTM}\ell(R)^{d}+\sum_{R\in \Top}\ell(R)^{d}\lec \beta_{\d\Omega}(\cT):=\ell(Q_0)^{d}+ \sum_{Q\in \cT}\beta_{\d\Omega}^{d,2}(MB_{Q})^{2}\ell(Q)^{d}.
\end{equation}
\end{lemma}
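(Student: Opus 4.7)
\medskip

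\noindent\textbf{Proof proposal.} The plan follows the strategy of \cite{DJ90,Sem90,GMT18}: approximate $\partial\Omega$ from inside $\Omega$ by chord-arc subdomains along stopping-time regions, invoke Dahlberg/DJS $A_\infty$ on those subdomains, and then transfer estimates to $\omega_\Omega$ via the maximum principle. The adaptation needed here is that (i) we cannot use approximation by Lipschitz graphs as in \cite{Sem90,HMM14} since we are not assuming UR and not even AR, and (ii) we have to budget the packing against $\beta_{\d\Omega}(\cT)$ rather than $\cH^d(Q_0)$. To overcome (i) we build the approximating surfaces directly using Lemma \ref{l:DT-dyadic}; to handle (ii) we apply the corona decomposition of Lemma \ref{l:corona} first.

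First, I would apply Lemma \ref{l:corona} inside $\cT$ to partition its cubes into subtrees $\cT_j$ whose minimal cubes and tops are packed by $\cH^d(Q_0)\lec \beta_{\d\Omega}(\cT)$ (using the AR-skeleton $E(\cT_j)$ produced by the lemma), and then work inside a single such subtree. Within $\cT_j$, I run a nested stopping-time procedure on a cube $R$: starting from the top, I stop at a cube $Q\subseteq R$ the first time one of the following occurs:
\begin{enumerate}[(S1)]
\item a \emph{bad-geometry} stop: $\beta_{\d\Omega}^{d,2}(MB_Q)\geq \varepsilon$ for a small $\varepsilon$, or the $\varepsilon(Q)$ of Lemma \ref{l:DT-dyadic} summed along the ancestry exceeds $\varepsilon^2$;
\item a \emph{high-density} stop: $\Theta_{\omega^{x_R}}^d(\lambda B_Q)\geq A\,\Theta_{\omega^{x_R}}^d(\lambda B_R)$;
\item a \emph{low-density} stop: $\Theta_{\omega^{x_R}}^d(\lambda B_Q)\leq \tau\,\Theta_{\omega^{x_R}}^d(\lambda B_R)$;
\item $Q$ is the child of a minimal cube of $\cT$ (i.e.\ $Q\in\BTM$).
\end{enumerate}
These stopping cubes become $\Stop(R)$, and the collection $\cT(R)$ of cubes not strictly below any stop forms the tree. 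Iterating, the maximal stopping cubes of type (S2)--(S4) are declared to be the next generation of tops; type (S1) cubes get absorbed into the Carleson budget.

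For the packing, (S1) stops are packed by $\beta_{\d\Omega}(\cT)$ directly (Theorem \ref{t:TST}) and the assumed smallness of David--Toro coherence sums (a Carleson condition on $\beta$-numbers, again charged to $\beta_{\d\Omega}(\cT)$ via standard arguments). For (S4), the packing $\sum_{R\in\BTM}\ell(R)^d\lec \cH^d(Q_0)\lec\beta_{\d\Omega}(\cT)$ is built in. The real work is packing (S2) and (S3). Here, inside each tree $\cT(R)$ the cubes satisfy (S1), so Lemma \ref{l:DT-dyadic} produces a bi-Lipschitz Reifenberg-flat surface $\Sigma_R$ staying $\varepsilon\ell(Q)$-close to $\partial\Omega$ for all $Q\in\cT(R)$. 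Using this surface together with the interior corkscrew point $x_R$, I build a chord-arc subdomain $\Omega_R\subseteq\Omega$ whose boundary coincides with $\Sigma_R$ near $R$ and whose complement contains a Whitney-type shell, so $\Omega_R$ is CAD with constants independent of $R$. By \cite{Dah77,DJ90,Sem90}, $\omega_{\Omega_R}\in A_\infty(\cH^d|_{\d\Omega_R})$; the maximum principle and Lemma \ref{l:bourgain} give $\omega_\Omega^{x_R}(\lambda B_Q)\sim \omega_{\Omega_R}^{x_R}(\lambda B_Q\cap\d\Omega_R)$ for cubes $Q\in\cT(R)$, so density comparability on $\Sigma_R$ transfers to density comparability of $\omega^{x_R}$ on $\lambda B_Q\cap\d\Omega$. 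This forces the (S2)/(S3) stopping cubes to lie inside a \emph{thin} subset of $R$ (in terms of $\omega_{\Omega_R}$-$A_\infty$), which by the reverse H\"older inequality translates into a Carleson packing condition: $\sum_{Q\in\Stop(R),\text{(S2 or S3)}}\ell(Q)^d\leq \theta\,\ell(R)^d$ for some $\theta<1$ depending only on $A,\tau^{-1},\lambda$ (chosen large enough to make the bad density set small).

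Finally, an iteration/Carleson-packing argument (standard, as in \cite[Sec.~5]{GMT18}) transforms the geometric $\theta<1$ inequality into the global bound
\begin{equation*}
\sum_{R\in\Top}\ell(R)^d \lec \sum_{R\in\Top_0}\ell(R)^d + \sum_{(S1)}\ell(Q)^d + \sum_{R\in\BTM}\ell(R)^d \lec \beta_{\d\Omega}(\cT),
\end{equation*}
where $\Top_0$ are tops produced by Lemma \ref{l:corona}. The main obstacle will be the construction of $\Omega_R$ and the verification that it is CAD with uniform constants despite the lack of AR for $\partial\Omega$: I would define $\Omega_R$ as the component of a Whitney-tubular thickening of one side of $\Sigma_R$ meeting $x_R$, and exploit \eqref{e:close-to-P_Q}--\eqref{e:close-to-E} from Lemma \ref{l:DT-dyadic} plus the interior corkscrew hypothesis to verify the NTA conditions, while AR of $\d\Omega_R=\Sigma_R$ comes for free from its bi-Lipschitz image of a plane.
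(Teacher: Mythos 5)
Your overall architecture matches the paper's: a stopping time combining geometric stops ($b\beta$ large, or the coherence sum of $\beta$'s exceeding $\ve^2$) with density stops relative to a corkscrew pole $x_R$, the David--Toro surface $\Sigma_R$ from Lemma \ref{l:DT-dyadic} built over each tree, a Lipschitz-type chord-arc subdomain $\Omega_R\subseteq\Omega$ sitting over $\Sigma_R$, Dahlberg/David--Jerison on $\Omega_R$, and an iteration exploiting that the density-stopped cubes pack with a factor $\theta<1$. However, two steps as you state them would fail.

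First, your budget repeatedly charges quantities to $\cH^{d}(Q_0)$, asserting $\cH^{d}(Q_0)\lec\beta_{\d\Omega}(\cT)$: this is used both to justify the preliminary application of Lemma \ref{l:corona} (whose tops are packed by $\cH^{d}(Q_0)$) and to pack $\BTM$. This inequality is false. The quantity $\beta_{\d\Omega}(\cT)$ only sums $\beta$-numbers over the cubes \emph{of the truncated tree} $\cT$; taking $\cT=\{Q_0\}$ gives $\beta_{\d\Omega}(\cT)\sim\ell(Q_0)^{d}$ while $\cH^{d}(Q_0)$ can be arbitrarily large (even infinite for an LCR set). Theorem \ref{t:TST} bounds $\cH^{d}(Q_0)$ by $\beta_{\d\Omega}(Q_0)$, which sums over \emph{all} subcubes and is not controlled by $\beta_{\d\Omega}(\cT)$. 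The paper never invokes $\cH^{d}(Q_0)$ or Lemma \ref{l:corona} in this direction: instead, within each tree $\Tree(R)$ the cubes of $\BTM(R)$ (and the Vitali families for the high-density cubes) are packed against $\ell(R)^{d}$ using the Ahlfors regularity of the David--Toro surface $\Sigma_R$, which is built only from cubes of $\Tree(R)\subseteq\cT$ and hence does not see the boundary's complexity below the tree; the resulting per-tree bounds are then summed via $\sum_{R\in\Top}\ell(R)^{d}$, which the iteration itself controls by $\beta_{\d\Omega}(\cT)$.

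Second, the claimed two-sided comparison $\omega_{\Omega}^{x_R}(\lambda B_Q)\sim\omega_{\Omega_R}^{x_R}(\lambda B_Q\cap\d\Omega_R)$ "by the maximum principle" is not available: since $\Omega_R$ is a proper subdomain, the maximum principle only yields the inequality $\omega_{\Omega_R}^{x_R}(F)\lec\omega_{\Omega}^{x_R}(\widetilde F)$ (after a Bourgain-type estimate relating sets on $\d\Omega_R$ to nearby surface balls of $\d\Omega$); the reverse inequality fails in general. The paper's treatment is accordingly asymmetric: the high-density cubes need no subdomain at all --- disjointness of a Vitali subfamily of the balls $\lambda B_{Q_j}$, the lower bound $\omega^{x_R}(\lambda B_{Q_j})\geq A\,\Theta^{d}_{\omega^{x_R}}(\lambda B_R)\,\ell(Q_j)^{d}$, and the total mass $\omega^{x_R}(\lambda B_R)$ give $\sum_{Q\in\HD(R)}\ell(Q)^{d}\lec A^{-1}\ell(R)^{d}$ directly; only the low-density cubes use $\Omega_R$, via the one-sided maximum-principle bound combined with the reverse H\"older inequality for $\cH^{d}|_{\d\Omega_R}$ with respect to $\omega_{\Omega_R}^{x_R}$, yielding $\sum_{Q\in\LD(R)}\ell(Q)^{d}\lec\tau^{\theta}\ell(R)^{d}$. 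With these corrections your outline reduces to the paper's proof.
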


We will write $\omega=\omega_{\Omega}$ and $\beta=\beta_{\d\Omega}^{d,2}$ for short. 


Let $k_0\in \bN$ and $\cT(k_{0})$ be those cubes in $\cT$ with sidelength at least $5\rho^{k_{0}}$. Let $M>1$, $\ve>0$ and
\[\Bad=
\{Q\subseteq Q_{0}: b\beta_{\d\Omega} (MB_Q)\geq \ve\}
\cap \cT(k_{0}).
\]

Observe that by Theorem \ref{t:TST} and Theorem \ref{l:corona},
\begin{equation}
\label{e:sumbad}
\sum_{Q\in \Bad}\ell(Q)^{d}
\lec \beta_{\d\Omega}(Q_{0}).
\end{equation}

 For $R\in \Bad$, we define $\Stop(R)=\{\emptyset\}$ and $\Next(R)$ to be the children of $R$ that are in $\cT(k_{0})$ (so these could be empty). 

For $R\in \cT(k_0) \backslash \Bad$, there is $P_{R}$ so that $b\beta_{\d\Omega}(MB_{R},P_{R})<\ve$. We can assume without loss of generality that $P_{R}$ passes through $\zeta_{R}$, since we still have $b\beta_{\d\Omega}(MB_{R},P_{R})\leq  2\ve$ for $\ve>0$. 

Let $\nu_{R}$ be the normal vector to $P_{R}$, and let $x_{R}^{\pm} = \zeta_{R}\pm \frac{\ell(R)}{2}\nu_{R}$. Since $\Omega$ is a $c$-corkscrew domain, for $\ve>0$ small enough, $\Omega$ must contain either $B(x_{R}
^{\pm}, \ell(R)/8)$. This is because, since $b\beta_{\d\Omega}(MB_R,P_R)<\ve$,  every $z\in  \d\Omega\cap MB_R$ satisfies $\dist(z,P_R)\leq M\ve \ell(R)$, and so for $\ve>0$ small enough, $\dist(B(x_{R}
^{\pm}, \ell(R)/8),\d\Omega)>0$, and if we let $B_{R}^{\pm}$ be the two components of $\{x\in B_{R}: \dist(x,P_{R})\geq \ve \ell(R)\}$ that contain $x_{R}^{\pm}$ respectively, then one of these must be contained in $\Omega$ since  otherwise the largest corkscrew ball in $B_{R}$ must have radius at most $\ve \ell(R)$, which is a contradiction if $\ve<c$ (recall $\Omega$ is a $c$-corkscrew domain). By changing the $\pm$ if necessary, we will assume $x_{R}=x_{R}^{+}$ is always in $\Omega$.

Let $R\in \cT(k_0)$. We let $\Stop(R)$ be the maximal cubes $Q\in  \cT(k_0) \backslash \Bad$ which contain a child $Q'$ for which one the following occurs:
\begin{enumerate}
\item[\BTM:] $Q'\in \BTM$. We call these cubes $\BTM(R)$.
\item[\Bad :] $Q'\in \Bad$. We call these cubes $\Bad(R)$.
\item[HD:] $\Theta_{\omega^{x_{R}^{\pm}}}^{d}(\lambda B_{Q'})>A\Theta_{\omega^{x_{R}^{\pm}}}^{d}(\lambda B_{R})$, call these cubes $\HD^{\pm}(R)$ and let $\HD(R)=\HD^{+}(R)\cup \HD^{-}(R)$. If $x_{R}^{-}\not\in \Omega$, we simply let $\HD^{-}(R)=\emptyset$.
\item[LD:] $\Theta_{\omega^{x_{R}}}^{d}(Q')<\tau \Theta_{\omega^{x_{R}}}^{d}(R)$, call these cubes $\LD^{\pm}(R)$ and let  $\LD(R)=\LD^{+}(R)\cup \LD^{-}(R)$. If $x_{R}^{-}\not\in \Omega$, we simply let $\LD^{-}(R)=\emptyset$.
\item[$B\beta$:] $Q'\not\in \Bad$, but for some fixed $M>0$,
\begin{equation}
\label{e:beta-Jones>ve^2}
\sum_{Q'\subseteq T\subseteq R} \beta(MB_{Q})^2 \geq 2 \ve^2 
,\end{equation}
call these cubes $B\beta(R)$. Note that for such  a $Q'$, since $Q'\not\in \Bad$, we have $\beta(MB_{Q'})\leq b\beta(MB_{Q'})<\ve$, and so 
\begin{equation}
\label{e:beta-Jones>ve^2*}
2\ve ^2
>\sum_{Q\subseteq T\subseteq R} \beta(MB_{Q})^2 
=\sum_{Q'\subseteq T\subseteq R} \beta(MB_{Q})^2  - \beta(MB_{Q'})^2
> 2 \ve^2 -\ve^2 = \ve^2 .
,\end{equation}
\end{enumerate}

We let $\Tree(R)$ denote the cubes in $\cT(k_0)$ contained in $R$ that are not properly contained in any cube from $\Stop(R)$ (so $\Stop(R)\subseteq \Tree(R)$) and let $\Next(R)$ denote the children of the cubes in $\Stop(R)$ that are in $\cT(k_0) $. so $\Next(R)$ could be empty, for example, if $R\in \cD_{k_0}$, or if $R$ is a minimal cube for $\cT$.

\def\wt{\widetilde}


For $R\in \Bad$, we let $\Stop(R)=\{R\}$ and $\Next(R)$ denote the children of $R$ in $\cT(k_0)$ and $\Tree(R)=\{R\}$. 

Let $Q_{0}\in \Top_{0}$, and inductively, if $R\in \Top_{k}$, set 
\[
\Top_{k+1}=\bigcup_{R\in \Top_{k}}\Next(R).
\]

Note that $\Top_{k}=\emptyset$ for all large $k$. Let $\Top = \bigcup \Top_{k}$.

\begin{remark}
\label{r:k_0}
The tops $\Top$ and trees $\Tree(R)$ should really be written $\Top^{k_{0}}$ and $\Tree^{k_0}(R)$ respectively since they depend on $k_0$, but we suppress the $k_0$ for ease of notation. Notice, however, that the trees are increasing in $k_0$, so the final tops we desire will be 
\[
\Top=\bigcap_{n>0}\bigcup_{k_{0}\geq n} \Top^{k_{0}}
\]
and for $R\in \Top$,
\[
\Tree(R) = \bigcup_{k_{0}\geq 1} \Tree^{k_{0}}(R).
\]
The purpose of cutting off our cubes at the scale $k_0$ is for simplicity, so that our trees are always finite.
\end{remark}

Without loss of generality, assume $P_{R}=\R^{d}$ and $\zeta_{R}=0$. Let $C_{1}= M/2 $ and $C_2$ be such that $M \gg C_{2}>1$. We can then apply Lemma \ref{l:DT-dyadic} with these constants to $S=\Tree(R)$. Let $g_{R}=g_{\Tree(R)}$ be the bi-Lipschitz map and $g_R(\R^{d})=\Sigma_{R}$ be the surface from the lemma. Since $\zeta_{R}=0$, by \eqref{e:gz-z}, $|g_R(0)|\lec \ve \ell(R)$.  Let 
 \[
 d_{R}(x) = \inf\{\ell(Q)+\dist(x,Q): Q\in \Tree(R)\}.
 \]

Let $\Omega_{R,\pm}$ be the component of $\Sigma_{R}$ containing the corkscrew $x_{R}^{\pm}$, we can assume $\Omega_{R,\pm}=g_{R}(\R^{d+1}_{\pm})$.  Let $\alpha>0$ be small, $e_{d+1}$ be the $(d+1)$st standard basis vector, and 
 \[
 U_{R}^{\pm} = \{x=x'\pm x_{d+1}e_{d+1}: x'\in \R^{d},\;  x_{d+1}>\alpha d_{R}(g(x')))\}\cap B(0,10\ell(R)).
 \]
If $x_{R}^{-}\not\in \Omega$, then we just set $U_{R}^{-}=\emptyset$. Since $d_{R}$ is Lipschitz, $U_{R}^{\pm }$ are disjoint Lipschitz domains, and since $g_{R}$ is bi-Lipschitz, the domain 
 \[
 \Omega_{R}^{\pm} = g_{R}(U_{R}^{\pm})
 \]
 is a CAD. Also note that, since $\Tree(R)\subseteq \cD(k_{0})$, we always have $d_{R}>0$.

\begin{lemma}
If $x_{R}^{\pm}\in \Omega$, then $\Omega_{R}^{\pm}\subseteq \Omega$. 
\end{lemma}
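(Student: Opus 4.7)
The plan is to establish three facts: (a) $\Omega_R^{+}$ is connected; (b) $x_R^{+}\in \Omega_R^{+}\cap \Omega$; and (c) $\Omega_R^{+}\cap \d\Omega=\emptyset$. From these the conclusion is immediate, since a connected subset of $\R^{d+1}\setminus \d\Omega$ that meets $\Omega$ must lie in the component of $\R^{d+1}\setminus \d\Omega$ containing $x_R^{+}$, which is contained in $\Omega$.

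The main work is in (c), which I would prove by contradiction. Suppose $y=g_R(x)\in \Omega_R^{+}\cap \d\Omega$ with $x=x'+x_{d+1}e_{d+1}\in U_R^{+}$, and let $L$ denote the bi-Lipschitz constant of $g_R$. Since $\Sigma_R=g_R(\R^{d})$,
\[
\dist(y,\Sigma_R)\;\geq\;L^{-1}x_{d+1}\;>\;L^{-1}\alpha\, d_R(g_R(x')).
\]
Because $d_R$ is $1$-Lipschitz and $|y-g_R(x')|\leq Lx_{d+1}$, rearranging yields $x_{d+1}\geq \tfrac{\alpha}{1+\alpha L}\,d_R(y)$, and therefore $\dist(y,\Sigma_R)\geq \alpha' d_R(y)$ with $\alpha':=\alpha/(L(1+\alpha L))$. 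Note $d_R(y)\geq 5\rho^{k_{0}}>0$ because every cube of $\Tree(R)\subseteq \cD(k_{0})$ has sidelength at least $5\rho^{k_{0}}$.

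For the matching upper bound, I would produce a cube $Q\in \Tree(R)$ with $\ell(Q)\sim d_R(y)$ and $y$ in a bounded multiple of $B_Q$. Pick $Q_{*}\in \Tree(R)$ with $\ell(Q_{*})+\dist(y,Q_{*})\leq 2d_R(y)$; if $\ell(Q_{*})$ is too small, walk up its ancestor chain in $\Tree(R)$ (which always reaches $R$, whose sidelength satisfies $\ell(R)\gec d_R(y)$ since $y\in g_R(B(0,10\ell(R)))\subseteq B(0,11\ell(R))$) and stop at the first ancestor $Q$ with $\ell(Q)\geq d_R(y)/2$. The previous ancestor has sidelength $<d_R(y)/2$, so $\ell(Q)<d_R(y)/(2\rho)$, whence $\ell(Q)\sim d_R(y)$. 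Since $\dist(y,Q)\leq \dist(y,Q_{*})\leq 2d_R(y)\lec \ell(Q)$, we obtain $y\in C_{0}B_Q$ for a constant $C_{0}$ depending only on $\rho$. Choosing $M$ so that $C_{1}=M/2\geq C_{0}$ and $\ve$ small enough that \eqref{e:C1beta} follows from the non-$\Bad$ condition $b\beta_{\d\Omega}(MB_Q)<\ve$, Lemma \ref{l:DT-dyadic} yields
\[
\dist(y,\Sigma_R)\;\lec\;\ve\ell(Q)\;\lec\;\ve\, d_R(y),
\]
and combined with the lower bound this forces $\alpha'\lec \ve$, contradicting $\ve\ll \alpha$.

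Facts (a) and (b) are then quick verifications. For (a), $U_R^{+}$ is the portion of the open ball $B(0,10\ell(R))$ lying strictly above the continuous graph $x_{d+1}=\alpha d_R(g_R(x'))$, which divides the ball into two connected open pieces; consequently $\Omega_R^{+}=g_R(U_R^{+})$ is connected as a homeomorphic image. For (b), the estimate \eqref{e:gz-z} gives $|g_R^{-1}(x_R^{+})-x_R^{+}|\lec \ve\ell(R)$, so $g_R^{-1}(x_R^{+})$ has $(d+1)$-coordinate $\ell(R)/2+O(\ve\ell(R))$, whereas $d_R(g_R(x'))\leq \ell(R)(1+O(\ve))$ for $x'$ near $0$ (using $\zeta_R=0\in R\in \Tree(R)$), so fixing $\alpha<1/2$ and $\ve$ small forces $g_R^{-1}(x_R^{+})\in U_R^{+}$. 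The principal difficulty is the cube selection in (c); beyond that the argument is direct Lipschitz bookkeeping.
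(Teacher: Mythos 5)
Your proof is correct, and it runs on the same engine as the paper's: points of $\Omega_R^{+}=g_R(U_R^{+})$ sit at distance $\gec \alpha\, d_R$ from $\Sigma_R$ (bi-Lipschitzness of $g_R$ plus the definition of $U_R^{+}$), whereas points of $\d\Omega$ near the tree are within $\lec \ve\, d_R$ of $\Sigma_R$ (via \eqref{e:close-to-P_Q} applied to a cube $\hat Q\in\Tree(R)$ with $\ell(\hat Q)\sim d_R$), and $\ve\ll\alpha$ makes these incompatible. The only organizational difference is that you rule out $\Omega_R^{+}\cap\d\Omega\neq\emptyset$ directly in a single contradiction and then invoke connectedness of $\Omega_R^{+}$ explicitly, whereas the paper estimates $\dist(y,\d\Omega)$ for $y\in\d\Omega_R^{+}$ and splits into the graph part and the spherical-cap part of that boundary; your packaging avoids the case analysis and makes the final topological step (connected set meeting $\Omega$ and avoiding $\d\Omega$) explicit rather than implicit. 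Your cube-selection step (walking up the ancestor chain from a near-minimizer of $d_R(y)$, with the fallback $Q=R$ when the chain tops out) is exactly the role played by $\hat Q$ in the paper and is handled correctly, including the degenerate case and the lower bound $d_R(y)\geq 5\rho^{k_0}>0$.
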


\begin{proof}
We will just show this for $\Omega_{R}=\Omega_{R}^{+}$ and $x_{R}=x_{R}^{+}$. We will show that if $y\in \d\Omega_{R}$, then $\dist(y,\d\Omega)>0$. 

Let $G_{R}= \d U_{R}^{+}\backslash \d B(0,10\ell(R))$. 

{\bf Case 1:} Suppose first that $y\in g_{R}(G_{R})$. Let $x=g_{R}^{-1}(y) \in \d U_{R}^{+}$. Then $x=x'+x_{d+1}e_{d+1} \in \R^{d}\oplus \R$ where $x'\in B(0,10\ell(R))\cap \R^{d}$. 

Let $y'=g_{R}(x')\in \Sigma_{R}$. Let $Q\in \Tree(R)$ be so that 
\[
\ell(Q)+\dist(y',Q)=d_{R}(y').
\]
Let $\hat{Q}\in \Tree(R)$ be the maximal ancestor of $Q$ so that $\ell(\hat{Q})\leq d_{R}(y')$. We claim that
\begin{equation}
\label{e:hatqsimdry'}
\ell(\hat{Q})\sim d_{R}(y').
\end{equation}

Indeed, if $\ell(\hat{Q})<\ell(R)$, then this is clear since the parent of $\hat{Q}$ (which has comparable size) will have size at least $d_{R}(y')$. If $\hat{Q}=R$, then because $g_{R}$ is bi-Lipschitz,
\[
\dist(y',R) 
\leq |y'-\zeta_{R}|
=|y'|
\leq |y'-g_{R}(0)|+|g_{R}(0)
\lec |x'-0|+\ve \ell(R) \lec \ell(R),\]
and so we have
\[
\ell(\hat{Q})\leq d_{R}(y')
\leq \ell(R)+\dist(y',R)\lec \ell(R) = \ell(\hat{Q}).
\]
This proves the claim. In particular, for $C_1$ large enough, $y'\in \frac{C_{1}}{2} B_{\hat{Q}}$. Since $g_{R}$ is bi-Lipschitz, we have 
\begin{equation}
\label{e:y-y'}
|y-y'|\sim |x-x'|
= x_{d+1} =\alpha d_{R}(g_{R}(x')) =\alpha  d_{R}(y')\sim \alpha \ell(\hat{Q}).
\end{equation}
Hence, for $\ve<\alpha$, and since $y'\in C_{1} B_{\hat{Q}}\cap \Sigma_{R}$, if $z\in \d\Omega$ is closest to $y$,
\begin{equation}
\label{e:distyOm}
|z-y|\leq \dist(y,\d\Omega) \leq |y-y'|+\dist(y',\d\Omega) \stackrel{ \eqref{e:close-to-E}\atop \eqref{e:y-y'}}{\lec} \alpha \ell(\hat{Q})
\end{equation}
In particular, for $C_1$ large enough and $\alpha$ small enough, $z\in C_{1} B_{\hat{Q}}$, and by \eqref{e:close-to-P_Q}, there is $y_0\in \Sigma_{R}$ with $|z-y_{0}|\lec \ve \ell(\hat{Q})$. Let $x_0=g_{R}^{-1}(y_{0})\in \R^{d}$. Since $g_{R}$ is $C$-bi-Lipschitz on $\R^{n}$, $x_0\in \R^{d}$, and $x\in U_{R}^{+}$, 
\[
|y-y_0|
\sim |x-x_0|
\geq \dist(x,\R^{d})
\geq  \alpha d_{R}(g_{R}(x'))
=\alpha d_{R}(y')
\sim \alpha \ell(\hat{Q})\]
and so  for $\ve\ll \alpha$,
\[
\dist(y,\d\Omega)
=|y-z|
\geq |y-y_0|-|y_0-z|
\gec \alpha \ell(\hat{Q})- \ve \ell(\hat{Q})
\gec \alpha d_{R}(y').
\]

This and \eqref{e:distyOm} imply
\begin{equation}
\label{e:GR}
\dist(y,\d\Omega) \sim \alpha d_{R}(y') \mbox{ for }y\in g_{R}(G_{R}).
\end{equation}
In particular, $\dist(y,\d\Omega)>0$ in this case.

{\bf Case 2:} Now suppose $y\in g_{R}(\d B(0,10\ell(R))\cap U_{R}^{+})$. Let $z\in \d\Omega$ be closest to $y$. Since $\dist(g_{R}(0),R)\lec \ve \ell(R)$, we have that 
\[
\dist(y,R)
\lec | y-g_{R}(0)|+\ve \ell(R)
\sim |x-0|+\ve \ell(R)\lec\ell(R),\]
and so $z\in CB_{R}$ for some $C>0$. By \eqref{e:close-to-P_Q}, for $C_{1}\gg C$, $\dist(z,\Sigma_{R})\lec \ve \ell(R)$. Let $z'\in \Sigma_{R}$ be closest to $z$, so $|z-z'|\lec \ve \ell(R)$. Hence, $g_{R}^{-1}(z')\in \R^{d}$, and so (using the fact that $g_R$ is bi-Lipschitz)

\begin{align*}
\dist(y,\d\Omega)
& =|y-z|
\geq |y-z'|-C\ve \ell(R)
\gec  |x-g_{R}^{-1}(z')|-C\ve \ell(R)\\
& \geq   \dist(x,\R^{d})-C\ve \ell(R)
\end{align*}
Thus, if $\dist(x,\R^{d})\geq \alpha \ell(R)$, then $\ve\ll \alpha$ implies $\dist(y,\d\Omega)\gec \alpha \ell(R)$. Otherwise, if $\dist(x,\R^{d})<\alpha\ell(R)$, then for $\alpha$ small enough, since $x\in \d B(0,10\ell(R))$, this implies $|x'|\geq 3\ell(R)$, and so the above inequality and \eqref{e:gz-z} imply
\begin{align*}
\dist(y,\d\Omega)
& \geq   \dist(x,\R^{d})-C\ve \ell(R)
\geq |x_{d+1}|-C\ve \ell(R)
=  \alpha d_{R}(g_{R}(x'))-C\ve \ell(R)\\
& \geq \alpha \dist\ps{g_{R}(x'),\frac{2}{1+C\ve}B_{R}}-C\ve \ell(R)\\
& \stackrel{\eqref{e:gz-z}}{\geq} \alpha \dist(x',B(0,2\ell(R)))-C\ve \ell(R)
\geq  \alpha\ell(R)-C\ve\ell(R)\gec \alpha\ell(R).
\end{align*}
 In either case, $\dist(y,\d\Omega)>0$. This finishes the proof.

\end{proof}

\begin{lemma}
\label{l:totalsum}
We have 
\begin{equation}
\label{e:totalsum}
\sum_{R\in \Top} \ell(R)^{d} \lec \beta_{\d\Omega}(\cT).
\end{equation}

\end{lemma}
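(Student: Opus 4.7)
The plan is to reorganise the sum $\sum_{R \in \Top}\ell(R)^{d}$ according to the reason the stopping-time procedure produced each top cube, and to bound each contribution by $\beta_{\d\Omega}(\cT)$, with the $\HD$ and $\LD$ pieces giving a small multiple of $T:=\sum_{R\in\Top}\ell(R)^{d}$ that can be absorbed on the left once $A$ is large and $\tau$ is small. Every $R\in\Top\setminus\{Q_{0}\}$ lies in $\Next(R')$ for a unique $R'\in\Top$, and since children of a cube have comparable sidelength, it suffices to control $\sum_{R'\in\Top}\sum_{Q\in\Stop(R')}\ell(Q)^{d}$. The $\BTM(R')$ stops contribute nothing here, as the elements of $\BTM$ lie outside $\cT(k_{0})$ and never re-enter $\Next(R')$; the remaining stops split into $\Bad$, $B\beta$, $\HD$, and $\LD$.

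For $\Bad$, summation over $R'$ gives at most $\sum_{Q\in\Bad}\ell(Q)^{d}\lec\beta_{\d\Omega}(Q_{0})$ by \eqref{e:sumbad}. For $B\beta$, the inequality \eqref{e:beta-Jones>ve^2*} says each $Q\in B\beta(R')$ accumulates at least $\ve^{2}$ of $\beta^{d,2}$-mass along its ancestry in $\Tree(R')$; swapping the order of summation and using disjointness of the $Q$'s below a fixed $T\in\cT$ yields
\[
\ve^{2}\sum_{R'\in\Top}\sum_{Q \in B\beta(R')}\ell(Q)^{d} \;\lec\; \sum_{T\in\cT}\beta_{\d\Omega}^{d,2}(MB_{T})^{2}\ell(T)^{d} \;\leq\; \beta_{\d\Omega}(\cT).
\]
For $\HD(R')$, the maximal cubes are pairwise disjoint, so the balls $\lambda B_{Q}$ have bounded overlap inside some dilate $\Lambda B_{R'}$, and Lemma \ref{l:bourgain} combined with the corkscrew property of $x_{R'}^{\pm}$ gives $\omega^{x_{R'}^{\pm}}(\Lambda B_{R'})\lec\omega^{x_{R'}^{\pm}}(\lambda B_{R'})$. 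Pairing this with the $\HD$ density lower bound produces $\sum_{Q\in\HD(R')}\ell(Q)^{d}\lec A^{-1}\ell(R')^{d}$.

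The main obstacle is the $\LD$ piece, since the absence of Ahlfors regularity of $\d\Omega$ precludes applying $A_{\infty}$-type lower bounds to $\omega_{\Omega}$ directly. My plan is to transplant the estimate to the chord-arc subdomain $\Omega_{R'}^{\pm}\subseteq\Omega$ built via Lemma \ref{l:DT-dyadic}: because $\Sigma_{R'}$ is $\ve$-Reifenberg flat and Ahlfors regular by \eqref{e:ETregular}, and $U_{R'}^{\pm}$ is a Lipschitz domain, $\Omega_{R'}^{\pm}$ is a chord-arc domain with constants independent of $R'$, and so by Dahlberg/David--Jerison/Semmes $\omega_{\Omega_{R'}^{\pm}}^{x_{R'}^{\pm}}$ is $A_{\infty}$ with respect to $\cH^{d}|_{\d\Omega_{R'}^{\pm}}$. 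The maximum principle gives $\omega_{\Omega}\geq\omega_{\Omega_{R'}^{\pm}}$ on $\d\Omega\cap\d\Omega_{R'}^{\pm}$, and the closeness estimates \eqref{e:close-to-E}--\eqref{e:close-to-P_Q} (available because no $Q\in\Tree(R')$ is in $\Bad$ and the accumulated $\beta$ is small thanks to the $B\beta$ rule) let one project each $Q\in\LD(R')$ to a set on $\d\Omega\cap\d\Omega_{R'}^{\pm}$ of $\cH^{d}$-measure $\gtrsim\ell(Q)^{d}$. Applying reverse H\"older inside $\Omega_{R'}^{\pm}$ to the union of these projections yields
\[
\omega_{\Omega}^{x_{R'}^{\pm}}\Bigl(\bigcup_{Q\in\LD(R')}Q\Bigr)\;\gtrsim\;\biggl(\frac{\sum_{Q\in\LD(R')}\ell(Q)^{d}}{\ell(R')^{d}}\biggr)^{\!p}\omega_{\Omega}^{x_{R'}^{\pm}}(\lambda B_{R'})
\]
for some $p>1$, while the $\LD$ inequality summed over the disjoint $Q$'s gives $\omega^{x_{R'}^{\pm}}(\bigcup Q)\lec\tau\,\omega^{x_{R'}^{\pm}}(\lambda B_{R'})$. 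Combining, $\sum_{Q\in\LD(R')}\ell(Q)^{d}\lec\tau^{1/p}\ell(R')^{d}$.

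Summing the four contributions over $R'\in\Top$ produces
\[
T \;\lec\; \ell(Q_{0})^{d} + \ve^{-2}\beta_{\d\Omega}(\cT) + \bigl(A^{-1}+\tau^{1/p}\bigr)T.
\]
Choosing $\ve$ first (thereby fixing $M$, $C_{1}$, $C_{2}$, and the David--Toro constants), then $A$ sufficiently large and $\tau$ sufficiently small that $A^{-1}+\tau^{1/p}\leq 1/2$, allows absorption and delivers $T\lec\beta_{\d\Omega}(\cT)$, which is \eqref{e:totalsum}. The hardest part of carrying this out will be making the $\LD$ comparison precise: specifying the projection of $Q$ onto $\d\Omega\cap\d\Omega_{R'}^{\pm}$ so that it genuinely lies in $\d\Omega$ (and not merely near $\Sigma_{R'}$), and tracking how the chord-arc and $A_{\infty}$ constants of $\Omega_{R'}^{\pm}$ depend on $\ve$, so that the coefficient $\tau^{1/p}$ can indeed be driven below any prescribed threshold once the other geometric parameters are fixed.
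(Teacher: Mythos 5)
Your overall architecture matches the paper's: split $\Top$ by stopping reason, bound the $\Bad$ and $B\beta$ contributions directly by $\beta_{\d\Omega}(\cT)$, show the $\HD$ and $\LD$ contributions are small multiples of $\sum_{R}\ell(R)^{d}$ (via the chord-arc subdomains $\Omega_{R}^{\pm}$ and the $A_{\infty}$/reverse H\"older property of their harmonic measures), and absorb. However, your $\LD$ step as written would fail: you propose to ``project each $Q\in\LD(R')$ to a set on $\d\Omega\cap\d\Omega_{R'}^{\pm}$'' and compare harmonic measures on that common boundary, but by construction $\d\Omega\cap\d\Omega_{R'}^{\pm}=\emptyset$. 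Since $\Tree(R')\subseteq\cT(k_{0})$, the function $d_{R'}$ is bounded below, and the graph domain $U_{R'}^{\pm}$ sits at height $\alpha d_{R'}>0$ above $\Sigma_{R'}$; the paper's estimate \eqref{e:GR} shows $\dist(y,\d\Omega)\sim\alpha d_{R'}(y')>0$ for every $y\in\d\Omega_{R'}^{\pm}$. The missing idea is to replace the common-boundary comparison by Bourgain's estimate plus the maximum principle applied to the harmonic function $u(x)=\omega_{\Omega}^{x}\bigl(\bigcup_{Q}c_{0}B_{Q}\bigr)$: one places balls $B^{Q}=B(\xi_{Q},\alpha^{2}\ell(Q))$ centered on $\d\Omega_{R'}$ inside $\frac{c_{0}}{4}B_{Q}\cap\Omega$, notes $u\gec 1$ on $\bigcup B^{Q}$ by Lemma \ref{l:bourgain}, and concludes $\omega_{\Omega_{R'}}^{x_{R'}}(\bigcup B^{Q})\lec u(x_{R'})\lec\tau$; only then does reverse H\"older on $\d\Omega_{R'}$ convert this into $\sum_{Q\in\LD(R')}\ell(Q)^{d}\lec\tau^{\theta}\ell(R')^{d}$.

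A second, smaller issue: in the $\HD$ step your claim that the balls $\lambda B_{Q}$, $Q\in\HD(R')$, have bounded overlap is false in general (disjoint cubes of rapidly decreasing size accumulating at a point give unbounded overlap of their dilates). One needs a Vitali $5r$-subcover $\{\lambda B_{Q_{j}}\}$, and --- here and in your $B\beta$ and $\BTM$ estimates --- one cannot pass from disjointness of the $Q$'s to $\sum\ell(Q)^{d}\lec\ell(T)^{d}$ using $\d\Omega$ alone, since $\d\Omega$ is only lower content regular. The paper routes every such packing estimate through the Ahlfors regular surface $\Sigma_{R'}$ via $\ell(Q)^{d}\lec\cH^{d}(c_{0}B_{Q}\cap\Sigma_{R'})$, which is available because \eqref{e:close-to-P_Q} guarantees $\zeta_{Q}$ is within $\frac{c_{0}}{2}\ell(Q)$ of $\Sigma_{R'}$ for every $Q\in\Tree(R')$. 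With these two repairs your argument closes exactly as the paper's does.
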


\begin{proof}

We will first get some estimates on the stopped cubes. 

\begin{lemma}
\begin{equation}
\label{e:HD}
\sum_{Q\in \HD(R)} \ell(Q)^{d}
 \lec_{\lambda}  A^{-1} \ell(R)^{d}.
\end{equation}
\end{lemma}

\begin{proof}
Without loss of generality (and to simplify notation), we assume $\HD(R)=\HD^{+}(R)$, the general case is similar. If $Q\in \HD(R)$, then $Q$ has a child $Q'$ so that $\Theta_{\omega^{x_{R}}}(\lambda B_{Q'})>A\Theta_{\omega^{x_{R}}}(\lambda B_{R})$. Thus,
\begin{equation}
\label{e:Q>A}
\Theta_{\omega^{x_{R}}}(\lambda B_{Q})
\gec \Theta_{\omega^{x_{R}}}(\lambda B_{Q'})\geq A\Theta_{\omega^{x_{R}}}(\lambda B_{R}).
\end{equation}

By the Vitali covering lemma, we can find disjoint balls $\lambda B_{Q_{j}}$ so that 
\[
\bigcup_{Q\in \HD(R)} \lambda B_{Q} \subseteq \bigcup_{j} 5\lambda B_{Q_{j}}.
\]
Also note that for all $Q\in \HD(R)\subseteq \Tree(R)$, by \eqref{e:close-to-P_Q}, for $\ve$ small enough (recall we set $\delta=2\ve$)
\[
\dist(\zeta_{Q}, \Sigma_{R})<\frac{c_{0}}{2}\ell(Q)
\]
where $\zeta_{Q}$ is as in Definition \ref{t:Christ}, and so 
\begin{equation}
\label{e:Q<c0}
\ell(Q)^{d}\lec \cH^{d}(c_{0}B_{Q}\cap \Sigma_{R}).
\end{equation}

Thus, since the balls $c_{0}B_{Q}$ are disjoint and $\lambda B_{Q_{j}}\subseteq \lambda B_{R}$ for all $j$,
\begin{align*}
\sum_{Q\in HD(R)} \ell(Q)^{d}
& \lec \cH^{d}|_{\Sigma_{R}} \ps{\bigcup_{Q\in \HD(R)} c_{0}B_{Q}}
 \leq \cH^{d}|_{\Sigma_{R}} \ps{\bigcup_{j} 5\lambda B_{Q_{j}}}\\
& \leq \sum_{j} \cH^{d}|_{\Sigma_{R}} ( 5\lambda B_{Q_{j}})
\sim_{\lambda}  \sum_{j} \ell(Q_{j})^{d}\\
& \stackrel{\eqref{e:Q>A}}{\lec}A^{-1}  \Theta^{d}_{\omega^{x_{S}}}( \lambda B_R)^{-1} \sum_{j} \omega^{x_{R}}( \lambda B_{Q_{j}})\\
& \leq A^{-1}\Theta^{d}_{\omega^{x_{S}}}( \lambda B_R)^{-1} \omega^{x_{R}}\ps{\bigcup \lambda B_{Q_{j}}} \\
& \leq A^{-1}\Theta^{d}_{\omega^{x_{S}}}( \lambda B_R)^{-1} \omega^{x_{R}}( \lambda B_{R}) \\
& = A^{-1} (\diam \lambda R)^{d} \sim_{\lambda} A^{-1} \ell(R)^{d}.
\end{align*}

\end{proof}

\begin{lemma}
There is a universal constant $\theta>0$ so that 
\begin{equation}
\label{e:LD}
\sum_{Q\in \LD(R)} \ell(Q)^{d} \lec \tau^{\theta }\ell(R)^{d}.
\end{equation}

\end{lemma}

\begin{proof}
Again, to simplify notation, we just assume $\LD(R)=\LD^{+}(R)$ and $x_{R}=x_{R}^{+}$. Recall that for $Q\in \LD(R)$, there is a child $Q'$ of $Q$ so that $\Theta_{\omega^{x_{R}}}^{d}(Q)<\tau \Theta_{\omega^{x_{R}}}^{d}(R)$. Let $\LD'(R)$ be the set of these children. Then we clearly have 
\[
\sum_{Q\in \LD(R)} \ell(Q)^{d}\lec \sum_{Q\in \LD'(R)} \ell(Q)^{d},
\]
so we will estimate this latter sum. Note that for $Q\in \LD'(R)$, since $Q$ has a parent $\hat{Q}$ with $b\beta_{\d\Omega}(MB_{\hat{Q}},P_{\hat{Q}})<\ve$, we know by \eqref{e:close-to-P_Q} that
\[
 \dist(\zeta_{Q},\d\Sigma_{R})
\lec \ve \ell(Q)
\]
so there is $\zeta_{Q}'\in \d\Sigma_{R}$ with 
\begin{equation}
\label{e:z-z'}
|\zeta_{Q}-\zeta_{Q}'|\lec \ve \ell(Q).
\end{equation}
 Let 
\[
\xi_{Q} = g_{R}( g_{R}^{-1}(\zeta_{Q}')+\alpha d_{R}(\zeta_{Q}')e_{d+1} )\in \d\Omega_{R}.
\]
Note that since $\zeta_{Q}\in Q\subseteq R\subseteq  B_{R}$, $\zeta_{Q}'\in 2B_{R}$, and by \eqref{e:gz-z}, $g_{R}^{-1}(\zeta_{Q}')\in 3B_{R}$ for $\ve>0$ small, thus $g_{R}^{-1}(\zeta_{Q}')\in G_{R}$. Hence, \eqref{e:GR} implies
\[
\delta_{\Omega}(\xi_{Q})\sim \alpha d_{R}(\zeta_{Q}')
\]

Note that $d_{R}(\zeta_{Q})\leq \ell(Q)$ trivially, but also, if $T\in \Tree(R)$ is any other cube, then either $T\cap Q=\emptyset$ (in which case $\dist(\zeta_Q,T)\geq c_0 \ell(Q)$ by Theorem \ref{t:Christ}) or $T\supseteq Q$ (in which case $\ell(T)\geq \ell(Q)$), and thus in fact $d_{R}(\zeta_{Q})\geq c_{0}\ell(Q)$, so $d_{R}(\zeta_{Q})\sim \ell(Q)$. By \eqref{e:z-z'}, this also means that $d_{R}(\zeta_{Q}')\sim \ell(Q)$. Hence,
\[
\delta_{\Omega}(\xi_{Q})\sim \alpha d_{R}(\zeta_{Q}')\sim \alpha \ell(Q).
\]

Let $B^{Q}=B(\xi_{Q},\alpha^2 \ell(Q))$. Then for $\alpha\ll c_{0}$, the balls $\{B^{Q}:Q\in \LD(R)\}$ are disjoint. Moreover, using that $g_{R}$ is bi-Lipschitz,

\begin{align*}
|\xi_{Q}-\zeta_{Q}|
& \leq |\zeta_{Q}-\zeta_{Q}'|+
|\zeta_{Q}'-\xi_{Q}|\\
& \stackrel{\eqref{e:z-z'}}{ \lec} \ve \ell(Q)+|g_{R}^{-1}(\zeta_{Q}')-(g_{R}^{-1}(\zeta_{Q}')+\alpha d_{R}(\zeta_{Q}')e_{d+1} )|
\\
&=\ve \ell(Q)+\alpha d_{R}(\zeta_{Q}')\lec \alpha  \ell(Q).
\end{align*}
Thus,  for $\alpha$ small,
\begin{equation}
\label{e:B^Qinc0/4 B_Q}
B^{Q}\subseteq \frac{c_{0}}{4}B_{Q}\cap \Omega.
\end{equation}

Lemma \ref{l:bourgain} implies that 
\begin{equation}
\label{e:LDbourgain}
\omega_{\Omega}^{x}(c_{0}B_{Q})\gec 1 \;\; \mbox{ for }x\in B^{Q}.
\end{equation}
Hence, by the maximum principle,  since $\Omega_{R}\subseteq \Omega$ is a CAD (and hence $\d\Omega_{R}$ is AR),
\begin{align*}
\omega_{\Omega_{R}}^{x_{R}}\ps{\bigcup_{Q\in \LD'(R)}B^{Q}}
& \lec  \omega_{\Omega}^{x_{R}} \ps{\bigcup_{Q\in \LD'(R)} c_{0}B_{Q}}
 \stackrel{\eqref{e:containment}}{\leq} \sum_{Q\in \LD'(R)}\omega_{\Omega}^{x_{R}}(Q)\\
& <\tau  \Theta_{\omega^{x_{R}}}(R) \sum_{Q\in \LD'(R)} \ell(Q)^{d}\\
& \stackrel{\eqref{e:Q<c0}}{\lec} \tau  \Theta_{\omega^{x_{R}}}(R) \sum_{Q\in \LD'(R)} \cH^{d}|_{\d\Omega_{R}}(B^{Q})\\
& =\tau \Theta_{\omega^{x_{R}}}(R)  \cH^{d}|_{\d\Omega_{R}}\ps{\bigcup_{Q\in \LD'(R)} B^{Q}}\\
& \lec \tau  \Theta_{\omega^{x_{R}}}(R) \cH^{d}(\d\Omega_{R})\\
& \lec  \tau  \Theta_{\omega^{x_{R}}}(R)\ell(R)^{d}=\tau\omega_{\Omega}^{x_{R}}(R)\leq \tau. 
\end{align*}

By Theorem \ref{t:AMT-HM} or the main result of \cite{DJ90}, $\omega_{\Omega}^{x_{R}}$ is $A_{\infty}$-equivalent to $\cH^{d}|_{\d\Omega_{R}}$ (with constants only depending on the CAD constants of $\Omega_{R}$, which don't depend on any of our parameters apart from $d$). In particular, if $\cH^{d}|_{\d\Omega_{R}} = p_{R} \omega_{\Omega_{R}}^{x_{R}}$, then the function $p_{R}$ satisfies a reverse H\"older inequality (\cite[Section V.5]{Big-Stein}), that is, there is $1< p<\infty$ (depending on the CAD constants for $\Omega_{R}$), so that 

\[
\ps{\avint_{\d\Omega_{R}} p_{R}^{p}d\omega_{\Omega_{R}}^{x_{R}}}^{\frac{1}{p}} \lec \avint_{\d\Omega_{R}}p_{R}d\omega_{\Omega_{R}}^{x_{R}}=\frac{\cH^{d}(\d\Omega_{R})}{\omega_{\Omega_{R}}^{x_{R}}(\d\Omega_{R})}\sim \ell(R)^{d}.
\]
Hence, for any set $F\subseteq \d\Omega_{R}$, by H\"older's inequality, if $\frac{1}{p}+\frac{1}{p'}=1$,
\[
\cH^{d}(F)
\leq \ps{\avint_{\d\Omega_{R}} p_{R}^{p}d\omega_{\Omega_{R}}^{x_{R}}}^{\frac{1}{p}}
\omega_{\Omega_{R}}^{x_{R}}(F)^{\frac{1}{p'}}\lec \ell(R)^{d}\omega_{\Omega_{R}}^{x_{R}}(F)^{\frac{1}{p'}}.
\]
Letting $\theta=1/p'$ and $F=\bigcup_{Q\in \LD'(R)} B^{Q}$, our previous estimates now give
\begin{align*}
\tau^{\theta}\ell(R)^{d}
& 
\gec \cH^{d}|_{\d\Omega_{R}} \ps{\bigcup_{Q\in \LD'(R)} B^{Q}}
\sim \sum_{Q\in \LD'(R)} \ell(Q)^{d} \gec  \sum_{Q\in \LD(R)} \ell(Q)^{d} .
\end{align*}

\end{proof}

\begin{lemma}
For $R\in \Top$,
\begin{equation}
\label{e:Bbeta}
\sum_{Q\in B\beta(R)} \ell(Q)^{d}\lec \sum_{T\in \Tree(R)} \beta(MB_{T})^{2} \ell(T)^{d}.
\end{equation}
\end{lemma}

\begin{proof}

Using \eqref{e:beta-Jones>ve^2*}  and the fact that the cubes in $\Stop(R)$ partition $R$ by Lemma \ref{l:minimal-partition},

\begin{align*}
\sum_{Q\in B\beta(R)} \ell(Q)^{d}
& \leq \ve^{-2} \sum_{Q\in B\beta(R)} \sum_{Q\subseteq T\subseteq R} \beta(MB_{T})^{2}  \ell(Q)^{d}\\
& \leq \ve^{-2} \sum_{T\in \Tree(R)} \beta(MB_{T})^{2} \sum_{Q\in \Stop(R)\atop Q\subseteq T}  \ell(Q)^{d} \\
& \lec \ve^{-2} \sum_{T\in \Tree(R)} \beta(MB_{T})^{2} \sum_{Q\in \Stop(R)\atop Q\subseteq T}  \cH^{d}(\Sigma_{R}\cap c_{0}B_{Q})\\
& \leq  \ve^{-2} \sum_{T\in \Tree(R)} \beta(MB_{T})^{2} \cH^{d}(\Sigma_{R}\cap B_{T} )\\
&  \stackrel{\eqref{e:ETregular}}{\lec}\ve^{-2} \sum_{T\in \Tree(R)} \beta(MB_{T})^{2} \ell(T)^{d}.
\end{align*}

\end{proof}

\begin{lemma}
For $R\in \Top$, 
\begin{equation}
\label{e:bottomsum}
\sum_{Q\in \BTM(R)}\ell(Q)^{d}\lec \ell(R)^{d}.
\end{equation}
\end{lemma}

\begin{proof}
Note by \eqref{e:Q<c0}, and since the balls $c_0B_Q$ are disjoint for $Q\in  \BTM(R)$,
\[
\sum_{Q\in \BTM(R)} \ell(Q)^{d}
 \lec \sum_{Q\in \BTM(R)} \cH^{d}(\Sigma_{R}\cap c_{0}B_{Q})
  \leq \cH^{d}(\Sigma_{R}\cap 2B_{R} )
  \stackrel{\eqref{e:ETregular}}{\lec}\ell(R)^{d}.
\]

\end{proof}

Let $\Stop'(R)=\Stop(R)\backslash \BTM(R)$. Combining all these estimates, we see that 
\[
\sum_{Q\in\Stop'(R)}\ell(Q)^{d}
\lec (A^{-1}+\tau^{\theta})\ell(R)^{d} + \underbrace{\ve^{-2} \sum_{T\in \Tree(R)} \beta(MB_{T})^{2} \ell(T)^{d}+ \sum_{Q\in \Bad(R)}\ell(R)^{d}}_{=:I(R)}.
\]
Note that if $R\in \Top_k$ for $k\geq 1$, then $R$ is the child of a cube in $\Stop'(R')$ for some $R'\in \Top_{k-1}$. Thus,
\begin{align*}
\sum_{k=0}^{\infty} \sum_{R\in \Top_{k}}\ell(R)^{d}
& = \ell(Q_{0})^{d}+
\sum_{k=1}^{\infty} \sum_{R\in \Top_{k}}\ell(R)^{d}\\
& \lec \ell(Q_{0})^{d}+\sum_{k=1}^{\infty}\sum_{R\in \Top_{k-1}}\sum_{Q\in \Stop'(R)} \ell(Q)^{d}\\
& \lec \ell(Q_{0})^{d}+ (A^{-1}+\tau^{\theta})\sum_{k=0}^{\infty}\sum_{R\in \Top_{k}}\ell(R)^{d} +\sum_{k=0}^{\infty}\sum_{R\in \Top_{k}}I(R)
\end{align*}
Thus, for $A$ large and $\tau$ small enough, and because the sets $\Tree(R)$ partition $\cT(k_{0})$, 
\begin{align*}
\sum_{k=0}^{\infty} \sum_{R\in \Top_{k}}\ell(R)^{d}
\lec \sum_{k=0}^{\infty}\sum_{R\in \Top_{k}}I(R)
\stackrel{\eqref{e:sumbad}}{\lec}\ve^{-2} \beta_{\d\Omega}(\cT).
\end{align*}
Let $\BTM(k_0)$ be those cubes $Q$ from $\BTM$ that are a child of some cube $Q'\in \cT(k_0)$ (so each cube from $\BTM$ is in $\BTM(k_0)$ for some $k_0\geq 0$). Since the trees partition $\cT(k_0)$, we also see that 
\[
\sum_{Q\in \BTM(k_0)}\ell(Q)^{d}
\leq \sum_{Q\in \BTM(k_0)}\ell(Q')^{d}
\leq \sum_{R\in \Top} \sum_{Q'\in \BTM(R)}\ell(Q')^{d}
\lec \sum_{R\in \Top}\ell(R)^{d}
.
\]
Taking $k_0\rightarrow\infty$ (and recalling Remark \ref{r:k_0}), this completes the proof of \eqref{e:totalsum}.

%
%

\end{proof}

We record the following Corollary of the proof for the case of semi-uniform domains. 

\begin{corollary}
\label{c:SU-cor}
With the same assumptions as Lemma \ref{l:CDHM<beta}, if $\Omega$ is also SU and $x\in \Omega\backslash MB_{Q_{0}}$, for $\lambda\geq 1$, and for $A,\tau^{-1}$ sufficiently large, we may find cubes $\Top$ contained in $Q_0$ and a partition of $\cT$ into trees $\{\Tree(R):R\in \Top\}$ so that for each $R\in \Top$, if $\omega=\omega^{x}$,
\begin{enumerate}
\item $\tau \Theta_{\omega}^{d}(\lambda B_R)\leq \Theta_{\omega}^{d}(\lambda B_Q) \leq A \Theta_{\omega}^{d}(\lambda B_R)$ for all $Q\in \Tree(R)$.
\item If $\Stop(R)$ denote the minimal cubes of $\Tree(R)$ and $Q\in \Stop(R)$, then either $Q$ as a child in $\BTM$, $\Theta_{\omega}^{d}(Q)\sim \tau \Theta_{\omega}^{d}(\lambda B_R)$ or $\Theta_{\omega}^{d}(\lambda B_Q)\sim A \Theta_{\omega}^{d}(\lambda B_R)$ (with implied constants independent of $\tau,\lambda$ and $A$).
\item \eqref{e:btm+top<b} holds.
\end{enumerate}
\end{corollary}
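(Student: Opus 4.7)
The plan is to deduce the corollary from Lemma \ref{l:CDHM<beta} by invoking the change-of-pole principle for semi-uniform domains. I would re-run the same stopping-time construction as in the lemma, but recast the density bounds in terms of the single fixed pole $\omega=\omega^x$ rather than the corkscrew poles $\omega^{x_R^\pm}$. Item (3) then follows immediately from the packing estimate \eqref{e:btm+top<b} of the lemma, since the tree structure itself is unaffected.

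The key step is the following change of pole. Since $\Omega$ is SU, for $M$ chosen large the pole $x\in\Omega\setminus MB_{Q_0}$ lies outside every $MB_R$ with $R\subseteq Q_0$, so Lemma \ref{l:w/w} (combined with Harnack chains to transfer between corkscrew points and with Lemma \ref{l:bourgain}, which gives $\omega^{x_R^\pm}(\lambda B_R)\sim 1$) yields
\[
\omega^{x_R^\pm}(F)\sim \frac{\omega^x(F)}{\omega^x(\lambda B_R)} \quad\text{for }F\subseteq \lambda B_R.
\]
Setting $F=\lambda B_Q$ for $Q\in\Tree(R)$ and dividing by $\ell(Q)^d$ gives $\Theta^d_{\omega^{x_R^\pm}}(\lambda B_Q)\sim \Theta^d_{\omega^x}(\lambda B_Q)/\omega^x(\lambda B_R)$. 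The sandwich bound from Lemma \ref{l:CDHM<beta} therefore translates, after absorbing an absolute constant into $\tau$ and $A$, into the corresponding bound for $\omega^x$; this gives item (1). The internal $\HD$, $\LD$, $B\beta$, and $\BTM$ summability estimates from the lemma's proof are insensitive to this change of pole (they depend only on corkscrew balls, the David--Toro surface $\Sigma_R$, and the CAD subdomain $\Omega_R$) and so transfer unchanged.

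For item (2), I would split stopped cubes by the reason for stopping. For $Q\in\HD(R)$ (respectively $\LD(R)$), doubling of $\omega$ at $x$ from Lemma \ref{l:doubling} lifts the child's density bound to $Q$ itself; combined with the upper (respectively lower) sandwich bound from item (1), this forces $\Theta^d_\omega(\lambda B_Q)\sim A\,\Theta^d_\omega(\lambda B_R)$ (respectively $\Theta^d_\omega(Q)\sim\tau\,\Theta^d_\omega(\lambda B_R)$). Cubes in $\BTM(R)$ have a child in $\BTM$ by construction, which is exactly the first alternative in item (2). The main obstacle is that cubes stopped as $\Bad(R)$ or $B\beta(R)$ satisfy no such dichotomy; my proposed remedy is to promote each such cube to a new top, absorbing it into $\Top$ so that it is no longer a minimal cube of $\Tree(R)$. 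The extra mass thus added to $\Top$ is controlled by \eqref{e:sumbad} and by the estimate \eqref{e:Bbeta}, both dominated by $\beta_{\d\Omega}(\cT)$, so the packing estimate \eqref{e:btm+top<b} is preserved.
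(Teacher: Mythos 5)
Your treatment of item (1) is essentially the paper's: the one-sided bounds of Lemma \ref{l:w/w}, with Harnack chains identifying the two reference corkscrews with $x_R^{\pm}$, give \eqref{e:intree}, and the packing in item (3) is inherited from \eqref{e:btm+top<b}. The gap is in item (2), and it originates in your displayed change-of-pole formula $\omega^{x_R^{\pm}}(F)\sim \omega^x(F)/\omega^x(\lambda B_R)$. For a semi-uniform (non-uniform) domain, Lemma \ref{l:w/w} only provides $\omega^{x_1}(F)\lesssim \omega^x(F)/\omega^x(B)\lesssim \omega^{x_2}(F)$ for two \emph{distinct} corkscrew points $x_1\neq x_2$; a two-sided comparison for a single prescribed corkscrew point is exactly the uniform-domain conclusion in the last line of that lemma. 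Item (1) survives because for a tree cube both poles $x_R^{\pm}$ satisfy the sandwich, so whichever of them plays the role of $x_1$ (resp.\ $x_2$) delivers the lower (resp.\ upper) bound. But a cube in $\HD(R)$ or $\LD(R)$ witnesses a density violation with respect to only \emph{one} of $x_R^{\pm}$, and if that pole is not the one giving the needed direction of the comparison, the dichotomy $\Theta_\omega^d(\lambda B_Q)\sim A\,\Theta_\omega^d(\lambda B_R)$ (resp.\ $\sim\tau$) does not follow. A secondary problem: promoting a $\Bad$ or $B\beta$ cube to a singleton top makes it the minimal cube of its own tree with $R=Q$, so $\Theta_\omega^d(\lambda B_Q)=\Theta_\omega^d(\lambda B_R)$ satisfies neither alternative of item (2) unless it happens to have a child in $\BTM$.

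The paper's fix is different and worth internalizing: after establishing \eqref{e:intree}, it runs a \emph{second} stopping time over all of $\cT$ defined purely in terms of the fixed-pole density, stopping when a child satisfies $\Theta_\omega^d(Q')>CA\,\Theta_\omega^d(R)$ or $\Theta_\omega^d(Q')<C^{-1}\tau\,\Theta_\omega^d(R)$ for a large constant $C$. Maximality plus the doubling of $\omega$ (Lemma \ref{l:doubling}, valid since $\Omega$ is SU) then gives the two-sided dichotomy for the new stopped cubes automatically, and the remaining minimal cubes are minimal in $\cT$, hence have a child in $\BTM$. The new tops are packed by observing that, for $C$ large, each new stopping cube must exit the old tree containing it (by \eqref{e:intree}), so the new tops lying in a fixed old tree $\Tree(R')$ are pairwise disjoint and, since $\Theta_\omega^d$ is comparable across $\Tree(R')$, their sidelengths sum to $\lesssim_{A,\tau}\ell(R')^d$; summing over $R'\in\Top$ and invoking \eqref{e:totalsum} preserves \eqref{e:btm+top<b}. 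You would need something of this form to close item (2).
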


\begin{proof}
Assume the same set-up as in the proof of Lemma \ref{l:CDHM<beta}. Let $\omega = \omega^{x}$. Let $R\in \Top$ and $Q\in \Tree(R)$. Let $x_{1}$ and $x_{2}$ be the corkscrew points for $\lambda B_{R}$ in Lemma \ref{l:w/w} (with $M\lambda$ in place of $M$ and $\lambda B_R$ in place of $B$). Notice that since $b\beta(MB_{R})<\ve$, we know that each of these corkscrew points is connected by a Harnack chain in $\Omega$ to either $x_{R}^{\pm}$, so without loss of generality, we can assume that $x_{1},x_{2}\in \{x_{R}^{\pm}\}$. Note that since $\omega$ is doubling, we have 
\begin{equation}
\label{e:qlambdaq}
 \Theta_{\omega}^{d}(\lambda B_{Q})\sim_{\lambda} \Theta_{\omega}^{d}(Q) \;\; \mbox{ for all }Q\subseteq Q_0.
 \end{equation}

 Then Lemma \ref{l:w/w} and the doubling property for harmonic measure implies that 
\begin{multline*}
 \frac{\Theta_{\omega}^{d}(\lambda B_{Q})}{\Theta_{\omega}^{d}(\lambda B_{R})}
 = \frac{\ell(R)^{d}}{\ell(Q)^{d}}  \frac{\omega(Q)}{\omega(R)} 
\stackrel{\eqref{e:qlambdaq}}{\sim} \frac{\ell(R)^{d}}{\ell(Q)^{d}}  \frac{\omega(\lambda B_Q)}{\omega(\lambda B_{R})} 
\lec \frac{\ell(R)^{d}}{\ell(Q)^{d}} \omega^{x_{2}}(\lambda B_{Q})
\sim \ell(R)^{d} \Theta_{ \omega^{x_{2}}}^{d}(\lambda B_{Q})\\
\leq A  \ell(R)^{d}  \Theta_{\omega^{x_{2}}}^{d}(\lambda B_{R}) \lec  A
\end{multline*}
and similarly, 
\begin{multline*}
 \frac{\Theta_{\omega}^{d}(\lambda B_{Q})}{\Theta_{\omega}^{d}(\lambda B_{R})} 
\stackrel{\eqref{e:qlambdaq}}{\sim}\frac{\ell(R)^{d}}{\ell(Q)^{d}}  \frac{\omega(\lambda B_Q)}{\omega(\lambda B_{R})} 
\gec \frac{\ell(R)^{d}}{\ell(Q)^{d}} \omega^{x_{1}}(\lambda B_{Q})
\sim \ell(R)^{d} \Theta_{ \omega^{x_{1}}}^{d}(\lambda B_{Q})\\
\geq \tau  \ell(R)^{d}  \Theta_{\omega^{x_{1}}}^{d}(\lambda B_{R}) \stackrel{\eqref{e:bourgain}}{\gec} \tau
\end{multline*}

Hence,
\begin{equation}
\label{e:intree}
\tau \Theta_{\omega}^{d}(\lambda B_{R})  \lec \Theta_{\omega}^{d}(\lambda B_{Q})\lec A\Theta_{\omega}^{d}(\lambda B_{R})
\;\;\; \mbox{ for all }Q\in \Tree(R).
\end{equation}

\def\wt{\widetilde}
Now we run a new stopping-time algorithm. Let $C>0$ be a large constant to be decided later. For $R\subseteq Q_0$, let $\wt{HD}(R)$ denote the maximal cubes $Q\subseteq R$ which contain a child $Q'$ so that $\Theta_{\omega}^d(Q')>CA\Theta_{\omega}^d(R)$ and $\wt{LD}(R)$ be those maximal cubes $Q\subseteq R$ which contain a child $Q'$ so that  $\Theta_{\omega}^d(Q')<C^{-1}\tau\Theta_{\omega}^d(R)$. Since $\omega$ is doubling, this and \eqref{e:intree} imply that 
\[
\Theta_{\omega}^d(Q')\sim CA\Theta_{\omega}^d(R) \;\; \mbox{ for }Q\in \wt{HD}(R)
\]
and 
\[
\Theta_{\omega}^d(Q')\sim C^{-1}\tau\Theta_{\omega}^d(R) \;\; \mbox{ for }Q\in \wt{LD}(R).
\]
Let $\wt{\Next}(R)$ denote the children of the cubes in $\wt \Stop(R):=\wt{HD}(R)\cup \wt{LD}(R)$ that are also in $\cT$.. Let $\wt\Top_{0}=\{Q_0\}$ and for $k\geq 0$ let 
\[
\wt{\Top}_{k+1} = \bigcup_{R\in \wt\Top_{k}}\wt{\Next}(R).
\]
Let $\wt\Top=\bigcup_{k\geq 0} \wt\Top_{k}$ and for $R\in \wt\Top$, let $\wt\Tree(R)$ be those cubes in $R$ in $\cT$ not properly contained in a cube from $\wt\Stop(R)$. Recalling \eqref{e:qlambdaq}, it is clear that these trees satisfy (1) and (2), since the minimal cubes in $\wt\Tree(R)$ are either in $\wt\Stop(R)$ or in $\BTM(R')$ for some $R'\in \Top$ and hence has a child in $\BTM$, so we just need to verify (3). 

Note that for each $R\in \wt\Top$, there is $R'\in \Top$ so that $R\in \Tree(R')$. Moreover, for $C$ large enough, $\wt\Stop(R)\subseteq \Tree(R)^{c}$. Thus, for fixed $R'\in \Top$, the cubes in $\{R\in \wt\Top: R\in \Tree(R')\}$ are disjoint. Thus,

\begin{align*}
\sum_{R\in \wt\Top}\ell(R)^{d}
& =\sum_{R'\in \Top}\sum_{R\in \wt\Top\atop R\in\Tree(R')}\ell(R)^{d}
\stackrel{\eqref{e:intree}\atop \eqref{e:qlambdaq}}{\sim}_{A,\tau}  \sum_{R'\in \Top} \Theta_{\omega}^{d}(\lambda B_{R'} )^{-1}\sum_{R\in \wt\Top\atop R\in\Tree(R')}\omega(R)\\
& \leq  \sum_{R'\in \Top}  \Theta_{\omega}^{d}(\lambda B_{R'})^{-1} \omega(R')
\leq  \sum_{R'\in \Top}   (\lambda \ell(R'))^{d}
\stackrel{\eqref{e:totalsum}}{ \lec} \beta_{\d\Omega}(\cT)
 \end{align*}

The estimate on $\sum_{Q\in \BTM} \ell(Q)^{d}$ is shown in the same way as before. This concludes the proof.

%
%
%
%

\end{proof}

\section{Proof of Theorms \ref{t:CDHM} and \ref{t:SU-version}: Part II}
\label{s:I-II-Part-II}

The goal of this section is to complete the proofs of  Theorems \ref{t:CDHM} and \ref{t:SU-version}. 

Combined with Lemma \ref{l:CDHM<beta} and Corollary \ref{c:SU-cor}, Theorms \ref{t:CDHM} and \ref{t:SU-version} will follow from the following lemma, whose proof is the objective of this section. 

  \begin{lemma}
 \label{l:beta<CDHM}
 Let $\Omega\subseteq \R^{d+1}$ be a interior $c$-corkscrew domain with lower $d$-content regular boundary and let $\cD$ be the Christ-David cubes for $\d\Omega$. For $\lambda\geq 2$ and for $A,\tau^{-1}>0$ large enough (depending on $\lambda$), we have that for all $Q_0\in \cD$,
 \begin{equation}
 \label{e:b<CDHM}
\beta_{\d\Omega}(Q_0)\lec_{A,\tau,\lambda } \CDHM(Q_0,\lambda ,A,\tau).
 \end{equation}
 \end{lemma}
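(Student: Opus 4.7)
The plan is to combine three ingredients: (i) Theorem~\ref{t:BAUP} from \cite{AV19}, which states that $\beta_{\d\Omega}(Q_0)$ is comparable to $\ell(Q_0)^d + \sum_{Q\in\BAUP_\ve(Q_0)}\ell(Q)^d$ for a fixed small $\ve>0$, where $\BAUP_\ve(Q_0)$ is the set of cubes $Q\subseteq Q_0$ where $\d\Omega$ fails to be $\ve$-close in relative Hausdorff distance to a finite union of $d$-planes at scale $\ell(Q)$; (ii) the corona decomposition by Ahlfors regular surrogates from Lemma~\ref{l:corona}; and (iii) the implication ``harmonic measure controls $\Rightarrow$ UR'' from \cite{HLMN17}. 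Once (i) is in hand, it suffices to prove that for $\ve>0$ a fixed small constant and for $A$, $\tau^{-1}$ large enough depending on $\ve$,
\[
\sum_{Q\in\BAUP_\ve(Q_0)}\ell(Q)^d \lec_{A,\tau,\lambda} \CDHM(Q_0,\lambda,A,\tau).
\]

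Fix a near-optimal decomposition $\{\Tree(R):R\in\Top\}$ realizing $\sum_{R\in\Top}\ell(R)^d \sim \CDHM(Q_0,\lambda,A,\tau)$. The local claim I would establish is that for each $R\in\Top$ and $\omega=\omega_\Omega^{x_R}$,
\[
\sum_{Q\in\Tree(R)\cap\BAUP_\ve} \ell(Q)^d \lec \ell(R)^d,
\]
so that summing over $R\in\Top$ concludes the proof. To prove the local claim, apply Lemma~\ref{l:corona} inside $\Tree(R)$ to further partition its cubes into sub-trees $\{\widetilde{\Tree}(S):S\in\widetilde{\Top}(R)\}$, each carrying an Ahlfors $d$-regular surrogate surface $E(\widetilde{\Tree}(S))$ which lies within distance $\vartheta\, d_{\cF}(x)$ of $\d\Omega$ by \eqref{e:adr-corona} and which satisfies the packing estimate $\sum_{S\in\widetilde{\Top}(R)}\ell(S)^d \lec \ell(R)^d$. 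The CDHM density bound $\tau\,\Theta_\omega^d(\lambda B_R)\leq \Theta_\omega^d(\lambda B_Q)\leq A\,\Theta_\omega^d(\lambda B_R)$ on $\Tree(R)$ should then transfer, via the maximum principle together with Lemmas~\ref{l:bourgain}, \ref{l:holder} and \ref{l:w/w}, to a localized weak-$A_\infty$ comparison between $\omega$ and $\cH^d|_{E(\widetilde{\Tree}(S))}$. Invoking the argument of \cite{HLMN17} on each sub-tree then yields uniform rectifiability of the surrogate at scales up to $\ell(S)$, and combining the David--Semmes characterization with \eqref{e:adr-corona} converts this back into $\sum_{Q\in\widetilde{\Tree}(S)\cap \BAUP_\ve}\ell(Q)^d \lec \ell(S)^d$; summing over $S$ closes the local argument.

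The main obstacle will be making the transfer between harmonic measure on $\d\Omega$ and surface measure on the artificial surrogate $E(\widetilde{\Tree}(S))$ quantitatively correct, since the surrogate is not itself the boundary of our domain and so \cite{HLMN17} cannot be applied as a black box. I anticipate handling this by constructing an auxiliary ``sandwich'' domain $\Omega_S$ whose boundary alternates between pieces of $\d\Omega$ and pieces of $E(\widetilde{\Tree}(S))$ along a Whitney decomposition respecting \eqref{e:whitney-like}; on such a domain, $\omega_\Omega$ and $\omega_{\Omega_S}$ are comparable along the surrogate by Harnack chains using the interior corkscrew hypothesis and Lemma~\ref{l:w/w}, after which the $A_\infty$-to-UR implication of \cite{HLMN17} applies genuinely to $\d\Omega_S$. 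The parameters $A$ and $\tau^{-1}$ must be taken large, in an analog of the tuning step in Lemma~\ref{l:CDHM<beta}, so that the induced weak-$A_\infty$ constant on $\d\Omega_S$ falls below the threshold forcing UR with flatness parameter comparable to the fixed $\ve$.
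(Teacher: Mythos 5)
Your high-level scaffolding (reduce to a BAUP packing bound via Theorem~\ref{t:BAUP}, fix a near-optimal CDHM decomposition, invoke Lemma~\ref{l:corona} to manufacture Ahlfors regular surrogates, and prove a packing estimate tree by tree) matches the paper's, but the core analytic step is a genuine gap, in two places. First, the CDHM condition on a tree only asserts two-sided comparability of the densities $\Theta_{\omega^{x_R}}^{d}(\lambda B_Q)$ over the surface cubes $Q\in\Tree(R)$; it says nothing about the harmonic measure of arbitrary small subsets of those cubes, which is exactly what a (localized) weak-$A_\infty$ comparison requires. Indeed the paper stresses that the CDHM trees coexist with $\omega\not\ll\cH^{d}$ (Theorem~\ref{t:example} and the \cite{AMT15} example), so no tuning of $A,\tau$ and no sandwich domain $\Omega_S$ can upgrade the tree condition to weak-$A_\infty$; your proposed ``transfer'' step fails at the outset, and with it the appeal to the $A_\infty\Rightarrow$ UR implication of \cite{HLMN17}. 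Second, even granting UR of the surrogate, that conclusion is vacuous: $E(\widetilde\Tree(S))$ is by construction a union of $d$-skeletons of dyadic cubes, hence automatically uniformly rectifiable, and Lemma~\ref{l:corona} produces such a surrogate for \emph{any} lower content regular set, including totally non-rectifiable ones. The containment \eqref{e:adr-corona} only says $\d\Omega$ hugs the surrogate at the scale of the stopping cubes; it does not encode the flatness of $\d\Omega$ at the intermediate scales of the tree, so UR of the surrogate cannot be ``converted back'' into $\sum_{Q\in\widetilde\Tree(S)\cap\BAUP_\ve}\ell(Q)^{d}\lec\ell(S)^{d}$.

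What is missing is the actual bridge from density comparability to flatness, which in the paper is the pair Lemma~\ref{l:g<theta} plus Lemma~\ref{l:halfspace}: the tree condition forces $G_\Omega(x_R,\cdot)/\delta_\Omega\sim\Theta_{\omega^{x_R}}^{d}(2B_R)$ throughout a sawtooth region over $\Tree(R)$, an integration by parts (using $\triangle|\grad g|^{2}=2|\grad^{2}g|^{2}$ and the Ahlfors regularity of $\d\Omega_R$, which is the \emph{only} role of the corona surrogate) yields the Carleson bound $\sum_{Q\in\Tree(R)}\int_{B_Q'}|\grad^{2}g/g|^{2}\delta_\Omega^{3}\lec\ell(R)^{d}$, and a compactness argument shows that wherever $g$ is approximately affine the boundary satisfies the WHSA condition; one then passes from WHSA to BAUP via Lemma~\ref{e:WHSA-BAUP} rather than directly. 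You would also need to bound $\cH^{d}(Q_0)$ (not merely $\ell(Q_0)^{d}$, which is what Theorem~\ref{t:BAUP} actually produces) by the CDHM sum, which is a separate Vitali covering argument using the lower density bound (Lemma~\ref{l:H<CDHM}).
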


\begin{remark}
It suffices to prove the lemma when $\lambda =2$. 
To see this, let $\Top$ and $\{\Tree(R):R\in \Top\}$ be as in Definition \ref{d:CDHM} with constants $\lambda\geq 2$ and $A,\tau>0$. Let $\Tree'(R)$ be those cubes in $\Tree(R)$ where the $N$th generation descendants of their children are in $\Tree(R)$ (we define this in terms of the children to ensure that $\Tree'(R)$ is a stopping-time region). Then for $Q\in \Tree'(R)$, 

\begin{align*}
\Theta_{\omega^{x_{R}}}(2B_{Q})\leq \ps{\lambda/2}^{d} & \Theta_{\omega^{x_{R}}}(\lambda B_{Q})
\leq \ps{\lambda/2}^{d} A\Theta_{\omega^{x_{R}}}(\lambda B_{R})
\leq \ps{\lambda/2}^{d} A (2\lambda \ell(R))^{-d}
\\
& \stackrel{\eqref{e:bourgain}}{\leq} \frac{CA}{2^{d}} \Theta_{\omega^{x_{R}}}(2B_{R})
\end{align*}
and for $N$ large enough (depending on $\lambda$), if $Q'\in \Tree(R)$ is an $N$th generation descendant of $Q$, then $\lambda B_{Q'}\subseteq 2B_{Q}$, and so
\[
\Theta_{\omega^{x_{R}}}(2B_{Q})
\geq \ps{\frac{\lambda \rho^{N}}{2}}^{d} \Theta_{\omega^{x_{R}}}(\lambda B_{Q'})
\geq  \ps{\frac{\lambda \rho^{N}}{2}}^{d} \tau \Theta_{\omega^{x_{R}}}(\lambda B_{R})
\leq \rho^{dN}\tau  \Theta_{\omega^{x_{R}}}(2 B_{R}).
\]

Hence, letting $A'=A\max\{C/2^{d},1\}$ and $\tau' = \rho^{dN}\tau$, we see that 
\[
\Theta_{\omega^{x_{R}}}(2 B_{R})/ \Theta_{\omega^{x_{R}}}(2 B_{Q})\in [\tau',A'] \;\; \mbox{ for all $Q\in \Tree'(R)$},
\]
and so 
\[
\CDHM(Q_0,2,A',\tau')\leq \sum_{R\in \Top} \ell(R)^{d}
\]
Now let 
\[
\Top'  =\Top \cup \bigcup_{R\in \Top}\Tree(R)\backslash \Tree'(R)
\]
and for $R\in \Top$ and $R'\in \Tree(R)\backslash \Tree'(R)$, we simply set $\Tree'(R')=\{R'\}$. Then by Definition \ref{d:CDHM}, we have 
\[
\CDHM(Q_0,2,A',\tau') 
\leq \sum_{R\in \Top'}\ell(R)^{d}
\lec_{N} \sum_{R\in \Top}\ell(R)^{d}
\]
and infimizing over all such collections $\{\Tree(R):R\in \Top\}$, we get 
\[
\CDHM(Q_0,2,A',\tau')\lec_{N}   \CDHM(Q_0,\lambda,A,\tau).
\]

In particular, for $A,\tau^{-1}$ large enough (so that $A',(\tau')^{-1}$ are large enough), \eqref{e:b<CDHM} with $\lambda =2$ implies \eqref{e:b<CDHM} for all $\lambda \geq 2$. 
 
\end{remark}

Before we prove Lemma \ref{l:beta<CDHM}, we make a few preliminary estimates comparing different multiscale geometric properties. 

The approach is essentially that of \cite{HM15,HLMN17}. There, they use the weak-$A_{\infty}$ property to control the behavior of the density of harmonic measure and thus show that Green's function is often affine, implying that the boundary (outside of a summable set of cubes) is flat from one side. We make ``flat from one side" more precise in the following definition.

\begin{definition}
Let $E\subseteq \R^{d+1}$ be a lower $d$-regular set and $\cD$ denote the Christ-David cubes for $E$. For $\ve>0$, we let $\cB^{\WHSA}_{K_0,\ve}$ denote those cubes $Q$ for which there is a half-plane $H_{Q}$ such that 
\[
H_{Q}\cap \ve^{-2}B_{Q}\subseteq E^{c},\]
\[
\dist(Q,\d H_{Q})\leq K_0 \ell(Q),\]
and
\[
\sup_{x\in \d H_{Q}\cap \ve^{-2}B_{Q}}\dist(x,E)<\ve \ell(Q).
\]
Let $\cB^{\WHSA}_{K_0,\ve}(R)$ be those cubes in $\cB^{\WHSA}_{K_0,\ve}$ contained in $R$. Let $\cG^{\WHSA}(R,K_0,\ve)$ be those cubes in $R$ that are not in $\cB_{K_0,\ve}^{\WHSA}(R)$. If $E$ is also Ahlfors regular, we say it satisfies the {\it Weak Half Space Approximation property} (or WHSA) if there are $\ve_0,K_0>0$ so that for $0<\ve<\ve_{0}$, and for any surface cube $R\in \cD$,
\[
\WHSA(R,K_0,\ve) :=\sum_{Q\in \cB_{K_0,\ve}^{\WHSA}(R)}\ell(Q)^{d}
\lec_{\ve,K_0} \ell(R)^{d}\]
\end{definition}

 In \cite[Section 5]{HM15}, it was shown that this is equivalent uniform rectifiability for Ahlfors regular sets.

 Lemma \ref{l:beta<CDHM} will follow from the earlier remark and the following three lemmas.

 \begin{lemma}
 \label{l:H<CDHM}
  Let $\Omega \subseteq \R^{d+1}$ be a corkscrew domain with lower $d$-content regular boundary and let $\cD$ be the Christ-David cubes for $\d\Omega$. Then for $\lambda \geq 1$, $R\in \cD$, and $A,\tau^{-1}>0$ large enough
  \[
  \cH^{d}(R)\lec_{A,\tau,\lambda }  \CDHM(R,\lambda , A,\tau).
  \]
  \end{lemma}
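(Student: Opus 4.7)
The plan is, given any CDHM decomposition of $R$ with tops $\Top$ and trees $\{\Tree(R'):R'\in\Top\}$ satisfying the density bounds of Definition~\ref{d:CDHM}, to prove $\cH^d(R)\lec_{A,\tau,\lambda}\sum_{R'\in\Top}\ell(R')^d$, and then take the infimum over decompositions. Lower $d$-content regularity implies the CDC, so Bourgain's Lemma~\ref{l:bourgain} applied to $\tfrac{\lambda}{2}B_{R'}$ (which contains the corkscrew point $x_{R'}$ once $\lambda\ge 2$) gives $\omega^{x_{R'}}(\lambda B_{R'})\gec 1$. The CDHM lower density bound then translates into the concrete estimate
\[
\omega^{x_{R'}}(\lambda B_Q)\gec \tau\,(\ell(Q)/\ell(R'))^d\qquad\text{for every } Q\in\Tree(R').
\]

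This yields a tree-local packing: for any subfamily $\{Q_k\}\subseteq\Tree(R')$ whose $\lambda$-dilates are pairwise disjoint, the disjointness gives $\sum_k\omega^{x_{R'}}(\lambda B_{Q_k})\le 1$, which combined with the lower bound produces $\sum_k\ell(Q_k)^d\lec\tau^{-1}\ell(R')^d$. Applying this with a Vitali $5r$-covering to the family $\{\lambda B_{Q_x}:x\in z(\Tree(R'))\}$, where $Q_x\in\Tree(R')$ is any cube containing $x$ with $\ell(Q_x)<\delta$ (available because such an $x$ lies in infinitely many cubes of $\Tree(R')$ by Lemma~\ref{l:minimal-partition}), produces a cover of $z(\Tree(R'))$ by sets of diameter $\lec\lambda\delta$ of total $d$-content $\lec\ell(R')^d$; letting $\delta\to 0$ gives $\cH^d(z(\Tree(R')))\lec\ell(R')^d$.

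To bound $\cH^d(R)$, I fix $\delta>0$ and assemble a cover of $R$ from the Vitali covers of $z(\Tree(R'))$ at scale $\delta$ for every $R'\in\Top$, together with every minimal cube $Q\in\Stop(R')$ of side length $\ell(Q)\le\delta$. The Vitali pieces contribute total content $\lec\sum_{R'\in\Top}\ell(R')^d$, and since every child of a minimal cube of $\Tree(R')$ is itself the top of a descendant tree, the Christ--David scaling $\ell(\text{child})=\rho\ell(Q)$ gives
\[
\sum_{R'\in\Top}\sum_{Q\in\Stop(R'),\,\ell(Q)\le\delta}\ell(Q)^d\lec\sum_{R''\in\Top\setminus\{R\}}\ell(R'')^d\le\sum_{R'\in\Top}\ell(R')^d.
\]
The main obstacle is verifying the cover is exhaustive in the presence of infinite descent through $\Top$: a point $x\in R$ avoiding every $z$-set sits in a strictly decreasing chain of tops $R=R'_0\supsetneq R'_1\supsetneq\cdots$. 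But only finitely many Christ--David cubes inside $R$ have side length $\ge\delta$, so the chain must eventually satisfy $\ell(R'_k)\le\delta$, at which point the unique minimal cube of $\Tree(R'_k)$ containing $x$ has side $\le\delta$ and lies in the cover. This shows $\cH^d_{C\delta}(R)\lec\sum_{R'\in\Top}\ell(R')^d$; letting $\delta\to 0$ and infimizing over valid CDHM decompositions completes the proof.
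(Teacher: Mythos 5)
Your proposal is correct and follows essentially the same route as the paper: the heart of both arguments is the combination of Bourgain's lemma with the lower density bound $\Theta_{\omega^{x_{R'}}}^{d}(\lambda B_Q)\geq \tau\,\Theta_{\omega^{x_{R'}}}^{d}(\lambda B_{R'})$ and a Vitali covering to get the per-tree packing estimate $\sum_k\ell(Q_k)^{d}\lec\tau^{-1}\ell(R')^{d}$, summed over $\Top$. The only difference is bookkeeping: the paper truncates at a fixed bottom scale $k_0$ and covers $Q_0$ by the scale-$k_0$ stopped cubes (which sidesteps your exhaustiveness discussion), whereas you partition each top into its $z$-set and minimal cubes via Lemma \ref{l:minimal-partition} and let the scale $\delta\to 0$; both are valid.
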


 \begin{lemma}
 \label{l:beta<WHSA}
  Let $\Omega \subseteq \R^{d+1}$ be a corkscrew domain with lower $d$-content regular boundary and let $\cD$ be the Christ-David cubes for $\d\Omega$. Then for all $K_0\geq 1$ and $\ve>0$ small enough, 
 \[
\beta_{\d\Omega}(R)\lec_{K_{0},\ve}  \cH^{d}(R)+ \WHSA(R,K_0,\ve)
 \]
 \end{lemma}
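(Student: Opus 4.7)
My plan is to combine the Analyst's Traveling Salesman Theorem (Theorem \ref{t:TST}) with the corona decomposition of lower regular sets by Ahlfors regular sets (Lemma \ref{l:corona}) and then invoke the equivalence between uniform rectifiability and WHSA on Ahlfors regular sets from \cite[Section 5]{HM15}. By Theorem \ref{t:TST}, for appropriate $\ve_0$ and $C_0$ we have
\[
\beta_{\d\Omega}(R)\sim \cH^{d}(R)+{\rm BLWG}(R,\ve_{0},C_{0}),
\]
so it is enough to bound ${\rm BLWG}(R,\ve_0,C_0)$ by $\cH^{d}(R)+\WHSA(R,K_{0},\ve)$. Morally, a cube $Q$ with large bilateral $\beta$ must either contain substantial surface measure or fail the WHSA half-space condition, since if $Q$ admits a genuine half-space approximation then, together with the lower content regularity of $\d\Omega$ and the corkscrew condition on $\Omega$, the boundary is essentially trapped near $\d H_{Q}$, forcing bilateral flatness at $Q$.

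To make this precise I would apply Lemma \ref{l:corona} to the cubes contained in $R$ (truncated at some scale $k_{0}$ and later letting $k_{0}\to\infty$), obtaining stopping-time regions $\{\Tree(T):T\in \Top\}$ with $\sum_{T\in\Top}\ell(T)^{d}\lec \cH^{d}(R)$ and, for each tree $\cT=\Tree(T)$, an Ahlfors regular set $E(\cT)$ that is $\vartheta$-close to $\d\Omega$ throughout $C_{0}B_{T}$. For each tree and each cube $Q\in\Tree(T)$ with $\ve^{-2}B_{Q}\subseteq C_{0}B_{T}$, the estimate \eqref{e:adr-corona} allows me to transfer the bilateral $\beta$-number and the WHSA half-space condition between $\d\Omega$ and $E(\cT)$ at the cost of mild adjustments of $K_{0}$ and $\ve$; cubes with $\ve^{-2}B_{Q}\not\subseteq C_{0}B_{T}$ lie near the top of the tree and contribute at most $O_{\ve,K_{0}}(\ell(T)^{d})$ in total per tree.

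Since $E(\cT)$ is Ahlfors regular, the HM15 characterization (applied quantitatively, together with Theorem \ref{t:TST} for AR sets) gives
\[
\sum_{Q\in \Tree(T),\ Q\in {\rm BLWG}} \ell(Q)^{d} \lec \ell(T)^{d}+\WHSA_{E(\cT)}(T,K_{0}',\ve').
\]
Transferring this estimate back to $\d\Omega$ using the previous step and summing over $T\in\Top$ yields
\[
{\rm BLWG}(R,\ve_{0},C_{0}) \lec \sum_{T\in\Top}\ell(T)^{d}+\WHSA_{\d\Omega}(R,K_{0},\ve) \lec \cH^{d}(R)+\WHSA(R,K_{0},\ve),
\]
as desired, after which we let $k_{0}\to\infty$ as in Remark \ref{r:k_0}.

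The main obstacle I anticipate is the transfer step: the WHSA condition is framed on the very large ball $\ve^{-2}B_{Q}$, so the approximation quality \eqref{e:adr-corona} must hold throughout this ball, which restricts attention to cubes that are sufficiently small relative to the tree top. Keeping track of the constants so that the parameters $K_{0},\ve$ appearing in the target inequality are compatible with the $K_{0}',\ve'$ produced on the Ahlfors regular model $E(\cT)$, and handling the top-scale cubes separately via the packing $\sum_{T}\ell(T)^{d}\lec \cH^{d}(R)$, is where the technical bookkeeping will be concentrated.
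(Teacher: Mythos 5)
The core reduction you start from -- $\beta_{\d\Omega}(R)\sim \cH^{d}(R)+\BLWG(R)$ via Theorem \ref{t:TST} -- is the right instinct, but the step where you actually convert flatness failure into WHSA failure is left as a black box that the cited source does not supply, and the corona scaffolding you build around it does not fill that hole. You invoke ``the HM15 characterization (applied quantitatively)'' on the Ahlfors regular sets $E(\cT)$ to get $\sum_{Q\in\Tree(T),\,Q\in\BLWG}\ell(Q)^{d}\lec \ell(T)^{d}+\WHSA_{E(\cT)}(T,K_0',\ve')$. But \cite[Section 5]{HM15} proves that the WHSA \emph{Carleson condition} implies UR for AR sets; it is an equivalence of qualitative Carleson packing conditions, not the quantitative cube-sum inequality you need tree by tree. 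Extracting that inequality from their argument requires isolating its one genuinely geometric ingredient, namely the cube-by-cube inclusion $\cB^{\BAUP}_{\ve,10}\subseteq \tilde{\cB}^{\WHSA}_{\ve,K_0}$ (Lemma \ref{e:WHSA-BAUP}), and pairing it with a quantitative bound of the form $\BLWG\lec \cH^{d}+\BAUP$. The latter is exactly Theorem \ref{t:BAUP} from \cite{AV19}, which already holds for lower $d$-content regular sets -- so once you have assembled the two facts that actually make your tree-level estimate work, they apply directly to $\d\Omega$ and the entire corona decomposition becomes unnecessary. This is what the paper does: $\beta_{\d\Omega}(R)\sim_{\ve}\cH^{d}(R)+\BAUP(R,10,\ve)\leq \cH^{d}(R)+\widetilde{\WHSA}(R,\ve,K_0)\lec_{\ve}\cH^{d}(R)+\WHSA(R,K_0,\ve)$, using that the HM15 inclusion needs no Ahlfors regularity.

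Beyond being superfluous, the transfer steps you sketch are themselves problematic. The WHSA condition contains the emptiness requirement $H_{Q}\cap \ve^{-2}B_{Q}\subseteq E^{c}$, and \eqref{e:adr-corona} is a one-sided estimate: it bounds $\dist(x,E(\cT))$ for $x\in\d\Omega$, not the reverse, so $E(\cT)$ may protrude into a half-space that $\d\Omega$ avoids (near $\d H_{Q}$, where $\d\Omega$ comes within $\ve\ell(Q)$ of $H_{Q}$, the skeleta of the cubes in $\cC$ can cross into $H_{Q}$ by $\sim\vartheta\,d_{\cF}$, and over the huge ball $\ve^{-2}B_{Q}$ one has only $d_{\cF}\lec \ve^{-2}\ell(Q)$). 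Salvaging this forces $\vartheta$ to be chosen small depending on $\ve$ and a translation of the half-spaces, none of which you address. I recommend abandoning the corona route for this lemma and arguing directly on $\d\Omega$ with Theorem \ref{t:BAUP} and Lemma \ref{e:WHSA-BAUP}.
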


 \begin{lemma}
 \label{l:WHSA<CDHM}
  Let $\Omega \subseteq \R^{d+1}$ be a corkscrew domain with lower $d$-content regular boundary and let $\cD$ be the Christ-David cubes for $\d\Omega$. Then for all $\ve^{-1},A,\tau^{-1}>0$  large enough, we have that for all $R\in \cD$,
 \[
 \WHSA(R,5,\ve) \lec_{K_0,\ve,A,\tau} \CDHM(R,2,A,\tau).
 \]
 \end{lemma}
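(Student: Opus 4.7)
The plan follows the scheme of \cite{HLMN17}: use the CDHM decomposition to get density control on $\omega$, use that density control to obtain half-plane flatness (WHSA) via a blow-up/rigidity argument, and then pack the exceptional cubes against the tops of the trees. Fix $R\in \cD$ and choose a CDHM decomposition of $R$ with parameters $\lambda=2,A,\tau$ almost realizing the infimum, producing tops $\Top$, trees $\{\Tree(T):T\in\Top\}$, and corkscrew balls $B(x_T,c\ell(T))\subseteq\Omega$ with
\[
\tau\,\Theta^d_{\omega^{x_T}}(2B_T)\leq \Theta^d_{\omega^{x_T}}(2B_Q)\leq A\,\Theta^d_{\omega^{x_T}}(2B_T)\qquad\text{for every }Q\in \Tree(T).
\]
Write $\theta_T=\Theta^d_{\omega^{x_T}}(2B_T)$. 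The main technical claim is: \emph{for every $\ve>0$ there is $N=N(\ve,A,\tau,d)$ such that any $Q\in \Tree(T)$ with at least $N$ ancestors in $\Tree(T)$ satisfies $Q\notin \cB^{\WHSA}_{5,\ve}$.} Granted this, every $Q\in \cB^{\WHSA}_{5,\ve}(R)$ lies within the first $N$ generations of descendants of some $T\in\Top$; since $\d\Omega$ is doubling there are $O_N(1)$ such descendants per tree and each contributes at most $\ell(T)^d$, so
\[
\WHSA(R,5,\ve)\lesssim_N \sum_{T\in\Top}\ell(T)^d \lesssim_{A,\tau} \CDHM(R,2,A,\tau).
\]

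To prove the claim, first combine Bourgain's estimate \eqref{e:bourgain}, the inequality \eqref{e:w>G}, and the density bounds to show that the normalized Green function $u_Q := G_\Omega(x_T,\cdot)/(\theta_T\ell(Q))$ is a nonnegative harmonic function on $\Omega\cap \ve^{-2}B_Q$ that vanishes on $\d\Omega\cap \ve^{-2}B_Q$ and whose boundary harmonic measure has two-sided Ahlfors regular density bounds (uniform in $Q\in\Tree(T)$) on $2B_{Q'}$ for every descendant $Q'\in \Tree(T)$ of $Q$ down to scale $\rho^N\ell(Q)$. If the claim fails, pick a sequence of counterexamples $(\Omega_n,T_n,Q_n)$ with $N_n\to\infty$; normalize so that $\ell(Q_n)=1$ and $\zeta_{Q_n}=0$, and extract via Lemma \ref{l:holder} and uniform lower $d$-content regularity a Hausdorff limit $E_\infty$ of $\d\Omega_n$ together with a locally uniform limit $u_\infty$ of $u_{Q_n}$ on $E_\infty^c$. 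Then $u_\infty$ is a nonnegative entire harmonic function on $E_\infty^c$ vanishing on $E_\infty$, its associated harmonic measure has two-sided Ahlfors regular density on every ball, and $E_\infty^c$ contains interior corkscrews on every scale (inherited from the corkscrew hypothesis). A rigidity theorem of Alt--Caffarelli / Kenig--Preiss--Toro type then forces $E_\infty$ to be a hyperplane and $u_\infty$ to be a linear function vanishing on it; in particular, an entire half-space bordering $E_\infty$ lies in $E_\infty^c$, contradicting $Q_n\in \cB^{\WHSA}_{5,\ve}$ for $n$ large.

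The main obstacle is the blow-up step. Since we lack Ahlfors regularity, the usual upper bound $\cH^d(\d\Omega_n\cap B)\lesssim r_B^d$ that makes the limiting boundary non-degenerate is unavailable and must be replaced by lower $d$-content regularity (preserved under Hausdorff limits) together with the one-sided corkscrew condition, which forces $E_\infty$ to separate $\R^{d+1}$ cleanly. A second subtlety is that the lower bound on $u_Q$ near corkscrews of descendants, needed to carry the density comparability through the limit, has no direct analogue of the uniform-domain bound $G\gtrsim \omega/r^{d-1}$; here it must be assembled from the corkscrew condition, short Harnack chains inside $\Tree(T)$ built from the David--Toro parametrizations used in Section \ref{s:I-II-Part-I}, and the density bounds themselves. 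The specific constant $K_0=5$ requires only tracking how close $\zeta_{Q_n}$ stays to $\d H_{Q_n}$ after the rescaling; this is a matter of bookkeeping. Once this quantitative compactness is in place, the rigidity theorem and the packing estimate are routine.
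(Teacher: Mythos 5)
Your argument hinges on the claim that every $Q\in\Tree(T)$ with at least $N$ ancestors in $\Tree(T)$ lies outside $\cB^{\WHSA}_{5,\ve}$, and this claim is where the proof breaks. The CDHM hypothesis only pins the densities $\Theta^{d}_{\omega^{x_T}}(2B_Q)$ inside the window $[\tau\theta_T,A\theta_T]$ with $A/\tau$ large, and it does so at the scales of cubes that belong to the tree; a cube $Q$ deep in $\Tree(T)$ may perfectly well be a minimal (leaf) cube, so there is no density information at any scale below $\ell(Q)$, and flatness of $\d\Omega$ at scale $\ell(Q)$ cannot be extracted from information at scales above it. Even if you strengthen the hypothesis to require $N$ generations of descendants in the tree, the blow-up/rigidity step fails: the limit you would produce is a nonnegative harmonic function whose harmonic measure has density bounded between two constants with large ratio, and no Alt--Caffarelli or Kenig--Preiss--Toro theorem forces such a configuration to be a half-space. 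Indeed, even \emph{exactly constant} density does not force flatness in general dimension (the non-flat singular cones that solve the one-phase Bernoulli problem in high dimensions give corkscrew domains with constant density at the vertex at every scale and a boundary that fails WHSA for small $\ve$ at all of those scales). The correct conclusion one can hope for is not that deep cubes are good, but that the bad cubes in each tree satisfy a Carleson packing bound $\sum_{Q\in\cB^{\WHSA}_{5,\ve}\cap\Tree(R)}\ell(Q)^{d}\lec\ell(R)^{d}$, which your generations-count argument cannot produce.

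That packing bound is exactly what the paper's proof delivers, by a quite different mechanism. One first refines the CDHM trees using the corona decomposition of Lemma \ref{l:corona} so that each tree carries an Ahlfors regular sawtooth domain $\Omega_R$ (this step is unavoidable: the boundary integral below needs upper regularity, which the hypotheses do not provide); Lemma \ref{l:g<theta} turns the density pinching into $g_R(x)/\delta_{\Omega}(x)\sim_{A,\tau}\Theta^{d}_{\omega^{x_R}}(2B_R)$ at corkscrew points of every $Q\in\Tree(R)$. Then Green's formula applied to $\triangle|\grad g_R|^{2}=2|\grad^{2}g_R|^{2}$ on $\Omega_R'$ yields the square-function estimate
\[
\sum_{Q\in\Tree(R)\setminus\{R\}}\int_{B_Q'}\av{\frac{\grad^{2}g_R}{g_R}}^{2}\delta_{\Omega}(x)^{3}\,dx\lec\ell(R)^{d},
\]
so by Chebyshev the cubes where $g_R$ is not approximately affine pack against $\ell(R)^{d}$; finally the compactness Lemma \ref{l:halfspace} (whose hypothesis is smallness of $\grad^{2}u$ in an interior ball, a genuinely local and much stronger input than density pinching) shows that WHSA-bad cubes are among the not-affine ones. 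If you want to salvage your outline, the claim to prove is the containment $\cB^{\WHSA}_{\ve,5}\cap\Tree(R)\subseteq{\rm NA}(R)$ together with the Carleson bound on ${\rm NA}(R)$, not the assertion that deep cubes are good.
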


 We divide the proofs of these lemmas into subsections:
 
 \subsection{Proof of Lemma \ref{l:H<CDHM}}
 
Let $Q_{0}\in \cD$. Without loss of generality, $Q_{0}\in \cD_{0}$. Let $\Top$ and $\Tree(R)$ for $R\in \Top$ be as in Definition \ref{d:CDHM}. Let $k_0\in \bN$, $\cD(k_{0})=\{Q\subseteq Q_{0}: \ell(Q)\geq \rho^{k_{0}}\ell(Q_{0})\}$, and 
\[
\Top(k_{0})=\Top \cap \cD(k_{0}), \;\; \Tree_{k_{0}}(R) = \Tree(R)\cap \cD(k_{0}).
\]

For $R\in \Top(k_{0})$, let $\Stop_{k_{0}}(R)$ be the minimal cubes in $\Tree_{k_{0}}(R)$ that are in $\cD_{k_{0}}$. For $Q\in \Stop_{k_{0}}(R)$, let $\Delta$ denote the dyadic cubes in $\R^{n}$ and
\[
C(Q) = \{ I\in \Delta : I\cap 5\lambda B_Q\neq\emptyset, \ell(I)<\ell(Q)\leq 2\ell(I)\}
\]
and 
\[
E(R) = \bigcup_{Q\in \Stop_{k_{0}}(R)} \bigcup_{I\in C(Q)} I.
\]
Note that as $\cD(k_0)$ is finite and $\{\Tree_{k_{0}}(R):R\in \Top(k_0)\}$ partitions $\cD(k_0)$, 
\[
\cD_{k_{0}}\subseteq \bigcup_{R\in \Top(k_{0})}\Stop_{k_{0}}(R).
\]
Hence, since $Q\subseteq  \bigcup_{I\in C(Q)}I$, and every $Q\in \cD_{k_{0}}$ is in $\Stop_{k_{0}}(R)$ for some $R\in \Top(k_0)$,
\[
Q_0 \subseteq \bigcup_{Q\in \cD_{k_{0}}\atop Q\subseteq Q_{0}} Q \subseteq \bigcup_{R\in \Top(k_{0})} \bigcup_{Q\in \Stop_{k_{0}}(R)}   \hspace{-15pt} Q\subseteq \bigcup_{R\in \Top(k_{0})} \bigcup_{Q\in \Stop_{k_{0}}(R)}  \bigcup_{I\in C(Q)}I
=
\bigcup_{R\in \Top(k_0)} \hspace{-15pt} E(R).
\]

Let $\lambda B_{Q_j}$ be a Vitali subcover of $\{\lambda B_{Q}:Q\in \Stop_{k_{0}}(R)\}$, so they are disjoint and $\bigcup_{Q\in \Stop_{k_{0}}(R)}\lambda B_{Q}\subseteq \bigcup 5\lambda B_{Q_{j}}$. 

Also note that for $Q\in \cD_{k_{0}}$, $\ell(Q)=5\rho^{k_{0}}$, and so for $I\in C(Q)$, $\diam I\leq 5\rho^{k_{0}}\sqrt{n}$. Thus,

\begin{align*}
\cH^{d}_{5\rho^{k_0}\sqrt{n}}(E(R))
& \leq \sum_{j} \sum_{I\in C(Q_j)} (\diam I)^{d}
\lec_{\lambda} \sum_{j}\ell(Q_j)^{d}\\
& \lec_{\lambda} \tau^{-1} \Theta_{\omega^{x_{R}}}^{d}( \lambda B_{R})^{-1}  \sum_{j} \omega^{x_{R}}(\lambda B_{Q_{j}})\\
&  \leq\tau^{-1}\Theta_{\omega^{x_{R}}}^{d}( \lambda B_{R})^{-1}   \omega^{x_{R}}(5\lambda B_{R})
\lec_{\lambda} \tau^{-1} \ell(R)^{d}. 
\end{align*}

Thus,
\[
\cH^{d}_{5\rho^{k_0}\sqrt{n}}(Q_0)
\leq \sum_{R\in \Top(k_{0})}\cH^{d}_{5\rho^{k_{0}}}(E(R))
\lec_{\tau,\lambda} \sum_{R\in \Top(k_{0})}\ell(R)^{d}<\CDHM(R,\lambda , A,\tau).
\]

 Letting $k_0\rightarrow\infty$ gives the result and completes the proof of Lemma \ref{l:H<CDHM}.

 \subsection{Proof of Lemma \ref{l:beta<WHSA}}

The proof of this lemma will require a bit more theory from quantitative rectifiability. We recall this intermediate geometric property. 

\begin{definition}[Bilateral Approximation by a Union of Planes (BAUP)] 
\label{d:baup}
Let $E\subseteq \R^{n}$ be lower $d$-content regular. For $\ve,M>0$, and $R\in \cD$, let $\cG^{\BAUP}_{\ve,M}$ denote those cubes $Q$ for which there is a union $U$ of $d$-dimensional planes for which 
\[
d_{MB_{Q}}(E,U)<\ve.
\]
Let $\cB^{\BAUP}_{\ve,M}=\bD\backslash \cG^{\BAUP}_{\ve,M}$, $\cB^{\BAUP}_{\ve,M}(R)$ be those cubes in $\cB^{\BAUP}_{\ve,M}$ contained in $R$, and 
\[
\BAUP(R,M,\ve) = \sum_{Q\in \cB^{\BAUP}_{\ve,M}(R)} \ell(Q)^{d}.
\]
\end{definition}

These cubes were introduced by David and Semmes \cite{of-and-on} who showed that an Ahlfors regular set $E$ is UR if and only if for each $M>1$ there is $\ve_0$ so that for $0<\ve<\ve_{0}$, if 
\[
\BAUP(R,M,\ve)\lec \ell(R)^{d} \mbox{ for all }R\in \cD.
\]

In \cite{AV19}, a version of this result was shown for just lower regular sets:

\begin{theorem} \cite[Theorem 1.4]{AV19}
\label{t:BAUP}
Let $E\subseteq \R^{n}$ be $(c,d)$-lower content regular. Then for $M>1$ and $\ve>0$ small enough (depending on $M$ and the lower content regularity constant) and $R\in \cD$,
\[
\beta_{E}(R)
\sim_{\ve,M} \cH^{d}(R)+\BAUP(R,M,\ve).
\]
\end{theorem}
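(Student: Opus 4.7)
The plan is to establish both directions of the equivalence $\beta_E(R) \sim \cH^d(R) + \BAUP(R,M,\ve)$, using the traveling salesman theorem (Theorem \ref{t:TST}) as the principal bridge. The easier direction $\cH^d(R) + \BAUP(R,M,\ve) \lec \beta_E(R)$ splits into two parts. First, Theorem \ref{t:TST} directly gives $\cH^d(R) \lec \beta_E(R)$. For the BAUP part, observe that if $Q \in \cB^{\BAUP}_{\ve,M}$, then in particular $E$ is not bilaterally $\ve$-close to any single plane in $MB_Q$, so $b\beta_{E}^{d}(MB_Q) \geq \ve$. After translating from $MB_Q$ to a comparable $C_0 B_{Q'}$ for some nearby $Q'$ of comparable size (via a bounded-multiplicity covering), each BAUP-bad cube corresponds to a BLWG cube, and another application of Theorem \ref{t:TST} gives $\BAUP(R,M,\ve) \lec \BLWG(R,\ve',C_0) \lec \beta_E(R)$.

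The harder direction is $\beta_E(R) \lec \cH^d(R) + \BAUP(R,M,\ve)$. Again by Theorem \ref{t:TST}, it suffices to control $\BLWG(R,\ve',C_0)$. The approach is to partition the cubes below $R$ into stopping-time regions $\{\Tree(T) : T \in \Top\}$, descending from $R$ and stopping at a cube whenever either (a) it is BAUP-bad, or (b) the best union-of-planes approximation for $E$ around that cube differs substantially from the approximating plane associated to the top $T$, signalling that the approximating union has "split." Inside a tree with no splitting, $E$ is uniformly close to a single $d$-plane $P_T$, so the David--Toro parametrization of Lemma \ref{l:DT-dyadic} applies and every interior cube $Q \in \Tree(T)$ has small $b\beta$; the remaining BLWG cubes in the tree live on the boundary of the parametrized sheet, and their packing is controlled by $\ell(T)^d$ via a Vitali cover on that sheet.

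It then remains to pack $\sum_{T \in \Top} \ell(T)^d$. Stopping reason (a) is charged directly to $\BAUP(R,M,\ve)$. For (b), the splitting cubes, one uses that at the splitting scale, the BAUP-good approximation contains at least two distinct planes separated by a definite angle, and each such plane carries a portion of $E$ of definite $\cH^d_\infty$-mass by the lower $d$-content regularity of $E$. Thus the cubes carved off at a split correspond to essentially disjoint pieces of $E$ with density bounded below, so their packing is controlled by $\cH^d_\infty(R) \lec \cH^d(R)$. Summing over both stopping mechanisms and all trees completes the estimate.

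The main obstacle is the packing of splitting cubes in (b) without the benefit of full Ahlfors regularity. In the David--Semmes setting one simply appeals to $\cH^d(E \cap B_Q) \sim \ell(Q)^d$ at every cube, but here only the lower bound is available. One must therefore carry out the packing using $\cH^d_\infty$, exploit the fact that distinct limiting planes force disjoint subsets of $E$ of definite lower content at definite Euclidean separation, and verify that these subsets can be charged to $\cH^d_\infty(R)$ with bounded multiplicity. Formalizing this is the technical heart of the argument in \cite{AV19} and requires careful tracking of the geometry of unions of planes, their separations, and their intersections with $E$ across scales.
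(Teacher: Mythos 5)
This statement is not proved anywhere in the paper: it is imported as a black box from \cite[Theorem 1.4]{AV19}, so there is no internal argument to compare yours against. Taken on its own terms, your easy direction is sound: a BAUP-bad cube is in particular far from every single plane, so $b\beta_{E}^{d}(MB_Q)\gec \ve$ after the routine adjustment of radii, and Theorem \ref{t:TST} converts both $\cH^{d}(R)$ and the resulting BLWG sum into $\beta_{E}(R)$. The architecture you propose for the converse (stopping-time regions where the approximating union of planes stays coherent with a single plane, David--Toro parametrization inside each tree, packing of the tops) is the right one and is in the spirit of what \cite{AV19} actually does.

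The gap is in the packing of the ``splitting'' tops, which you assert rather than prove. Two concrete problems. First, Hausdorff content is subadditive but not superadditive, so ``pairwise disjoint pieces of $E$ each of content $\gec \ell(T)^{d}$'' does not bound $\sum_{T}\ell(T)^{d}$ by $\cH^{d}_{\infty}(R)$; one must pass to $\cH^{d}$, using $\cH^{d}\geq \cH^{d}_{\infty}$ on each piece and countable additivity of $\cH^{d}$ on disjoint Borel sets --- which is precisely why the right-hand side of the theorem carries $\cH^{d}(R)$ and not $\ell(R)^{d}$. Second, and more seriously, disjointness of the two branches produced at a single split is not enough: splits recur at all scales inside nested trees, and the mass exhibited by a descendant split may be the same mass exhibited by an ancestor split, so without a global bookkeeping (a martingale-type measure-increment argument, or a bounded-multiplicity charging of points of $E$ across the whole hierarchy) the sum over all splitting tops is not controlled. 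There is also a smaller unaddressed point in the no-splitting case: the BAUP-approximating union at a cube $Q$ may contain planes far from $P_{T}$ that only graze $MB_{Q}$, so concluding that every interior cube has small bilateral $\beta$ relative to $P_{T}$ requires restricting attention to the planes of the union that genuinely meet a smaller concentric ball. None of this invalidates the strategy, but the decisive estimate --- the one the theorem exists to provide --- is exactly the step you defer.
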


 In \cite[Section 5]{HM15}, the authors show the following (they assume Ahlfors regularity, but an inspection of the proof shows that it is not needed):
 
 \begin{lemma}
 \label{e:WHSA-BAUP}
 Let 
 \[
 D_{\ve}(Q) = \{Q'\in \cD: \ve^{3/2}\ell(Q)\leq \ell(Q')\leq \ell(Q), \;\; Q'\cap K_0^2 B_{Q}\neq\emptyset \}. 
 \]
Let $\tilde{\cB}^{\WHSA}_{\ve,K_0}$ be those cubes $Q$ for which $D_{\ve}\cap {\cB}^{\WHSA}_{\ve,K_0}\neq\emptyset$. Then
\[
 \cB^{\BAUP}_{\ve,10}\subseteq  \tilde{\cB}^{\WHSA}_{\ve,K_0} . 
\]

 \end{lemma}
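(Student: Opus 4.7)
The plan is to prove the contrapositive. Fix $Q$ and suppose $Q\notin\tilde\cB^{\WHSA}_{\ve,K_0}$, so every $Q'\in D_\ve(Q)$ is WHSA-good: each comes equipped with a half-plane $H_{Q'}$ whose boundary plane $P_{Q'}=\d H_{Q'}$ satisfies
\[
\dist(Q',P_{Q'})\leq K_0\ell(Q')\quad\text{and}\quad\sup_{x\in P_{Q'}\cap\ve^{-2}B_{Q'}}\dist(x,\d\Omega)<\ve\ell(Q').
\]
The goal is to produce a union $U$ of $d$-planes with $d_{10B_Q}(\d\Omega,U)<\ve$, placing $Q$ in $\cG^{\BAUP}_{\ve,10}$.

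First, assemble the candidate. Fix a Christ--David generation $k$ for which $5\rho^k\sim\ve^{3/2}\ell(Q)$, and let $\cF$ consist of those $Q'\in\cD_k$ which meet $10B_Q$. Provided $K_0^2\geq10$ (which we may assume in the regime where the statement has content), each such $Q'$ lies in $D_\ve(Q)$ and so comes with a plane $P_{Q'}$; set $U=\bigcup_{Q'\in\cF}P_{Q'}$.

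Second, estimate $\sup_{x\in\d\Omega\cap10B_Q}\dist(x,U)$. Because the cubes in $\cD_k$ partition $\d\Omega$, every such $x$ lies in a unique $Q'(x)\in\cF$, and then
\[
\dist(x,U)\leq\dist(x,P_{Q'(x)})\leq\diam Q'(x)+\dist(Q'(x),P_{Q'(x)})\lec K_0\ell(Q'(x))\sim K_0\ve^{3/2}\ell(Q),
\]
which is $\ll\ve\ell(Q)$ once $\ve\lec K_0^{-2}$. Third, estimate $\sup_{y\in U\cap10B_Q}\dist(y,\d\Omega)$. Fix $y\in P_{Q'}\cap10B_Q$ with $Q'\in\cF$. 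Since $Q'$ meets $10B_Q$, the point $y$ sits within $10\ell(Q)+\ell(Q')$ of $\zeta_{Q'}$, while the enlarged ball $\ve^{-2}B_{Q'}$ has radius $\sim\ve^{-1/2}\ell(Q)\gg\ell(Q)$. Hence $y\in P_{Q'}\cap\ve^{-2}B_{Q'}$, and the WHSA inequality yields $\dist(y,\d\Omega)<\ve\ell(Q')\sim\ve^{5/2}\ell(Q)\ll\ve\ell(Q)$. Combining the two bounds gives $d_{10B_Q}(\d\Omega,U)<\ve$ for $\ve$ sufficiently small in terms of $K_0$, establishing $Q\in\cG^{\BAUP}_{\ve,10}$.

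The main obstacle is the choice of scale threshold $\ve^{3/2}\ell(Q)$ in the definition of $D_\ve(Q)$. We need $\ell(Q')$ small enough that the error $K_0\ell(Q')$ in step two is dominated by $\ve\ell(Q)$, and simultaneously large enough that $\ve^{-2}B_{Q'}$ swallows $10B_Q$ in step three; the latter forces $\ell(Q')\gg\ve^2\ell(Q)$. Any scale in the window $(\ve^2\ell(Q),\ve\ell(Q)/K_0)$ works, and $\ve^{3/2}\ell(Q)$ sits comfortably inside. Crucially, no Ahlfors regularity enters: the argument uses only that a fixed generation of Christ--David cubes partitions $\d\Omega$, so every $x\in\d\Omega\cap 10B_Q$ is automatically assigned a cube of the chosen scale, after which the glueing together of the WHSA half-planes proceeds exactly as in \cite{HM15}.
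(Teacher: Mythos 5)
Your argument is correct and is essentially the tiling argument from \cite{HM15} that the paper cites in lieu of a proof: one plane per Christ--David cube at scale $\approx \ve^{3/2}\ell(Q)$, with the two halves of the bilateral estimate coming from $\dist(Q',\d H_{Q'})\leq K_0\ell(Q')$ and from the containment $10B_{Q}\subseteq \ve^{-2}B_{Q'}$ respectively; as you note, only the partition property of a fixed generation of cubes is used, which is exactly why Ahlfors regularity can be dropped. The only caveats are minor and consistent with the surrounding text: the inclusion requires $\ve\lesssim K_0^{-2}$ (available, since the lemma is only invoked for $\ve$ small enough), and you correctly read $\cB^{\WHSA}_{K_0,\ve}$ as the cubes for which \emph{no} admissible half-plane exists, the negation being inadvertently omitted in the paper's definition.
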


Let 
\[
\widetilde{\WHSA}(R,\ve,K_0)= \sum_{Q\subseteq R \atop Q\in \tilde{\cB}_{\ve,K_0}^{\WHSA}(R)} \ell(Q)^{d}.
\]
Then the previous lemma and Theorem \ref{t:BAUP} imply

\begin{align*}
\beta_{E}(R)^{d}
& \sim_{\ve} \cH^{d}(R)+\BAUP(\ve,10)(R)
\leq \cH^{d}(R)+\wt{\WHSA}(R,\ve,K_0)\\
& \lec_{\ve}  \cH^{d}(R)+{\WHSA}(R,\ve,K_0)
\end{align*}

This completes the proof of  Lemma \ref{l:beta<WHSA}.

 \subsection{Proof of Lemma \ref{l:WHSA<CDHM}}
 
 This proof is modelled after that in \cite{HM15}, and Hofmann and Martell attribute this line of attack to \cite{LV07} (there are also some common aspects  to the proof of Theorem \ref{t:BJ94}). However, we have to take some care since our sets are not Ahlfors regular, but we can fix this by using Lemma \ref{l:corona}.

We first need a lemma that says if a harmonic function is approximately affine, in some ball, then the boundary is approximately flat near that ball. This has been proved elsewhere before, \cite{HM15, HLMN17} for example, but since those proofs require Ahlfors regularity, we give a different proof here.

\begin{lemma}
\label{l:halfspace}
Let $\Omega\subseteq \R^{d+1}$ be a corkscrew domain with lower $d$-content regular complement, $B$ a ball centered on $\d\Omega$ with $2r_{B}<\diam \d\Omega$. Let $\ve>0$ and let $u$ be a harmonic function on $\Omega$ that is positive on $\ve^{-2}B\cap \Omega$ and vanishing continuously on $\frac{2}{\ve^{2}}B\cap \d\Omega$. Suppose also that there is a corkscrew ball $B(y_{B},2cr_{B})\subseteq   \Omega\cap B$ so that 
\[
\sup_{\frac{2}{\ve^{2}}B\cap \Omega} u\lec u(y_{B}).
\]
Also suppose that 
\[
\avint_{B(y_{B},cr_{B})} |\grad^2 u|^{2}  <\delta \frac{u(y_{B})^2}{r_{B}^{4}}
\]
Then there is a half space $H_{B}$ so that 
\[
\d H_{B}\cap B\neq \emptyset, \;\;  
H_{B}\cap \ve^{-2}B\subseteq \Omega\]
and 
\[
\sup_{x\in \d H_{B}\cap \ve^{-2}B}\dist(x,\d\Omega)<\ve r_{B}.
\]
\end{lemma}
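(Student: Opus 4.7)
The plan is to take the candidate half-space to be the superlevel set $H_B := \{L > \eta\}$ of the first-order Taylor polynomial
\[
L(x) := u(y_B) + \nabla u(y_B)\cdot (x - y_B)
\]
of $u$ at $y_B$, for a suitable $\eta$ on the scale of $\ve r_B$. After normalising so that $u(y_B)=1$ and $r_B=1$, the geometric conclusions will reduce to showing $u$ is uniformly close to $L$ across a large region.

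For the local closeness near $y_B$, I would exploit that each component $\d_i\d_j u$ of the Hessian is itself harmonic on $\Omega$. For $x\in B(y_B,c/2)$, the ball $B(x,c/2)\subseteq B(y_B,c)\subseteq\Omega$, and the mean-value property gives
\[
|\d_i\d_j u(x)|^{2} \;\lec\; \avint_{B(x,c/2)}|\d_i\d_j u|^{2} \;\lec\; \avint_{B(y_B,c)}|\nabla^{2}u|^{2} \;<\;\delta,
\]
so $|\nabla^{2}u|\lec\sqrt{\delta}$ pointwise on $B(y_B,c/2)$; Taylor's theorem then yields $|u-L|\lec\sqrt{\delta}$ there. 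The interior gradient estimate gives the upper bound $|\nabla u(y_B)|\lec 1$. The lower bound $|\nabla u(y_B)|\gec 1$ follows by contradiction: otherwise $L$ would be essentially $\equiv 1$ on $B(y_B,c/2)$, forcing $u\approx 1$ there, which contradicts the H\"older vanishing of $u$ at a boundary point $\xi\in\d\Omega$ within $O(1)$ of $y_B$ (Lemma \ref{l:holder}) combined with $\sup u\lec 1$.

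The main obstacle is propagating the estimate $|u-L|\lec\sqrt{\delta}$ from $B(y_B,c/2)$ out to $\ve^{-2}B\cap\Omega$, where a priori we only know $|u-L|\lec\ve^{-2}$. I would handle this by a quantitative unique-continuation argument for the harmonic function $\Phi:=u-L$. The Hadamard three-balls inequality, iterated along a Harnack chain inside $\Omega$ that joins $y_B$ to any target point at distance $\gec\ve$ from $\d\Omega$, converts smallness on a ball into smallness on a larger ball at the price of a H\"older-type exponent $\alpha\in(0,1)$. After $N=N(\ve)$ iterations one obtains a bound of the form $|\Phi|\lec (\sqrt{\delta})^{\alpha^{N}}(\ve^{-2})^{1-\alpha^{N}}$ on $\{x\in\ve^{-2}B\cap\Omega:\delta_{\Omega}(x)\geq\ve\}$, which can be made $\ll\ve$ by choosing $\delta$ small enough depending on $\ve$.

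With $\eta\sim\ve$ and $\delta$ chosen so that $|\Phi|<\eta/2$ on this interior set, both conclusions follow. If some $\xi\in H_B\cap\d\Omega\cap\ve^{-2}B$ existed, pick an interior point $y$ at distance $\sim\ve$ from $\xi$; the boundary H\"older bound gives $u(y)\lec\ve^{\alpha}$ while $L(y)\geq\eta - O(\ve)$, so $\Phi(y)<-\eta/2$ for $\ve$ small, contradicting the propagated smallness. Hence $\d\Omega$ avoids $H_B\cap\ve^{-2}B$, and convexity of the half-space together with the fact that $y_B\in H_B\cap\Omega$ yields $H_B\cap\ve^{-2}B\subseteq\Omega$. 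For the flatness bound on $\d H_B$: for any $x\in\d H_B\cap\ve^{-2}B$, walk from $x$ in direction $-\nabla u(y_B)/|\nabla u(y_B)|$. On this ray $L$ decreases from $\eta$ at unit rate, so by distance $2\eta\sim\ve$ the ray must have exited $\Omega$ (otherwise $L<-\eta$ combined with $u\geq 0$ would force $|\Phi|>\eta$ along the interior portion of the ray, contradicting the propagated estimate), yielding a boundary point $\xi\in\d\Omega$ with $|x-\xi|\leq 2\eta\lec\ve$.
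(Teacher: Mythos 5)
The propagation step is where your argument breaks down. To iterate the three-balls inequality from $B(y_B,c/2)$ out to an arbitrary point of $\{x\in\ve^{-2}B\cap\Omega:\delta_{\Omega}(x)\geq\ve\}$ you need a Harnack chain inside $\Omega$ joining $y_B$ to that point consisting of a \emph{bounded} number $N=N(\ve)$ of balls, each compactly contained in $\Omega$: the final exponent $\alpha^{N}$ must stay bounded below for $(\sqrt{\delta})^{\alpha^{N}}(\ve^{-2})^{1-\alpha^{N}}$ to be forced below $\ve$ by shrinking $\delta$. But the lemma only assumes $\Omega$ is a corkscrew domain with lower content regular complement --- no uniformity, NTA, or Harnack chain condition is available. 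Two points of $\ve^{-2}B\cap\Omega$, each at distance $\geq\ve$ from $\d\Omega$, may only be connectable through regions of $\Omega$ that pinch arbitrarily close to the boundary (or through paths leaving $\ve^{-2}B$, where you have no control on $u-L$), so $N$ is unbounded and the iteration yields nothing. This gap infects everything downstream: the exclusion of $\d\Omega$ from $H_{B}\cap\ve^{-2}B$, the flatness estimate on $\d H_{B}$, and even your lower bound $|\grad u(y_{B})|\gec 1$ all require $|u-L|\ll\ve$ at points that need not be reachable from $y_{B}$ by controlled chains.

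The paper circumvents precisely this difficulty by arguing by compactness rather than quantitatively: taking a sequence of putative counterexamples with $\delta=1/j$, the CDC H\"older estimate (Lemma \ref{l:holder}) makes the $u_{j}$ equicontinuous up to the boundary, so a subsequence converges uniformly on $\frac{3}{2}\ve^{-2}B$ to a limit $u_{\infty}$; the Hessian hypothesis forces $\grad^{2}u_{\infty}\equiv 0$ on $B(y,c/2)$, and real-analyticity of harmonic functions then gives affineness on the entire connected component of $\{u_{\infty}>0\}$ containing that ball --- a \emph{qualitative} unique continuation that never chains through the bad geometry. If you want a direct quantitative proof along your lines, you would need either to work only within a connected component of a superlevel set of $u$ and prove it is chain-connected at scale $\ve r_{B}$, or to add a uniformity hypothesis that the lemma deliberately avoids.
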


\begin{proof}
Without loss of generality, we can assume $B=\bB$ and $u(y_{B})=1$. Let $\bB'=\ve^{-2}\bB$. Suppose instead that for all $j$ there are domains $\Omega_j$ and harmonic functions $u_{j}$ positive on $\bB'\cap \Omega$, vanishing continuously on $\d\Omega\cap \bB'$, and there is $B_{j}=B(y_{j},c)\subseteq \Omega_{j}\cap \bB'$ such that 
\[
\sup_{2\bB'\cap \Omega_j} u_j\lec 1
\]
and 

\[
\avint_{B_j} |\grad^2 u_j|^{2}  < u(y_{j})^2/j=1/j
\]
but for every half space $H$, either
\[
H\cap \bB'\not\subseteq \Omega_j\]
or 
\[
\sup_{x\in \d H\cap \bB'}\dist(x,\d\Omega_j)\geq \ve.
\]
By Lemma \ref{l:holder}, the $u_{j}$ are uniformly H\"older on $\frac{3}{2}\bB'$, and since they are bounded on  $2\bB'$, we may pass to a subsequence so that $u_{j}\rightarrow u_{\infty}$ uniformly on $\frac{3}{2}\bB'$ and so that $y_{j}\rightarrow y$. Note that as $u_{j}\sim 1$ on $B_{j}$ by Harnack's inequality for all $j$, we have that $u_{j}\sim 1$ on $B(y,c/2)$, thus the set $P=\{x\in \frac{3}{2}\bB': u_{\infty}>0\}$ is nonempty. However, notice that by Cauchy estimates, the second derivatives of $u_{j}$ also converge uniformly after passing to a subsequence and they do so to the double derivative of $u_{\infty}$ on $P$. Thus, we must have $\grad^{2} u_{\infty}\equiv 0$ on $B(y,c/2)$. This implies $u_{\infty}$ is affine on the connected component $P'$ of $P$ containing $B(y,c/2)$, hence there is a half space $H$ so that $P'\cap \frac{3}{2}\bB'=H\cap \frac{3}{2}\bB'$. Let $H_{\ve/2}=\{x\in H: \dist(x,H^{c})=\ve/2\}$. 

Since $u_{j}\rightarrow u_{\infty}$ uniformly on $\bB'$ and $u_{\infty}\gec_{\ve}1$ by Harnack's inequality in $P'\cap \bB'$, we know that $u_{j}>0$ on $H_{\ve/2}\cap \bB'$ for $j$ large enough, that is, $\Omega_{j}\supseteq H_{\ve/2}\cap \bB'$.

Now suppose that for infinitely many $j$ there were $x_{j}\in \d H\cap \bB'$ so that $B(x_{j},\ve)\subseteq \Omega_{j}$. Passing to a subsequence, $x_j\rightarrow x\in \d H \cap \bB'$ and by Harnack's inequality, $B(x,\ve)\subseteq P'$, but this is impossible since $u_{\infty}(x)=0$ as $x\in \d H\cap \bB'$. Thus, for sufficiently large $j$, 
\[
\sup_{x\in \d H\cap \bB'}\dist(x,\d\Omega_{j})<\frac{\ve}{2}.
\]
Thus,
\[
\sup_{x\in \d H_{\ve}\cap\bB'}\dist(x,\d\Omega_{j})<\ve.
\]
The existence of $H_{\ve}$ now contradicts our assumptions. This proves the lemma. 

\end{proof}

Let $Q_{0}\in \cD$. Without loss of generality, $Q_{0}\in \cD_{0}$. Let $\Top_{1}$ and $\Tree_{1}(R)$ for $R\in \Top_{1}$ be as in Definition \ref{d:CDHM} with $\lambda = 2$ so that
\begin{equation}
\label{e:top1}
\sum_{R\in \Top_{1}} \ell(R)^{d} \leq 2 \cdot \CDHM(Q_0,2,A,\tau).
\end{equation}
Just as in the proof of Lemma \ref{l:H<CDHM}, let $k_0\in \bN$, $\cD(k_{0})=\{Q\subseteq Q_{0}: \ell(Q_{0})\geq \rho^{k_{0}}\ell(Q_{0})\}$, and let 
\[
\Top_{1}(k_{0})=\Top_{1} \cap \cD(k_{0}), \;\; \Tree_{1}^{k_{0}}(R) = \Tree_1(R)\cap \cD(k_{0}).
\]


By Definition \ref{d:CDHM}, for each $Q\in \Tree_{1}^{k_{0}}(R)$ and $R\in \Top_{1}(k_{0})$, there is a corkscrew point $x_{R}\in B_{R}$ so that 
\begin{equation}
\label{e:tree1}
\tau \Theta_{\omega^{x_{R}}}^{d}(2B_{R} )
\leq \Theta_{\omega^{x_{R}}}^{d}(2B_Q)
<A\Theta_{\omega^{x_{R}}}^{d}(2B_R).
\end{equation}
Let $\{\Tree_{2}(R):R\in \Top_{2}(k_{0})\}$ be the stopping-time regions from Lemma \ref{l:corona} for $C_{1},\vartheta^{-1}$ large enough constant we will pick later.  Now the sets

\[
\{\Tree_{1}^{k_{0}}(R_{1})\cap \Tree_{2}(R_{2}): R_{i}\in \Top_{i}(k_0)\}
\]
partition $\cD(k_0)$, and if $\Top(k_0)=\Top_{1}(k_{0})\cup \Top_2(k_{0})$, then each tree in the above collection has a top cube $R\in \Top(k_0)$, and we denote that tree $\Tree(R)$ (we drop the $k_0$ for convenience, but remember int also depends on $k_0$). In particular, \eqref{e:tree1} is still satisfied for $Q\in \Tree(R)$ and $R\in \Top(k_{0})$, and by Lemma \ref{l:corona}, there are Ahlfors regular sets $E_{R}$ satisfying the conclusion of Lemma \ref{l:corona} with respect to $\Tree(R)$.  By Lemma \ref{l:corona}, Lemma \ref{l:H<CDHM}, and Definition \ref{d:CDHM}, we have  for all $k_0\in\bN$ that
\begin{multline}
\label{e:sumtop12}
\sum_{R\in \Top(k_{0})}\ell(R)^{d}
 \leq \sum_{R\in \Top_{1}}\ell(R)^{d} + \sum_{R\in \Top_{2}(k_0)}\ell(R)^{d}   \\
\lec \CDHM(Q_0,2,A,\tau)+ \cH^{d}(Q_{0})
\lec \CDHM(Q_0,2,A,\tau).
\end{multline}

\begin{lemma}
For $Q\in \cD$, let $\cW$ be the Whitney cubes for $\Omega$ and
\[
U_{Q}^{K}=\{I\in \cW: I\cap KB_{Q}\neq\emptyset, \;\; \ell(I)\geq \ell(Q)/K\}
\]
and
\[
\Omega_{R}  = \bigcup_{Q\in \Tree(R)} U_{Q}^{K}.
\]
Then for $K$ large enough, $C_{2}\gg K$, and $\vartheta\ll K^{-1}$, $\d\Omega_{R}$ is Ahlfors regular. 
\end{lemma}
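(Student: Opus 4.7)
My plan is to prove the two-sided Ahlfors regularity bound by comparing $\partial \Omega_R$ to the Ahlfors regular approximating surface $E_R = E(\Tree(R))$ from Lemma \ref{l:corona}. The upper bound will be a standard Whitney-cube counting, while the lower bound will use that $\partial \Omega_R$ is close to (and must ``surround'') the AR set $E_R$ on the scales of interest.

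First I would establish a geometric dictionary between $\partial \Omega_R$ and $E_R$. For each Whitney cube $I$ contributing a boundary face to $\partial \Omega_R$, $I$ meets $K B_Q$ for some $Q \in \Tree(R)$, whence $\ell(I) \sim \delta_{\Omega}(x) \sim d_{\cF}(x)$ for every $x \in I$, using the Whitney property and a comparison between $d_{\cF}$ and $\delta_{\Omega}$ (the latter being bounded below in the tree region and comparable when $\vartheta \ll K^{-1}$). Combining with the corona closeness \eqref{e:adr-corona} and \eqref{e:whitney-like}, every $x \in \partial \Omega_R$ lying ``inside the tree'' (say, in a ball of radius $\tfrac{1}{2} C_0 \ell(R)$ around $\zeta_R$) satisfies $\dist(x, E_R) \lesssim K \vartheta\, \ell(I) \ll \ell(I) \sim \delta_{\Omega}(x)$, while cubes from $\cC$ near $x$ have sidelength $\sim \vartheta\, \ell(I)$. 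Faces arising at the ``top'' of the tree (Whitney cubes of scale $\sim \ell(R)$ that peel off the boundary because their neighbors lie outside $K B_Q$ for every $Q\in \Tree(R)$) will be handled separately as a controlled piece near $\partial (C_0 B_R)$.

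Second, for the upper bound, fix $x \in \partial \Omega_R$ and $0 < r \leq \diam \partial \Omega_R$. Every boundary face meeting $B(x,r)$ belongs to a Whitney cube $I \in \Omega_R$ with $\ell(I) \lesssim r + \delta_{\Omega}(x) \lesssim r$ (using that $x$ lies on a face of a Whitney cube of size $\sim \delta_{\Omega}(x)$); the bounded-overlap property of Whitney cubes yields at most $O(1)$ boundary faces of size $\sim 2^{-k} r$ meeting $B(x,r)$ for each $k \geq 0$, and summing the $d$-content of these faces gives $\cH^d(B(x,r) \cap \partial \Omega_R) \lesssim r^d$.

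Third, for the lower bound, I would split into scales. If $r \leq c\, \delta_{\Omega}(x)$, the face of the Whitney cube containing $x$ is $d$-dimensional and contributes $\gtrsim r^d$ directly. For $r > c\, \delta_{\Omega}(x)$, by Step 1 there is $x' \in E_R$ with $|x-x'| \lesssim \vartheta\, \delta_{\Omega}(x) \ll r$, so $\cH^d(B(x', 2r) \cap E_R) \gtrsim r^d$ by Ahlfors regularity \eqref{e:ETregular}. I then want to produce a Lipschitz map $\pi : E_R \cap B(x', 2r) \to \partial \Omega_R$ with controlled multiplicity: for each $y \in E_R$, $y$ lies on the $d$-skeleton of some cube $I_y \in \cC$ from Lemma \ref{l:corona}, and a Whitney cube $J_y \in \Omega_R$ of comparable size lies within $C\,\ell(I_y)$ of $y$ (since $\ell(I_y) \sim \vartheta\, d_{\cF}$ by \eqref{e:whitney-like}, and $U_Q^K$ covers this scale once $K$ is taken large relative to $\vartheta^{-1}$); projecting $y$ onto the nearest face of $\partial \Omega_R$ gives a bounded-multiplicity Lipschitz map, so $\cH^d(B(x, Cr) \cap \partial \Omega_R) \gtrsim \cH^d(B(x',2r)\cap E_R) \gtrsim r^d$, and a Vitali/covering argument converts this into the required bound for $B(x,r)$ itself.

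The main obstacle is the last step: controlling the projection from $E_R$ to $\partial \Omega_R$ at scales that straddle the transition between ``deep in the tree'' and the top/bottom boundary of the stopping region. This is where the hypothesis $\vartheta \ll K^{-1}$ enters crucially: it ensures the Whitney scale dominates both the offset of $\partial \Omega_R$ from $\partial \Omega$ and the offset of $E_R$ from $\partial \Omega$, so that the two surfaces sit in a Lipschitz-comparable position relative to the tangent planes $P_Q$ of the cubes $Q \in \Tree(R)$.
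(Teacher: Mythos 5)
Your overall strategy --- reducing both bounds to the Ahlfors regular approximant $E_R=E(\Tree(R))$ of Lemma \ref{l:corona} --- is exactly the reduction the paper intends: the paper's own proof is a one-line citation of \cite[Proposition A.2]{HMM14} after observing that, for $\vartheta$ small, $\Omega_R$ is a sawtooth sitting inside the complement of $\bigcup_{I\in\cC}I$, whose boundary is AR. However, two of your steps do not survive scrutiny as written.

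\textbf{Upper bound.} The assertion that bounded overlap of Whitney cubes yields ``at most $O(1)$ boundary faces of size $\sim 2^{-k}r$ meeting $B(x,r)$'' is false. Disjointness of Whitney cubes, together with the fact (valid for $\vartheta\ll K^{-1}$) that every Whitney cube of $\Omega_R$ of scale $s$ lies within $Cs$ of the AR set $E_R$, only bounds the number of scale-$s$ cubes meeting $B(x,r)$ by $\lec (r/s)^{d}$; each contributes up to $Cs^{d}$ of boundary, so each scale can contribute $\sim r^{d}$ and the sum over $k$ diverges. (Concretely: if the minimal cubes of $\Tree(R)$ all have scale $2^{-k}r$, then $\partial\Omega_R$ contains a ``floor'' made of $\sim 2^{kd}$ faces of that scale inside $B(x,r)$.) The sum closes only because a Whitney cube $I\subseteq\Omega_R$ contributes a face to $\partial\Omega_R$ only when it abuts a Whitney cube lying in no $U_Q^K$, which forces $I$ to sit within $C_K\ell(I)$ of a minimal cube $Q'$ of $\Tree(R)$ with $\ell(Q')\sim_K\ell(I)$ (or near $R$ itself / the lateral edge of $KB_R$). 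One must charge each face to such a $Q'$ and use that the minimal cubes are pairwise disjoint and, by \eqref{e:adr-corona}, lie within $\vartheta\ell(Q')$ of $E_R$, so that $\sum_{Q'}\ell(Q')^{d}\lec\cH^{d}(E_R\cap B(x,Cr))\lec r^{d}$ by \eqref{e:ETregular}. Without this charging to a disjoint stopping family, the scale-by-scale count cannot give upper regularity.

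\textbf{Lower bound.} A Lipschitz map $\pi:E_R\cap B(x',2r)\to\partial\Omega_R$ of bounded multiplicity gives $\cH^{d}(\pi(A))\lec\cH^{d}(A)$ --- the inequality points the wrong way, so it cannot transfer $\cH^{d}(E_R\cap B(x',2r))\gec r^{d}$ to $\partial\Omega_R$ unless you also prove $\pi$ is co-Lipschitz on a substantial subset, which is unclear for a nearest-point projection that displaces points by the full local Whitney scale and need not be injective. The standard fix is a separation argument: for $r\gec\delta_\Omega(x)$, exhibit a ball $B(y_1,cr)\subseteq\Omega_R\cap B(x,Cr)$ (take a Whitney cube of scale $\sim r$ in $U_Q^K$ for a tree cube $Q$ of scale $\sim r$ near $x$) and a set of $d$-content $\gec r^{d}$ in $B(x,Cr)\setminus\overline{\Omega_R}$ (e.g.\ the part of $\partial\Omega\cap B(x,cr)$ at Whitney depth below the tree, using lower content regularity), and then invoke the content form of the separation lemma to conclude $\cH^{d}_{\infty}(\partial\Omega_R\cap B(x,Cr))\gec r^{d}$. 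For $r\lec\delta_\Omega(x)$ your idea is essentially right, but note $x$ may lie on a lower-dimensional edge where several faces meet, so one should argue that the boundary of the finite union of comparably sized Whitney cubes forming $\Omega_R$ near $x$ has measure $\gec r^{d}$ in $B(x,r)$, e.g.\ by projection onto a coordinate hyperplane. Given these issues, the most economical repair is to verify the hypotheses of \cite[Proposition A.2]{HMM14} relative to $E_R$ rather than rebuilding the argument from scratch.
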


The proof is exactly the same as \cite[Proposition A.2]{HMM14}, as for $\vartheta$ small enough, $\Omega_{R}$ will be contained in 
\[
\ps{\bigcup_{I\in \cC_{R}}I}^{c},\]
which is a domain with Ahlfors regular boundary by Lemma \ref{l:corona}. We leave the details to the reader.

\begin{lemma}
\label{l:g<theta}
For $Q\in \Tree(R)$, there is $z_{Q}\in 4B_{Q}$ so that 
\begin{equation}
\label{e:g<theta}
\delta_{\Omega}(z_{Q})\sim_{A,\tau} \ell(Q) \;\; \mbox{ and }\frac{G_{\Omega}(x_{R},z_{Q})}{\ell(Q)}\sim_{A,\tau}\Theta_{\omega_{\Omega}^{x_{R}}}^{d}(2B_{R}) .
\end{equation}
\end{lemma}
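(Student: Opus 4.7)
The plan has three ingredients: construct $z_Q$ from the interior corkscrew condition, obtain the upper bound on $G_\Omega(x_R,z_Q)$ from \eqref{e:w>G} and the tree property, and obtain the lower bound by passing to the saw-tooth subdomain $\Omega_R$, where both sides of \eqref{e:w>G} become available.

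First, I would apply the interior corkscrew condition on $\Omega$ to $B_Q$ to produce a ball $B(z_Q,c\ell(Q))\subseteq 2B_Q\cap\Omega$; automatically $z_Q\in 4B_Q$ and $\delta_\Omega(z_Q)\sim\ell(Q)$. For the upper bound on $G$, when $\ell(Q)\ll\ell(R)$ the corkscrew point $x_R$ lies outside $2B_Q$ (since $\delta_\Omega(x_R)\gtrsim\ell(R)\gg\ell(Q)$), so \eqref{e:w>G} with $B$ a ball of radius $\sim\ell(Q)$ centered at $\zeta_Q\in\d\Omega$ yields $G_\Omega(x_R,z_Q)\lesssim \omega^{x_R}(4B)/\ell(Q)^{d-1}$. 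Because $\Tree(R)$ is a stopping-time region and therefore contains the ancestors of $Q$ at comparable scales, $4B\cap\d\Omega$ is covered by a uniformly bounded number of cubes $Q'\in\Tree(R)$ of size $\sim\ell(Q)$; the upper half of the tree property then gives $\omega^{x_R}(2B_{Q'})\lesssim_A\ell(Q)^d\Theta^d_{\omega^{x_R}}(2B_R)$ on each, and summing produces the upper bound. The degenerate case $Q=R$ is handled separately by choosing $z_R\in 4B_R$ at a fixed non-vanishing distance from $x_R$ and appealing to Harnack's inequality and Bourgain's lemma \eqref{e:bourgain}.

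For the lower bound I would use the saw-tooth $\Omega_R=\bigcup_{Q'\in\Tree(R)}U_{Q'}^K$ just introduced. After slightly enlarging $K$ one has $x_R,z_Q\in\Omega_R$, and the maximum principle gives $G_\Omega(x_R,z_Q)\ge G_{\Omega_R}(x_R,z_Q)$. The preceding lemma shows $\d\Omega_R$ is Ahlfors regular, and a standard chaining argument along the lines of \cite[Proposition A.2]{HMM14} identifies $\Omega_R$ as a CAD; hence the Dahlberg--David--Jerison--Semmes theorem yields $\omega_{\Omega_R}\in A_\infty(\cH^d|_{\d\Omega_R})$ and, in particular, the two-sided version of \eqref{e:w>G} holds in $\Omega_R$. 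This produces
\[
G_{\Omega_R}(x_R,z_Q)\;\sim\;\omega^{x_R}_{\Omega_R}(\widetilde B)/\ell(Q)^{d-1}
\]
for $\widetilde B$ a ball of radius $\sim\ell(Q)$ centered on $\d\Omega_R$ near $z_Q$, so the lower bound reduces to
\[
\omega^{x_R}_{\Omega_R}(\widetilde B)\;\gtrsim_{A,\tau}\;\ell(Q)^d\,\Theta^d_{\omega^{x_R}_{\Omega}}(2B_R).
\]

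The main obstacle is this last comparison between the harmonic measures of the saw-tooth and ambient domains. I would start from the representation
\[
\omega^{x_R}_\Omega(2B_Q)=\int_{\d\Omega_R}\omega^w_\Omega(2B_Q)\,d\omega^{x_R}_{\Omega_R}(w),
\]
valid because $w\mapsto\omega^w_\Omega(2B_Q)$ is harmonic on $\Omega\supseteq\Omega_R$, and split the integral into the near part $\d\Omega_R\cap CB_Q$, where the integrand is bounded by $1$ and the contribution is at most $\omega^{x_R}_{\Omega_R}(\d\Omega_R\cap CB_Q)\lesssim\omega^{x_R}_{\Omega_R}(\widetilde B)$ by doubling on the CAD $\Omega_R$, and its complement, where Lemma \ref{l:holder} applied at boundary points of $\d\Omega$ close to $w\in\d\Omega_R$ (which exist because the construction keeps $\d\Omega_R$ at distance $\lesssim \ell(Q')$ from $\d\Omega$ inside each $U_{Q'}^K$) yields H\"older decay of $\omega^w_\Omega(2B_Q)$ in the ratio $\ell(Q)/|w-\zeta_Q|$. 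Telescoping along dyadic annuli around $Q$ through the ancestors in $\Tree(R)$, and using the tree property to control $\omega^{x_R}_{\Omega_R}$ on each annulus (where the CAD constants for $\Omega_R$ depend only on the construction parameters, not on $A,\tau$), the far contribution is absorbed into the near one. Combining with the lower half of the tree property, $\omega^{x_R}_\Omega(2B_Q)\gtrsim\tau\,\ell(Q)^d\Theta^d_{\omega^{x_R}_{\Omega}}(2B_R)$, this yields the required lower bound on $\omega^{x_R}_{\Omega_R}(\widetilde B)$ and completes the proof.
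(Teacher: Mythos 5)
Your lower bound is where the argument breaks. The comparison $\omega_{\Omega_R}^{x_R}(\widetilde B)\gtrsim \ell(Q)^{d}\,\Theta^{d}_{\omega_\Omega^{x_R}}(2B_R)$ is the \emph{hard} direction of a Dahlberg--Jerison--Kenig-type ``main lemma'' (the easy direction, via the maximum principle, goes the other way), and your telescoping does not establish it in this generality. The difficulty is the ``interior'' portion of $\d\Omega_R$: a point $w\in\d\Omega_R$ in the annulus $|w-\zeta_Q|\sim 2^{j}\ell(Q)$ can sit on the top of a Whitney cube associated to an ancestor of $Q$ of sidelength $\sim 2^{j}\ell(Q)$, so $\delta_\Omega(w)\sim 2^{j}\ell(Q)$ and Lemma \ref{l:holder} gives a decay factor $\bigl(\delta_\Omega(w)/2^{j}\ell(Q)\bigr)^{\alpha}\sim 1$, i.e.\ no decay at all; in a domain that is merely corkscrew with LCR boundary (the setting of Lemma \ref{l:WHSA<CDHM}) there is no substitute bound for $\omega_\Omega^{w}(2B_Q)$ at such deep points. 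So the far part cannot be ``absorbed'' as claimed. A secondary issue is in your upper bound: $4B\cap\d\Omega$ need not be covered by cubes of $\Tree(R)$ of sidelength $\sim\ell(Q)$, since same-scale neighbours of $Q$ may lie in entirely different trees; the correct route (the paper's \eqref{e:lambdaB<2B}) is to pass to an ancestor $Q^{k}$ of $Q$ with $\lambda B_Q\subseteq 2B_{Q^k}$, which is in $\Tree(R)$ by coherence unless it contains $R$, in which case one uses \eqref{e:bourgain}.

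The paper's proof avoids the saw-tooth entirely and is much shorter; the idea you are missing is that $z_Q$ should \emph{not} be fixed in advance as a corkscrew point. Instead, testing \eqref{e:ibp} against a bump function adapted to $2B_Q$ gives $\omega^{x_R}(2B_Q)\lec \sup_{4B_Q}G_\Omega(x_R,\cdot)\,\ell(Q)^{d-1}$, so the LD side of the tree property produces \emph{some} $z_Q\in 4B_Q$ with $G_\Omega(x_R,z_Q)/\ell(Q)\gec\tau\,\Theta^{d}_{\omega^{x_R}}(2B_R)$. One then feeds this back through the boundary H\"older estimate \eqref{e:holder} combined with \eqref{e:w>G} and \eqref{e:lambdaB<2B}, obtaining $\tau\Theta(2B_R)\lec(\delta_\Omega(z_Q)/\ell(Q))^{\alpha}A\,\Theta(2B_R)$, which \emph{forces} $\delta_\Omega(z_Q)\gec(\tau/A)^{1/\alpha}\ell(Q)$; the two-sided estimate on $G$ then drops out. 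In other words, the depth of $z_Q$ is a conclusion, not a hypothesis, and no comparison between $\omega_{\Omega_R}$ and $\omega_\Omega$ is needed for this lemma (the saw-tooth is only used later, for the Green's function square-function estimate). If you insist on a corkscrew $z_Q$, you would additionally need a Harnack chain inside $4B_Q\cap\Omega$ joining it to the point produced by \eqref{e:ibp}, and no such connectivity is assumed here.
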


\begin{proof}
Let $\phi_{Q}$ be a smooth bump function so that 
\[
\one_{2B_{Q}}\leq \phi_{Q}\leq \one_{4B_{Q}} \;\; \mbox{ and } |\grad^{2} \phi_{Q}|\lec \ell(Q)^{-2}.
\]
Then 
\[
\omega_{\Omega}^{x_{R}}(2B_{Q}) 
\leq \int \phi_{Q} d\omega_{\Omega}^{x_{R}}
\stackrel{\eqref{e:ibp}}{=} \int_{\Omega} G_{\Omega}(x_{R},x)\triangle \phi_{R} dx 
\lec \sup_{4B_{R}} G_{\Omega}(x_{R},\cdot) \ell(Q)^{d-1}.
\]
and so there is $z_{Q}\in 4B_{R}$ so that 
\begin{equation}
\label{e:tT<G/l}
\tau\Theta_{\omega_{\Omega}^{x_{R}}}^{d}(2B_{R})  \leq \Theta_{\omega_{\Omega}^{x_{R}}}^{d}(2B_{Q})
\lec \frac{G_{\Omega}(x_{R},z_Q) }{\ell(Q)}.
\end{equation}

\def\thomega{\Theta_{\omega_{\Omega}^{x_{R}}}^{d}}
If $\xi_Q\in\d\Omega $ is the closest point to $z_{Q}$, then $|\xi_Q-z_{Q}|\leq |\zeta_{Q}-z_{Q}|\leq 4\ell(Q)$, and so $z_{Q}\in B(\xi_{Q},4\ell(Q))\subseteq 8B_{Q}$. Thus,
\begin{align}
\frac{G_{\Omega}(x_{R},z_Q) }{\ell(Q)}
& \stackrel{\eqref{e:holder}}{\lec} \sup_{z\in 8B_{Q}} \ps{\frac{G_{\Omega}(x_{R},z) }{\ell(Q)}} \ps{\frac{|z_{Q}-\xi_{Q}|}{\ell(Q)}}^{\alpha} \notag \\ 
& \stackrel{\eqref{e:w>G}}{\lec} \ps{\frac{\delta_{\Omega}(z_{Q})}{\ell(Q)}}^{\alpha} \Theta_{\omega_{\Omega}^{x_{R}}}^{d}(32B_{Q})
\label{e:G/l<32}
\end{align}
We claim that for any $\lambda \geq 4$, 
\begin{equation}
\label{e:lambdaB<2B}
\thomega(\lambda B_{Q})\lec_{\lambda} A \thomega(2B_{R}).
\end{equation} 
Indeed, if $k\in \mathbb{N}$, $Q^{k}$ is the $k$th ancestor of $Q$ and $Q^{k}\in \Tree(R)$, then for $k$ large enough depending on $\lambda$,
\[
\thomega(\lambda B_{Q})
\lec_{\lambda} \thomega(2B_{Q^{k}})
\leq A\thomega(2B_{R}).
\]
Otherwise, if $R$ is the $j$th ancestor of $Q$ for some $j\leq k$, then 
\[
\thomega(\lambda B_{R})\lec_{\lambda}  \ell(R)^{-d}\stackrel{\eqref{e:bourgain}}{\sim} \thomega(2B_{R}).\]
%
%
%
and this proves the claim. Thus,
\begin{align*}
\tau\Theta_{\omega_{\Omega}^{x_{R}}}^{d}(2B_{R}) 
& \stackrel{\eqref{e:tT<G/l}}{\lec} \frac{G_{\Omega}(x_{R},z_Q) }{\ell(Q)}
\stackrel{\eqref{e:G/l<32}}{\lec} \ps{\frac{\delta_{\Omega}(z_{Q})}{\ell(Q)}}^{\alpha}  \thomega(32B_{Q})\\
& \stackrel{\eqref{e:lambdaB<2B}}{\lec} \ps{\frac{\delta_{\Omega}(z_{Q})}{\ell(Q)}}^{\alpha}  A \thomega(2B_{R}).
\end{align*}
Hence, we have 
\[
\ps{\frac{\tau}{A}}^{1/\alpha}\ell(Q) \lec \delta_{\Omega}(z_{Q}) \leq  4\ell(Q).\] 
Plugging this back into the above inequality also gives the rest of \eqref{e:g<theta}.


\end{proof}

%

Let $g_{R}(x) = G_{\Omega}(x,x_{R})$. By \eqref{e:g<theta}, there is $c>0$ so that  if $B_{Q}'= B(z_{Q},c\ell(Q))$, then $2B_{Q}'\subseteq 5B_{Q}\cap \Omega$ and for $x\in B_{Q}'$, by Harnack's inequality
\begin{equation}
\label{e:g<r^-d}
\frac{g_{R}(x)}{\delta_{\Omega}(x)} \sim \frac{g_{R}(x)}{\ell(Q)}\sim  \Theta_{\omega^{x_{R}}}^{d}(2B_R) \sim \ell(R)^{-d}
\end{equation}
By adjusting the value of $c$ and the positions of the $z_Q$ and using Harnack's inequality, we can assume that $B_Q'\subseteq \Omega_{R}':=\Omega_{R}\backslash B(x_{R},c\ell(R))$ for all $Q\in \Tree(R)$ so that $R\neq Q$.

Moreover, for $\lambda$ large enough $\frac{\lambda}{4} B_{Q}\supseteq U_{Q}^{K}$, so for all $x\in U_{K}\backslash B(x_{R},c\ell(R))$,
\begin{equation}
\label{e:allxinU}
\frac{g_{R}(x)}{\delta_{\Omega}(x)} 
\stackrel{\eqref{e:w>G}}{\lec}
\Theta_{\omega^{x_{R}}}^{d}(\lambda B_{Q})
\stackrel{\eqref{e:lambdaB<2B}}{\lec}
\Theta_{\omega^{x_{R}}}^{d}(2 B_{R})
\stackrel{\eqref{e:bourgain}}{\lec} \ell(R)^{-d}.
\end{equation}

Since  $2|\grad^2 g_{R}|^2 = \triangle |\grad g_{R}|^{2}$, using Green's formula, the fact that $g_{R}$ is harmonic in $\Omega_{R}\backslash \{x_{R}\}$, and the Cauchy estimates
\begin{equation}
\label{e:cauchydelta}
|\grad^2 g_{R}(x)|\delta_{\Omega}(x), |\grad g_{R}(x)|\lec \frac{g_{R}(x)}{\delta_{\Omega}(x)} \;\; \mbox{ for all }x\in \Omega\backslash  B(x_{R},c\ell(R)).
\end{equation}
we have
\begin{align*}
\sum_{Q\in \Tree(R) \backslash \{R\} } \int_{B_{Q}'} \av{\frac{\grad^2 g_{R}}{g_{R}}}^2 & \delta_{\Omega}(x)^3dx 
\stackrel{\eqref{e:g<r^-d}}{\sim} \ell(R)^{3d}  \sum_{Q\in \Tree(R) \backslash \{R\}} \int_{B_{Q}'}  \av{\grad^2 g_{R}}^2 g_{R} dx \\
&  \lec  \ell(R)^{3d}  \int_{\Omega_{R}'}  \triangle |\grad g_{R}|^{2} g_{R}(x)dx  \\
& \sim  \ell(R)^{3d}  \int_{\d\Omega_{R}'}\ps{g_{R}\frac{d|\grad g_{R}|^2}{d\nu}-|\grad g_{R}|^2 \frac{dg_{R}}{d\nu}}d\cH^{d}\\
& \stackrel{\eqref{e:cauchydelta}}{\lec}
\ell(R)^{3d} \int_{\d\Omega_{R}'} \frac{g_{R}(x)^{3}}{\delta_{\Omega}(x)^{3}}d\cH^{d}(x)\\
  &  \stackrel{\eqref{e:allxinU}}{\lec}  \cH^{d}(\d\Omega_{R}' )
  \sim \ell(R)^{d}.
\end{align*}
\def\NA{\rm NA}
In particular,  If we let $\NA(R)$ (for "not affine") denote those cubes $Q\in\Tree(R)$ for which 
\[
{ \avint_{B_{Q}'} \av{\frac{\grad^2 g_{R}}{g_{R}}}^2  \delta_{\Omega}(x)^4dx }\geq \delta,
\]
Then
\begin{align*}
\sum_{Q\in \NA(R)}\ell(Q)^{d} 
& \leq  \ell(R)^{d}+\delta^{-1} \sum_{Q\in \Tree(R) \backslash \{R\}} \ps{ \avint_{B_{Q}'} \av{\frac{\grad^2 g_{R}}{g_{R}}}^2  \delta_{\Omega}(x)^4dx } \ell(Q)^{d}\\
&  \lec  \delta^{-1}\ell(R)^{d}.
\end{align*}


By Lemma \ref{l:halfspace}, for $\delta$ small enough (and recalling that $2B_{Q}'\subseteq 5B_{Q}$)
\[
\cB^{\WHSA}_{\ve,5}(Q_0)  \cap \Tree(R)\subseteq \NA(R).
\]

Thus,
\begin{align*}
\sum_{Q\in \cB^{\WHSA}_{\ve,5}(Q_0) \cap \cD(k_0)}\ell(Q)^{d}
& \leq \sum_{R\in \Top(k_{0})} \sum_{Q\in \cB^{\WHSA}_{\ve,5}(Q_0)  \cap \Tree(R)} \ell(Q)^{d}\\
& \lec\sum_{R\in \Top(k_{0})} \sum_{Q\in \NA(R)}  \ell(Q)^{d} \\
&  \lec \delta^{-1} \sum_{R\in \Top(k_{0})} \ell(R)^{d} 
\stackrel{\eqref{e:sumtop12}}{\lec} \CDHM(Q_0,2,A,\tau)
\end{align*}
Letting $k_0\rightarrow\infty$ finally gives 
\[
\WHSA(Q_{0},\ve,5)
=\sum_{Q\in \cB^{\WHSA}_{\ve,5}(Q_0) }\ell(Q)^{d}
\lec \CDHM(Q_0,2,A,\tau).
\]

This finishes the proof of Lemma \ref{l:WHSA<CDHM}. 

\subsection{The semi-uniform case}

Toward showing the other direction of Theorem \ref{t:SU-version}, we will show the following.

  \begin{lemma}
 \label{l:beta<CDHM-SU}
Suppose $\Omega$ is SU and $\{\Tree_1(R):R\in \Top_1\}$ are trees as in the statement of Theorem \ref{t:SU-version} with respect to $\omega=\omega_{\Omega}^{x_0}$ with $x_0\in \Omega\backslash MB_{Q_{0}}$ and $Q_0\in \cD$.  For $\lambda\geq 2$ and for $A,\tau^{-1}>0$ large enough (depending on $\lambda$), we have that for all $Q_0\in \cD$,
 \[
\beta_{\d\Omega}(Q_0)\lec_{A,\tau,\lambda } \sum_{R\in \Top_{1}}\ell(R)^{d}.
 \]
 \end{lemma}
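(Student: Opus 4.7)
The plan is to convert condition (1) of Theorem \ref{t:SU-version}, which is a density ratio for a single harmonic measure $\omega = \omega^{x_0}$, into the varying-pole density ratio of Definition \ref{d:CDHM}, so that Lemma \ref{l:beta<CDHM} applies directly.

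First, I would fix $M$ in the hypothesis large enough (depending on the SU, CDC, corkscrew and $\lambda$ constants) that $x_0 \in \Omega \setminus M' B_R$ for \emph{every} $R \in \Top_1$, where $M'$ is the constant in Lemma \ref{l:w/w}. This is possible because $R \subseteq Q_0$ and $\ell(R) \leq \ell(Q_0)$ force $B_R$ into a bounded dilate of $B_{Q_0}$, so $x_0 \notin M B_{Q_0}$ yields $x_0 \notin M' B_R$ uniformly. For each $R \in \Top_1$ let $x_R$ be an interior corkscrew point of $B_R$, so $B(x_R, c\ell(R)) \subseteq B_R \cap \Omega$.

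Next, I would apply Lemma \ref{l:w/w} with $B = \lambda B_R$, $x = x_0$ and $E = \lambda B_Q \cap \partial\Omega$ for $Q \in \Tree_1(R)$, obtaining corkscrew points $x_1, x_2 \in \lambda B_R \cap \Omega$ such that
\[
\omega^{x_1}(\lambda B_Q) \lec \frac{\omega^{x_0}(\lambda B_Q)}{\omega^{x_0}(\lambda B_R)} \lec \omega^{x_2}(\lambda B_Q).
\]
Because $\Omega$ is SU with the CDC and $x_R, x_1, x_2$ are all corkscrew points of $\lambda B_R$, they can be joined pairwise by Harnack chains inside $\Omega$ whose length depends only on the structural constants; hence $\omega^{x_1}(F) \sim \omega^{x_R}(F) \sim \omega^{x_2}(F)$ for every Borel $F \subseteq \partial\Omega$. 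Combining this with Lemma \ref{l:bourgain}, which gives $\omega^{x_R}(\lambda B_R) \sim 1$, produces
\[
\frac{\Theta_{\omega^{x_R}}^d(\lambda B_Q)}{\Theta_{\omega^{x_R}}^d(\lambda B_R)} \sim \frac{\Theta_{\omega^{x_0}}^d(\lambda B_Q)}{\Theta_{\omega^{x_0}}^d(\lambda B_R)},
\]
with constants independent of $R$ and $Q$.

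Therefore condition (1) of Theorem \ref{t:SU-version} (with pole $x_0$) automatically yields the same bounded-ratio condition for the varying poles $\omega^{x_R}$, with parameters $\tau' \sim \tau$ and $A' \sim A$. The family $\{\Tree_1(R): R \in \Top_1\}$ is then an admissible decomposition in the sense of Definition \ref{d:CDHM} for $\lambda, A', \tau'$, so
\[
\CDHM(Q_0, \lambda, A', \tau') \leq \sum_{R \in \Top_1} \ell(R)^d,
\]
and provided $A, \tau^{-1}$ (hence $A', (\tau')^{-1}$) are large enough, Lemma \ref{l:beta<CDHM} gives $\beta_{\partial\Omega}(Q_0) \lec \CDHM(Q_0, \lambda, A', \tau') \leq \sum_{R \in \Top_1} \ell(R)^d$, as required. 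The only mildly delicate step is the uniform Harnack-chain estimate between corkscrew points of $\lambda B_R$ in an SU, CDC domain, which is standard but must be invoked explicitly to transfer the density ratio from $x_0$ to $x_R$.
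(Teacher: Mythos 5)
Your reduction to Lemma \ref{l:beta<CDHM} would be shorter than the paper's argument, but it founders on exactly the step you flag as ``mildly delicate but standard'': in a semi-uniform domain, two corkscrew points of the same ball $\lambda B_R$ need \emph{not} be joinable by a Harnack chain of bounded length. Semi-uniformity provides cigar curves from interior points to \emph{boundary} points; bounded Harnack chains between pairs of \emph{interior} points is precisely the stronger uniformity condition. (Think of the complement of a large flat piece of $d$-plane with a distant hole: corkscrew points on opposite sides of a small boundary ball can only be connected by traveling all the way to the hole, so their harmonic measures are not comparable.) Consequently, Lemma \ref{l:w/w} hands you two possibly incomparable points $x_1$ and $x_2$: the hypothesis on $\omega^{x_0}$ yields the upper density bound $\Theta^d_{\omega^{x_1}}(\lambda B_Q)\lec A\,\Theta^d_{\omega^{x_1}}(\lambda B_R)$ only for the pole $x_1$ and the lower bound only for the pole $x_2$, and you have not produced a single corkscrew point $x_R$ for which both hold. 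So the family $\{\Tree_1(R)\}$ is not shown to be admissible for Definition \ref{d:CDHM}, and Lemma \ref{l:beta<CDHM} cannot be invoked. Note that in Corollary \ref{c:SU-cor} the paper does carry out a pole transfer of this kind, but there the top cubes satisfy $b\beta(MB_R)<\ve$ by construction, so the two-sided local flatness forces every corkscrew point of $\lambda B_R$ into the Harnack-chain component of $x_R^{+}$ or $x_R^{-}$; your trees come with no such flatness information at their tops.

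The paper sidesteps the issue entirely: it never converts to the varying-pole CDHM condition, but instead re-runs the argument of Lemma \ref{l:WHSA<CDHM} with the single Green function $g=G_\Omega(x_0,\cdot)$, using the doubling property \eqref{doubling} of $\omega^{x_0}$ (available because $\Omega$ is SU, by Lemma \ref{l:doubling}) as a substitute for \eqref{e:lambdaB<2B}, and then concludes via Lemma \ref{l:beta<WHSA}. If you want to rescue your route, you would need either to assume $\Omega$ uniform (in which case Lemma \ref{l:w/w} lets you take $x_1=x_2$ and everything goes through), or to first refine the given trees so that each top cube is flat enough for the Harnack-chain argument of Corollary \ref{c:SU-cor} to apply---which essentially amounts to redoing the stopping-time construction rather than quoting Lemma \ref{l:beta<CDHM} as a black box.
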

 
 We will require the following lemmas.
 
  \begin{lemma}
 \label{l:WHSA<CDHM-SU}
With the assumptions of the previous lemma, and for all $\ve^{-1},A,\tau^{-1}>0$  large enough, we have that for all $R\in \cD$,
 \[
 \WHSA(R,5,\ve) \lec_{K_0,\ve,A,\tau} \CDHM(R,2,A,\tau).
 \]
 \end{lemma}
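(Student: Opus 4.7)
The plan is to adapt the proof of Lemma \ref{l:WHSA<CDHM} to the SU setting by replacing the per-tree Green's functions $g_R = G_\Omega(\cdot, x_R)$ throughout by the single Green's function $g_0 = G_\Omega(\cdot, x_0)$ attached to the common pole. Strictly speaking the conclusion already follows from Lemma \ref{l:WHSA<CDHM} since every SU domain is a corkscrew domain with LCR boundary, but a self-contained argument using the trees $\Tree_1(R)$ from Theorem \ref{t:SU-version} directly is cleaner and streamlines the later application in Lemma \ref{l:beta<CDHM-SU}. In particular, one proves the stronger statement $\WHSA(Q_0, 5, \ve) \lec \sum_{R \in \Top_1} \ell(R)^d$; this dominates $\CDHM(Q_0, 2, A, \tau)$ up to universal constants, since by Lemma \ref{l:w/w} together with doubling (Lemma \ref{l:doubling}) the fixed-pole density bounds satisfied by $\{\Tree_1(R)\}$ translate into corkscrew-pole bounds as required by Definition \ref{d:CDHM}.

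First I would fix a scale cutoff $k_0 \in \bN$, set $\Tree_1^{k_0}(R) = \Tree_1(R) \cap \cD(k_0)$, and refine each truncated tree via Lemma \ref{l:corona} (with $C_0, \vartheta$ small) to obtain a refined partition $\{\Tree(R') : R' \in \Top\}$ of $\cD(k_0)$. Each $\Tree(R')$ is contained in some $\Tree_1(R)$ and carries an Ahlfors regular skeleton $E_{R'}$ that builds a chord-arc subdomain $\Omega_{R'} \subseteq \Omega$ in the manner of \cite[Proposition A.2]{HMM14}. By \eqref{e:ADR-packing} and the argument of Lemma \ref{l:H<CDHM} applied to $\Tree_1$ (which goes through verbatim with the fixed pole),
\[
\sum_{R' \in \Top} \ell(R')^d \lec \sum_{R \in \Top_1} \ell(R)^d + \cH^d(Q_0) \lec \sum_{R \in \Top_1} \ell(R)^d.
\]
The fixed-pole analog of Lemma \ref{l:g<theta} then asserts that for each $Q \in \Tree(R') \subseteq \Tree_1(R)$ there is $z_Q \in 4B_Q$ with $\delta_\Omega(z_Q) \sim_{A,\tau} \ell(Q)$ and $g_0(z_Q)/\ell(Q) \sim_{A,\tau} \Theta_\omega^d(\lambda B_R) =: \kappa_R$. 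The proof is the same: test \eqref{e:ibp} against a bump supported on $4B_Q$, use \eqref{e:w>G} and \eqref{e:holder} to convert between harmonic measure density and Green's function values, and apply the tree density bound. By Harnack, $g_0/\delta_\Omega \sim_{A,\tau} \kappa_R$ throughout a Whitney ball $B_Q' = B(z_Q, c\ell(Q)) \subseteq 5B_Q \cap \Omega$.

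The main Green's identity computation on $\Omega_{R'}$ then carries over verbatim. Since $x_0 \notin MB_{Q_0} \supseteq \Omega_{R'}$ for $M$ large, $g_0$ is harmonic on $\Omega_{R'}$, and integration by parts as in the original proof yields
\[
\sum_{Q \in \Tree(R')} \int_{B_Q'} \av{\frac{\grad^2 g_0}{g_0}}^2 \delta_\Omega^3 \, dx \lec \kappa_R^{-3} \int_{\d \Omega_{R'}} \frac{g_0^3}{\delta_\Omega^3} \, d\cH^d \lec \ell(R')^d,
\]
the factor $\kappa_R^{-3}$ coming from $g_0^{-2} \cdot g_0^{-1}$ via $g_0 \sim \kappa_R \delta_\Omega$ on $B_Q'$ cancelling against the $\kappa_R^3$ produced by $(g_0/\delta_\Omega)^3 \sim \kappa_R^3$ on $\d\Omega_{R'}$, leaving the Ahlfors regular measure $\cH^d(\d\Omega_{R'}) \sim \ell(R')^d$. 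Defining $\NA(R')$ to be the cubes $Q \in \Tree(R')$ for which the $B_Q'$-average of $|\grad^2 g_0/g_0|^2 \delta_\Omega^4$ exceeds $\delta$ gives $\sum_{Q \in \NA(R')} \ell(Q)^d \lec \delta^{-1} \ell(R')^d$, and for $\delta$ small Lemma \ref{l:halfspace} forces $\cB^{\WHSA}_{\ve,5}(Q_0) \cap \Tree(R') \subseteq \NA(R')$. Summing over $R' \in \Top$ and sending $k_0 \to \infty$ finishes the proof.

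The main obstacle is essentially bookkeeping: one must verify that the common scale $\kappa_R$ controls $g_0/\delta_\Omega$ simultaneously at all Whitney locations inside a single refined tree $\Tree(R')$, so that the $\kappa_R^{\pm 3}$ factors cancel cleanly in the Green's identity computation above. This is where semi-uniformity enters: Lemma \ref{l:w/w} allows the fixed pole $x_0$ to play the role of any corkscrew pole up to universal constants, while Lemma \ref{l:doubling} lets one freely interchange $\Theta_\omega^d(2B_R)$, $\Theta_\omega^d(\lambda B_R)$, and $\Theta_\omega^d(MB_R)$. Beyond these substitutions, no step of the proof of Lemma \ref{l:WHSA<CDHM} requires modification.
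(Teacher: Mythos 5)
Your proposal is correct, and for the stated lemma it coincides with the paper's (omitted) proof: the paper simply observes that the argument of Lemma \ref{l:WHSA<CDHM} goes through unchanged, which is exactly your opening remark that an SU domain with LCR boundary is in particular a corkscrew domain with LCR boundary. Your ``self-contained'' variant --- replacing the per-tree Green's functions $G_{\Omega}(x_{R},\cdot)$ by the single $g_0=G_{\Omega}(x_0,\cdot)$, proving the fixed-pole analogue of Lemma \ref{l:g<theta} via \eqref{e:ibp}, \eqref{e:w>G}, \eqref{e:holder} and doubling, and running the Green's identity on the sawtooth regions with the $\Theta_{\omega}^{d}(2B_R)^{\pm 3}$ cancellation --- is precisely the paper's proof of the \emph{next} lemma, which establishes $\WHSA(Q_0,\ve,5)\lec \sum_{R\in\Top_1}\ell(R)^{d}$; so that part is also sound.

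One logical point to fix: your claim that the bound $\WHSA(Q_0,5,\ve)\lec\sum_{R\in\Top_1}\ell(R)^{d}$ yields the stated inequality because $\sum_{R\in\Top_1}\ell(R)^{d}$ ``dominates'' $\CDHM(Q_0,2,A,\tau)$ runs in the wrong direction. Converting the fixed-pole trees into a valid competitor for Definition \ref{d:CDHM} via Lemmas \ref{l:w/w} and \ref{l:doubling} shows $\CDHM(Q_0,2,A,\tau)\leq \sum_{R\in\Top_1}\ell(R)^{d}$, which combined with $\WHSA\lec\sum_{R\in\Top_1}\ell(R)^{d}$ gives no comparison between $\WHSA$ and $\CDHM$. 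The stated lemma instead follows from your first observation (the infimum in $\CDHM$ is over corkscrew-pole decompositions, and the proof of Lemma \ref{l:WHSA<CDHM} applies verbatim), while the fixed-pole bound is the separate, genuinely stronger statement needed for Lemma \ref{l:beta<CDHM-SU}. Keeping these two assertions distinct, as the paper does, avoids the circularity.
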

 
 This has the same proof as Lemma \ref{l:WHSA<CDHM}, so we omit it.

\begin{lemma}
Suppose $\Omega$ is also SU and $\{\Tree_1(R):R\in \Top_1\}$ are trees as in the statement of Lemma \ref{t:SU-version} with respect to $\omega=\omega_{\Omega}^{x_0}$ with $x_0\in \Omega\backslash MB_{Q_{0}}$ and $Q_0\in \cD$. Then for $Q_0,\tau,A,\ve$ as above,
\[
\WHSA(Q_0,\ve,5)\lec \sum_{R\in \Top_1} \ell(R)^{d}.
\]
\end{lemma}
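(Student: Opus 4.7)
The plan is to adapt the proof of Lemma \ref{l:WHSA<CDHM}, with the crucial modification that we use the fixed pole $x_0$ instead of the corkscrew poles $x_R$, and the given trees $\{\Tree_1(R):R\in\Top_1\}$ in place of those built from Definition \ref{d:CDHM}. A first observation is the SU-analogue of Lemma \ref{l:H<CDHM},
\[
\cH^{d}(Q_0) \lec \sum_{R\in\Top_1}\ell(R)^{d}.
\]
The proof is identical after replacing $\omega^{x_R}$ with $\omega^{x_0}$ and invoking doubling of $\omega^{x_0}$ (Lemma \ref{l:doubling}, applicable since $\Omega$ is SU and $x_0$ lies outside $MB_{Q_0}$) in place of the corkscrew bound $\omega^{x_R}(5\lambda B_R)\sim 1$.

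Next, for each $R\in \Top_1$ I would refine $\Tree_1(R)$ via Lemma \ref{l:corona} into subtrees $\{\Tree_2(T):T\in \Top_2(R)\}$ with $\sum_{T\in \Top_2(R)}\ell(T)^{d}\lec \cH^{d}(R)$ and with AR approximating surfaces $E(\Tree_2(T))\subseteq C_{0}B_{T}$. Writing $\Top=\Top_1\cup\bigcup_R\Top_2(R)$, the previous estimate gives
\[
\sum_{R'\in\Top}\ell(R')^{d}\lec \sum_{R\in\Top_1}\ell(R)^{d}+\cH^{d}(Q_0)\lec \sum_{R\in\Top_1}\ell(R)^{d},
\]
and each of the resulting subtrees $\Tree(R')$ still inherits the two-sided control
\[
\tau\Theta^{d}_{\omega^{x_0}}(\lambda B_{R})\le \Theta^{d}_{\omega^{x_0}}(\lambda B_Q)\le A\Theta^{d}_{\omega^{x_0}}(\lambda B_{R}) \text{ for all } Q\in \Tree(R'),
\]
where $R\supseteq R'$ is the original element of $\Top_1$ containing $R'$.

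For each $R'\in \Top$ I then construct the Whitney region $\Omega_{R'}$ exactly as in Lemma \ref{l:WHSA<CDHM}, but use the single Green's function $g(x):=G_\Omega(x,x_0)$ in place of $g_{R}$. The SU-analogue of Lemma \ref{l:g<theta} states $g(z_Q)/\ell(Q)\sim_{A,\tau}\Theta^{d}_{\omega^{x_0}}(\lambda B_{R})$ for a suitable point $z_Q\in 4B_Q$ with $\delta_\Omega(z_Q)\sim \ell(Q)$, and follows from the same Green's-formula/H\"older argument using \eqref{e:ibp}, \eqref{e:w>G}, \eqref{e:holder}, the two-sided hypothesis above, and doubling of $\omega^{x_0}$; the corresponding Whitney-region estimate is $g(x)/\delta_\Omega(x)\sim_{A,\tau}\Theta^{d}_{\omega^{x_0}}(\lambda B_{R})$ rather than $\sim\ell(R)^{-d}$.

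Feeding these into the integration-by-parts computation of Lemma \ref{l:WHSA<CDHM}, the prefactor $\ell(R)^{3d}$ is replaced by $(\ell(R)^{d}/\omega^{x_0}(\lambda B_{R}))^{3}$, while the boundary integral acquires an extra factor $(\omega^{x_0}(\lambda B_{R})/\ell(R)^{d})^{3}$; the two cancel and we obtain
\[
\sum_{Q\in \Tree(R')\setminus\{R'\}}\int_{B_Q'}\Big|\frac{\grad^2 g(x)}{g(x)}\Big|^{2}\delta_\Omega(x)^{3}\,dx\lec \ell(R')^{d}.
\]
Lemma \ref{l:halfspace}, which is pole-independent, then places every WHSA cube in $\Tree(R')$ inside the \emph{non-affine} set of $g$, and Chebyshev gives $\sum_{Q\in \cB^{\WHSA}_{\ve,5}(Q_0)\cap\Tree(R')}\ell(Q)^{d}\lec \ell(R')^{d}$. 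Summing over $R'\in \Top$ and letting $k_0\to\infty$ completes the proof. The main obstacle is that $g/\delta_\Omega$ is no longer universally comparable to $\ell(R)^{-d}$; the resolution is that it is uniformly comparable to $\Theta^{d}_{\omega^{x_0}}(\lambda B_{R})$ across the tree, and this single factor cancels cleanly in the scale-invariant quantity $|\grad^2 g|^{2}\delta_\Omega^{3}/g^{2}$.
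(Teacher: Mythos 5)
Your proposal follows the paper's proof of this lemma essentially step for step: reduce to $\lambda=2$ by doubling, bound $\cH^{d}(Q_0)$ by $\sum_{R\in\Top_1}\ell(R)^{d}$ via the argument of Lemma \ref{l:H<CDHM} with $\omega^{x_0}$ in place of $\omega^{x_R}$, superimpose the corona decomposition of Lemma \ref{l:corona}, prove the SU version of Lemma \ref{l:g<theta} so that $g/\delta_\Omega\sim_{A,\tau}\Theta^{d}_{\omega}(2B_R)$ on the Whitney regions, note that this factor enters to the power $-3$ in front of the Green's-formula integral and to the power $+3$ in the boundary term so that it cancels, and then conclude with Lemma \ref{l:halfspace} and Chebyshev exactly as before. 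One small bookkeeping correction: Lemma \ref{l:corona} should be applied once to $Q_0$ and its trees intersected with the $\Tree_1(R)$ (as is done in Lemma \ref{l:WHSA<CDHM}), rather than applied separately inside each $\Tree_1(R)$, since the latter would produce $\sum_{R\in\Top_1}\cH^{d}(R)$, which is not controlled by $\cH^{d}(Q_0)$ when the tops are nested; the bound $\sum_{R\in\Top_1}\ell(R)^{d}+\cH^{d}(Q_0)$ you display is the one the global application yields.
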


\begin{proof}
We sketch the changes needed in the above proof of Lemma \ref{l:WHSA<CDHM}. Note that if $\{\Tree_1(R):R\in \Top_1\}$ are as in Lemma \ref{t:SU-version}, since harmonic measure is doubling, we can actually assume $\lambda =2$ (this changes the constants $A$ and $\tau$ by a constant multiple depending on the doubling constant).

Now define $\Top_{1}(k_0)$, $\Tree_{1}^{k_0}$, $\Top_{2}$, $\Tree_{2}$, $\Tree$ and $\Top(k_0)$ as in the proof of Lemma \ref{l:WHSA<CDHM}. We now have 
\begin{multline*}
\sum_{R\in \Top(k_0)}\ell(R)^{d}
\leq \sum_{R\in \Top_1}\ell(R)^{d}
+\sum_{R\in \Top_2}\ell(R)^{d}\\
\lec \sum_{R\in \Top_1}\ell(R)^{d} + \cH^{d}(Q_0)
\lec \sum_{R\in \Top_1}\ell(R)^{d} 
\end{multline*}
where the last estimate follows from the proof of Lemma \ref{l:H<CDHM} (where we just replace $\omega_{\Omega}^{x_{R}}$ by $\omega$ everywhere).

We now need a version of Lemma \ref{l:g<theta}:

\begin{lemma}
\label{l:g<thetaSU}
For $R\in \Top$, let $\{x_{R}^{i}\}_{i=1}^{N}$ be reference points for $2B_{R}$ and $g = G_{\Omega}(x_0,\cdot)$. For $Q\in \Tree(R)$, there is $z_{Q}\in 4B_{Q}$ so that 
\begin{equation}
\label{e:g<thetaSU}
\delta_{\Omega}(z_{Q})\sim_{A,\tau} \ell(Q) \;\; \mbox{ and }\sum\frac{g(z_{Q})}{\ell(Q)}\sim_{A,\tau}\Theta_{\omega}^{d}(2B_{R}) .
\end{equation}
\end{lemma}

\begin{proof}
The proof is exactly the same as Lemma \ref{l:g<theta}, except that now to prove \eqref{e:lambdaB<2B}, we just use \eqref{doubling} since $\Omega$ is SU. 
\end{proof}

Again, there is $c>0$ so that  if $B_{Q}'= B(z_{Q},c\ell(Q))$, then $2B_{Q}'\subseteq 5B_{Q}\cap \Omega$ and for $x\in B_{Q}'$, by Harnack's inequality
\begin{equation}
\label{e:g<r^-d-SU}
\frac{g(x)}{\delta_{\Omega}(x)} \sim \frac{g(x)}{\ell(Q)}\sim  \Theta_{\omega}^{d}(2B_R)
\end{equation}
By adjusting the value of $c$ and the positions of the $z_Q$, we can assume that $B_Q'\subseteq \Omega_{R}':=\Omega_{R}\backslash B(x_{R},c\ell(R))$ for all $Q\in \Tree(R)$ so that $R\neq Q$.

Again, for $\lambda$ large enough $\frac{\lambda}{4} B_{Q}\supseteq U_{Q}^{K}$, so for all $x\in U_{K}\backslash B(x_{R},c\ell(R))$,
\begin{equation}
\label{e:allxinU-SU}
\frac{g(x)}{\delta_{\Omega}(x)} 
\stackrel{\eqref{e:w>G}}{\lec}
\Theta_{\omega}^{d}(\lambda B_{Q})
\stackrel{\eqref{doubling}}{\lec}
\Theta_{\omega}^{d}(2 B_{R}).
\end{equation}

Also note that \eqref{e:cauchydelta} still holds with $g$ in place of $g_{R}$. Repeating the same estimates as before, we get 
\begin{align*}
\sum_{Q\in \Tree(R) \backslash \{R\} } \int_{B_{Q}'} \av{\frac{\grad^2 g}{g}}^2 & \delta_{\Omega}(x)^3dx 
\stackrel{\eqref{e:g<r^-d-SU}}{\sim} \Theta_{\omega}^{d}(2B_R)^{-3}\sum_{Q\in \Tree(R) \backslash \{R\}} \int_{B_{Q}'}  \av{\grad^2 g}^2 g dx \\
& 
\lec \Theta_{\omega}^{d}(2B_R)^{-3} \int_{\d\Omega_{R}'} \frac{g(x)^{3}}{\delta_{\Omega}(x)^{3}}d\cH^{d}(x)\\
  &  \stackrel{\eqref{e:allxinU-SU}}{\lec} \Theta_{\omega}^{d}(2B_R)^{-3} \Theta_{\omega}^{d}(2B_R)^{3} \cH^{d}(\d\Omega_{R}' )
  \sim \ell(R)^{d}.
\end{align*}

 The remaining steps are just as in Lemma \ref{l:g<theta}.

\end{proof}

Now Lemma \ref{l:beta<CDHM-SU} follows from the previous two lemmas and Lemma \ref{l:beta<WHSA}.

\subsection{Conclusion of proofs of Theorems \ref{t:CDHM} and \ref{t:SU-version}}

We finally remark that Theorem \ref{t:CDHM} follows from Lemma \ref{l:CDHM<beta} (with $\cT$ equal to all cubes contained in $Q_0$) and \ref{l:beta<CDHM}. Theorem \ref{t:SU-version} similarly follows from Corollary \ref{c:SU-cor}  (with $\cT$ equal to all cubes contained in $Q_0$) and \ref{l:beta<CDHM-SU}.

\section{Proof of Theorem \ref{t:Afin}}
\label{s:III}

Theorem \ref{t:Afin} will follow from the following slightly more general result:

\begin{theorem}
\label{t:Afin2}
Let $\Omega\subseteq \R^{d+1}$ be a semi-uniform domain with Ahlfors regular boundary and let $\cD$ be the Christ-David cubes. There is $M>0$ depending on the semi-uniformity and Ahlfors regularity constants so that the following holds. For $Q_0\in \cD$, let $x_0\in  \Omega\backslash MB_{Q_{0}}$ and $\omega=\omega_{\Omega}^{x_0}$. Let $k$ be the Radon-Nikodym derivative of $\omega$ in $Q_0$ with respect to $\sigma = \cH^{d}|_{\d\Omega}$. Then there is $0<C\lec 1$ so that
\[
C+ \avint_{Q_{0}}  \log \frac{1}{k}  d\sigma+\log\frac{\omega(Q_0)}{|Q_0|} \sim \frac{\beta_{\d\Omega}(Q_0)}{\ell(Q_{0})^{d}}.
\]
Above, the constants only depend on the semi-uniformity and Ahlfors regularity.
\end{theorem}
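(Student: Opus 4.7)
The approach follows Fefferman-Kenig-Pipher's dyadic martingale analysis of $A_\infty$ weights, adapted to our geometric setting via Theorem \ref{t:SU-version}. By Ahlfors regularity, $|Q_0|=\sigma(Q_0)\sim\ell(Q_0)^d$, so setting $E_Q:=\omega(Q)/\sigma(Q)$ and
\[
J(Q_0):=\log E_{Q_0}-\avint_{Q_0}\log k\,d\sigma,
\]
which is nonneg by Jensen's inequality applied to $\log$, the left-hand side of the claim equals $J(Q_0)+O(1)$ in the absolutely continuous case (and is $\geq J(Q_0)+O(1)$ in general, since the singular part $\omega_s$ only inflates $\log\frac{\omega(Q_0)}{|Q_0|}$). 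The target reduces to $\sigma(Q_0)(J(Q_0)+C)\sim\sigma(Q_0)+\beta_{\d\Omega}(Q_0)$.

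The key identity, obtained by telescoping the dyadic martingale $\log E_{Q^n(x)}\to\log k(x)$ along the chain of ancestors of $x$ and integrating, is
\[
\sigma(Q_0)J(Q_0)=\sum_{Q\subsetneq Q_0}\sigma(Q)\log\frac{E_{\hat Q}}{E_Q}=\sum_{Q\subseteq Q_0}\sigma(Q)\,\mathrm{KL}(q_Q\,\|\,p_Q),
\]
where $p_Q,q_Q$ are the $\omega$- and $\sigma$-probability distributions on the children of $Q$ and $\mathrm{KL}$ is the Kullback-Leibler divergence. I then apply Theorem \ref{t:SU-version} with $\lambda=2$ and pole $x_0$ to obtain a partition $\{\Tree(R):R\in\Top\}$ with $\sum_{R\in\Top}\ell(R)^d\sim\beta_{\d\Omega}(Q_0)$, bounded density ratios $E_Q/E_R\in[\tau,A]$ inside each $\Tree(R)$, and minimal cubes $M\in\Stop(R)$ satisfying $E_M\sim\tau E_R$ (LD) or $E_M\sim AE_R$ (HD) by property~(3).

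For the upper bound ``$\lesssim$'', I split the martingale sum by trees: within $\Tree(R)$, signed telescoping collapses the sum to $\int_R\log(E_R/E_{M(x)})\,d\sigma$, whose integrand is $O(1)$ since $E_{M(x)}/E_R\in[\tau,A]$, giving $O(\ell(R)^d)$ per tree. The between-tree jumps $\sigma(R')\log(E_{\hat{R'}}/E_{R'})$ for $R'\in\Top\setminus\{Q_0\}$ satisfy $E_{R'}\lesssim E_{\hat{R'}}$ automatically (since $\omega(R')\le\omega(\hat{R'})$ and $\sigma(R')\sim\sigma(\hat{R'})$), so the negative part is bounded, while the positive part telescopes into the within-tree integrand of the descendant tree $\Tree(R')$ and is absorbed into its $O(\ell(R')^d)$ contribution. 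Summing over $R\in\Top$ yields $\sigma(Q_0)J(Q_0)\lesssim\beta_{\d\Omega}(Q_0)$. For the lower bound ``$\gtrsim$'', the refined second stopping time in Corollary \ref{c:SU-cor} ensures that at every $M\in\Stop(R)$, some child has density differing from $E_M$ by at least a fixed large factor, forcing $\mathrm{KL}(q_M\,\|\,p_M)\gtrsim 1$ and hence $\sigma(M)\mathrm{KL}_M\gtrsim\ell(M)^d$ by AR. Since each $R'\in\Top\setminus\{Q_0\}$ is a child of such an $M$ with $\ell(R')\sim\ell(M)$, summing yields $\sigma(Q_0)J(Q_0)\gtrsim\sum_{R\neq Q_0}\ell(R)^d$, and adding $\ell(Q_0)^d\sim\sigma(Q_0)$ on both sides completes the lower bound.

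The main obstacle is the between-tree bookkeeping in the upper bound: individually $\log(E_{\hat{R'}}/E_{R'})$ can be arbitrarily large and positive when harmonic measure concentrates away from the new top $R'$, so the cancellation against the descendant-tree telescoping must be made quantitative via careful reindexing of the martingale sum. A second delicate point is Theorem \ref{t:Afin2}'s non-absolutely-continuous case (where $k$ may vanish on a set of positive $\sigma$-measure, as in Theorem \ref{t:example}): the martingale $E_Q$ blows up on $\mathrm{supp}(\omega_s)$, so one must truncate at scale $\rho^{k_0}$, run the argument for the truncated martingale, and pass to $k_0\to\infty$, using the additive constant $C$ on the left to absorb $\log$-divergences coming from the singular set.
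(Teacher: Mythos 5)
Your overall framework --- the Fefferman--Kenig--Pipher dyadic martingale/telescoping identity combined with the trees of Theorem \ref{t:SU-version} --- is the same as the paper's, and your upper bound is essentially sound; indeed the between-tree jumps you single out as the main obstacle are automatically $O_{A,\tau}(1)$, because $\omega$ is doubling (Lemma \ref{l:doubling}, $\Omega$ being semi-uniform) and $\sigma$ is Ahlfors regular, so $E_{R'}\sim E_{\hat{R'}}$ for \emph{every} parent--child pair, and no delicate reindexing is needed there.

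The lower bound, however, has a genuine gap. You claim that at each minimal cube $M\in\Stop(R)$ some child has density differing from $E_M$ by a fixed large factor, forcing $\mathrm{KL}(q_M\,\|\,p_M)\gec 1$. This is false: the stopping condition in Theorem \ref{t:SU-version}/Corollary \ref{c:SU-cor} compares the child's density to $E_R$, the density at the \emph{top} of the tree, not to $E_M$; and the same doubling-plus-AR fact used above forces $E_{M'}\sim E_M$ for every child $M'$ of every cube, with constants independent of $A$ and $\tau$. The drift from $E_R$ down to $\tau E_R$ (or up to $AE_R$) accumulates over many generations whose individual ratios can be arbitrarily close to $1$, so the single-generation Kullback--Leibler divergence at a stop cube admits no lower bound. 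What actually rescues the estimate --- and what the paper's proof does --- is a trichotomy at the level of whole trees rather than individual stop cubes: (i) the HD stop cubes of $\Tree(R)$ have total $\sigma$-measure at most $A^{-1}|R|$, since large density forces small measure, so their (negative!) contribution to $J$ is negligible for $A$ large; (ii) if the LD stop cubes occupy a fixed fraction of $R$, then the \emph{telescoped} quantity $\int_{\bigcup\LD(R)}\log(E_R/E_M)\,d\sigma\geq\log(1/(C_0\tau))\,|{\textstyle\bigcup}\LD(R)|\gec|R|$ --- it is the accumulated log-ratio along the entire chain from $R$ to $M$, integrated over the LD cubes, that is bounded below, not any per-cube KL term; (iii) for the remaining trees most of the mass lands in $F(R)=R\setminus\bigcup_{Q\in\Next(R)}Q$, and $\sum|R|\lec|Q_0|$ for those because the sets $F(R)$ are pairwise disjoint. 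Your argument engages with none of these points and in particular offers no mechanism for the HD stop cubes, where the density increase makes the local contribution to $J$ negative. Separately, the non-absolutely-continuous case is only gestured at; the paper runs two distinct arguments depending on which side of the equivalence is assumed finite a priori, using dominated convergence in one and monotone convergence of the tree-level Jensen gaps in the other.
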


To see how this implies Theorem \ref{t:Afin}, observe that if $\omega \ll \cH^{d}$ in $Q_0$, then $\omega|_{Q_{0}} = kd\sigma|_{Q_{0}}$. Hence, $\omega(Q_{0})/|Q_0|=\avint_{Q_{0}} kd\sigma$, and so Theorem \ref{t:Afin2} impleis 
\[
\frac{\beta_{\d\Omega}(Q_0)}{\ell(Q_{0})^{d}}
\sim 
C+ \avint_{Q_{0}}  \log \frac{1}{k}  d\sigma +\log\avint_{Q_{0}} kd\sigma.
\]
Note that by Jensen's inequality,
\[
\avint_{Q_{0}}  \log \frac{1}{k}  d\sigma +\log\avint_{Q_{0}} kd\sigma
\geq 
\avint_{Q_{0}}  \log \frac{1}{k}  d\sigma +\avint_{Q_{0}} \log kd\sigma = \avint_{Q_{0}} \log 1d\sigma 
= 0.\]
Thus,
\begin{align*}
C& \leq C+\avint_{Q_{0}}  \log \frac{1}{k}  d\sigma +\log\avint_{Q_{0}} kd\sigma\\
& \leq C\ps{1+\avint_{Q_{0}}  \log \frac{1}{k}  d\sigma +\log\avint_{Q_{0}} kd\sigma}\\
& \lec 1+\avint_{Q_{0}}  \log \frac{1}{k}  d\sigma +\log\avint_{Q_{0}} kd\sigma.
\end{align*}

This proves Theorem \ref{t:Afin}.\\

The rest of this section is dedicated to the proof of Theorem \ref{t:Afin2}, some of the ideas for which come from the martingale arguments used to study $A_{\infty}$-weights in \cite[Section 3.18]{FKP91}. The proof will follow from the main lemmas in the following two sections.

\subsection{The case assuming $\beta_{\d\Omega}(Q_0)<\infty$ a priori}

\begin{lemma}
The conclusions of Theorem \ref{t:Afin2} hold assuming  $\beta_{\d\Omega}(Q_0)<\infty$. 
\end{lemma}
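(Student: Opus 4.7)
The plan is to apply Theorem \ref{t:SU-version} to the collection $\cT$ of all cubes contained in $Q_0$ (so that $\BTM$ is empty), obtaining, for $A$ and $\tau^{-1}$ chosen large depending on the semi-uniformity and Ahlfors regularity of $\Omega$, a partition of these cubes into trees $\{\Tree(R) : R \in \Top\}$ enjoying properties (1)--(3) together with
\[
\sum_{R \in \Top}\ell(R)^d \sim \beta_{\d\Omega}(Q_0).
\]
Set $\sigma := \cH^d|_{\d\Omega}$; by Ahlfors regularity $\sigma(R) \sim \ell(R)^d$, so $\sum_R \sigma(R) \sim \beta_{\d\Omega}(Q_0) < \infty$, and Borel--Cantelli forces $\sigma$-a.e.\ $x \in Q_0$ to lie in only finitely many tops $R_1(x)=Q_0 \supsetneq R_2(x) \supsetneq \cdots \supsetneq R_{N(x)}(x)$. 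For such $x$, property (1) traps the dyadic densities $k_Q := \omega(Q)/\sigma(Q) \sim \Theta_\omega^d(\lambda B_Q)$ along the chain through $x$ inside a bounded window inside the deepest tree, so martingale convergence gives $k_{Q_n(x)} \to k(x) > 0$ $\sigma$-a.e., where $k$ is the Radon--Nikodym derivative of the absolutely continuous part of $\omega$ against $\sigma$.

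Set $\Phi(x) := \log(\omega(Q_0)/|Q_0|) - \log k(x)$, so that the goal---after dividing by $\sigma(Q_0) \sim \ell(Q_0)^d$ and up to an additive $O(\sigma(Q_0))$ absorbed by the constant $C$ on the left---reduces to $\int_{Q_0}\Phi\,d\sigma \sim \beta_{\d\Omega}(Q_0)$. Telescoping $\log k_{Q_n(x)}$ tree-by-tree decomposes $\Phi(x)$ into (a) the top-to-stop differences $\sum_{i<N(x)}[\log k_{R_i(x)} - \log k_{S_i(x)}]$ inside each non-terminal tree traversed by $x$, where $S_i(x) \in \Stop(R_i(x))$ is the exit cube and $R_{i+1}(x)$ is a child of $S_i(x)$; (b) the one-generation cross-tree jumps $\sum_{i<N(x)}[\log k_{S_i(x)} - \log k_{R_{i+1}(x)}]$; and (c) the residual $[\log k_{R_{N(x)}(x)} - \log k(x)]$ inside the deepest tree. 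Doubling of $\omega$ together with Ahlfors regularity bounds every summand of (b) uniformly, and property (1) bounds (c) by $\log(A/\tau)$; integrating, together with $\int_{Q_0} N\,d\sigma = \sum_R \sigma(R) \sim \beta_{\d\Omega}(Q_0)$, shows these two groups contribute $O(\beta_{\d\Omega}(Q_0))$ with constants independent of $A$ and $\tau$.

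The principal contribution comes from (a), which by Fubini equals $\sum_{R \in \Top} J_R$ with $J_R := \sum_{S \in \Stop(R)}\sigma(S)[\log k_R - \log k_S]$. Property (3) forces each ratio $y_S := k_S/k_R$ to lie in $\{\sim \tau\} \cup \{\sim A\}$, and $\sum_{S \in \Stop(R)}\omega(S) \leq \omega(R)$ forces the fraction $p_R$ of $\sigma(R \setminus z(\Tree(R)))$ covered by LD-type stops to satisfy $1-p_R \lec 1/A$; a direct calculation then yields $J_R \sim \sigma(R \setminus z(\Tree(R)))\bigl(\log(1/\tau) - (\log A)/A\bigr)$, comparable to $\sigma(R \setminus z(\Tree(R)))$ once $A, \tau^{-1}$ are large. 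Since the children of $\Stop(R)$ exhaust $\Next(R) \subseteq \Top$ and partition $R \setminus z(\Tree(R))$, summing gives $\sum_R J_R \sim \sum_{R \in \Top \setminus \{Q_0\}}\sigma(R) \sim \beta_{\d\Omega}(Q_0) - \sigma(Q_0)$, which together with the bounds on (b) and (c) completes the proof. The main obstacle is the lower bound on $J_R$: Jensen's inequality yields only $J_R \geq 0$, and cancellation between the HD and LD stopping contributions is ruled out only by the quantitative dichotomy in property (3) together with choosing $A, \tau^{-1}$ large enough that the $(\log A)/A$ correction does not swamp the dominant $\log(1/\tau)$ gain.
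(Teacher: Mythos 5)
Your overall strategy is the same as the paper's: run Theorem \ref{t:SU-version}, telescope $\log k$ along the chain of trees through a.e.\ point, control the cross-tree and terminal contributions by doubling and by property (1), and extract the main term from the HD/LD dichotomy at the stopping cubes. The paper organizes this as a sum of functions $f_R$ supported on $R$ rather than a pointwise telescoping, but that is cosmetic. However, there is a genuine error in your treatment of the main term. The mass constraint $\sum_{S\in \Stop(R)}\omega(S)\leq \omega(R)$ gives $\sum_{S\in \HD(R)}\sigma(S)\lec A^{-1}\sigma(R)$ (the paper's \eqref{e:A-1|R|}), i.e.\ the HD stops are a small fraction of \emph{all of $R$} --- not of $\sigma\bigl(R\setminus z(\Tree(R))\bigr)$. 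Since $z(\Tree(R))$ (the set $F(R)$ in the paper, where the tree descends forever) can carry almost all of $\sigma(R)$, nothing prevents \emph{every} stopping cube from being HD; in that case $1-p_R$ is $1$, not $O(1/A)$, and $J_R\approx -\log A\cdot\sigma(\HD(R))<0$, so your claimed per-tree asymptotic $J_R\sim \sigma(R\setminus z(\Tree(R)))\bigl(\log(1/\tau)-(\log A)/A\bigr)$ is false. This matters because you need $\sum_R J_R$ to dominate $(1+C)\sum_{R\in\Top\setminus\{Q_0\}}\sigma(R)$ in order to beat the $O(1)$-per-jump losses from group (b), and the tree-by-tree route you describe does not deliver that.

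The gap is repairable, and there are two ways out. The paper's way: split $\Top$ into $\Top_1=\{R: |\LD(R)|\geq |R|/4\}$ and $\Top_2$; on $\Top_1$ the LD gain $\tfrac{1}{4}\log(1/\tau)|R|$ genuinely dominates once $\tau$ is small and $A$ is large, while on $\Top_2$ one has $|R|<2|F(R)|$, and since the sets $F(R)$ are pairwise disjoint, $\sum_{R\in\Top_2}|R|\leq 2|Q_0|$, so the problematic trees contribute only $O(|Q_0|)$ and are absorbed into the additive constant. Alternatively, one can keep your per-tree bookkeeping but only claim the one-sided inequality $J_R\geq (\log(1/\tau)-C)\,\sigma(\LD(R))-C\tfrac{\log A}{A}\sigma(R)$ and observe that the total HD loss is $\lec \tfrac{\log A}{A}\sum_{R\in\Top}\sigma(R)\lec \tfrac{\log A}{A}\beta_{\d\Omega}(Q_0)$, which is absorbable for $A$ large; but this is a \emph{global} absorption, not the per-tree comparability you assert. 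Either way, the sentence deriving $1-p_R\lec 1/A$ and the displayed asymptotic for $J_R$ must be replaced.
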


\begin{proof}

Assume $\beta_{\d\Omega}(Q_0)<\infty$. Let $\Top$ and $\Tree(R)$ be the cubes and trees from Theorem \ref{t:SU-version} and $\omega=\omega_{\Omega}^{x_{0}}$ for some $x_{0}\in \Omega\backslash MB_{Q_{0}}$. 

For $R\in \Top$, let $\Next(R)$ be the children of the cubes in $\Stop(R)$. By Theorem \ref{t:SU-version} (3) and \eqref{doubling}, we know that  for $Q\in \Next(R)$, either 
\begin{enumerate}[(a)]
\item $\Theta_{\omega}^{d}(Q)\sim A \Theta_{\omega}^{d}(R)$, or 
\item $\Theta_{\omega}^{d}(Q)\sim \tau \Theta_{\omega}^{d}(R)$.
\end{enumerate}

Let $\HD(R)$ and $\LD(R)$ denote the cubes from the first and second alternatives respectively.

For $Q\subseteq \d\Omega$, let $|Q|=\cH^{d}(Q)$ and
\[
\theta^{d}_{\omega}(Q) = \frac{\omega(Q)}{|Q|}\sim \Theta_{\omega}^{d}(Q)
\]
where the last comparison follows from Ahlfors regularity of $\d\Omega$. Thus, by the doubling property for $\omega$, there is a constant $C_0$ so that 
\begin{equation}
\label{e:thetaHD}
C_0^{-1} A \theta^{d}_{\omega}(R) < \theta^{d}_{\omega}(Q) \leq C_0 A \theta^{d}_{\omega}(R) \;\; \mbox{ for all }Q\in \HD(R)
\end{equation}
and 
\begin{equation}
\label{e:thetaLD}
C_0^{-1} \tau\theta^{d}_{\omega}(R)  < \theta^{d}_{\omega}(Q) \leq C_0 \tau  \theta^{d}_{\omega}(R) \;\; \mbox{ for all }Q\in \LD(R)
\end{equation}

It can be shown using a similar proof to those of \cite[Lemmas 2.12 and 2.17]{Mattila} that there is a Borel function $k$ finite $\cH^{d}$-a.e. so that 
\[
\omega = k\cH^{d}+\omega_{s}\]
where $\omega_{s}\perp\omega$ and
\begin{equation}
\label{e:rd}
\lim_{Q\downarrow \{x\}} \theta^{d}_{\omega}(Q)  = k \;\; \mbox{ for $\cH^{d}$-a.e. $x\in Q_0$,}
\end{equation}

Let
\[
F(R) = R\backslash \bigcup_{Q\in  \Next(R)}Q
\]
and define

\[
f_{R} = \sum_{Q\in \Next(R)}\log \frac{\theta_{\omega}(Q)}{\theta_{\omega}(R)}\one_{Q} +\frac{k}{\Theta_{\omega}(R)}\one_{F_{R}}.
\]

\begin{lemma}
For all $x\in R$,
\begin{equation}
\label{e:f_R<}
-|\log \tau/C_0|= \log (\tau/C_0)\leq f_{R}(x) \leq \log (C_2A).
\end{equation}
\end{lemma}

\begin{proof}
Recall from Lemma \ref{l:ds-cover} that every $x\in R$ is either contained in infinitely many cubes from $\Tree(R)$ or is contained in a cube from $\Stop(R)$, and since $\Next(R)$ are the children of the cubes in $\Stop(R)$, this means $x$ is contained in a cube from $\HD(R)$ or $\LD(R)$. Thus, if $x\in F(R)$, it is contained in infinitely many cubes $Q\in \Tree(R)$, and so \eqref{e:rd} and (1) from Theorem \ref{t:SU-version} imply \eqref{e:f_R<}. 

Now if $x\in Q\in \Next(R)$ and $Q'\in \Stop(R)$ is the parent of $Q$, then \eqref{e:f_R<} follows from \eqref{e:thetaHD} and \eqref{e:thetaLD}.

\end{proof}

\begin{lemma}
The sets $\{F(R):R\in \Top\}$ are disjoint. In particular,
\begin{equation}
\label{e:frsum}
\sum_{R\in\Top}|F(R)|\leq |Q_{0}|.
\end{equation}
\end{lemma}

\begin{proof}
To see this, suppose there is $x\in F(R)\cap F(R')$ with $R\neq R'$. Then $x\in R\cap R'$, without loss of generality we can assume $R\supseteq R'$, so $R'\subseteq Q\in \Next(R)$, but since $x\in F(R)$, $x$ is not contained in any cube from  $ \Next(R)$, which is a contradiction. 
\end{proof}

Note that $\cH^{d}$-a.e. $x\in Q_0$ is contained in $F_{R}$ for some $R\in \Top$ by (3) of Theorem \ref{t:SU-version} and the Ahlfors regularity of $\d\Omega$. Thus,
\begin{equation}
\label{e:logk-logt = sum f}
\log k-\log \theta_{\omega}^{d}(Q_0)=\sum_{R\in \Top} f_{R} \;\;\;\; \mbox{$\cH^{d}$-a.e in $Q_{0}$}.
\end{equation}

\begin{lemma}
\[
\int_{Q_{0}} \ps{\log k-\log \theta_{\omega}^{d}(Q_0)}
=\sum_{R\in \Top}\int_{Q_{0}} f_{R}.
\]
\end{lemma}

\begin{proof}

We claim that $\log k-\log \theta_{\omega}(Q_0)$ is absolutely integrable. Indeed, by \eqref{e:logk-logt = sum f},
\[
|\log k-\log \theta_{\omega}^{d}(Q_0)|
\leq \sum_{R\in \Top} |f_{R}|\]
and so
\[
\int_{Q_{0}} |\log k-\log \theta_{\omega}^{d}(Q_0)|
\leq \sum_{R\in \Top}\int_{Q_{0}} |f_{R}|
\stackrel{\eqref{e:f_R<}}{\lec}_{\tau,A} \sum_{R\in \Top} |R|\lec \beta_{\d\Omega}(Q_{0}),\]
and this proves the claim.

Now we prove the lemma. Let $\Top_{j}$ be those cubes in $\Top$ that are properly contained in $j$ many cubes from $\Top$, $\Top^{N}=\bigcup_{j=0}^{N}\Top_{j}$  and let 
\[
f_{N} = \sum_{R\in \Top^N}f_{R}.
\]
Then
\[
|f_{N}|
\leq  \sum_{R\in \Top} |f_{R}|\lec_{\tau,A} \sum_{R\in \Top}\one_{R}
\]
and the last sum is a nonegative integrable function, thus the dominated convergence theorem implies
\begin{align*}
\int_{Q_{0}} \ps{\log k-\log \theta_{\omega}^{d}(Q_0)}
& =\int_{Q_{0}} \sum_{R\in \Top}f_{R}
 =\int_{Q_{0}} \lim_{N} f_{N} 
  =\lim_{N} \int_{Q_{0}} f_{N}\\
& =\lim_{N} \int_{Q_{0}} \sum_{R\in \Top^N}f_{R}
 =\lim_{N}\sum_{R\in \Top^{N}}\int_{Q_{0}} f_{R} 
  = \sum_{R\in \Top} \int_{Q_{0}} f_{R}.
\end{align*}

This proves the lemma.

\end{proof}

Thus,

%
%

\begin{equation}
\label{e:loglower}
\int_{Q_{0}} (\log k-\log \theta_{\omega}^{d}(Q_{0}))
=\sum_{R\in \Top} \int_{Q_{0}}  f_{R}
\stackrel{\eqref{e:f_R<}}{\geq}  -|\log(C_0^{-1}\tau)|\sum_{R\in \Top} |R|
\end{equation}

Now we will prove an opposite inequality. Note that
\begin{equation}
\label{e:A-1|R|}
\sum_{Q\in \HD(R)} |Q|
\stackrel{\eqref{e:thetaHD}}{\sim} A^{-1} \theta_{\omega}^{d}(R)^{-1} \sum_{Q\in \HD(R)} \omega(Q)
\leq A^{-1} \theta_{\omega}^{d}(R)^{-1} \omega(R)
=A^{-1} |R|.
\end{equation}
And so  for some constant $c$ depending on the Ahlfors regularity,
\begin{equation}
\label{e:HDint}
\int_{\bigcup_{Q\in \HD(R)}Q} f_{R} 
\stackrel{\eqref{e:f_R<}}{\leq} \sum_{Q\in \HD(R)} \log (C_0A) |Q|
\stackrel{\eqref{e:A-1|R|}}{\leq} c\frac{\log (C_0A)}{A} |R|
\end{equation}
We will abuse notation here and also write $\LD(R)=\bigcup_{Q\in \LD(R)}Q$. Then 
\begin{equation}
\label{e:LDint}
\int_{\bigcup_{Q\in \LD(R)}Q} f_{R} 
\stackrel{\eqref{e:thetaLD}}{\leq} \sum_{Q\in \LD(R)} \log (C_0\tau ) |Q|
=\log (C_0\tau) |\LD(R)|.
\end{equation}
We will pick $\tau>0$ small so that 
\begin{equation}
\label{e:log/8<1}
\frac{\log (C_0\tau)}{8}<-1.
\end{equation}
Let 
\[
\Top_{1}=\{R\in \Top:|\LD(R)|\geq |R|/4\}, \;\; \Top_{2}=\Top\backslash \Top_{1}.
\]

Also note that if $R\in \Top_{1}$, then (recalling $\log (C_0\tau)<0$ and $|\LD(R)|\geq |R|/4$)
\begin{equation}
\label{e:LDint-top1}
\int_{\bigcup_{Q\in \LD(R)}Q} f_{R} 
\stackrel{\eqref{e:LDint}}{\leq} \log (C_0\tau) |\LD(R)|
\leq  \frac{\log (C_0\tau)}{4}  |R|
\end{equation}
and so (recalling \eqref{e:log/8<1}) if we pick $A$ large enough so that $c\frac{\log (C_0A)}{A} < -\frac{ \log (C_0\tau)}{8}$,
\begin{align}
 \sum_{R\in \Top_{1}} &  \ps{\int_{\bigcup_{Q\in \HD(R)} Q}+\int_{\bigcup_{Q\in \LD(R)}Q}+\int_{F(R)}} f_{R}\notag \\
& \stackrel{\eqref{e:HDint}, \eqref{e:LDint-top1} \atop \eqref{e:f_R<},\eqref{e:frsum}}{\leq} \sum_{R\in \Top_{1}}\ps{c\frac{\log(C_0 A) }{A}+\frac{\log(C_0\tau)}{4}} |R| + \log(C_0A)|Q_{0}| \notag \\
& \leq \sum_{R\in \Top_{1}} \frac{\log(C_0\tau)}{8} |R| + \log(C_0A)|Q_{0}|
\stackrel{\eqref{e:log/8<1}}{<} -\sum_{R\in \Top_{1}}|R| + \log(C_0A)|Q_{0}|
\label{e:top1}
\end{align}

Now if $R\in \Top_{2}$,
\[
|R|=|F(R)\cup LD(R)\cup \HD(R)|
\stackrel{\eqref{e:A-1|R|}}{\leq} |F(R)|+\ps{CA^{-1}+\frac{1}{4}}|R|
\]
and so for $A>4C$,
\begin{equation}
\label{e:R<2FR}
|R|<2|F(R)|.
\end{equation}
Thus,
\begin{equation}
\label{e:rintop2}
\sum_{R\in \Top_{2}}|R|
\stackrel{\eqref{e:R<2FR}}{\leq} 2\sum_{R\in \Top_{2}} |F_{R}|
\stackrel{\eqref{e:frsum}}{\leq}2|Q_{0}|.
\end{equation}
Hence,
\begin{align}
 \sum_{R\in \Top_{2}} &  \ps{\int_{\bigcup_{Q\in \HD(R)} Q}+\int_{\bigcup_{Q\in \LD(R)}Q}+\int_{F(R)}} f_{R} \notag \\
& \stackrel{\eqref{e:HDint}, \eqref{e:LDint} \atop \eqref{e:f_R<},\eqref{e:frsum}}{\leq} \sum_{R\in \Top_{2}}\ps{c\frac{\log (C_0A)}{A} |R|+\underbrace{\log(C_0\tau) |\LD(R)|}_{\leq 0}} + \log(C_0A)|Q_{0}| \notag \\
& \leq \sum_{R\in \Top_{2}}{\frac{c\log (C_0A)}{A} |R|}+ \log(C_0A)|Q_{0}|  \notag \\
& \stackrel{\eqref{e:rintop2}}{ \leq} \ps{\frac{2c\log (C_0A)}{A}+  \log(C_0A)} |Q_{0}|
=: C_{A}|Q_{0}|.
\label{e:top2}
\end{align}

Thus, noting that $2+\log (C_0A)<2C_{A}$ for $A$ large,
\begin{align*}
\int_{Q_{0}} & \ps{\log k - \log \theta_{\omega}^{d}(Q_{0})}
  = \sum_{R\in \Top} \int_{Q_{0}} f_{R}
   = \ps{\sum_{R\in \Top_{1}}+\sum_{R\in \Top_{2}}}f_{R}\\
&   \stackrel{\eqref{e:top1} \atop\eqref{e:top2} }{ \leq} 
-\sum_{R\in \Top_{1}}  |R| + \log(C_0A)|Q_{0}|  + C_{A}|Q_{0}|\\
&   \leq - \sum_{R\in \Top}  |R|+\sum_{R\in \Top_{2}}|R|+ (\log (C_0A)+C_{A})|Q_{0}| + \\
  & 
 \stackrel{\eqref{e:rintop2}}{ \leq} - \sum_{R\in \Top}  |R|+(2+\log (C_0A)+C_{A})|Q_{0}| < - \sum_{R\in \Top}  |R|+3C_{A}|Q_{0}| 
  \end{align*}

%
%
%

Combining this with \eqref{e:loglower}, we get 

\[
-\sum_{R\in \Top} |R|- |Q_{0}|\lec \int_{Q_{0}}  \ps{\log k- \log \theta_{\omega}^{d}(Q_{0})} \leq - \sum_{R\in \Top} |R| + 3C_{A}|Q_{0}|.
\]

Thus, if we subtract $C|Q_{0}|$ from both sides for large enough $C$, we get 
\[
\int_{Q_{0}}  \ps{\log k - \log \theta_{\omega}^{d}(Q_{0})-C}
\sim - \sum_{R\in \Top} |R|^{d}  -|Q_{0}|
.
\]
Now taking negatives of both sides (and recalling $|R|\sim \ell(R)^{d}$) gives 
\[
\avint_{Q_{0}}  \ps{\log \frac{1}{k} +  \log \theta_{\omega}^{d}(Q_{0}) +C}
\\
\sim  \sum_{R\in \Top} \frac{|R|}{|Q_{0}|}+ 1
\stackrel{\eqref{e:SU-top-sum}}{\sim} \frac{\beta_{\d\Omega}(Q_{0})}{|Q_{0}|}.
\]

This proves Theorem \ref{t:Afin2} under the assumption that $\beta_{\d\Omega}(Q_{0})<\infty$.

\end{proof}

\subsection{The case assuming the log integral is finite a priori}

\begin{lemma}
The conclusions of Theorem \ref{t:Afin2} hold assuming
\[
\avint_{Q_{0}}  \log \frac{1}{k}  d\cH^{d} +\log\frac{\omega(Q_0)}{|Q_{0}|}<\infty.
\]
\end{lemma}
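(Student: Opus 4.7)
The plan is to reduce to the case already handled by showing the log-integral hypothesis forces $\beta_{\d\Omega}(Q_{0})<\infty$, whereupon the previous lemma applies directly. First, apply Theorem~\ref{t:SU-version}(1)--(2)---which hold with no a priori finiteness assumption on $\beta$---to produce trees $\{\Tree(R):R\in\Top\}$ satisfying $\sum_{R\in\Top}\ell(R)^{d}\sim \beta_{\d\Omega}(Q_{0})$ and the local density dichotomy on each $\Tree(R)$. It thus suffices to bound $\sum_{R\in\Top}\ell(R)^{d}$ in terms of the log integral.

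I would truncate the martingale as follows. Set $\Top^{0}=\{Q_{0}\}$, $\Top^{j+1}=\bigcup_{R\in\Top^{j}}\Next(R)$, and $\Top^{\leq N}=\bigcup_{j=0}^{N}\Top^{j}$. For each $R\in\Top$, define $f_{R}$ exactly as in the previous proof; it is uniformly bounded (by a constant depending only on $A$ and $\tau$) and supported in $R$, and each $x\in Q_{0}$ lies in at most one cube of each $\Top^{j}$. Hence the partial sum
\[
g_{N}(x)=\sum_{R\in\Top^{\leq N}} f_{R}(x)
\]
is pointwise bounded by a multiple of $N+1$ and in particular integrable. Telescoping along the chain of tops containing $x$ gives $g_{N}(x)=\log \theta_{N}(x)-\log\theta_{\omega}^{d}(Q_{0})$, where $\theta_{N}(x)=k(x)$ whenever $x\in F(R)$ for some $R\in\Top^{\leq N}$, and otherwise $\theta_{N}(x)=\theta_{\omega}^{d}(Q_{x})$ for the unique cube $Q_{x}\in\Next(R^{N}_{x})$ containing $x$ (with $R^{N}_{x}$ the depth-$N$ top containing $x$).

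Next, partition $\Top^{\leq N}$ according to the dichotomy from the previous proof (based on whether $|\LD(R)|\geq|R|/4$) and rerun the HD/LD bookkeeping verbatim; this analysis uses only the local bounds $\Theta_{\omega}^{d}(Q)\in[\tau,A]\,\Theta_{\omega}^{d}(R)$ on $\Tree(R)$ and doubling of $\omega$, none of which require a global finiteness hypothesis. One obtains the uniform upper estimate
\[
\int_{Q_{0}} g_{N}\,d\sigma \;\leq\; -\sum_{R\in\Top^{\leq N}}|R|\,+\,3C_{A}|Q_{0}|.
\]
For the matching lower bound, note that on each stop cube $Q_{x}$ we have $\theta_{\omega}^{d}(Q_{x})\geq \avint_{Q_{x}} k\,d\sigma$ since $d\omega\geq k\,d\sigma$, so Jensen's inequality gives $\int_{Q_{x}}\log\theta_{N}\,d\sigma\geq \int_{Q_{x}}\log k\,d\sigma$. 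Combined with the trivial equality $\theta_{N}=k$ on $\bigcup_{R\in\Top^{\leq N}}F(R)$, this yields
\[
\int_{Q_{0}}g_{N}\,d\sigma \;\geq\; \int_{Q_{0}}\log k\,d\sigma-\log\theta_{\omega}^{d}(Q_{0})\cdot|Q_{0}|.
\]

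Combining the two inequalities produces
\[
\sum_{R\in\Top^{\leq N}}|R|\;\leq\;|Q_{0}|\left(3C_{A}+\log\theta_{\omega}^{d}(Q_{0})+\avint_{Q_{0}}\log\tfrac{1}{k}\,d\sigma\right),
\]
a bound uniform in $N$. Letting $N\to\infty$ gives $\sum_{R\in\Top}|R|<\infty$, and Theorem~\ref{t:SU-version}(2) then forces $\beta_{\d\Omega}(Q_{0})<\infty$; the previous lemma now concludes the proof of Theorem~\ref{t:Afin2}. The main obstacle is justifying the truncated telescoping identity together with the Jensen-type lower bound on $\int g_{N}$; once these are in hand, the HD/LD analysis is purely local and carries over without modification from the previous lemma.
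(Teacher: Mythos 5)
There is a genuine gap, and it is a circularity. Your argument hinges on the stop-cube dichotomy: every $Q\in\Next(R)$ is either HD (so $\theta_\omega^d(Q)\sim A\,\theta_\omega^d(R)$, giving $\sum_{\HD(R)}|Q|\lesssim A^{-1}|R|$) or LD (so $\theta_\omega^d(Q)\sim\tau\,\theta_\omega^d(R)$, giving the large negative contribution to $\int f_R$). But that dichotomy is item (3) of Theorem \ref{t:SU-version}, which the paper establishes only under the hypothesis $\beta_{\d\Omega}(Q_0)<\infty$ --- precisely what you are trying to prove. Items (1)--(2), which do hold unconditionally, give only the two-sided bound $\Theta_\omega^d(\lambda B_Q)\in[c\tau,CA]\,\Theta_\omega^d(\lambda B_R)$ on $\Tree(R)$; the general construction behind them stops for geometric reasons ($\BTM$, $\Bad$, $B\beta$) as well as for density reasons, so $\Next(R)$ can contain cubes with $\theta_\omega^d(Q)\sim\theta_\omega^d(R)$ that are neither HD nor LD. For such cubes $f_R\approx 0$ and neither mechanism controls $\sum|Q|$; in the extreme case where every stop cube has comparable density one gets $\int g_N\approx 0$ while $\sum_{R\in\Top^{\leq N}}|R|$ can grow like $N|Q_0|$, so your claim that the HD/LD bookkeeping ``carries over without modification'' fails.

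The repair is what the paper does: abandon the trees of Theorem \ref{t:SU-version} and run a fresh stopping time defined purely by density thresholds (stop at maximal $Q$ with a child $Q'$ satisfying $\theta_\omega^d(Q')>CA\,\theta_\omega^d(R)$ or $\theta_\omega^d(Q')<C^{-1}\tau\,\theta_\omega^d(R)$), so that every stopped cube is HD or LD by construction and the two-sided comparabilities follow from maximality plus doubling. For these trees one then invokes the converse inequality $\beta_{\d\Omega}(Q_0)\lesssim\sum_{R\in\Top}\ell(R)^d$ (Lemma \ref{l:beta<CDHM-SU}), which holds for any family of trees satisfying the density condition with no finiteness assumption. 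Your truncation-and-Jensen step for bounding $\sum_{R\in\Top^{\leq N}}|R|$ by the log integral is essentially the paper's argument (the paper applies Jensen level-by-level via the functions $u_k,h_k$ and monotone convergence, you apply it once at depth $N$; these are equivalent), so once the trees are built correctly the rest of your outline goes through.
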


\begin{proof}
Under these assumptions, we'll show that we still have $\beta_{\d\Omega}(Q_{0})<\infty$, and we can then employ the previous lemma. To do this, it suffices to find a new collection $\{\Tree(R):R\in \Top\}$ satisfying the conditions of Theorem \ref{t:SU-version}. 

Recall that $k<\infty$ $\cH^{d}$-a.e. so \eqref{e:rd} still holds. Run a stopping-time algorithm as follows. Let $C>1$. For $R\subseteq Q_{0}$, let $\HD'(R)$ be those maximal cubes in $R$ which have a child $Q'$ for which $\theta_{\omega}^{d}(Q')>CA\theta_{\omega}^{d}(R)$, and $\LD'(R)$ be those maximal $Q\subseteq R$ with a child $Q'$ so that $\theta_{\omega}^{d}(Q')<C^{-1}\tau \theta_{\omega}^{d}(R)$, let $\Stop(R)=\HD'(R)\cup \LD'(R)$. Let $\HD(R)$ be the children of the cubes in $\HD'(R)$ and $\LD(R)$ be the children of the cubes in $\LD'(R)$ and set $\Next(R)=\HD(R)\cup \LD(R)$. For $C$ large enough, by the doubling property, we have that $\theta_{\omega}(Q)>A\theta_{\omega}(R)$ for $Q\in \HD(R)$ and $\theta_{\omega}(Q)<\tau \theta_{\omega}(R)$ for $Q\in \LD(R)$. By maximality, we also have there is $C_0$ so that $\theta_{\omega}(Q)<C_0A\theta_{\omega}(R)$ for $Q\in \HD(R)$ and $\theta_{\omega}(Q)\geq C_0^{-1}\tau \theta_{\omega}(R)$ for $Q\in \LD(R)$.

Let $\Top_{0}=\{Q_{0}\}$ and for $k\geq 0$, let 
\[
\Top_{k+1}=\bigcup_{R\in \Top_{k}} \Next(R), \;\;\; \Top = \bigcup_{k\geq 0} \Top_{k}.
\]

By Lemma \ref{t:SU-version},
\[
\beta_{\d\Omega}(Q_0)
\lec \sum_{R\in \Top}|R|
\]
and so the lemma will follow once we show
\begin{equation}
\label{e:sum<log1/k}
\sum_{R\in \Top}|R|\lec 
\avint_{Q_{0}}  \log \frac{1}{k}  d\cH^{d} +\log\frac{\omega(Q_0)}{|Q_{0}|}<\infty.
\end{equation}

First observe that for $R\in \Top$, 
\begin{equation}
\label{e:HD<A-1-small-theta}
\sum_{Q\in \HD(R)}|Q|
< \sum_{Q\in \HD(R)} \theta_{\omega}^{d}(R)^{-1} A^{-1} \omega(Q)
\leq A^{-1}|R|.
\end{equation}
Let 
\[
G_{R} = R\backslash \bigcup_{Q\in \Next(R)} Q, \;\;\; 
G_{N} = \bigcup_{k=0}^{N} \bigcup_{R\in \Top_{k}}G_{R}
\]
and
\[
g_{R} = \sum_{Q\in \Next(R)} \log\frac{\theta_{\omega}(R)}{\theta_{\omega}(Q)} + \log \frac{\theta_{\omega}(R)}{k}\one_{G_{R}}.
\]
Note that if $x\in G_{R}$, then
\begin{equation}
\label{e:k<At}
k(x) \leq  CA \theta_{\omega}(R).
\end{equation}

Since $\theta_{\omega}(Q)<\tau \theta_{\omega}(R)$ for $Q\in \LD(R)$ and $\theta_{\omega}(Q)<C_0A\theta_{\omega}(R)$ for $Q\in \HD(R)$,
\begin{align*}
\log\tau^{-1}\sum_{Q\in \LD(R)}|Q|
& \leq \sum_{Q\in \LD(R)} |Q|\log\frac{\theta_{\omega}(R)}{\theta_{\omega}(Q)}\\
& \leq \sum_{Q\in \Next(R)}|Q|\log\frac{\theta_{\omega}(R)}{\theta_{\omega}(Q)}
+  \sum_{Q\in \HD(R)} |Q| \log\frac{\theta_{\omega}(Q)}{\theta_{\omega}(R)}\\
& \leq   \sum_{Q\in \Next(R)}|Q|\log\frac{\theta_{\omega}(R)}{\theta_{\omega}(Q)}+ \log C_0A\sum_{Q\in \HD(R)}|Q|\\
& \leq  \sum_{Q\in \Next(R)}|Q|\log\frac{\theta_{\omega}(R)}{\theta_{\omega}(Q)}+ \frac{\log C_0A}{A} |R|\\
& = \int_{R} g_{R} +\int_{G_{R}} \log  \frac{k}{\theta_{\omega}(R)} + \frac{\log A}{A} |R|\\ 
& \stackrel{\eqref{e:k<At}}{\leq} \int_{R} g_{R}  +\log CA |G_{R}| + \frac{\log A}{A}|R|.
\end{align*}

In particular, if $\kappa = (\log\tau^{-1})^{-1}$, for $k> 0$, if we pick $A$ large so that $A>4$ and $\kappa \frac{\log C_0A}{ A}<\frac{1}{4}$, 
\begin{align*}
\sum_{R\in \Top_{k}}|R|
& =\sum_{R\in \Top_{k-1}}\sum_{Q\in \Next(R)=\HD(R)\cup \LD(R)}|Q|\\
& \stackrel{\eqref{e:HD<A-1-small-theta}}{\leq} \sum_{R\in \Top_{k-1}} \ps{A^{-1}|R|+  \kappa \ps{\int_{R} g_{R}  +\log CA |G_{R}| + \frac{\log C_0A}{A}|R|}}\\
& <  \sum_{R\in \Top_{k-1}} \ps{\frac{|R|}{2}+  \kappa  \ps{\int_{R} g_{R}  +\log CA |G_{R}|}}\\
\end{align*}

Thus,

\begin{align*}
\sum_{k=0}^{N} \sum_{R\in \Top_{k}}|R|
& = |Q_{0}|+\sum_{k=1}^{N} \sum_{R\in \Top_{k}}|R|\\
& 
\leq |Q_{0}| +\sum_{k=1}^{N}\sum_{R\in \Top_{k-1}} \ps{\frac{|R|}{2}+  \kappa \ps{\int_{R} g_{R}  +\log CA |G_{R}|}}\\
\end{align*}
And so 

\[
\frac{1}{2} \sum_{k=0}^{N} \sum_{R\in \Top_{k}}|R|
\leq |Q_{0}| +\sum_{k=0}^{N-1}\sum_{R\in \Top_{k}}  \kappa \ps{\int_{R} g_{R}  +\log CA |G_{R}|}
\]
As before with the $F_{R}$, the $G_{R}$ are disjoint and so
\[
\sum_{k=0}^{N-1}\sum_{R\in \Top_{k}} \kappa  \log CA |G_{R}|\leq \kappa\log CA |Q_{0}|.\]
So to prove \eqref{e:sum<log1/k}, we just need to show
\[
\sum_{k=0}^{N-1}\sum_{R\in \Top_{k}} 
\int_{R} g_{R}\lec \avint_{Q_{0}}  \log \frac{1}{k}  d\cH^{d} +\log\frac{\omega(Q_0)}{|Q_{0}|}.
\]
Note that 
\[
Q_0 = G_{k-1}\sqcup \bigsqcup_{R\in \Top_{k}}R.
\]
Indeed, if $x\in Q_0$ and $x\not\in \bigcup_{R\in \Top_{k}}R$, then $x\in G_{R}$ for some $R\in \Top_{j}$ for some $j<k$, that is, $x\in G_{k-1}$. Let 
\[
u_{k} = \sum_{R\in \Top_{k}} \ps{\sum_{Q\in \Next(R)} \frac{\theta_{\omega}(Q)}{\theta_{\omega}(R)}\one_{Q} + \frac{k}{\theta_{\omega}(R)}\one_{G_{R}}}+\one_{G_{k-1}}.
\]

and
\[
h_{k}=\sum_{R\in \Top_{k}}\one_{R}\avint_{R}\log \frac{1}{u_{k}}
=\sum_{R\in \Top_{k}}\one_{R}\avint_{R}g_{R}.
\]
Note that by \cite[Theorem 2.12 (2)]{Mattila}, 
\[
\int_{E}k \leq \omega(E) \;\; \mbox{for $E$ Borel.}
\]
Hence, for $R\in \Top_{k}$,
\begin{align*}
\int_{R} u_{k}
& =\theta_{\omega}^{d}(R)^{-1}\ps{\sum_{Q\in \Next(R)} |Q| \theta_{\omega}(Q) + \int_{G_{R}} k}\\
& \leq  \theta_{\omega}^{d}(R)^{-1}\ps{\sum_{Q\in \Next(R)} \omega(Q) +\omega(G_{R})}\\
& =\theta_{\omega}^{d}(R)^{-1}\omega(R) = |R|.
\end{align*} 

Thus, for $x\in R\in \Top_{k}$, by Jensen's inequality, 
\[
h_{k}(x) =\avint_{R} g_{R}
=- \avint_{R} \log u_{k}
\geq -\log \avint_{R} u_{k} \geq  -\log 1 =0 .
\]
and for $x\in G_{k-1}$, $h_{k}(x)=0$. Thus, $ h_{k}\geq 0$ everywhere. 
Moreover, $\sum h_{k}=\log\frac{1}{k}+\log\theta_{\omega}(Q_0)$ by \eqref{e:rd}, so the monotone convergence theorem implies that 
\[
\sum_{k=0}^{N-1} \sum_{R\in \Top_{k}}\int_{R} g_{R} 
=\sum_{k=0}^{N-1} \sum_{R\in \Top_{k}}\int_{R} h_{k}
=\sum_{k=0}^{N-1} \int h_{k}
\leq \sum_{k=0}^{\infty}   \int  h_{k}
= \log\frac{1}{k}+\log\theta_{\omega}(Q_0).
\]
Combined with our earlier estimates, this proves \eqref{e:sum<log1/k}.

\end{proof}

%

\section{Proof of Theorem \ref{t:hruscev}}

The proof is somewhat similar to that in the previous section. Assuming the conditions of Theorem \ref{t:hruscev}, let $k\in \mathbb{N}$. We can assume $E$ is a compact subset of $Q_0$ and $Q_0\in \cD_{0}$. 

%

We first claim that there is a finite collection of cubes $\cC$ covering $E$ so that 
\[
\cH_{\infty}^{d}(E)\sim \sum_{Q\in \cC}\ell(Q)^{d}.
\]
To see this, first by definition of Hausdorff content (or rather, dyadic Hausdorff content, since the two are comparable), we may find a collection of disjoint cubes $\cC'$ covering $E$ so that 
\[
\cH_{\infty}^{d}(E)\sim \sum_{Q\in \cC'}\ell(Q)^{d}.\]
If $\cC'$ is finite, then we can set $\cC=\cC'$. Otherwise, notice that $\{B_{Q}^{\circ}\}_{Q\in \cC'}$ is an open cover of $E$, and so we can take a finite subcover $\{B_{Q}^{\circ}\}_{Q\in \cC''}$ for some $\cC''\subseteq \cC$ finite. Now let $\cC$ be the maximal cubes $Q\subseteq Q_0$ for which there is a sibling $Q'$and there is $Q''\in \cC''$ so that $\ell(Q)=\ell(Q')=\ell(Q'')$ and $Q'\cap B_{Q''}\neq\emptyset$. Then $\cC$ is finite, covers $E$, and 
\[
\cH^{d}_{\infty}(E)\lec \sum_{Q\in \cC}\ell(Q)^{d}\sim \sum_{Q\in \cC''}\ell(Q)^{d}
\leq \sum_{Q\in \cC'}\ell(Q)^{d}\sim \cH^{d}_{\infty}(E).
\]
This proves the claim. 

Let $\cS$ be the maximal cubes that contain an element of $\cC$ as a child. Then $E\subseteq \bigcup_{Q\in \cS}$ and so 
\[
\cH_{\infty}^{d}(E)\lec \sum_{Q\in \cS}\ell(Q)^{d} 
\lec \sum_{Q\in \cC}\ell(Q)^{d} \sim \cH_{\infty}^{d}(E).
\]

Let $\cB$ be the maximal cubes that do not intersect a cube in $\cS$ and let $\cB'$ be their children. Then $\cB\cup \cS$ is finite. Let $\cT$ be those cubes that contain an element of $\cB\cup \cS$. By Lemma \ref{l:ds-cover}  (with $\cC$ replaced by $\cB'\cup \cC$ in our case), $\cT$ is a stopping-time region whose minimal cubes are $\cB\cup \cS$.
%

Let $\BTM$ be the children of the minimal cubes. Let $\delta_{x}$ denote the Dirac mass at $x$ and let
\[
\mu=\sum_{Q\in \BTM}\ell(Q)^{d}\delta_{\zeta_{Q}}.
\]
 By Lemma \ref{l:CDHM<beta},
\[
\mu(Q_0)=\sum_{Q\in \BTM}\ell(Q)^{d}\lec \beta_{\d\Omega}(\cT).
\]
\def\FC{{\rm FC}}
Let $5\rho^{k_0}$ be the size of the smallest dyadic cube in $\BTM$ and set $\mu^{k_0}=\mu$. We define a collection of cubes $\FC$ (for ``Frostmann cube") and measures $\mu^{k}$ as follows. Suppose $k\leq k_0$ and $\mu^{k}$ has been defined. Let $Q\in \cD_{k-1}\cap \cT$. If 
\[
\mu^{k}(Q)>2\ell(Q)^{d},\]
then add $Q$ to $\FC$ and define 
\[
\mu^{k-1}|_{Q}=\frac{\ell(Q)^{d}}{\mu^{k}(Q)}\mu^{k}|_{Q}.
\]

Note that by induction,
\def\Child{{\rm Child}}
\[
2\ell(Q)^{d}<\mu^{k}(Q)\leq\sum_{R\in \Child(Q)}\mu^{k}(R)
\leq \sum_{R\in \Child(Q)} 2\ell(R)^{d}\lec \ell(Q)^{d},
\]
so we have 
\begin{equation}
\label{e:muksimmuk-1}
\mu^{k}|_{Q}\lec \mu^{k-1}|_{Q} \leq \mu^{k}|_{Q}.
\end{equation}
Otherwise, we just set $\mu^{k-1}|_{Q}=\mu^{k}|_{Q}$. Continue in this way and set $\nu=\mu^{0}$. 

For $Q\in \FC$, let $n(Q)$ denote the number of cubes in $\FC$ properly containing $Q$. If $k(Q)$ is so that $Q\in \cD_{k(Q)}$, and if $R\in \BTM$ and $R\subseteq Q$, and $R=Q^{0}\subseteq Q^1\subseteq \cdots \subseteq Q^{n(R)-n(Q)}=Q$ are in $\FC$, then
\[
\mu^{k(Q)}(R) 
=\mu^{k(R)}(R) \prod_{i=1}^{n(R)-n(Q)} \frac{\ell(Q^{i})^{d}}{\mu^{k(Q^{i})+1}(Q^{i})}
<\frac{\mu^{k(R)}(R) }{2^{n(R)-n(Q)}} = \frac{\ell(R)^{d}}{2^{n(R)-n(Q)} }.
\]

Thus,
\begin{align*}
\sum_{Q\in \FC} \ell(Q)^{d}
& =\sum_{Q\in \FC}\mu^{k(Q)}(Q)
\leq \sum_{Q\in \FC}\sum_{R\in \BTM\atop R\subseteq Q} \mu^{k(Q)}(R)\\
& \leq \sum_{R\in \BTM} \sum_{Q\in \FC\atop Q\supseteq R} \mu^{k(Q)}(R)
<  \sum_{R\in \BTM} \sum_{Q\in \FC\atop Q\supseteq R}\frac{\ell(R)^{d}}{2^{n(R)-n(Q)}}\\
& \lec \sum_{R\in \BTM} \ell(R)^{d}\lec \beta_{\d\Omega}(\cT).
\end{align*}

For $R\in \FC$, let $\Stop_{F}(R)$ be the maximal cubes in $\FC$ properly contained in $R$ and $\Tree_{F}(R)$ be those cubes contained in $R$ containing a cube from $\Stop_{F}(R)$. 

\begin{lemma}
\label{l:thetanu}
For $R\in \FC\cup \{Q_0\}$ and $Q\in \Tree_{F}(R)$,
\begin{equation}
\label{e:thetanu}
\Theta_{\nu}^{d}(Q) \sim\Theta_{\nu}^{d}(R). 
\end{equation}
\end{lemma}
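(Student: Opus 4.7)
The plan is to unfold the iterative construction $\mu^{k_0}\rightarrow\mu^{k_0-1}\rightarrow\cdots\rightarrow\mu^0=\nu$ and track the Frostman rescaling factors applied to $\nu(Q)$ and $\nu(R)$, showing they differ by at most a multiplicative constant. Each renormalization at a cube $T\in\FC$ multiplies the measure inside $T$ by $f_T:=\ell(T)^d/\mu^{k(T)+1}(T)$, and the inductive estimate preceding \eqref{e:muksimmuk-1} gives $\mu^{k(T)+1}(T)\sim\ell(T)^d$, so every such factor satisfies $f_T\sim 1$.

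Next I would partition the Frostman cubes relevant to the ratio $\Theta_\nu^d(Q)/\Theta_\nu^d(R)$ into three groups: (i) those $T\in\FC$ with $T\supsetneq R$, which rescale $\nu(Q)$ and $\nu(R)$ by the same factor and hence cancel; (ii) those $T\in\FC$ with $Q\subsetneq T\subsetneq R$, of which there are none, since any such $T$ would properly contain some $Q'\in\Stop_F(R)$ with $Q'\subseteq Q$ while sitting strictly inside $R$, contradicting the maximality of $Q'$ in the definition of $\Stop_F(R)$; and (iii) the rescalings at $R$ and at $Q$ themselves, each contributing a factor $\sim 1$. After these cancellations the ratio reduces to $\mu^{k(R)}(Q)/\mu^{k(R)}(R)$ up to a bounded multiplicative constant.

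For $R\in\FC$ one has $\mu^{k(R)}(R)=\ell(R)^d$ and $\mu^{k(R)}(Q)=f_R\,\mu^{k(Q)}(Q)$, so the claim reduces to showing $\mu^{k(Q)}(Q)\sim\ell(Q)^d$. The upper bound follows by summing $\mu^{k(Q)+1}(T)\leq 2\ell(T)^d$ over the children $T$ of $Q$ exactly as in the argument preceding \eqref{e:muksimmuk-1}, and the case $R=Q_0\notin\FC$ is analogous with $\ell(R)^d$ replaced by $\mu^{k_0}(R)$. The main obstacle is the matching lower bound $\mu^{k(Q)+1}(Q)\gec\ell(Q)^d$ when $Q\notin\FC$: for this I would use that $Q\in\Tree_F(R)$ forces $Q$ to contain cubes $Q'\in\Stop_F(R)\subseteq\FC$, whose renormalized mass $\ell(Q')^d$ persists up to the scale of $Q$ (again since the maximality argument of the second paragraph, applied inside $Q$, prevents any Frostman rescaling strictly between $Q'$ and $Q$), and then combine the contributions from all such $Q'\subseteq Q$ to recover a bound comparable to $\ell(Q)^d$.
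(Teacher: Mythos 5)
Your plan is structurally the same as the paper's proof: factor out the common rescalings coming from Frostman cubes containing $R$, observe that maximality of the cubes in $\Stop_{F}(R)$ forbids any Frostman cube strictly between a stopped cube and $R$ (so the only factors in play are the ones at $R$ and at the stopped cubes themselves, each $\sim 1$ by the estimate surrounding \eqref{e:muksimmuk-1}), read off the upper bound $\mu^{k(R)}(Q)\lec \ell(Q)^{d}$ from the stopping condition, and reduce everything to a lower bound on $\mu^{k(R)}(Q)$ obtained by summing the masses $\ell(Q')^{d}$ of the cubes $Q'\in\Stop_{F}(R)$ contained in $Q$. All of this matches the paper.

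The one genuine gap is the last step: you assert that "combining the contributions from all such $Q'\subseteq Q$" recovers a bound comparable to $\ell(Q)^{d}$, but you give no mechanism for why
$\sum_{Q'\in \Stop_{F}(R),\, Q'\subseteq Q}\ell(Q')^{d}\gec \ell(Q)^{d}$.
This is the only non-bookkeeping inequality in the lemma, and it is not automatic: a priori $Q$ need only contain \emph{one} cube of $\Stop_{F}(R)$, which could be far smaller than $Q$. The paper closes this by invoking the geometry of the boundary rather than the combinatorics of $\FC$: the stopped cubes inside $Q$ account for the mass of $\mu^{k(R)}|_{Q}$ and cover $Q$, so by subadditivity of Hausdorff content and lower $d$-content regularity \eqref{e:Edcontent} one gets
$\sum_{Q'\in \Stop_{F}(R),\, Q'\subseteq Q}\ell(Q')^{d}\gec \cH^{d}_{\infty}(Q)\sim \ell(Q)^{d}$.
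Without some such quantitative input (lower content regularity is exactly what it is there for), the combination you describe does not yield the lower bound, so you should make this line explicit.
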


\begin{proof}
Recall that $\nu|_{R}$ is a multiple of $\mu^{k(R)}|_{R}$, so it suffices to prove the same statement with $\mu^{R}:=\mu^{k(R)}$ in place of $\nu$. 
We can assume $Q\neq R$. If $Q\not\in \Stop_{F}(R)$, then $\mu^{R}(Q)\leq 2\ell(Q)^{d}$ and if $Q\in \Stop_{F}(R)$, then $\mu^{R}(Q)=\ell(Q)^{d}$. Thus, we just need to focus on showing a lower bound for $\mu^{R}(Q)$:
\[
\mu^{R}(Q) 
=\sum_{T\in \Stop_{F}(R)\atop T\subseteq Q}\mu^{R}(T)
=\sum_{T \in \Stop_{F}(R)\atop T\subseteq Q}\ell(T)^{d}
\gec \cH^{d}_{\infty}(Q)\sim \ell(Q)^{d}.
\]
This proves the lemma.
\end{proof}

%

\begin{lemma}
\label{l:nu>exp}
\[
\nu(E)\gec \cH^{d}_{\infty}(E)\exp(-C\beta_{\d\Omega}(\cT)/\cH^{d}_{\infty}(E)).
\]
\end{lemma}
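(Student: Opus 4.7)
The plan is to combine two ingredients. The first is an explicit formula for the mass $\nu$ assigns to each atom $\zeta_R$ (for $R\in\BTM$) in terms of the number $N(R):=\#\{R'\in \FC : R'\supseteq R\}$ of its Frostman ancestors. The second is a Frostman-averaging bound showing that on most of $E$ (in the sense of Hausdorff content) the function $N$ is controlled by $\beta_{\d\Omega}(\cT)/\cH_{\infty}^{d}(E)$.

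For the first ingredient, I would iterate the rescaling relation \eqref{e:muksimmuk-1} along the chain of $\FC$-ancestors of $R$ to get
\[
\nu(\{\zeta_R\}) \;\sim\; \ell(R)^{d}\prod_{R'\in\FC,\,R'\supseteq R}\frac{\ell(R')^{d}}{\mu^{k(R')+1}(R')}.
\]
Each factor is at most $1/2$ by the very definition of $\FC$. For the lower bound on each factor, observe that $\mu^{k(R')+1}(R')\leq\sum_{S\in\Child(R')}\mu^{k(R')+1}(S)\leq 2\sum_{S\in\Child(R')}\ell(S)^{d}\lec\ell(R')^{d}$, since Christ--David cubes at a fixed level have uniformly bounded multiplicity of children. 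Hence each factor is at least some universal $c_{0}>0$, giving
\[
\nu(\{\zeta_R\}) \;\gec\; c_{0}^{N(R)}\,\ell(R)^{d}.
\]

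For the second ingredient, I would apply Frostman's lemma to the compact set $E$ to produce a measure $\sigma$ supported on $E$ with $\sigma(E)\sim s:=\cH_{\infty}^{d}(E)$ and $\sigma(B(x,r))\leq r^{d}$ for all balls. For any cube $R$, $\sigma(E\cap R)\leq\sigma(B(\zeta_{R},\ell(R)))\leq\ell(R)^{d}$. Therefore, setting $\beta:=\beta_{\d\Omega}(\cT)$,
\[
\int_{E}N\,d\sigma \;=\; \sum_{R\in\FC}\sigma(E\cap R) \;\lec\; \sum_{R\in\FC}\ell(R)^{d} \;\lec\; \beta,
\]
and Chebyshev's inequality yields a subset $E_{0}\subseteq E$ with $\sigma(E_{0})\gec s$ on which $N\leq K\beta/s$ for a universal constant $K$.

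Finally, I would transfer this to $\nu$. Take the covering family $\cC\subseteq\BTM$ of $E$ constructed at the beginning of the proof (disjoint, finite, $\sum_{Q\in\cC}\ell(Q)^{d}\sim s$), arranged so that each atom $\zeta_{Q}$ lies in $E$. Running the same Chebyshev argument on $\sum_{Q\in\cC}N(Q)\ell(Q)^{d}=\sum_{R\in\FC}\sum_{Q\in\cC,\,Q\subseteq R}\ell(Q)^{d}\lec\beta$ (where the inner sums are controlled via $\sigma$ and the comparison $\ell(Q)^{d}\sim\sigma(Q)$ for $Q$ in the covering) produces a subfamily $\cC_{\mathrm{good}}\subseteq\cC$ with $\sum_{Q\in\cC_{\mathrm{good}}}\ell(Q)^{d}\gec s$ and $N(Q)\leq K\beta/s$ throughout. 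Combined with the weight bound from the first step,
\[
\nu(E)\;\geq\;\sum_{Q\in\cC_{\mathrm{good}}}\nu(\{\zeta_{Q}\})\;\gec\;c_{0}^{K\beta/s}\sum_{Q\in\cC_{\mathrm{good}}}\ell(Q)^{d}\;\gec\;s\exp(-C\beta/s).
\]
The main obstacle is the bridge between the atomic measure $\nu$ and the set-function $\cH_{\infty}^{d}$: one needs Frostman's lemma precisely to turn the Hausdorff-content estimate $\sum_{R\in\FC}\cH_{\infty}^{d}(E\cap R)\lec\beta$ into a usable pointwise counting bound $\int N\,d\sigma\lec\beta$, after which convexity (Chebyshev) and the atom weight formula combine to give the exponential.
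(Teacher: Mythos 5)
Your overall strategy---iterate \eqref{e:muksimmuk-1} along the chain of $\FC$-ancestors to get $\nu(\{\zeta_Q\})\gec c_0^{N(Q)}\ell(Q)^{d}$ for $Q\in\BTM$, then show $N\lec \beta/s$ on a piece of $E$ carrying a definite fraction of the content (writing $\beta=\beta_{\d\Omega}(\cT)$, $s=\cH^{d}_{\infty}(E)$)---is the same as the paper's, and your first two ingredients are sound: the lower bound on each rescaling factor is exactly the displayed induction following \eqref{e:muksimmuk-1}, and the estimate $\int_E N\,d\sigma=\sum_{R\in\FC}\sigma(E\cap R)\lec\sum_{R\in\FC}\ell(R)^{d}\lec\beta$ is correct because the Frostman growth condition gives $\sigma(R)\le\sigma(B(\zeta_R,\ell(R)))\lec\ell(R)^{d}$. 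Where you differ from the paper is in how the bound on $N$ is extracted: the paper avoids Frostman's lemma altogether by grouping $\FC$ into generations $\cS(k)$ (cubes with exactly $k$ proper $\FC$-ancestors), observing that at most $N\lec\beta/s$ generations can each carry side-mass $\ge s/2$, and discarding everything below the first light generation using subadditivity of Hausdorff content. Your Frostman-plus-Chebyshev route is a legitimate, arguably cleaner, alternative for that step.

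The gap is in your transfer step. The inequality $\sum_{R\in\FC}\sum_{Q\in\cC,\,Q\subseteq R}\ell(Q)^{d}\lec\beta$ is not justified: it would need the local packing bound $\sum_{Q\in\cC,\,Q\subseteq R}\ell(Q)^{d}\lec\ell(R)^{d}$, which can fail since $\d\Omega$ is only lower content regular (disjoint subcubes of $R$ may have side-masses summing to far more than $\ell(R)^{d}$), and global near-optimality of the cover $\cC$ does not prevent local inefficiency inside one $R$. Likewise $\sigma(Q)\sim\ell(Q)^{d}$ is not available---Frostman's lemma gives only the upper bound, and $\sigma$ may charge some cubes of the cover very little. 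The fix is to bypass $\cC$: having produced $E_0=\{x\in E:N(x)\le K\beta/s\}$ with $\sigma(E_0)\gec s$, set $\cG=\{Q\in\BTM:Q\cap E_0\ne\emptyset\}$. Since $\FC\subseteq\cT$ and no cube of $\cT$ is properly contained in a $\BTM$ cube, $N$ is constant on each $Q\in\BTM$, so $N(Q)\le K\beta/s$ for $Q\in\cG$; and since the $\BTM$ cubes are disjoint and cover $E$, one gets $\sum_{Q\in\cG}\ell(Q)^{d}\gec\sum_{Q\in\cG}\sigma(Q)\ge\sigma(E_0)\gec s$ using only the upper Frostman bound. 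Combined with the atom-weight estimate this yields the lemma. (A caveat you share with the paper: the atoms $\zeta_Q$ need not lie in $E$, so one should either restrict to cubes contained in $E$ or read the statement as concerning the union of the $\BTM$ cubes associated to $E$.)
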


\begin{proof}
For $k\geq 0$, let $\cS(k)$ be those cubes in $\FC$ that are properly contained in $k$ many cubes in $\FC$. Let $\delta\in (0,1/2)$ and let $N\in \mathbb{N}$ be the maximal integer so that 
\[
\sum_{Q\in \cS(k)} (2\ell(Q))^{d}\geq \frac{ \cH^{d}_{\infty}(E)}{2} \mbox{ for all }1\leq k\leq N.
\]
Then
\[
N\leq   \frac{2}{ \cH^{d}_{\infty}(E)}\sum_{k=1}^{N}\sum_{Q\in \cS(k)} (2\ell(Q))^{d}
\leq \frac{2}{ \cH^{d}_{\infty}(E)} \sum_{Q\in \FC}(2\ell(Q))^{d} \lec\frac{ \beta_{\d\Omega}(\cT)}{ \cH^{d}_{\infty}(E)}
\]
Thus, 
\[
\cH^{d}_{\infty}\ps{E\cap \bigcup_{Q\in \cS(N+1)}Q}
\leq \sum_{Q\in \cS(N+1)}(\diam B_{Q})^{d}
= \sum_{Q\in \cS(N+1)}(2\ell(Q))^{d}<\frac{ \cH^{d}_{\infty}(E)}{2}.
\]
Hence, if we let $E'=E\backslash \bigcup_{Q\in \cS(N+1)}Q$, then $\cH^{d}_{\infty}(E')\geq \cH^{d}_{\infty}(E)/2$. By \eqref{e:muksimmuk-1}, if $Q\in \BTM\backslash \cS(N+1)$, then for some $c\in (0,1)$,
\begin{equation}
\label{e:nu>cellQ}
\nu(Q)\geq c^{N+1}\mu^{k_0}(Q) = c^{N+1}\ell(Q)^{d}.
\end{equation}
Thus,
\[
\nu(E)
\geq \nu(E')
=\sum_{Q\in \BTM\atop Q\subseteq E'} \nu(Q)
\geq c^{N+1} \sum_{Q\in \BTM\atop Q\subseteq E'} \ell(Q)^{d}
\gec c^{N+1}\cH^{d}_{\infty}(E')
\gec c^{N+1} \cH^{d}(E).
\]

\end{proof}
Let $\cT'$ be those cubes in $\cT$ that properly contain a cube from $\cS(N+1)\cup \BTM$. 
Let $\BTM'$ be the children of the minimal cubes of $\cT'$, so they are each adjacent to a cube in $\cS(N+1)\cup \BTM$, and so to a cube in $\FC\cup \BTM$. 
Now we partition $\cT'$ into subtrees as follows. For $R\in \cT$, let $\Stop'(R)$ be the cubes in $\cT'$ that contain cube from $\FC\cup \Top$ as a child, and let $\Next'(R)$ be the children of the cubes in $\Stop'(R)$. For $R\in \BTM'$, we let $\Next'(R)=\emptyset$. Let $\Top_{0}'=\{Q_0\}$ and for $k\geq 0$, let $\Top_{k+1}'=\bigcup_{R\in \Top_{k}'}\Next'(R)$ and $\Top'=\bigcup_{k\geq 0}\Top_{k}'$. Note that each $R\in \Top'$ either adjacent to a cube in $\BTM'\cup \Top\cup\FC$, and recalling that cubes in $\BTM'$ are each adjacent to a cube in $\FC\cup \BTM$, we have
\[
\sum_{R\in \Top'}\ell(R)^{d}
\lec \sum_{R\in \FC\cup \Top\cup \BTM}\ell(R)^{d}\lec \beta(\cT).
\]

Let $\theta_{\omega}^{\nu}(Q)= \omega(Q)/\nu(Q)$,
\[
k=\sum_{Q\in \BTM'}\Theta_{\omega}^{\nu}(Q)\one_{Q},
\]
and for $R\in \Top'$,
\[
f_{R} = \sum_{Q\in \Next'(R)} \log\frac{\Theta_{\omega}^{\mu}(Q)}{\Theta_{\omega}^{\nu}(R)}\one_{R}.
\]
Note that since each $\Tree'(R)$ is contained in some $\Tree_{F}(R')$ for some $R'\in \FC\cup \{Q_0\}$ and is also contained in $\Tree(R'')$ for some $R\in \Top$, \eqref{e:thetanu} and the conclusion of Theorem \ref{t:SU-version} imply that 
\[
|f_{R}|\lec_{\tau,A} 1.
\]
Thus,
\begin{align*}
\avint_{Q_{0}} \ps{\log k - \log \Theta_{\omega}^{\nu}(Q_0)}
& =\sum_{R\in \Top'} \int_{Q_0} \frac{f_{R}}{\nu(Q_{0})} d\nu 
\gec_{A,\tau}-\sum_{R\in \Top'} \frac{\nu(R)}{\nu(Q_{0})}\\
& \gec -\sum_{R\in \Top'} \frac{\ell(R)^{d}}{\ell(Q_{0})^{d}}
\gec -\frac{\beta(\cT)}{\ell(Q_{0})^{d}}.
\end{align*}
Following the proof of \cite[Theorem 1]{Hru84} up to \cite[Equation (7)]{Hru84} (with Lebesgue measure replaced with $\nu$ and $w$ replaced by $k$), and recalling Lemma \ref{l:nu>exp}, we obtain that there is $C>1$ so that 
\[
\log \frac{C\beta(\cT)}{\ell(Q_{0})^{d}}
\geq \frac{\nu(E')}{\nu(Q_0)}\log\frac{\omega(Q_{0})}{\omega(E)}
\geq \exp\ps{-C\frac{\beta(\cT)}{\cH^{d}_{\infty}(E)}} \frac{\cH^{d}(E)}{\ell(Q_{0})^{d}}\log\frac{\omega(Q_{0})}{\omega(E)}.
\]

Noting that $\cH^{d}_{\infty}(E)\lec \ell(Q_{0})^{d}$, the above implies (for a slightly larger $C_0>C$)
\[
\log \frac{\omega(Q_{0})}{\omega(E)}
\leq  \exp\ps{C\frac{\beta(\cT)}{\cH^{d}_{\infty}(E)}}\frac{\ell(Q_{0})^{d}}{\cH_{\infty}^{d}(E)}\log \frac{C\beta(\cT)}{\cH^{d}_{\infty}(E)}
\lec \frac{\ell(Q_{0})^{d}}{\cH_{\infty}^{d}(E)} \exp\ps{C_0\frac{\beta(\cT)}{\cH_{\infty}^{d}(E)}}.
\]
Thus, for some $C_1>0$,
\[
\frac{\omega(E)}{\omega(Q_{0})}
\geq \exp\ps{ -C_1\frac{\ell(Q_{0})^{d}}{\cH_{\infty}^{d}(E)} \exp\ps{C_0\frac{\beta(\cT)}{\cH_{\infty}^{d}(E)}}}.
\]

\section{Proof of Theorem \ref{t:example}}

\label{s:example}

In this section we give an example of a domain in $\bC$ whose boundary is Ahlfors regular with finite linear deviation but whose harmonic measure has a singular part. The proof is an adaptation of some of the details in \cite{Bat96}.

Let $f_{0},...,f_{3}$ be as in the introduction. Let $I_\emptyset=[-1/2,1/2]^{2}$ and for a word $\alpha=\alpha_{1}\cdots \alpha_{n}$ with $\alpha_i\in \{0,1,2,3\}$, let 
\[
f_{\alpha}=f_{\alpha_{n}}\circ \cdots \circ f_{1} \;\;\; \mbox{ and }\;\;
\;
I_{\alpha} = f_{\alpha}(I_0).
\]
See Figure \ref{f:cantor-label}

\begin{figure}
\includegraphics[width=200pt]{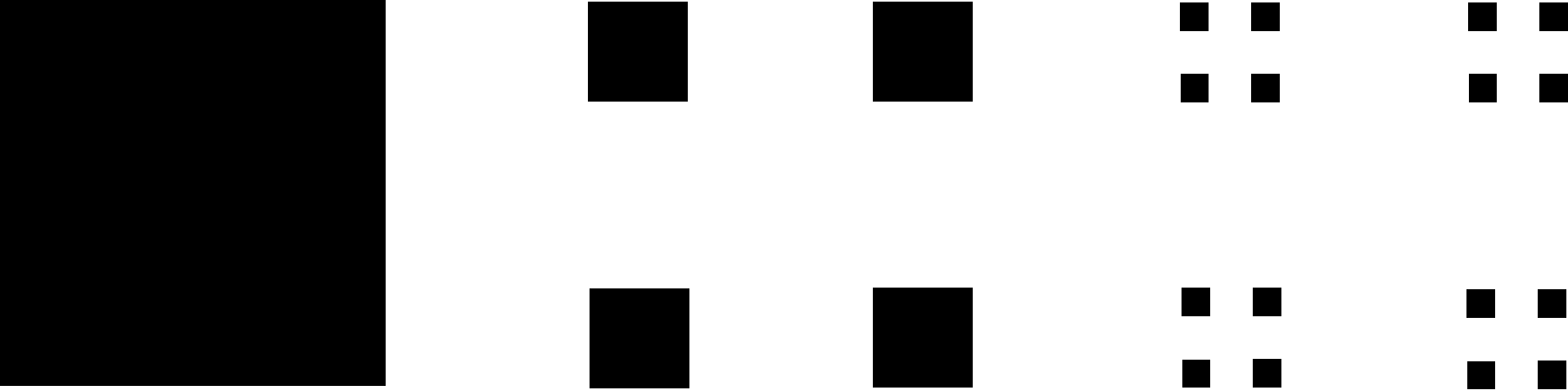}
\begin{picture}(200,0)(0,0)
\put(20,0){$I_{\alpha}$}
\put(75,0){$I_{\alpha 3}$}
\put(110,0){$I_{\alpha 4}$}
\put(110,40){$I_{\alpha 1}$}
\put(75,40){$I_{\alpha 2}$}
\put(200,0){$I_{\alpha 1 3}$}
\end{picture}
\caption{The cubes $I_{\alpha}$, its children, and grandchildren.}
\label{f:cantor-label}
\end{figure}

Let $K$ denote the 4-corner cantor set and $\omega=\omega_{K^{c}}^{\infty}$ denote the harmonic measure with pole at infinity for $\Omega = \bC\backslash K$. We let $|A|=\cH^{1}(A\cap K)$.

By \cite[Lemma 2.7]{Bat96}, there is $\rho>1$ so that for all $\alpha$, there is $i$ so that 
\[
\frac{\omega(I_{\alpha i})}{|I_{\alpha i}|} > \rho \frac{\omega(I_{\alpha})}{|I_{\alpha}|}.
\]
If we iterate this, this means there is $N$ and a word $\beta$ with $|\beta|=N$ so that 

\[
\frac{\omega(I_{\alpha \beta})}{|I_{\alpha \beta}|} > 4 \frac{\omega(I_{\alpha})}{|I_{\alpha}|}.
\]
\def\Ch{{\rm Child}}
For $k\geq 0$, let $D_{k} = \{I_{\alpha}: |\alpha|=kN\}$ and for $k\geq 0$, $I_{\alpha}\in D_{k}$, and $n\in \bN$, let 
\[
\Ch_{n}(I_{\alpha})=\{I_{\alpha \beta}:|\beta|=nN\}\subseteq D_{k+n}, \;\; \Ch(I_{\alpha}):=\Ch_{1}(I_{\alpha})\}.
\]
Let $D=\bigcup D_{k}$. Thus, the above says that for all $I\in D$ there is $I'\in \Ch(I)$ so that 
\[
\frac{\omega(I')}{|I'|} > 16 \frac{\omega(I)}{|I|}.
\]

Thus, by Cauchy-Schwartz, the fact that $(1-t)^{\frac{1}{2}}\leq 1-t/2$ for $t\geq 0$, and since $\omega(J)\geq c\omega(I)$ for all $J\in \Ch(I)$ (since $\omega$ is doubling as $\Omega$ is uniform),
\begin{align*}
\sum_{J\in \Ch(I)} \omega(J)^{\frac{1}{2}}|J|^{\frac{1}{2}}
& 
 = \sum_{J\in \Ch(I) \atop J\neq I'} \omega(J)^{\frac{1}{2}}|J|^{\frac{1}{2}} +  \omega(I')^{\frac{1}{2}}|I'|^{\frac{1}{2}} \\
 & \leq \ps{\sum_{J\in \Ch(I) \atop J\neq I'} \omega(J)}^{\frac{1}{2}}\ps{\sum_{J\in \Ch(I) \atop J\neq I'}  |J|}^{\frac{1}{2}} +\omega(I')^{\frac{1}{2}} \ps{ \frac{\omega(I')|I|}{16\omega(I)}}^{\frac{1}{2}}\\
 & \leq |I|^{\frac{1}{2}} \omega(I)^{\frac{1}{2}} \ps{\ps{1-\frac{\omega(I')}{\omega(I)}}^{\frac{1}{2}} + \frac{1}{4} \frac{\omega(I')}{\omega(I)}} \\
 & \leq  |I|^{\frac{1}{2}} \omega(I)^{\frac{1}{2}}\ps{1-\frac{1}{4}\frac{\omega(I')}{\omega(I)}}
 \leq |I|^{\frac{1}{2}} \omega(I)^{\frac{1}{2}}(1-c/4)=: \lambda |I|^{\frac{1}{2}} \omega(I)^{\frac{1}{2}}
\end{align*}

Let $\tau>0$ be small, $n\in \mathbb{N}$. Since $|J| = 4^{-N}|I|$ for $J\in \Child(I)$,
\begin{align*}
\sum_{J\in \Ch(I)} \omega(J)^{\frac{1}{2}}|J|^{\frac{1-\tau}{2}} 
& = 4^{\frac{N\tau}{2}}|I|^{-\frac{\tau}{2}}  \sum_{J\in \Ch(I)} \omega(J)^{\frac{1}{2}}|J|^{\frac{1}{2}} \\
& < 4^{\frac{N\tau}{2}}\lambda \omega(I)^{\frac{1}{2}}|I|^{\frac{1-\tau}{2}}
=: \gamma \omega(I)^{\frac{1}{2}}|I|^{\frac{1-\tau}{2}}
\end{align*}
where $\gamma <1$ for $\tau$ small enough depending on $N$ and $\lambda$. Let
\[
\Stop(I,n)=\ck{ J\in \Ch_{n}(I): \frac{\omega(J)}{\omega(I)}<\frac{|J|^{1-\tau}}{|I|^{1-\tau}}}.
\]
Then iterating the above estimate, we get 
\[
\sum_{J\in \Stop(I,n)}\omega(J)
=\sum_{J\in \Stop(I,n)}\omega(J)^{\frac{1}{2}}\omega(J)^{\frac{1}{2}}
\leq \frac{\omega(I)^{\frac{1}{2}}}{|I|^{\frac{1-\tau}{2}}} \sum_{J\in \Stop(I,n)}\omega(J)^{\frac{1}{2}}|J|^{\frac{1-\tau}{2}}
< \gamma^{n} \omega(I).
\]
Let 
\[
\Top(I,n)=\Ch_{n}(I)\backslash \Stop(I,n).
\]
Then
\begin{equation}
\label{e:h1decay}
\sum_{J\in \Top(I,n)}|J|
 = \sum_{J\in \Top(I,n)} |J|^{\tau}|J|^{1-\tau} 
 \leq  \sum_{J\in \Top(I,n)} \ps{\frac{|I|^{\tau}}{ 4^{nN\tau}}}\ps{ |I|^{1-\tau} \frac{\omega(J)}{\omega(I)}}   \\ 
 \leq \frac{|I|}{4^{nN\tau}} .
\end{equation}

Let $\Top(0) =I_{\emptyset}$. For $n>0$, let 
\[
\Stop(n) = \bigcup_{I\in \Top(n-1)} \Stop(I,n), \;\; \Top(n) = \bigcup_{I\in \Top(n-1)} \Top(I,n).
\]
Then for $n>0$,
\begin{align*}
\sum_{J\in \Top(n)}\omega(J)
& =\sum_{I\in \Top(n-1)} \sum_{J\in \Top(I,n)} \omega(J)\\
& = \sum_{I\in \Top(n-1)} \ps{\omega(I)-\sum_{J\in \Stop(I,n)}\omega(J)}\\
& >\sum_{I\in \Top(n-1)}\omega(I)(1-\gamma^{n})
 >\cdots > \omega(I_{\emptyset}) \prod_{k=1}^{n} (1-\gamma^{k})
\end{align*}
Thus, taking $n\rightarrow\infty$, if we set 
\[
G=\bigcap_{n\geq 0}\bigcup_{I\in \Top(n)} I,\]
we get that $\omega(G)>0$. Moreover, since $G\subseteq \Top(n)$ for all $n$, 
\[
|G|
\leq  \sum_{J\in \Top(n)} |J|
\leq \sum_{I\in \Top(n-1)}\sum_{J\in \Top(I,n)} |J|
< \frac{1}{4^{nM\tau}} \sum_{I\in \Top(n-1)}.
\]
Hence $|G|=0$, since otherwise, taking the intersection over $\bigcup_{J\in \Top(n)} J$, the above would give $|G|\leq \frac{1}{4^{nM\tau}}|G|<|G|$, a contradiction.

\def\Homega{\hat{\Omega}}
\def\homega{\hat{\omega}}

Now let $\eta>1$ be very close to $1$ and 
\[
F=\cnj{\bigcup_{n\geq 0} \bigcup_{I\in \Stop(n)} \eta I} =G\cup \bigcup_{n\geq 0} \bigcup_{I\in \Stop(n)} \eta I
\]

Let $\Homega=F^{c}$ and $\homega=\omega_{\Homega}^{\infty}$. See Figure \ref{f:homega}

\begin{figure}
\includegraphics[width=350pt]{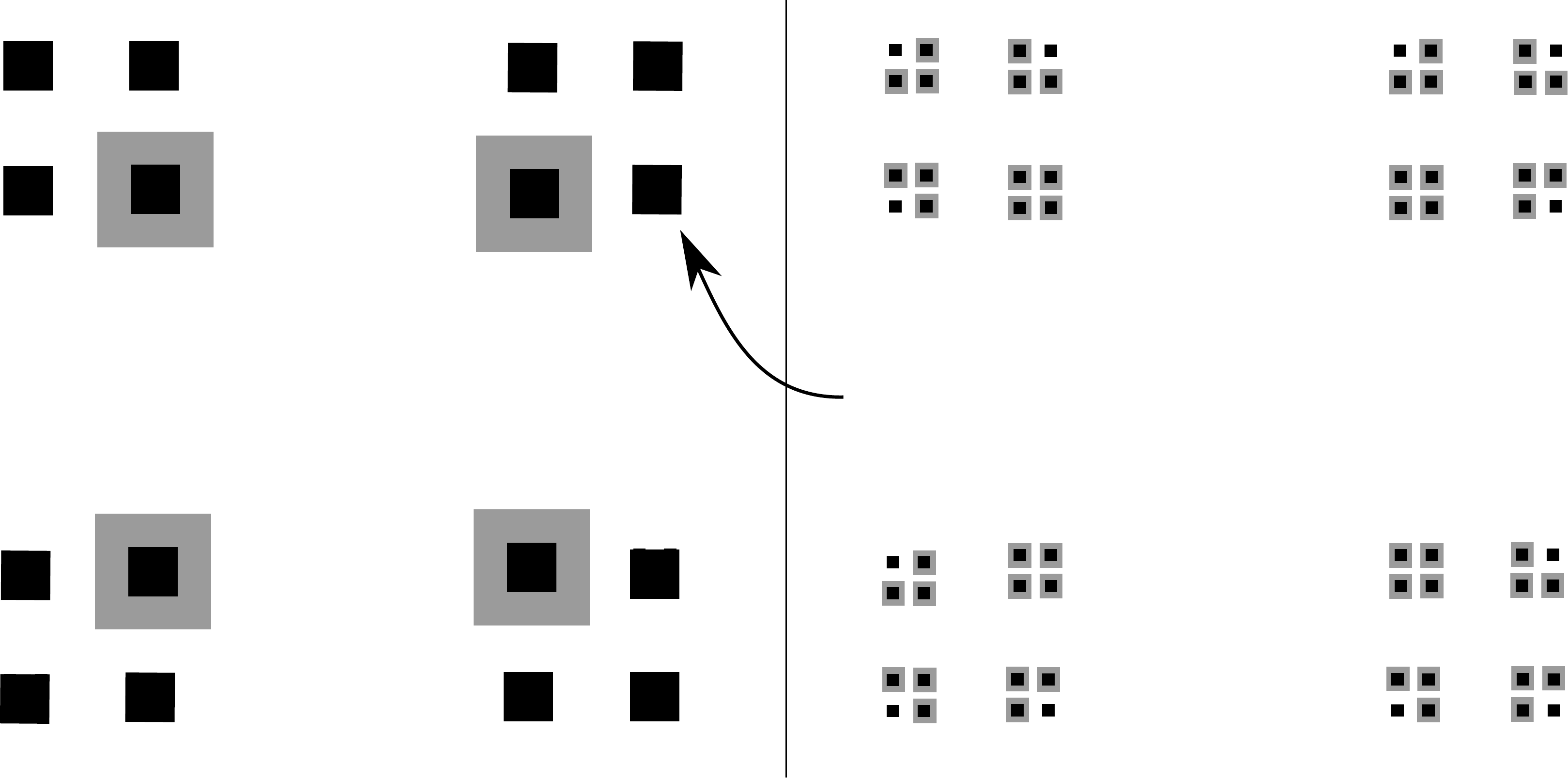}
\begin{picture}(350,0)(0,0)
\put(195,95){$J\in \Top(I,n)$}

\put(20,0){$\Child(I,n)$ for some $I$}
\put(180,0){$\Child(J,n+1)$ for some $J\in \Top(I,n)$}
\end{picture}
\caption{On the left, we have the cubes in $\Child_{n}(I)$ for some cube $I$. Around each cube $J\in \Stop(I,n)$, we place a dilated copy $\eta J$ where $\eta>1$ is a number very close to $1$ so that $\eta J$ contains but is very close to $J$. (In this figure, $n=2$). These cubes form part of the complement of $\Homega$. For the cubes $J\not\in \Stop(I,n)$, these form the top cubes $\Top(I,n)$, and inside each such cube (shown on the right), we consider the cubes that are $n+1$ generations below $J$ (so in this figure, $n+1=3$), and place dilated copies around those cubes in $\Stop(J,n+1)$, which again will be part of $\Homega^{c}$ . Notice that the cubes in $\Stop(J,n+1)$ take up a larger portion of the Hausdorff $1$-measure of $K$ in $J$. }
\label{f:homega}
\end{figure}

It is a bit annoying but rather straightforward to check that $\Homega$ is also uniform, so we omit the details

{\bf Claim:} $\omega(G)>0$. We recall a lemma about harmonic measure for uniform domains:

\begin{lemma} \cite[Theorem 1.3]{MT15} Let $n\geq 1$, $\Omega$ be a uniform domain in $R^{d+1}$ and let $B$ be a ball centered on $\d\Omega$ . Let $p_1,p_2\in\Omega$ be such that $\dist(p_i,B\cap \d\Omega)>c_0^{-1}r_B$ for$ i=1,2$. Then, for any Borel set $E\subseteq B\cap \d\Omega$ ,
\[
\frac{\omega^{p_1}(E)}{\omega_{p_1}(B)} \sim \frac{\omega^{p_2}(E)}{\omega_{p_2}(B)}
\]
with the implicit constant depending only on $c_0$ and the uniform behavior of $\Omega$.
\end{lemma}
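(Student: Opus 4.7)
The plan is to fix a corkscrew point $x_{B} \in B \cap \Omega$ for $B$ (uniformity of $\Omega$ gives the interior corkscrew condition at every scale, since the cigar from any reference point in $\Omega$ to $\zeta_{B}$ produces a point at distance $\gtrsim r_{B}$ from $\d\Omega$) and reduce the lemma to showing that for every $p \in \Omega$ with $\dist(p, B \cap \d\Omega) > c_{0}^{-1}r_{B}$,
\begin{equation}\label{e:changepole}
\frac{\omega^{p}(E)}{\omega^{p}(B)} \sim \omega^{x_{B}}(E),
\end{equation}
with constants depending only on $c_{0}$ and the uniformity parameters of $\Omega$. Writing \eqref{e:changepole} for $p = p_{1}$ and $p = p_{2}$ and dividing then yields the lemma. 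Since $\omega^{x_{B}}(B) \sim 1$ by Bourgain's estimate (Lemma~\ref{l:bourgain}), \eqref{e:changepole} is equivalent to the boundary Harnack-type comparison $u(p)/v(p) \sim u(x_{B})/v(x_{B})$, where $u(x) := \omega^{x}(E)$ and $v(x) := \omega^{x}(B)$ are positive harmonic functions on $\Omega$ that vanish continuously on $\d\Omega \setminus B$.

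For $p \in \Omega \setminus MB$, with $M = M(c_{0})$ chosen sufficiently large, \eqref{e:changepole} is immediate from Lemma~\ref{l:w/w} in its uniform-domain formulation (which gives the conclusion $\omega^{p}(E)/\omega^{p}(B) \sim \omega^{x_{B}}(E)$ with $x_{1} = x_{2} = x_{B}$). For $p \in MB$ with the prescribed separation from $B \cap \d\Omega$ I split into two sub-cases. If additionally $\dist(p, \d\Omega) \gtrsim r_{B}$, then $p$ and $x_{B}$ are both nontangential at scale $r_{B}$ and are joined by a Harnack chain of length depending only on $M$ and the uniformity constants (extracted from the cigar between them), so Harnack's inequality applied separately to $u$ and $v$ yields \eqref{e:changepole}. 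If instead $\dist(p, \d\Omega) \ll r_{B}$, the nearest point $q \in \d\Omega$ to $p$ must lie in $\d\Omega \setminus B$ (otherwise the separation hypothesis on $p$ is violated), and I iterate the boundary Harnack principle for the pair $(u,v)$ (which both vanish on $\d\Omega \setminus B$) up the scales from $\dist(p, \d\Omega)$ to $\sim r_{B}$ around the point $q$, finally reaching a nontangential ``exit point'' $y$ at scale $r_{B}$ near $q$; a Harnack chain from $y$ to $x_{B}$ then closes the argument.

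The principal obstacle is establishing the scale-invariant boundary Harnack principle used in the iteration, namely: for any ball $B' \subset \R^{d+1}$ centered on $\d\Omega \setminus B$ of radius at most $c r_{B}$, any two positive harmonic functions on $B' \cap \Omega$ vanishing on $B' \cap \d\Omega$ have comparable ratios on $\tfrac12 B' \cap \Omega$ with constants depending only on the uniformity of $\Omega$. In uniform domains this can be proved via a Carleson-type estimate (the starting ingredient being Lemma~\ref{l:bourgain}), combined with Harnack chaining along the cigar-curve structure, as in the work of Aikawa and Hirata. The crucial point is that the BHP constant is independent of the scale and of the ratio $r_{B}/\dist(p, \d\Omega)$; consequently the finitely many scale-iterations needed to traverse from $\dist(p, \d\Omega)$ up to $r_{B}$ around $q$ produce overall constants depending only on the uniformity and on $c_{0}$, not on how close $p$ is to $\d\Omega \setminus B$. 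The H\"older decay of Lemma~\ref{l:holder} and the doubling of $\omega$ (Lemma~\ref{l:doubling}, available since uniform $\Rightarrow$ semi-uniform) are what make this iteration quantitative.
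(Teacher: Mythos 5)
There is a genuine gap, and it is worth noting first that the paper itself does not prove this statement: it is quoted verbatim from \cite[Theorem 1.3]{MT15} (with the remark that the NTA case goes back to \cite{JK82}), so the relevant comparison is with Mourgoglou--Tolsa's proof. The hypothesis there, as here, is \emph{only} that $\Omega$ is uniform; no capacity density condition, Ahlfors regularity, or exterior fatness is assumed. Every essential ingredient in your scheme carries exactly such an extra hypothesis: Lemma \ref{l:bourgain} (your claim $\omega^{x_B}(B)\sim 1$), Lemma \ref{l:holder} (H\"older decay at the boundary), Lemma \ref{l:w/w} (stated for semi-uniform \emph{CDC} domains), and the Carleson-estimate/boundary Harnack machinery of Aikawa--Hirata \cite{AH08}, which is developed for John/uniform domains \emph{with the CDC}. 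Without the CDC these tools genuinely fail, not just lack a reference: a uniform domain can have boundary portions of codimension two or more (e.g.\ the complement of a segment in $\R^3$ is uniform), where harmonic measure does not charge the boundary at all, Bourgain's lower bound is false, solutions need not decay at the boundary, and no scale-invariant boundary Harnack principle is available. So the ``principal obstacle'' you flag at the end --- the scale-invariant BHP near $q\in\d\Omega\setminus B$ --- is not a deferrable technicality under the stated hypotheses; it is precisely the point at which the argument collapses, and it is precisely what \cite{MT15} had to avoid. Their proof does not run through a BHP at all: it works directly with the functions $\omega^x(E)$ and $\omega^x(B)$ via the maximum principle, Harnack chains along the cigar curves, and a bootstrapping/self-improvement argument on the ratio, which is why the result holds for arbitrary uniform domains.

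Under the additional assumption that $\Omega$ satisfies the CDC, your outline is essentially the classical Jerison--Kenig change-of-pole argument and is sound (the far case from Lemma \ref{l:w/w}, the intermediate case by Harnack chains, the near case by one application of the BHP at scale $r_B$ around $q$ --- the iteration over scales is unnecessary since the BHP already covers all points of the half-ball uniformly in $\dist(p,\d\Omega)$). That restricted version would in fact suffice for the application in Section \ref{s:example}, where $\d\Omega$ is Ahlfors regular and hence CDC. But as a proof of the lemma as stated --- for general uniform domains --- the argument has a genuine gap: it silently imports the CDC through every cited lemma, and no proof of the required boundary Harnack principle (true or otherwise) is supplied in that generality.
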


This was originally shown for NTA domains in \cite{JK82}. In our setting, since $\d\Omega$ is $1$-Ahlfors regular, if $B=B(0,\diam \d\Omega)$, then $\omega^{0}_{\Omega}(\d\Omega)= 1$. Hence, if $x_{j}$ is a sequence of points going to infinity so that $\omega_{\Omega}^{x_{j}}/\omega_{\Omega}^{x_{j}}(\d\Omega)\warrow \omega$, 
\[
0<\omega(G) 
= \lim_{j\rightarrow\infty}  \frac{\omega_{\Omega}^{x_{j}}(G)}{\omega_{\Omega}^{x_{j}}(\d\Omega)}
\gec \frac{\omega_{\Omega}^{0}(G)}{\omega_{\Omega}^{0}(\d\Omega)}
= \omega_{\Omega}^{0}(G).
\]
The first equality is justified since $G$ is a compact totally disconnected and \cite[Exercise 1.9, p.22]{Mattila}. Thus, by Harnack's principle, since $\Omega$ is connected, $\omega_{\Omega}^{x}(G)>0$ for all $x\in G$. 

{\bf Claim:} $\homega(G)>0$. The proof here is modelled after that in the proof of \cite[Lemma 6.3]{JK82}.

First we need to show there is $c\in (0,1)$ so that 
\begin{equation}
\label{e:wG^c>1}
\omega_{\Omega}^{x}(G^{c})\gec 1 \;\; \mbox{ for all }x\in \d\Homega\backslash G. 
\end{equation}
Note that if $x\in \d\Homega\backslash G$, then $x\in \d \eta I$ for some square $I\in \Stop(n)$ for some $n$, and so $I\subseteq G^{c}$. By Lemma \ref{l:bourgain}, $\omega_{\Omega}^{x}(G^{c})\geq \omega_{\Omega}^{x}(I)\gec 1$. This proves \eqref{e:wG^c>1}.

Next, suppose that $\homega^{x}(G)=0$. Let $\phi$ be a lower function for $\one_{G}$ with respect to $\Omega$. Then by definition of harmonic measure, $\phi(x) \leq \omega_{\Omega}^{x}(G)$ for all $x\in \Omega$, and so by \eqref{e:wG^c>1}, for $x\in \d\Homega\backslash G$,  there is $c\in (0,1) $ so that 
\[
\phi(x) \leq \omega_{\Omega}^{x}(G)<c<1. 
\]
Thus, $\phi(x) - c$ is a lower function for $\one_{G}$ {\it with respect to $\Homega$}, since  for all $\xi\in G$
\[
\limsup_{x\rightarrow \xi}(\phi(x)-c)\leq 1-c<1
\]
 and  for $\xi\in \d\Homega\backslash G$,
 \[
 \limsup_{x\rightarrow \xi}\phi(x)\leq \omega_{\Omega}^{\xi}(G)-c<c-c=0.\]
 Hence, $\phi(x)-c\leq \homega^{x}(G)=0$ and so $\phi(x)\leq c<1$ for all $x\in \Homega$. Thus, taking the supremum over all such lower functions, we get $\omega_{\Omega}^{x}(G)\leq c$ for all $x\in \Homega$. By applying the maximum principle inside each $\lambda I$ for $I\in \bigcup \Stop(n)$ (and noting that $\omega_{\Omega}^{x}(G)$ vanishes on $\d\Omega\cap \lambda I$ for each $I\in \bigcup \Stop(n)$), this implies $\omega_{\Omega}^{x}(G)\leq c$ in $\Omega$. We now evoke the following lemma:

\begin{lemma}[{\cite[Lemma 2.24]{AAM19}}] 
Suppose that $\Omega\subseteq \R^{d+1}$, $\d\Omega$ is regular, $\omega_{\Omega}^{x}(\{\infty\})=0$ for all $x\in \Omega$, and that $E$ is a closed subset of $\d^\infty\Omega$. Then $\omega_{\Omega}^{x}(E)=0$ for all $x\in \Omega$ if and only if $$\sup_{x\in \Omega}\omega_{\Omega}^{x}(E)<1.$$
\end{lemma}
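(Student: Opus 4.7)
The forward direction is immediate: if $\omega_\Omega^x(E)=0$ for every $x$, then $\sup_x \omega_\Omega^x(E)=0<1$. For the converse, I set $u(x) := \omega_\Omega^x(E)$, noting that $u$ is a bounded harmonic function on $\Omega$ with $0 \leq u \leq c$ where $c := \sup_{\Omega} u < 1$, and aim to conclude $u \equiv 0$. Suppose for contradiction that $u(x_0)>0$ at some $x_0 \in \Omega$. Then since $\Omega$ is connected and $u \geq 0$ is harmonic, Harnack's inequality forces $u > 0$ throughout $\Omega$; in particular $\omega_\Omega^{x_0}(E)>0$, so the measure $\mu := \omega_\Omega^{x_0}|_E$ is a nonzero finite measure carried by $E$.

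The plan is now to produce a boundary point at which the fine (or non-tangential) limit of $u$ equals $1$, since any such point forces $\sup u \geq 1$ and contradicts $c<1$. Under the regularity of $\d\Omega$, one has the weak-$*$ convergence $\omega_\Omega^x \warrow \delta_\xi$ as $x \to \xi$ for every $\xi \in \d^\infty\Omega$; combined with Besicovitch-type differentiation of $\omega_\Omega^{x_0}$, this should yield a Fatou-type statement: at $\omega_\Omega^{x_0}$-a.e.\ $\xi \in \d^\infty\Omega$, the bounded harmonic function $u$ admits a fine boundary limit equal to $\one_E(\xi)$. Because $\mu(\d^\infty\Omega) = \omega_\Omega^{x_0}(E) > 0$, I can select such $\xi$ inside $E$; at this $\xi$ the fine limit of $u$ is $1$, producing $x_n \in \Omega$ with $u(x_n) \to 1$, the desired contradiction.

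The chief obstacle is justifying the Fatou-type a.e.\ boundary convergence in this generality. The cleanest route is to invoke the Brelot--Doob theorem: on any domain all of whose boundary points are regular for the Dirichlet problem, every bounded harmonic function has fine boundary limits equal to its $\omega$-boundary values $\omega$-almost everywhere. Applied to the PWB solution $u$ with data $\one_E$ -- which coincides with $x \mapsto \omega_\Omega^x(E)$ since $E$ is closed -- this yields precisely the required $\omega_\Omega^{x_0}$-a.e.\ convergence. The hypothesis $\omega_\Omega^x(\{\infty\})=0$ ensures $\omega_\Omega^x$ is supported on the genuine boundary, so the point at infinity (if it enters) contributes nothing and can either be ignored or handled via a Kelvin-type inversion that reduces to the bounded-domain case.
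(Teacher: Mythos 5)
Your forward direction is fine, and the overall idea for the converse (locate a point of $E$ at which $u(x)=\omega_{\Omega}^{x}(E)$ tends to $1$) is a legitimate strategy. The problem is the step you yourself identify as the chief obstacle: the ``Brelot--Doob theorem'' you invoke is not a correct off-the-shelf statement in the generality you need. The Fatou--Na\"{\i}m--Doob theorem gives minimal-fine boundary limits of $H_{f}$ equal to $f$ almost everywhere with respect to harmonic measure \emph{on the Martin boundary}; transferring this to the Euclidean boundary $\d^{\infty}\Omega$ requires identifying the two boundaries, which fails for general domains and is in no way implied by Dirichlet-regularity of $\d\Omega$ (regularity is a statement about $\Omega^{c}$ being non-thin at each boundary point, not about the structure of minimal harmonic functions). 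Even granting a fine limit equal to $\one_{E}(\xi)=1$ at some $\xi\in E$, you would still need $\Omega$ to be non-thin at $\xi$ in order to extract points $x_{n}\in\Omega$ with $u(x_{n})\to 1$; that holds $\omega$-a.e.\ but is another unproved step. (The one clean way to make your plan rigorous is probabilistic: $u(B_{t\wedge\tau})$ is a bounded martingale converging a.s.\ to $\one_{E}(B_{\tau})$, so on the event $\{B_{\tau}\in E\}$, which has positive probability if $u(x_{0})>0$, one gets $u\to 1$ along the Brownian path. But that is a different proof from the one you wrote.)

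The cited source proves this with a two-line maximum-principle argument that avoids all boundary-limit machinery, and the present paper essentially rehearses it in Section 8 immediately before invoking the lemma. Set $c=\sup_{\Omega}u<1$. Since $E$ is closed and every point of $\d\Omega$ is regular, $u(x)\to 0$ as $x\to\xi$ for every $\xi\in\d^{\infty}\Omega\setminus E$, while $\limsup_{x\to\xi}u(x)\leq c=c\,\one_{E}(\xi)$ for $\xi\in E$ (the point at infinity being harmless because $\omega_{\Omega}^{x}(\{\infty\})=0$). Hence $u$ is a lower function for the boundary data $c\,\one_{E}$, so $u\leq \overline{H}_{c\one_{E}}=c\,\omega_{\Omega}^{\cdot}(E)=c\,u$, i.e.\ $(1-c)u\leq 0$ and $u\equiv 0$. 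I would recommend replacing your argument with this one, or at least rewriting the converse so that the key convergence statement is one you can actually cite or prove.
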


Here, $\d_{\infty}\Omega = \d\Omega\cup \{\infty\}$ if $\Omega$ is unbounded and equals $\d\Omega$ otherwise. By \cite[Example 6.5.6]{AG}, $\omega_{\Omega}(\{\infty\})=0$. Thus the previous lemma implies $\omega_{\Omega}^{x}(G)=0$, which is a contradiction. This proves the claim.

Finally, we need to show that, if $\cD$ are the surface cubes for $\d\Homega$, that
\[
\sum_{Q\in \cD} \beta_{\d\Omega}(3B_{Q})^{2} \ell(Q)<\infty.\]
Since $\d\Homega$ is $1$-Ahlfors regular, and using the fact that $\beta_{\d\Omega}(B')\lec \beta_{\d\Omega} (B)$ whenever $B'\subseteq B$ and $r_{B}\lec r_{B'}$, it suffices to show that 
\[
\int_{\d\Homega}\int_{0}^{1} \beta_{\d\Omega}(B(x,r))^{2} \frac{dr}{r}d\cH^{1}(x)<\infty.
\]

Let $I\in \Top(n-1)$ and $J\in \Stop(n,I)$. Note that since $\d\lambda J$ is just a square, we know that 
\[
\int_{\d \lambda I} \int_{0}^{\ell(J)}\beta_{\d\Omega}(B(x,r))^{2} \frac{dr}{r}d\cH^{1}(x)\lec \ell(I).\]
Then we trivially bound (using that $J\in \Ch_{n}(I)$ and $\beta_{\d\Omega}\leq 1$)
\[
\int_{\d \lambda I} \int_{\ell(J)}^{\ell(I)}\beta_{\d\Omega}(B(x,r))^{2} \frac{dr}{r}d\cH^{1}(x)
\lec \ell(J)\log\frac{\ell(I)}{\ell(J)}
\lec \ell(J) n \sim |J| n.
\]
Since $|G|=0$,
\begin{align*}
\int_{\d\Homega}\int_{0}^{1} &  \beta_{\d\Omega}(B(x,r))^{2} \frac{dr}{r}d\cH^{1}(x)\\
& =\sum_{n> 0} \sum_{I\in \Top(n-1)} \sum_{J\in \Stop(n,J)}\int_{\d \lambda J}\int_{\ell(J)}^{\ell(I)}\beta_{\d\Omega}(B(x,r))^{2} \frac{dr}{r}d\cH^{1}(x)\\
& \lec  \sum_{n> 0} \sum_{I\in \Top(n-1)} \sum_{J\in \Stop(n,J)} n |J|\\
& \leq  \sum_{n> 0} \sum_{I\in \Top(n-1)}  n |I|\\
& \lec 1+\sum_{n> 0} \sum_{J\in \Top(n-2) }\sum_{I\in \Top(n-1,J)} n|I|\\
& \stackrel{\eqref{e:h1decay}}{\lec} 
1+\sum_{n> 0}  4^{-nN\tau} \sum_{J\in \Top(n-2) } n|J|<1+\sum_{n> 0}  n4^{-nN\tau}<\infty.
\end{align*}

\section{Proof of Theorem \ref{t:green}}
\label{s:IV}

\subsection{Part I: Bounding the oscillation of Green's function using CDHM}

\label{s:IV-Part-I}

In this section, we prove the following:

\begin{lemma}
Let $\Omega\subseteq \R^{d+1}$ be a bounded uniform domain with lower $d$-content regular boundary. Let $Q_0\in \cD$ and $x_0$ be a $c$-corkscrew point in $B_{Q_{0}}\cap \Omega$ so that if $B_0= B(x_{0},c\ell(Q_{0}))$, then $2B_0\subseteq \Omega\cap B_{Q_{0}}$. Let $g=G_{\Omega}(x_{0},\cdot)$ and
\[
\Omega_{Q_{0}}= \bigcup_{Q\subseteq Q_0}U_{Q}^{K}
 \]
Then for $K$ large enough (chosen as in the proof of \ref{l:WHSA<CDHM})
\begin{equation}
\ell(Q_0)^{d} + \int_{\Omega_{Q_{0}}\backslash B_{0} } \av{\frac{\grad^2 g}{g}}^{2} \delta_{\Omega}(x)^{3} dx 
\lec \beta_{\d\Omega}(Q_0).
\end{equation}
\end{lemma}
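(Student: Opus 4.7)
The plan is to adapt the argument in Lemma \ref{l:WHSA<CDHM} to the uniform setting, where the extra structure lets us work with the single Green's function $g = G_\Omega(x_0,\cdot)$ throughout (rather than switching poles $x_R$ between trees). First, I would apply Theorem \ref{t:CDHM} to obtain collections $\Top$ and trees $\{\Tree(R) : R \in \Top\}$ partitioning all cubes in $Q_0$ so that \eqref{e:tree1} holds (with $\omega^{x_R}$), and simultaneously refine via Lemma \ref{l:corona} so that for each $R \in \Top$ there is an Ahlfors regular ``dyadic envelope'' $\d\Omega_R$ approximating $\d\Omega$ inside $2B_R$, and
\[
\sum_{R \in \Top}\ell(R)^d \lec \CDHM(Q_0,2,A,\tau)+\cH^d(Q_0) \lec \beta_{\d\Omega}(Q_0),
\]
where the last estimate uses Lemma \ref{l:H<CDHM}. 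Then, exactly as in the displayed lemma preceding Lemma \ref{l:g<theta}, I choose $K$ so that the Whitney-type regions $\Omega_R = \bigcup_{Q\in\Tree(R)} U_Q^K$ have Ahlfors regular boundary contained in the complement of $\bigcup_{I\in\cC_R} I$, and so that $\{\Omega_R\}_{R\in\Top}$ covers $\Omega_{Q_0}$ with bounded overlap.

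Next, I would use the uniformity of $\Omega$ to rewrite the pointwise estimates \eqref{e:g<r^-d} and \eqref{e:allxinU} in terms of $g$ rather than $g_R$. Specifically, by Lemma \ref{l:w/w} and Harnack's inequality along a chain from $x_R$ to $x_0$, one has $g(x)/g(y_R) \sim g_R(x)/g_R(y_R)$ for points $x$ in $\Omega_R$ uniformly bounded away from $\d\Omega$ relative to scale, and $g(y_R) \sim \omega^{x_0}(2B_R)/\ell(R)^{d-1}$ via \eqref{e:w>G}. So the analogue of Lemma \ref{l:g<theta} (using the semi-uniform version Lemma \ref{l:g<thetaSU}, valid a fortiori in the uniform case) produces corkscrew-type points $z_Q$ in each $Q \in \Tree(R)$ with $\delta_\Omega(z_Q) \sim \ell(Q)$ and
\[
\frac{g(x)}{\delta_\Omega(x)} \sim \Theta_{\omega^{x_0}}^d(2B_R) \quad \text{for all } x \in B_Q',
\]
and the weaker upper bound $g(x)/\delta_\Omega(x) \lec \Theta_{\omega^{x_0}}^d(2B_R)$ throughout $U_Q^K$.

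The main computation then is, for each $R \in \Top$ with $R \neq Q_0$, applying Green's identity to $|\grad g|^2$ on $\Omega_R' := \Omega_R$ (resp.\ $\Omega_{Q_0}\setminus B_0$ for the top tree), using the harmonicity of $g$ away from $x_0$, the identity $2|\grad^2 g|^2 = \triangle |\grad g|^2$, and the Cauchy estimates \eqref{e:cauchydelta}:
\begin{align*}
\int_{\Omega_R'} \av{\frac{\grad^2 g}{g}}^2 \delta_\Omega^3 \, dx
&\sim \Theta_{\omega^{x_0}}^d(2B_R)^{-3} \int_{\Omega_R'} |\grad^2 g|^2\, g\, dx \\
&\lec \Theta_{\omega^{x_0}}^d(2B_R)^{-3} \int_{\d\Omega_R'} \frac{g^3}{\delta_\Omega^3}\, d\cH^d \\
&\lec \cH^d(\d\Omega_R') \sim \ell(R)^d,
\end{align*}
where the last line uses the pointwise bound on $g/\delta_\Omega$ on $\d\Omega_R'$ and the Ahlfors regularity of $\d\Omega_R$. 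Summing over $R \in \Top$ and invoking the packing estimate gives the claimed bound by $\beta_{\d\Omega}(Q_0)$, with the summand $\ell(Q_0)^d$ absorbed by the $R=Q_0$ term.

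The main obstacle I expect is bookkeeping around the pole: one must excise a definite Whitney neighborhood of $x_0$ (this is what $B_0$ is for), verify that $\d\Omega_{Q_0}'$ picks up only $\lec \ell(Q_0)^d$ of surface measure including this inner boundary, and ensure that Green's formula can be applied on regions whose boundaries are merely Ahlfors regular (not smooth) -- this is standard via approximation. A secondary technical issue is that the uniform-case analogue of \eqref{e:lambdaB<2B} relies on the doubling property \eqref{doubling} for $\omega$, which requires the pole to lie far from $B_R$ compared to the corkscrew scale; this is automatic inside each tree because we can replace $x_0$ by a nontangential approach point via Lemma \ref{l:w/w}, at the cost of harmless constants depending on the uniformity constants of $\Omega$.
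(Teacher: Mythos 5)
Your proposal follows essentially the same route as the paper: take the corona decomposition coming from Theorem \ref{t:CDHM} refined by Lemma \ref{l:corona}, use uniformity together with Lemma \ref{l:w/w} and Harnack chains to get the two\nobreakdash-sided comparison $g(x)/\delta_{\Omega}(x)\sim\Theta_{\omega^{x_{0}}}^{d}(2B_{R})$ on the sawtooth regions $\Omega_{R}$, and then run the Green's identity computation with the Cauchy estimates \eqref{e:cauchydelta} and the Ahlfors regularity of $\d\Omega_{R}$ to get $\lec\ell(R)^{d}$ per tree, summing via the packing bound. This matches the paper's argument in Section \ref{s:IV-Part-I}, so no further comment is needed.
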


We assume the same set up as in the proof of Lemma \ref{l:WHSA<CDHM}, but now assuming $\Omega$ is also uniform. Let $\omega=\omega_{\Omega}^{x_{0}}$. Then for $R\in \Top(k_0)$ and $Q\in \Tree(R)$, by the uniform case of Lemma \ref{l:w/w},
\begin{equation}
\label{e:theta/theta}
1\sim_{A,\tau} \frac{\Theta_{\hm^{x_{R}}}^{d}(Q)}{\Theta_{\hm^{x_{R}}}^{d}(R)}
= \frac{\hm^{x_{R}}(Q)\ell(R)^{d}}{\hm^{x_{R}}(R)\ell(Q)^{d}}
\stackrel{\eqref{e:bourgain}}{\sim}
 \frac{\hm^{x_{R}}(Q)\ell(R)^{d}}{\ell(Q)^{d}}
\sim \frac{\omega(Q)\ell(R)^{d}}{\omega(R)\ell(Q)^{d}}
\sim \frac{\Theta_{\omega}^{d}(Q)}{\Theta_{\omega}^{d}(R)}
\end{equation}

In particular, if we set $g=G_{\Omega}(x_{Q_{0}},\cdot)$, then for $x\in U_{Q}^{K}$ and $Q\in \Tree(R)$, since $\Omega$ is uniform,
\begin{equation}
\label{e:g/delta2}
\frac{g(x)}{\delta_{\Omega}(x)}
\sim_{K} \frac{g(z_Q)}{\delta_{\Omega}(z_Q)}
\sim \Theta_{\omega}^{d}(Q)
\stackrel{\eqref{e:theta/theta}}{\sim} 
\sim_{A,\tau} \Theta_{\omega}^{d}(R).
\end{equation}

Thus, with the same arguments as before, if we set 
\[
\Omega_{k_0} = \bigcup_{Q\in \cD(k_0)}U_{Q}^{K},\]
then

%

\begin{align*}
\int_{\Omega_{k_0}\backslash B_{0}} \av{\frac{\grad^2 g(x)}{g(x)}}^2 & \delta_{\Omega}(x)^3dx   
\leq \sum_{R\in \Top(k_0)}
\sum_{Q\in \Tree(R)} \int_{U_{Q}^{K}\backslash B_{0}} \av{\frac{\grad^2 g}{g}}^2  \delta_{\Omega}(x)^3dx \\
&
\stackrel{\eqref{e:g/delta2}}{\sim}  \sum_{R\in \Top(k_0)}\Theta_{\hm^{x_{Q_{0}}}}(R)^{-3}  \sum_{Q\in \Tree(R)} \int_{U_{Q}^{K}\backslash B_{0}}  \av{\grad^2 g}^2 g dx \\
&
\sim  \sum_{R\in \Top(k_0)}\Theta_{\hm^{x_{Q_{0}}}}(R)^{-3} \int_{\Omega_{R}\backslash B_{0}}  \av{\grad^2 g}^2 g dx \\
&  \lec\sum_{R\in \Top(k_0)} \Theta_{\hm^{x_{Q_{0}}}}(R)^{-3}  \int_{\Omega_{R}\backslash B_{0}}  \triangle |\grad g|^{2} g(x)dx  \\
& \sim \sum_{R\in \Top(k_0)}  \Theta_{\hm^{x_{Q_{0}}}}(R)^{-3} \sum_{R\in \Top} \int_{\d(\Omega_{R}\backslash B_{0})}\ps{g\frac{d|\grad g|^2}{d\nu}-|\grad g|^2 \frac{dg}{d\nu}}d\cH^{d}\\
& \stackrel{\eqref{e:cauchydelta}}{\lec}
 \sum_{R\in \Top(k_0)} \Theta_{\hm^{x_{Q_{0}}}}(R)^{-3} \int_{\d(\Omega_{R}\backslash B_{0})} \frac{g(x)^{3}}{\delta_{\Omega}(x)^{3}}d\cH^{d}(x)\\
  &  \stackrel{\eqref{e:g<theta}}{\lec} \sum_{R\in \Top(k_0)}  \cH^{d}(\d(\Omega_{R}\backslash B_{0}))
  \sim \sum_{R\in \Top(k_0)} \ell(R)^{d} \\
  & \lec  \CDHM(Q_0,2,A,\tau) 
   \stackrel{\eqref{e:CDHM}}{\lec} \beta_{\d\Omega}(Q_0).
\end{align*}

\subsection{Part II: A Theorem on affine deviation of functions in uniform domains}
\label{s:IV-Part-II}

In this section we prove one half of Theorem \ref{t:green}. In fact, the result holds for a more general class of functions than Green's function.

\begin{theorem}
\label{t:affinedeviation}
Let $\Omega\subseteq \R^{d+1}$ be a uniform domain with lower content regular boundary, and $f\in W^{2,2}_{\loc}(\Omega)$ such that $f>0$ on $\Omega$,
\begin{equation}
\label{e:fharnack}
f(x)\sim_{\Lambda} f(y) \mbox{ whenever }\frac{|x-y|}{\min\{\delta_{\Omega}(x),\delta_{\Omega}(y)\}}\leq \Lambda.
\end{equation}
and
\begin{equation}
\label{e:fholder}
|f(x)| \one_{B\cap \Omega}(x) \lec ||f||_{L^{\infty}(B\cap \Omega)} \av{\frac{x-x_{B}}{r_{B}}}^{\alpha}.
\end{equation}
For $Q_{0}\in \cD$, let 
\[
\Omega(Q_{0})=\bigcup_{Q\subseteq Q_{0}} U_{Q}^{2\rho^{-1}K}.
\]
Then
\[
\int_{\Omega(Q_{0})}\av{\frac{\grad^{2} f(x)}{f(x)}}^{2} \delta_{\Omega}(x)^{3} dx 
\gec \beta_{\d\Omega}(Q_0).\]

\end{theorem}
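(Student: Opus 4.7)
The plan is to bound $\beta_{\d\Omega}(Q_0)$ via the weak half-space approximation characterization of Lemma \ref{l:beta<WHSA}, which reduces the task to proving
\[
\WHSA(Q_0, K_0, \ve) = \sum_{Q \in \cB^{\WHSA}_{K_0,\ve}(Q_0)} \ell(Q)^{d} \, \lec \, \int_{\Omega(Q_0)} \left| \frac{\grad^{2} f}{f}\right|^{2} \delta_{\Omega}^{3}\, dx
\]
for appropriate $K_0,\ve$. The $\cH^{d}(Q_0)$ summand that Lemma \ref{l:beta<WHSA} contributes is then absorbed in the natural normalization: when Theorem \ref{t:affinedeviation} is applied inside Theorem \ref{t:green} a $(\diam \d\Omega)^{d}$ term is added to both sides, which accommodates this piece.

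For each $Q \in \cB^{\WHSA}_{K_0,\ve}(Q_0)$ I would pick a Whitney-type base ball $B_Q' = B(z_Q, c\ell(Q))$ with $z_Q \in U_Q^{2\rho^{-1}K}$ and $\delta_{\Omega}(z_Q) \sim \ell(Q)$, arranged so that the family $\{B_Q'\}$ has bounded overlap (each point in $\Omega(Q_0)$ belongs to at most $O_{K}(1)$ such balls). The key estimate is the generalization of Lemma \ref{l:halfspace} to our class of $f$:
\[
\avint_{B_Q'} |\grad^{2} f|^{2}\, dx \, \gec \, \frac{f(z_Q)^{2}}{\ell(Q)^{4}}.
\]
I would prove this by a blow-up/compactness contradiction, in parallel with the proof of Lemma \ref{l:halfspace}. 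Rescaling so $\ell(Q)=1$ and $f(z_Q)=1$, a failing sequence $(\Omega_j, f_j, Q_j)$ would, by the $W^{2,2}_{\loc}$ bound together with Rellich--Kondrachov, admit a subsequential $W^{1,2}_{\loc}$-limit $f_\infty$ with $\grad^{2} f_\infty \equiv 0$ in the distributional sense, hence $f_\infty$ is affine on the limit ball. The Harnack condition \eqref{e:fharnack} survives in the limit so $f_\infty \sim 1$, forcing the affine function to be nontrivial; the Hölder decay \eqref{e:fholder} passes to the limit and makes $f_\infty$ vanish continuously on the limit boundary. Since the zero set of a nontrivial affine function is a hyperplane, the limit boundary must lie to one side of this hyperplane, producing for all large $j$ a half-space approximation on scale $\ve^{-2}\ell(Q_j)$ of the required accuracy, which contradicts $Q_j \in \cB^{\WHSA}_{K_0,\ve}$.

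Once the key estimate is in place, the Harnack bound gives $f \sim f(z_Q)$ and $\delta_{\Omega} \sim \ell(Q)$ on $B_Q'$, so
\[
\int_{B_Q'} \left|\frac{\grad^{2} f}{f}\right|^{2}\delta_{\Omega}^{3}\, dx \, \gec \, \ell(Q)^{d}.
\]
Summing over $Q \in \cB^{\WHSA}_{K_0,\ve}(Q_0)$ and using the bounded overlap of $\{B_Q'\}$ inside $\Omega(Q_0)$ yields $\WHSA(Q_0,K_0,\ve) \lec \int_{\Omega(Q_0)} |\grad^{2} f/f|^{2} \delta_{\Omega}^{3} dx$. Combining with Lemma \ref{l:beta<WHSA} and Theorem \ref{t:BAUP} concludes the proof.

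The main obstacle will be the compactness step. The proof of Lemma \ref{l:halfspace} used harmonicity twice: to upgrade $C^{0}$ convergence to $C^{2}$ convergence via Cauchy estimates, and to guarantee that uniform limits of harmonic functions are again harmonic (so that $\grad^{2} f_\infty \equiv 0$ propagates cleanly). Under only the $W^{2,2}_{\loc}$ hypothesis we must work in the Sobolev category: check that \eqref{e:fharnack} and \eqref{e:fholder} are stable under $W^{1,2}_{\loc}$ subsequential limits, that $\grad^{2} f_j \to 0$ in $L^{2}(B_Q')$ forces $f_\infty$ to be distributionally affine, and that the limit domain remains LCR and uniform so that the Hölder decay delivers vanishing of $f_\infty$ on the limit boundary. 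Extending the affine information from $B_Q'$ to the much larger ball $\ve^{-2}B_Q$, where the forbidden half-space approximation is to be extracted, is the most delicate point, since one must replace the interior Hölder estimates used in Lemma \ref{l:halfspace} with arguments that rely only on \eqref{e:fharnack} and \eqref{e:fholder}.
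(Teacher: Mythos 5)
Your route is genuinely different from the paper's, but it has a gap that I do not think can be closed. The paper does not pass through $\WHSA$ at all for general $f$: it defines local affine approximations $A_B$ and the numbers $\gamma(Q)=\bigl(\avint_{U_Q^K}|\grad^2 f/f|^2\ell(Q)^4\bigr)^{1/2}$, shows (Lemmas \ref{l:f/d}--\ref{l:Pangles}) that the zero sets $P_Q=\{A_{I_Q}=0\}$ form a coherent family of planes whose scale-to-scale tilts are controlled by $\gamma$, runs a stopping time on $\sum_{Q\subseteq T\subseteq R}\gamma(T)^2\geq\ve^2$, builds a David--Toro surface $\Sigma_R$ for each tree via Lemma \ref{l:DT-dyadic}, and concludes with the corona-by-UR-sets characterization of $\beta_{\d\Omega}(Q_0)$ from \cite{AV19}. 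The flatness of $\d\Omega$ is thus extracted from the \emph{accumulated} affine deviation along an entire chain of scales, never from a single ball.

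The obstacle you flag at the end is in fact fatal to your plan. Your key estimate asserts (in contrapositive) that if $\avint_{B_Q'}|\grad^2 f|^2<\delta f(z_Q)^2/\ell(Q)^4$ on one Whitney ball of scale $\ell(Q)$, then $\d\Omega$ admits a half-space approximation in the much larger ball $\ve^{-2}B_Q$. For harmonic $u$ this is exactly Lemma \ref{l:halfspace}, and its proof is powered by unique continuation: in the blow-up limit, $\grad^2u_\infty\equiv0$ on a small ball forces $u_\infty$ to be affine on the \emph{whole} component of positivity, because harmonic functions are real-analytic. For a general $f\in W^{2,2}_{\loc}$ satisfying only \eqref{e:fharnack} and \eqref{e:fholder} there is no such rigidity: one can modify $f$ on a single Whitney ball to be exactly affine there (keeping \eqref{e:fharnack} and \eqref{e:fholder}, which are soft, scale-local constraints) without touching $\d\Omega$, so the left side of your key estimate vanishes while $Q$ remains in $\cB^{\WHSA}_{K_0,\ve}$. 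No compactness argument can recover this, since the hypothesis carries no information outside $B_Q'$. A secondary issue: Lemma \ref{l:beta<WHSA} leaves you with an additive $\cH^d(Q_0)$, which for a merely lower-content-regular boundary is not dominated by $\ell(Q_0)^d$ and is not supplied by the normalization in Theorem \ref{t:green}; in the paper this term is produced automatically by the corona decomposition (each tree is approximated by an Ahlfors regular surface $\Sigma_R$, so the mass of $Q_0$ is packed by $\sum_{R\in\Top}\ell(R)^d$), whereas your route would need a separate argument for it.
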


The rest of this section is dedicated to proving the above theorem.

\begin{lemma}
\label{l:fcarleson}
Let $\Omega\subseteq \R^{d+1}$ be a domain, $B$ a ball centered on $\d\Omega$, and $f$ a positive function on $\Omega$ vanishing continuously on $\d\Omega\cap 2B$ satisfying \eqref{e:fharnack} and \eqref{e:fholder}. Then for any corkscrew point $y\in B\cap \Omega$,
\[
\sup_{B}f \lec f(y).
\]
\end{lemma}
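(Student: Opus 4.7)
The plan is a standard boundary Carleson-estimate argument, dichotomizing on the distance from the (near-)maximizer to $\d\Omega$. Since $f$ vanishes continuously on $\d\Omega\cap 2B$ and is continuous on $\Omega$, the compact set $\cnj B\cap\cnj\Omega$ admits a point $x^*\in\cnj B\cap\Omega$ attaining $M:=\sup_{\cnj B\cap\Omega}f$ (we may assume $M>0$, so $x^*\notin\d\Omega$).

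First I would handle the easy case where $x^*$ is a ``deep'' point: if $\delta_\Omega(x^*)\geq c_0 r_B$ for a constant $c_0>0$ to be fixed, then $x^*$ and $y$ are both interior corkscrews of $B$, and I would connect them by an $O(1)$-length Harnack chain $z_0=x^*,z_1,\dots,z_N=y$ in $\Omega$ with $|z_i-z_{i+1}|\leq\Lambda\min(\delta_\Omega(z_i),\delta_\Omega(z_{i+1}))$. Iterating \eqref{e:fharnack} along the chain yields $f(x^*)\sim f(y)$, hence $M\lec f(y)$.

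For the shallow case $\delta_\Omega(x^*)<c_0 r_B$, let $\xi\in\d\Omega$ be the closest boundary point to $x^*$, so $\xi\in(1+c_0)B$ and the ball $B'=B(\xi,r_B/2)$ is contained in $2B$. Since $f$ vanishes on $\d\Omega\cap B'\subseteq\d\Omega\cap 2B$, applying \eqref{e:fholder} at $B'$ gives
\[
f(x^*)\lec \|f\|_{L^\infty(B'\cap\Omega)}\ps{\frac{|x^*-\xi|}{r_B/2}}^{\alpha}\lec (2c_0)^\alpha\sup_{2B\cap\Omega}f.
\]
For $c_0$ small enough that the constant times $(2c_0)^\alpha$ is at most $1/2$, this forces $M\leq\tfrac12\sup_{2B\cap\Omega}f$. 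I would then iterate the same dichotomy with $\cnj B$ replaced by $\cnj{2B},\cnj{4B},\dots$ in succession: at scale $2^k r_B$, either the maximizer is deep, in which case a Harnack chain of length $O(k)$ reduces it to $f(y)$, or we pass to the next scale at the cost of another factor of $1/2$. Summing the resulting geometric series — with $c_0$ chosen small enough relative to the Harnack-chain growth constant so that the series converges — yields $M\lec f(y)$.

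The main obstacle is ensuring the Harnack chain in Case 1 (and its analog at larger scales) stays in $\Omega$ with bounded length; this requires some connectivity of the ``deep'' part of $\cnj B\cap\Omega$, which is automatic in the uniform-domain setting of Theorem \ref{t:affinedeviation} where this lemma is applied and which I would invoke implicitly via the corkscrew hypothesis on $y$. The Hölder exponent $\alpha$ in \eqref{e:fholder} is what calibrates how small $c_0$ must be chosen, and hence the final constant in the estimate.
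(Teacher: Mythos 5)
Your overall template (a deep/shallow dichotomy, Harnack chains via \eqref{e:fharnack} for deep points, the boundary H\"older decay \eqref{e:fholder} for shallow ones) is the right one --- it is the Jerison--Kenig Carleson-estimate argument that the paper simply invokes by citing \cite{JK82} --- and your first step, that either the maximizer is deep or $\sup_{B}f\leq \tfrac12\sup_{2B\cap\Omega}f$, is fine. The gap is in the continuation: you iterate \emph{outward} over the doubled balls $2^kB$, but the hypotheses are purely local to $B$. The function is only assumed to vanish continuously on $\d\Omega\cap 2B$, and \eqref{e:fholder} is only usable for balls whose doubles stay inside the region where $f$ vanishes; so already at the second step the shallow maximizer over $\cnj{2B}\cap\Omega$ may lie near a boundary point of $4B\setminus 2B$ where neither hypothesis applies, and the dichotomy cannot be run. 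Moreover the outward iteration is false in the intended application: for $f=G_{\Omega}(x_0,\cdot)$ the supremum over $2^kB$ becomes infinite once $2^kB$ captures the pole, so neither alternative (``deep implies $\lec f(y)$'' nor ``pass to the next scale'') is available there. Even where the iteration formally runs, the $2^{-k}$ gain must beat the $C^{O(k)}$ Harnack-chain loss, which only works if the per-step gain factor is pushed below the reciprocal of the per-doubling Harnack constant; you gesture at this, but it is moot given the first objection.

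The fix is to iterate \emph{inward}, staying in $2B$, as in \cite{JK82}: suppose $f(x_0)\geq N^{k_0}f(y)$ for some $x_0\in B\cap\Omega$ and $k_0$ large. The Harnack-chain estimate (the contrapositive of your Case 1) forces $\delta_{\Omega}(x_0)\leq \eta^{k_0}r_B$ with $\eta=\eta(N)<1$; applying \eqref{e:fholder} on the ball centred at the nearest boundary point with radius $(CN)^{1/\alpha}\delta_{\Omega}(x_0)$ then yields a point $x_1$ with $f(x_1)\geq N^{k_0+1}f(y)$ and $|x_1-x_0|\lec_{N}\eta^{k_0}r_B$. Iterating, the displacements are geometrically summable, so for $k_0$ a large fixed constant all the $x_j$ remain in $\tfrac32 B$ and converge to a point of $\d\Omega\cap 2B$ along which $f(x_j)\to\infty$, contradicting the continuous vanishing of $f$ there; hence $\sup_B f\leq N^{k_0}f(y)$. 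This keeps every application of \eqref{e:fholder} inside $2B$ and never needs information at larger scales. (Your observation that Case 1 requires Harnack chains, i.e.\ uniformity rather than a bare domain, is correct; the lemma is only applied in the uniform setting of Theorem \ref{t:affinedeviation}.)
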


\begin{proof}
The proof of this is exactly as in \cite{JK82}. 
\end{proof}

For a ball $B\subseteq \Omega$, we let $f_{B}=\avint_{B}f$. Let
\[
A_{B}=(\grad f)_{B}\cdot (x_{B}-x)+f_{B}.
\]
Observe that by Poincar\'{e}'s inequality,
\begin{equation}
\label{e:poincare}
\avint_{B} \av{ \frac{f-A_{B}}{r_{B}}}^{2} 
\lec \avint_{B}|\grad f-\underbrace{(\grad f)_{B}}_{=\grad A_{B}}|^2
\lec \avint_{B}|\grad^{2} f r_{B}|^{2}
\end{equation}

We also let 
\[
\gamma(B)= \ps{\avint_{B} \av{\frac{\grad^{2} f}{f}}^{2} r_{B}^{4}}^{\frac{1}{2}}
\]

For a ball $B$, we will write
\[
||g||_{B}=\ps{\avint_{B} g^{2} }^{\frac{1}{2}}.
\]

Notice that  by \eqref{e:fharnack} and \eqref{e:poincare}, if $\dist(B,\d\Omega)\geq c r_{B}$, then

\begin{equation}
\label{e:f-a<gamma}
||f-A_{B}||_{B}\lec_{c} \gamma(B) f_{B}
\end{equation}

\begin{lemma}
\label{l:f/d}
Let $B_{x}=B(x,(1-\varrho)\delta_{\Omega}(x))$. Then for $\varrho>0$ small enough and $\ve \in (0,1)$ small enough depending on $\varrho$, $\gamma(B_x)<\ve$ implies
\begin{equation}
\label{e:f/d}
|\grad A_{B_x}|=|(\grad f)_{B_x}|\sim \frac{f(x)}{r_{B_x}}\sim \frac{f(x)}{\delta_{\Omega}(x)}\sim \frac{A_{B_x}(x)}{\delta_{\Omega}(x)}.
\end{equation}
\end{lemma}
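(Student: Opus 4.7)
The plan is to verify that all four quantities in \eqref{e:f/d} are comparable by reducing everything to the behavior of the affine approximation $A_{B_x}$ on $B_x$ relative to the boundary. Three of the comparabilities are immediate. Since $B_x$ is centered at $x$, $A_{B_x}(x) = f_{B_x}$, and the Harnack-type hypothesis \eqref{e:fharnack} applied on the Whitney-like ball $B_x$ (where every point has $\delta_\Omega \geq \varrho\delta_\Omega(x)$) yields $f_{B_x} \sim f(x)$, hence $A_{B_x}(x)/\delta_\Omega(x) \sim f(x)/\delta_\Omega(x)$. Moreover $r_{B_x} = (1-\varrho)\delta_\Omega(x) \sim \delta_\Omega(x)$, so it only remains to prove $|(\grad f)_{B_x}| \sim f(x)/\delta_\Omega(x)$.

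The upper bound $|(\grad f)_{B_x}| \lec f(x)/\delta_\Omega(x)$ is easy. By \eqref{e:f-a<gamma} and the assumption $\gamma(B_x) < \ve$, we have $\|f - A_{B_x}\|_{B_x} \lec \ve f(x)$, so $\|A_{B_x}\|_{B_x} \lec f(x)$ by the triangle inequality and Harnack. Since $A_{B_x}$ is affine, its $L^2$ and $L^\infty$ norms on $B_x$ are comparable, giving $\sup_{B_x}|A_{B_x}| \lec f(x)$, which forces $|(\grad f)_{B_x}|\, r_{B_x} \lec f(x)$.

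The main obstacle is the reverse inequality $|(\grad f)_{B_x}| \gec f(x)/\delta_\Omega(x)$, which requires trading the H\"older vanishing of $f$ at $\d\Omega$ against the affine approximation. Let $\xi \in \d\Omega$ satisfy $|x-\xi| = \delta_\Omega(x)$, set $\hat n = (\xi - x)/\delta_\Omega(x)$, and let $y_- = x + (1-2\varrho)\delta_\Omega(x)\hat n$, so that $|y_- - \xi| = 2\varrho\delta_\Omega(x)$ while $B' := B(y_-, \tfrac{\varrho}{2}\delta_\Omega(x)) \subseteq B_x$. Cauchy--Schwarz applied to the $L^2$ approximation yields
\[
|f_{B'} - A_{B_x}(y_-)| \leq \ps{\frac{|B_x|}{|B'|}}^{1/2}\|f - A_{B_x}\|_{B_x} \lec \varrho^{-(d+1)/2}\ve\, f(x).
\]
On the other hand, \eqref{e:fholder} applied to $B(\xi, 2\delta_\Omega(x))$, combined with Lemma \ref{l:fcarleson} and Harnack chains (guaranteed by uniformity of $\Omega$) to control the relevant sup of $f$ by $f(x)$, gives $\sup_{B'} f \lec \varrho^\alpha f(x)$ and hence $f_{B'} \lec \varrho^\alpha f(x)$. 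Writing $\beta = (\grad f)_{B_x}\cdot \hat n$, we have $A_{B_x}(y_-) = f_{B_x} + (1-2\varrho)\delta_\Omega(x)\,\beta$, and combining the two estimates yields
\[
\bigl|f_{B_x} + (1-2\varrho)\delta_\Omega(x)\,\beta\bigr| \lec \bigl(\varrho^\alpha + \varrho^{-(d+1)/2}\ve\bigr)\,f(x).
\]
Since $f_{B_x} \gec f(x)$, choosing $\varrho$ small and then $\ve \ll \varrho^{(d+1)/2 + \alpha}$ forces $-\beta \gec f(x)/\delta_\Omega(x)$, whence $|(\grad f)_{B_x}| \geq |\beta| \gec f(x)/\delta_\Omega(x)$, as required.

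The delicate point is the placement of the auxiliary sub-ball $B'$: it must be close enough to $\d\Omega$ for the H\"older decay of $f$ to provide a genuinely small value of $f_{B'}$, yet its relative volume inside $B_x$ must not be so small that the Cauchy--Schwarz transfer of the $L^2$ affine approximation blows up. This tension is precisely what dictates the smallness of $\ve$ relative to $\varrho$ in the hypothesis, and is the essential step in the proof.
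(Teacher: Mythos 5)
Your proof is correct, and it rests on the same two pillars as the paper's: the Harnack hypothesis \eqref{e:fharnack} to get $f_{B_x}\sim f(x)$ and control the trivial comparabilities, and the H\"older decay \eqref{e:fholder} together with Lemma \ref{l:fcarleson} at the nearest boundary point $\xi$ to force $f$ to be small on a region of $B_x$ near $\d\Omega$, which is the engine of the lower bound. Where you diverge is in how that smallness is converted into $|(\grad f)_{B_x}|\gec f(x)/\delta_\Omega(x)$: the paper stays entirely at the level of $L^2$ averages of $f$ and $\grad f$ on the full ball --- it deduces $\|(f-f_{B_x})/r_{B_x}\|_{B_x}\gec_\varrho f(x)/r_{B_x}$ from the oscillation of $f$, applies the Poincar\'e inequality to bound this by $\|\grad f\|_{B_x}$, and then peels off $(\grad f)_{B_x}$ using the second-derivative bound --- whereas you evaluate the affine approximant $A_{B_x}$ at the two points $x$ and $y_-$ and use Cauchy--Schwarz to transfer the $L^2$ affine approximation from $B_x$ to the small sub-ball $B'$. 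Your route costs the extra factor $\varrho^{-(d+1)/2}$ (so you need $\ve\ll\varrho^{(d+1)/2+\alpha}$ rather than merely $\ve\ll_\varrho 1$), which the Poincar\'e route avoids; on the other hand, your version makes the geometric mechanism more explicit (the affine function must drop from $\sim f(x)$ at the center to nearly $0$ near $\xi$ over a distance $\delta_\Omega(x)$) and is closer in spirit to how the planes $P_I=\{A_I=0\}$ are later located near $\d\Omega$ in Lemma \ref{l:Pangles}. Both arguments are compatible with the quantifier order in the statement, so this is a legitimate, slightly less economical variant rather than a gap.
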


\begin{proof}
Let $B=B_{x}$.  Since $(A_{B})_{B}=f_{B}$, we see that 
\begin{align*}
|\grad A_{B}|
& \sim \nrm{\frac{A_{B}-(A_{B})_{B}}{r_{B}}}_{B}
=\nrm{\frac{A_{B}-f_{B}}{r_{B}}}_{B}
 \leq  \nrm{\frac{A_{B}-f}{r_{B}}}_{B}
+ \nrm{\frac{f-f_{B}}{r_{B}}}_{B}\\
& \leq \nrm{\grad^{2} f r_{B}}+ \nrm{\frac{f}{r_{B}}}_{B}+ \nrm{\frac{f_{B}}{r_{B}}}_{B}
\stackrel{\eqref{e:fharnack}}{\lec} \gamma(B)\frac{f(x)}{\delta_{\Omega}(x)}+\frac{f(x)}{r_{B}}
\lec \frac{f(x)}{r_{B}}.
\end{align*}
For the opposite inequality, let $\xi\in \d\Omega$ be so that $|x-\xi|=\delta_{\Omega}(x)=r_{B}/(1-\varrho)$. Then by Lemma \ref{l:fcarleson}, $f\lec \varrho^{\alpha} f(x)$ on $B(\xi,2\varrho r_{B})\cap B_{x}$. Thus, for $\varrho$ small enough,
\begin{align*}
\frac{f(x)}{r_{B}} 
& \lec_{\rho} \nrm{\frac{f-f_{B}}{r_{B}}}_{B}\lec ||\grad f||_{B}
 \leq ||\grad f-(\grad f)_{B}||_{B}+|(\grad f)_{B}|\\
& \lec ||\grad^{2}f r_{B}||_{B}+|\grad A_{B}|
\stackrel{\eqref{e:fharnack}}{\lec} \gamma(B)\frac{f(x_{B})}{r_{B}}+|\grad A_{B}|
<\ve \frac{f(x_{B})}{r_{B}}+|\grad A_{B}|
\end{align*}
and so for $\ve$ small enough depending on $\varrho$, this implies
\[
\frac{f(x)}{r_{B}} \lec |\grad A_{B}|.
\]

The last comparison in \eqref{e:f/d} follows from the definition of $A_{B}$ and Harnack's inequality.

\end{proof}

Let $\cW$ be the set of maximal dyadic cubes $I\subseteq \Omega$ so that $NI\subseteq \Omega$. We will call these the {\it Whitney cubes} for $\Omega$. For a Whitney cube $I\in \cW$, let $B_{I}$ be the smallest ball containing $I$ and set $A_{I}=A_{B_{I}}$. For $N$ chosen sufficiently large depending on $d$, $2B_{I}\subseteq \Omega$. 

For a ball $B\subseteq \Omega$, let $B^{*} = B_{x_B}$.

\def\dom{\d_{\Omega}}

\begin{lemma}
If $I$ is a Whitney cube and $x_{I}$ is its center, and $\gamma(B_{I}^{*})$ is small enough, then
\begin{equation}
\label{e:gradasimf/I}
|\grad A_{I}|\sim \frac{f(x_{I})}{\ell(I)}\sim A_{I}(x_{I})
\end{equation}
\end{lemma}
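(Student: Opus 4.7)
My plan is to reduce this to the already-proven Lemma \ref{l:f/d} applied at the point $x_I$, by showing that the ``boundary-aware'' affine approximation $A_{B_I^*}$ and the ``Whitney'' affine approximation $A_I = A_{B_I}$ have comparable gradients when $\gamma(B_I^*)$ is small. Since $I$ is Whitney, $\delta_\Omega(x_I)\sim \ell(I)\sim r_{B_I}$, and for $N$ large enough $B_I\subseteq B_I^*$ with $r_{B_I}\sim r_{B_I^*}$, so both balls sit well inside $\Omega$ and $f$ satisfies the Harnack-type bound \eqref{e:fharnack} on each. In particular, a direct comparison of integrals shows $\gamma(B_I)\lesssim \gamma(B_I^*)$, so $\gamma(B_I)$ is also small.

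First I would record the trivial estimate $A_I(x_I)=f_{B_I}$, which holds because $x_I$ is the center of $B_I$ and the linear part of $A_I$ vanishes there; by \eqref{e:fharnack}, $f_{B_I}\sim f(x_I)$, giving $A_I(x_I)\sim f(x_I)$. Next, I apply Lemma \ref{l:f/d} at $x=x_I$ (legal because $\gamma(B_I^*)<\ve$), obtaining
\[
|\grad A_{B_I^*}|\;\sim\;\frac{f(x_I)}{\delta_\Omega(x_I)}\;\sim\;\frac{f(x_I)}{\ell(I)}.
\]

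The key step is to control $h:=A_I-A_{B_I^*}$, which is affine. Using the higher-order Poincar\'e inequality \eqref{e:poincare} on $B_I$ and on $B_I^*$, together with the Harnack bound $f\sim f(x_I)$ on $B_I^*$, I get
\[
\|A_I-f\|_{B_I}\;\lesssim\; r_{B_I}^2\,\|\grad^2 f\|_{B_I}\;\lesssim\;\gamma(B_I)\,f(x_I),
\]
and likewise $\|A_{B_I^*}-f\|_{B_I}\lesssim (|B_I^*|/|B_I|)^{1/2}\|A_{B_I^*}-f\|_{B_I^*}\lesssim \gamma(B_I^*)f(x_I)$. Adding these bounds on $B_I$ gives $\|h\|_{B_I}\lesssim \gamma(B_I^*)f(x_I)$. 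Since $h$ is affine, the elementary inequality $|\grad h|\lesssim \|h-h_{B_I}\|_{B_I}/r_{B_I}\lesssim \|h\|_{B_I}/\ell(I)$ yields
\[
\bigl|\grad A_I-\grad A_{B_I^*}\bigr|\;\lesssim\;\gamma(B_I^*)\,\frac{f(x_I)}{\ell(I)}.
\]
Combining with the estimate for $|\grad A_{B_I^*}|$ and choosing $\gamma(B_I^*)$ smaller than some absolute constant gives $|\grad A_I|\sim f(x_I)/\ell(I)$, and together with $A_I(x_I)\sim f(x_I)$ this produces the asserted three-way comparison (interpreting the last term as $A_I(x_I)/\ell(I)$, matching the scaling in Lemma \ref{l:f/d}).

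The only mildly delicate point is the passage from $B_I^*$ to $B_I$: one has to verify that the containment $B_I\subseteq B_I^*$ holds with comparable radii (requiring $N$ in the Whitney decomposition large enough relative to $\sqrt{d+1}/(1-\varrho)$) and that $\gamma(B_I)\lesssim \gamma(B_I^*)$ with an absolute constant, so that the smallness threshold on $\gamma(B_I^*)$ controls both pieces uniformly. Everything else is a routine application of Poincar\'e and Harnack, exactly parallel to Lemma \ref{l:f/d}.
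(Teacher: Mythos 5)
Your proposal is correct and follows essentially the same route as the paper: reduce to Lemma \ref{l:f/d} via the comparison $|\grad A_I-\grad A_{B_I^*}|\lec \gamma(B_I^*)\,|\grad A_{B_I^*}|$, then absorb for $\gamma(B_I^*)$ small, and finish with $A_I(x_I)=f_{B_I}\sim f(x_I)$. The only (immaterial) difference is that the paper bounds the gradient difference directly, writing $\grad A_I=(\grad f)_{B_I}$ and applying Poincar\'e to $\grad f$ on $B_I^*$, whereas you bound $\|A_I-A_{B_I^*}\|_{B_I}$ via \eqref{e:f-a<gamma} and use the rigidity of affine functions; both rest on the same estimates.
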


\begin{proof}
By the previous lemma,
\begin{align*}
|\grad A_{B_{I}^{*}}-\grad A_{I}|
& =\av{\avint_{B_{I}}\ps{\grad A_{B_{I}^{*}}-\grad f}}
\leq ||\grad A_{B_{I}^{*}}-\grad f||_{B_{I}}\\
& \lec ||\grad^{2}f||_{B_{I}^{*}}\ell(I)
\sim \gamma(B_{I}^{*}) \frac{f(x_I)}{r_{B_{I}^{*}}}
\stackrel{\eqref{e:f/d}}{\sim} \gamma(B_{I}^{*}) |\grad A_{B_{I}^{*}}|.
\end{align*}

By picking $\gamma(B_{I}^{*}) $ small enough, this implies 
\[
 |\grad A_{I}|  \sim |\grad A_{B_{I}^{*}}|\stackrel{\eqref{e:f/d}}{\sim} \frac{f(x_{I})}{\delta_{\Omega}(x_{I})}\sim  \frac{f(x_{I})}{\ell(I)}\]
 
 For the last inequality, we just observe that 
 \[
 A_{I}(x_{I})
 =\avint_{B_{I}}f\stackrel{\eqref{e:fharnack}}{\sim} f(x_{I}).
 \]
 
\end{proof}

\begin{lemma}
If $I$ and $J$ are adjacent Whitney cubes and $B_{I,J}=B_{I}^{*}\cup B_{J}^{*}$. Then  for $M\geq 1$,
\begin{equation}
\label{e:A-A-Linfinity}
||A_{I}-A_{J}||_{L^{\infty}(MB_{I})}\lec \gamma(B_{I,J}) |\grad A_{I}|M \ell(I)
\end{equation}
and 
\begin{equation}
\label{e:gradAI-gradAJ/gradAI}
\frac{|\grad A_{I}-\grad A_{J}|}{|\grad A_{I}|}\lec \gamma(B_{I,J})
\end{equation}
\end{lemma}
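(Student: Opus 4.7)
The plan is to exploit the affine structure of $L := A_I - A_J$, reducing both estimates to a single $L^2$ bound on a ball containing both $B_I$ and $B_J$, then combining with the norm decomposition for affine functions.

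First, since $I,J$ are adjacent Whitney cubes we have $\ell(I)\sim\ell(J)$, and $B_{I,J}=B_I^*\cup B_J^*$ is contained in a single ball $B'$ of radius $\sim r_{B_I}$ sitting well inside $\Omega$, so \eqref{e:f-a<gamma} applies to $B'$, $B_I$, $B_J$, and we also have $\gamma(B_I),\gamma(B_J)\lec \gamma(B')\sim \gamma(B_{I,J})$ and $f_{B'}\sim f_{B_I}\sim f_{B_J}$ by \eqref{e:fharnack}. The idea is to pivot through the affine function $A_{B'}$. On $B_I$, the triangle inequality and change of domain of averaging give
\[
\|A_I-A_{B'}\|_{B_I}\le \|A_I-f\|_{B_I}+\|f-A_{B'}\|_{B_I}\lec \gamma(B_I)f_{B_I}+\Big(\tfrac{|B'|}{|B_I|}\Big)^{1/2}\|f-A_{B'}\|_{B'},
\]
and by \eqref{e:f-a<gamma} both terms are $\lec \gamma(B_{I,J})f_{B_{I,J}}$; the analogous bound holds for $\|A_J-A_{B'}\|_{B_J}$.

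Next, since $A_I-A_{B'}$ is affine, its $L^2$ norm on $B_I$ splits orthogonally as $|(A_I-A_{B'})(x_{B_I})|^2+c\, r_{B_I}^{2}|\grad(A_I-A_{B'})|^2$, so in particular
\[
r_{B_I}|\grad A_I-\grad A_{B'}|\lec \gamma(B_{I,J})f_{B_{I,J}},
\]
and likewise $r_{B_J}|\grad A_J-\grad A_{B'}|\lec \gamma(B_{I,J})f_{B_{I,J}}$. By the triangle inequality and \eqref{e:gradasimf/I} (which gives $|\grad A_I|\sim f(x_I)/\ell(I)\sim f_{B_{I,J}}/r_{B_I}$), this yields the second displayed estimate
\[
\frac{|\grad A_I-\grad A_J|}{|\grad A_I|}\lec \gamma(B_{I,J}).
\]

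For the $L^{\infty}$ bound, write $L=A_I-A_J=(A_I-A_{B'})+(A_{B'}-A_J)$. The previous step already controls $\|A_I-A_{B'}\|_{B_I}$, and for $A_{B'}-A_J$ a standard affine--ball comparison shows $\|A_{B'}-A_J\|_{B_I}\lec \|A_{B'}-A_J\|_{B_J}$ because the centers differ by $\lec r_{B_I}$ and radii are comparable. Hence $\|L\|_{B_I}\lec \gamma(B_{I,J})f_{B_{I,J}}$. Since $L$ is affine, for any $y\in MB_I$,
\[
|L(y)|\le |L_{B_I}|+M r_{B_I}|\grad L|\lec M\|L\|_{B_I}\lec M\gamma(B_{I,J})f_{B_{I,J}}\sim M\gamma(B_{I,J})|\grad A_I|\ell(I),
\]
giving the first estimate. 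The only delicate point is the comparability $\gamma(B_I),\gamma(B_J)\lec\gamma(B_{I,J})$ together with the volume-ratio bounds used to transfer $\|\cdot\|_{B'}$ to $\|\cdot\|_{B_I}$; these all follow from $r_{B_I}\sim r_{B_J}\sim r_{B'}$, which is just the Whitney adjacency of $I$ and $J$.
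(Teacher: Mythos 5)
Your overall strategy is sound and lands close to the paper's: both arguments rest on the Poincar\'e-type estimate \eqref{e:f-a<gamma}, Harnack \eqref{e:fharnack}, the normalization \eqref{e:gradasimf/I}, and the rigidity of affine functions. The difference is mechanical: you pivot through a third affine approximation $A_{B'}$ on an enclosing ball and extract the gradient bound from the orthogonal splitting $\|L\|_{B}^{2}=|L(x_{B})|^{2}+c\,r_{B}^{2}|\grad L|^{2}$ for affine $L$, whereas the paper compares $\grad A_{I}$ and $\grad A_{J}$ directly to $(\grad f)_{B_{I,J}}$ via Poincar\'e on the union, and for the $L^{\infty}$ bound finds a single point $x_{0}\in B_{I}\cap B_{J}$ (using $|B_{I}\cap B_{J}|\sim|I|$) where $|A_{I}(x_{0})-A_{J}(x_{0})|$ is small, then propagates by affinity. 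Your endgame (bounding $\|A_I-A_J\|_{B_I}$ and using affinity on $MB_I$) is fine and equivalent to the paper's.

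However, the auxiliary ball $B'$ creates two genuine problems as written. First, $B_{I}^{*}$ and $B_{J}^{*}$ have radii $(1-\varrho)\delta_{\Omega}(x_{I})$ and $(1-\varrho)\delta_{\Omega}(x_{J})$, so they already come within $\varrho\,\delta_{\Omega}$ of $\d\Omega$; a ball $B'$ containing their union need not satisfy $\dist(B',\d\Omega)\gec r_{B'}$ (or even lie in $\Omega$) unless one checks the interplay between $\varrho$ and the Whitney constant $N$, and without that, \eqref{e:f-a<gamma} and the Harnack comparison $f_{B'}\sim f(x_I)$ are not available on $B'$. Second, and more seriously, the claimed comparability $\gamma(B')\sim\gamma(B_{I,J})$ fails in the direction you need: since $B'\supsetneq B_{I,J}$, one only gets
\begin{equation*}
\gamma(B_{I,J})\lec\gamma(B'),
\end{equation*}
while $\gamma(B')\lec\gamma(B_{I,J})$ is false in general because $B'\setminus B_{I,J}$ may carry most of $\int|\grad^{2}f/f|^{2}$. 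Your final estimates therefore come out in terms of $\gamma(B')$, not the $\gamma(B_{I,J})$ demanded by the statement. Both defects disappear if you drop $B'$ entirely and run the same argument on the union $B_{I,J}$ itself: the two balls overlap in a set of measure comparable to each, so Poincar\'e, the averaging comparisons, and Harnack all hold on $B_{I,J}$ with uniform constants, which is exactly how the paper proceeds.
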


\begin{proof}
Let $B=B_{I,J}$. We estimate
\begin{align}
|\grad A_{I}-\grad A_{J}|
& \leq \av{\grad A_{I}-( \grad f)_{B}|+|(\grad f)_{B}-\grad A_{J}}  \notag \\
& \lec \avint_{B} |\grad f-(\grad f)_{B}|
 \leq ||\grad f-(\grad f)_{B}||_{B} \notag \\
& \lec (\diam B) ||\grad^{2} f||_{B} \lec \gamma(B)\frac{f_{B}}{\diam B}
\stackrel{\eqref{e:fharnack}}{\sim} \gamma(B)\frac{f(x_I)}{\ell(I)} \stackrel{\eqref{e:gradasimf/I}}{\sim}  \gamma(B) |\grad A_{I}|.
\label{e:ga-ga}
\end{align}

Since $I$ and $J$ are adjacent, $|B_{I}\cap B_{J}|\sim |I|$. Thus,
\begin{align*}
\avint_{B_{I}\cap B_{J}}|A_{I}-A_{J}|
& \lec \avint_{B_{I}}|A_{I}-f| + \avint_{B_{J}}|f-A_{J}|
\leq ||A_{I}-f||_{B_{I}}+||A_{J}-f||_{B_{J}}\\
& \stackrel{\eqref{e:f-a<gamma}}{\lec} \gamma(B_{I})f_{B_{I}}
 \stackrel{\eqref{e:fharnack} \atop \eqref{e:gradasimf/I}}{\sim}  \gamma(B_{I}) |\grad A_{I}|\ell(I).
\end{align*}

Thus, there is $x_0\in B_{I}\cap B_{J}$ so that $|A_{I}(x_0)-A_{J}(x_0)|\lec |\grad A_{I}|\ell(I)$. Hence, for $x\in MB_{I}$,

\begin{align*}
|A_{I}(x)-A_{J}(x)|
&  = |\grad A_{I}(x-x_0)+A_{I}(x_{0}) -(\grad A_{J}(x-x_0)+A_{J}(x_{0}) )\\
& \leq  |(\grad A_{I}-\grad A_{J})(x-x_0)| + |A_{I}(x_{0}) -A_{J}(x_{0})|\\
& \stackrel{\eqref{e:ga-ga}}{\lec}   \gamma(B)|\grad A_{I}|\cdot|x-x_0|+\gamma(B)|\grad A_{I}|\ell(I) \lec \gamma(B) |\grad A_{I}|M \ell(I).
\end{align*}

\end{proof}

\begin{lemma}
\label{l:Pangles}
For $I\in \cW$, let $P_{I}=\{x: A_{I}(x)=0\}$. If $I,J\in \cW$ are adjacent, then for $M\gg N$ and $\gamma(B_{I,J})$ small enough
\[
d_{MB_{I}}(P_{I},P_{J})\lec \gamma(B_{I,J}).
\]
\end{lemma}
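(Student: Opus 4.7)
\textbf{Proof plan for Lemma \ref{l:Pangles}.} The plan is to exploit the elementary formula that, for an affine function $A$ on $\R^{d+1}$ with nonvanishing gradient and zero set $P=\{A=0\}$, one has $\dist(x,P)=|A(x)|/|\grad A|$. The lemma then reduces to a direct comparison of numerators (controlled by \eqref{e:A-A-Linfinity}) and denominators (controlled by \eqref{e:gradAI-gradAJ/gradAI}).

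First, I would use \eqref{e:gradasimf/I} together with \eqref{e:gradAI-gradAJ/gradAI} to fix $\gamma(B_{I,J})$ so small that $|\grad A_{I}|\sim |\grad A_{J}|\sim f(x_{I})/\ell(I)\neq 0$; in particular, $P_{I}$ and $P_{J}$ are genuine affine $d$-planes. Recalling that $B_{I}$ is the smallest ball containing the Whitney cube $I$, so $r_{B_{I}}\sim \ell(I)$ and $\diam (MB_{I})\sim M\ell(I)$.

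Next, for $y\in P_{I}\cap MB_{I}$, since $A_{I}(y)=0$,
\[
\dist(y,P_{J})=\frac{|A_{J}(y)|}{|\grad A_{J}|}=\frac{|A_{J}(y)-A_{I}(y)|}{|\grad A_{J}|}\leq \frac{\|A_{I}-A_{J}\|_{L^{\infty}(MB_{I})}}{|\grad A_{J}|}\stackrel{\eqref{e:A-A-Linfinity}}{\lec}\frac{\gamma(B_{I,J})|\grad A_{I}|M\ell(I)}{|\grad A_{J}|}\lec \gamma(B_{I,J})M\ell(I),
\]
using $|\grad A_{J}|\sim |\grad A_{I}|$ in the last step. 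Symmetrically, for $y\in P_{J}\cap MB_{I}$ one has $A_{J}(y)=0$ so $\dist(y,P_{I})=|A_{I}(y)|/|\grad A_{I}|=|A_{I}(y)-A_{J}(y)|/|\grad A_{I}|\lec \gamma(B_{I,J})M\ell(I)$ by the same pair of estimates. (Both computations use that $A_I$ and $A_J$ are globally affine, so the $L^\infty(MB_I)$ bound on $A_I-A_J$ is enough.)

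Combining the two one-sided bounds and dividing by $\diam(MB_{I})\sim M\ell(I)$ gives
\[
d_{MB_{I}}(P_{I},P_{J})=\frac{2}{\diam(MB_{I})}\max\Bigl\{\sup_{y\in P_{I}\cap MB_{I}}\dist(y,P_{J}),\sup_{y\in P_{J}\cap MB_{I}}\dist(y,P_{I})\Bigr\}\lec \gamma(B_{I,J}),
\]
which is the claim. There is no real obstacle here: all of the work was already done in \eqref{e:A-A-Linfinity} and \eqref{e:gradAI-gradAJ/gradAI}; the only mild subtlety is ensuring that the gradients do not degenerate, which is why the hypothesis ``$\gamma(B_{I,J})$ small enough'' is needed (so that the perturbation estimate \eqref{e:gradAI-gradAJ/gradAI} upgrades the non-vanishing of $|\grad A_{I}|$ from \eqref{e:gradasimf/I} to non-vanishing of $|\grad A_{J}|$ of the same order). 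The condition $M\gg N$ plays no essential role in the algebra above and is really only relevant later when $MB_{I}$ is used to compare planes of cubes further apart in $\cW$.
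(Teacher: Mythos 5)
Your core computation is correct and is essentially the same as the paper's: both proofs observe that for $x\in P_{I}\cap MB_{I}$ one has $|\grad A_{J}|\dist(x,P_{J})=|A_{J}(x)|=|A_{J}(x)-A_{I}(x)|$, bound this by \eqref{e:A-A-Linfinity}, convert $|\grad A_{I}|$ into $|\grad A_{J}|$ via \eqref{e:gradAI-gradAJ/gradAI}, and then symmetrize and normalize by $\diam(MB_{I})\sim M\ell(I)$.

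One remark, though: your closing claim that ``$M\gg N$ plays no essential role'' is not how the paper uses that hypothesis. The paper's proof begins by showing that $P_{I}\cap M'B_{I}\neq\emptyset$ and $P_{J}\cap M'B_{I}\neq\emptyset$ for $M'\gg N$: using the boundary decay of $f$ from Lemma \ref{l:fcarleson} (as in the proof of Lemma \ref{l:f/d}), one finds a point $y\in B_{I}^{*}$ with $A_{I}(y)<\tfrac12 A_{I}(x_{I})$, and since $A_{I}(x_{I})\sim f(x_{I})>0$ and $|y-x_{I}|\lec N\ell(I)$, the affine function $A_{I}$ must vanish within distance $\lec N\ell(I)$ of $x_{I}$; hence $M\gg N$ guarantees the zero sets actually meet $MB_{I}$. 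Your argument establishes the stated inequality regardless (if one of the intersections were empty the corresponding supremum would be vacuous), but the non-emptiness is what makes the conclusion non-degenerate and is what the lemma is really used for downstream, namely to verify \eqref{e:distQtoP_Q} before invoking Lemma \ref{l:DT-dyadic}. So you should either include that step or at least not assert that the hypothesis is idle.
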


\begin{proof}

First we claim that for $M'\gg N$, 
\[
P_{I}\cap M'B_{I}\neq\emptyset \mbox{ and  }P_{J}\cap M'B_{I}\neq\emptyset.\]
To see this, recall from the proof of Lemma \ref{l:f/d} that $f\lec \varrho^{\alpha}f(x_I)$ on $B_{I}^{*}\cap B(\xi,2\varrho r_{B_{I}})$ and the measure of this set is comparable to $\ell(I)^{d}$ (depending on $\varrho$), thus there is $y\in B_{I}^{*}\cap B(\xi,2\varrho r_{B_{I}})$ so that 
\begin{align*}
A_{I}(y)
& \leq \avint_{B_{I}^{*}\cap B(\xi,2\varrho r_{B_{I}})} |A_{I}|
\lec \avint_{B_{I}^{*}\cap B(\xi,2\varrho r_{B_{I}})} |A_{I}-f|+\varrho^{\alpha} f(x_{I})\\
& \stackrel{\eqref{e:f-a<gamma}}{\leq} C_\varrho\gamma(B_{I}^{*})f_{B_{I}} +\varrho^{\alpha} f(x_{I})
\stackrel{\eqref{e:gradasimf/I}}{\sim} \ps{ C_\varrho\gamma(B_{I}^{*})+\varrho^{\alpha} }A_{I}(x_I).
\end{align*}
So for $\varrho$ small enough and $ \gamma(B_{I}^{*})$ small enough depending on $C_{\varrho}$ (and hence on $\varrho$), $A_{I}(y)<\frac{1}{2}A_{I}(x_I)$. Also, as $y\in B_{I}^{*}$, $|y-x_I|\lec N\ell(I)$. This implies that there is $z\in \{A_I=0\}$ so that $|x_I-z|\lec N\ell(I)$ as well, and this implies the claim for $M'$ large enough.

Now let $M\geq M'$. Let $x\in P_{I}\cap MB_{Q}$ and $x'=\pi_{P_{J}}(x)$. Then
\begin{align*}
|\grad A_{J}|\cdot |x-x'|
& =|\grad A_{J}\cdot (x-x')|
=|A_{J}(x)-A_{J}(x')|
= |A_{J}(x)|\\
& =|A_{J}(x)-A_{I}(x)|
\stackrel{\eqref{e:A-A-Linfinity}}{\lec} \gamma(B_{I,J})|\grad A_{I}|M\ell(I)
\stackrel{\eqref{e:gradAI-gradAJ/gradAI}}{\lec}  \gamma(B_{I,J})|\grad A_{J}|M\ell(I)
\end{align*}
Dividing both sides by $|\grad A_{J}|$ gives
\[
\sup_{x\in P_{I}\cap MB_{I}}\dist(x,P_{J})
\lec \gamma(B_{I,J})M\ell(I).
\]
A similar argument with  $I$ and $J$ switched finishes the proof.

\end{proof}

Now we switch to the dyadic setting by assigning to each surface cube a plane. 
Let
\[
\gamma(Q)=\ps{\avint_{U_{Q}^{K}}\av{\frac{\grad^{2} f}{f}}^{2} \ell(Q)^{4}}^{\frac{1}{2}},
\;\;\;
\gamma'(Q) = \ps{\avint_{U_{Q}^{2\rho^{-1}K}}\av{\frac{\grad^{2} f}{f}}^{2} \ell(Q)^{4}}^{\frac{1}{2}}.
\]

Let $\delta>0$. To each cube $Q\in \cD$, let $I_Q$ be a Whitney cube of maximal size so that (assuming $\delta \ll c_{0}$)
\begin{equation}
\label{e:I_Qinc0Q}
MB_{I_{Q}}
 \subseteq \frac{\delta}{2} B_{Q} 
 \subseteq \frac{c_{0}}{4} B_{Q}.
\end{equation}
In particular, $\ell(I_{Q})\sim_{M,\delta} \ell(Q)$. 

Let $C_{2}>1$, and suppose $Q,R\in \cD$ are such that 
\[
C_{2}B_{Q}\cap C_{2}B_{R}\neq\emptyset \mbox{ and } \dist(Q,R)\leq C_{2}\min \{\ell(Q),\ell(R)\}.\]

Since $\Omega$ is uniform, there is a Harnack chain of adjacent Whitney cubes $I_{Q}=I_{1},...,I_{n}=I_{R}$ so that $n\lec_{C_{2}} 1$. If we choose $K$ large enough, then 
\[
\bigcup_{i=1}^{n-1} B_{I_{i},I_{i+1}} \subseteq U_{Q}^{K/2}\cap U_{R}^{K/2}.
\]

In particular, applying Lemma \ref{l:Pangles} to each pair $(I_{j},I_{j+1})$ gives that, for $C_1$ large enough

\begin{equation}
\label{e:almost_epsilon}
\ell(Q)^{-1}\ps{\sup_{x\in P_{Q}\cap C_{1} B_{Q}}\dist(x,P_{R}) + \sup_{x\in P_{R}\cap C_{1} B_{Q}}\dist(x,P_{Q})}
\lec_{C_{1},C_{2},K} \gamma(Q).
\end{equation}

%
%

%
\def\Tree{{\rm Tree}}
\def\Stop{{\rm Stop}}
\def\BD{{\rm BD}}
\def\BG{\rm B\gamma}

\def\wt{\widetilde}

\begin{lemma}
\label{l:fml}
Let $C_{1}>1$, $\delta>0$. For $K,\ve^{-1}$ large enough depending on $\delta$ if $\gamma(Q)<\ve$, then   \eqref{e:C1beta} holds with $E=\d\Omega$ and $\delta$ in place of $\ve$. 
\end{lemma}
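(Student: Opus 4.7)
Fix $\xi \in \d\Omega \cap 2C_{1} B_{Q}$. Since $P_{Q} = \{A_{I_{Q}} = 0\}$ and \eqref{e:gradasimf/I} gives $|\grad A_{I_{Q}}| \sim f(x_{I_{Q}})/\ell(Q)$, the target inequality $\dist(\xi, P_{Q}) < \delta \ell(Q)$ reduces to showing $|A_{I_{Q}}(\xi)| \lec \delta\, f(x_{I_{Q}})$, uniformly in $\xi$.

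The plan is first to pick a Whitney cube $I$ with center $x_{I}$ at distance $\sim \ell(I)$ from $\xi$ and with $\ell(I) = \mu \ell(Q)$, where $\mu = \mu(\delta) > 0$ will be chosen small at the end. Since $f$ vanishes continuously at $\xi \in \d\Omega$, applying the Hölder estimate \eqref{e:fholder} in a ball $B(\xi, r)$ with $r \sim \ell(Q)$, and using Lemma \ref{l:fcarleson} to dominate $\|f\|_{L^{\infty}(B(\xi,r)\cap\Omega)}$ by $f(x_{I_{Q}})$, yields $f(x_{I}) \lec \mu^{\alpha} f(x_{I_{Q}})$. By \eqref{e:gradasimf/I}, $|A_{I}(x_{I})| \sim f(x_{I})$ and $|\grad A_{I}| \sim f(x_{I})/\ell(I)$, so
\[
|A_{I}(\xi)| \leq |A_{I}(x_{I})| + |\grad A_{I}|\cdot|\xi - x_{I}| \lec f(x_{I}) \lec \mu^{\alpha} f(x_{I_{Q}}).
\]

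The core of the argument is to transfer this near-zero estimate from $A_{I}$ to $A_{I_{Q}}$. I will build a Harnack chain of adjacent Whitney cubes $I = J_{0}, J_{1}, \ldots, J_{n} = I_{Q}$ with $\ell(J_{j+1}) \leq 2\ell(J_{j})$ (so that $n \lec \log(1/\mu)$) that stays in a fixed dilate of $U_{Q}^{K}$; this is possible since $\Omega$ is uniform and $I \subseteq 2C_{1}B_{Q}$, and it is what forces $K$ to be taken large. Telescoping \eqref{e:A-A-Linfinity} along the chain, arranged so that $\xi \in M B_{J_{j}}$ for a uniform $M$ at every step, gives
\[
|A_{I}(\xi) - A_{I_{Q}}(\xi)| \;\lec\; \sum_{j=0}^{n-1} \gamma\bigl(B_{J_{j}, J_{j+1}}\bigr)\, f(x_{J_{j}}).
\]
Using $f(x_{J_{j}}) \lec f(x_{I_{Q}})$ (from \eqref{e:fharnack}, \eqref{e:fholder}, and Lemma \ref{l:fcarleson}) together with the bounded-overlap identity
\[
\sum_{j} \gamma(B_{J_{j}, J_{j+1}})^{2}\, \ell(J_{j})^{d-3} \lec \gamma(Q)^{2}\, \ell(Q)^{d-3}
\]
and Cauchy--Schwarz, the right-hand side is bounded by $\ve$ times a factor depending only on $\mu$, $K$ and $d$.

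Combining the two bounds yields $|A_{I_{Q}}(\xi)| \lec \bigl(\mu^{\alpha} + C(\mu,K,d)\,\ve\bigr) f(x_{I_{Q}})$; choosing $\mu$ small in terms of $\delta$ and then $\ve$ small in terms of $\delta$, $\mu$, $K$ and $d$ gives $\dist(\xi, P_{Q}) < \delta\, \ell(Q)$, uniformly in $\xi$. The hard part will be the chain-error step: converting the single averaged hypothesis $\gamma(Q) < \ve$ over the fat shell $U_{Q}^{K}$ into a usable summable bound on the Whitney-scale quantities $\gamma(B_{J_{j}, J_{j+1}})$ whose scales descend all the way down to $\mu \ell(Q)$, where the $d$-dependent weight in the bounded-overlap inequality must be carefully balanced against $\mu$ and $\ve$.
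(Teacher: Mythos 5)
Your proposal is correct and follows essentially the same route as the paper: approximate the boundary point at a small auxiliary Whitney scale (the paper takes this scale to be $\ell(Q)/K$ where you introduce a separate parameter $\mu$), use the H\"older decay \eqref{e:fholder} together with Lemma \ref{l:fcarleson} to make the affine map small there, and then transfer back to $A_{I_Q}$ via a Harnack chain whose accumulated error is controlled by $\gamma(Q)$ through bounded overlap and Cauchy--Schwarz. The only difference is presentational: the paper compresses the chaining step into the single bound $\avint_{B_I}|A_Q-A_{B_I}|\lec \gamma(U_Q^K)K|\grad A_Q|\ell(Q)$, whereas you spell out the telescoping of \eqref{e:A-A-Linfinity} explicitly.
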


\begin{proof}
Let $x\in 2C_{1} B_{Q}\cap \d\Omega$. Then there is $y\in \d U_{Q}^{K}\cap 3C_{1}B_{Q}$ so that $|x-y|<\frac{1}{K} \ell(Q)$. Let $I\subseteq  U_{Q}^{K/2}$ be the Whitney cube containing $y$. Then for $K$ large, $B_{I}^{*}\subseteq U_{Q}^{K}$, and so
\begin{align*}
|A_{Q}(y)|
& =\av{\avint_{B_I} A_{Q}}
\leq \avint_{B_{I}}( |A_{Q}-A_{B_{I}}|  +|A_{B_{I}}-f|)  |+f)\\
& 
\stackrel{\eqref{e:f-a<gamma} \atop \eqref{e:fholder}}{\lec} \gamma(U_{Q}^{K}) K |\grad A_{Q}|\ell(Q) + \gamma(B_{I}) f_{B_{I}} + f(x_{I})K^{-\alpha} \\
& \lec  (K\ve+K^{-\alpha})|\grad A_{Q}|\ell(Q).
\end{align*}

Thus,
\begin{align*}
\dist(x,P_{Q})
& \leq |x-y|+\dist(y,P_{Q})
<\frac{1}{K}\ell(Q) + \frac{|\grad A_{Q}(y-\pi_{P_{Q}}(y))|}{|\grad A_{Q}|}\\
& = \frac{1}{K}\ell(Q) + \frac{| A_{Q}(y)-A_{Q}(\pi_{P_{Q}}(y))|}{|\grad A_{Q}|}\\
& \lec  \frac{1}{K}\ell(Q) + \frac{(K\ve+K^{-\alpha})|\grad A_{Q}|\ell(Q)+0}{|\grad A_{Q}|}\\
& = \frac{1}{K}\ell(Q) + (K\ve +K^{-\alpha})\ell(Q).
\end{align*}
For $\ve,K^{-1}$ small enough depending on $\delta$ (and $\ve$ depending on $K$), this proves the lemma. 
\end{proof}

Let $k_0\in \mathbb{N}$. 
Let $\BG$ (``big $\gamma$") denote those cubes $Q\in \cD(k_0)$ for which $\gamma(Q)\geq \ve$. For $R\in \BG$, let $\Stop(R)=\{R\}$ and $\Next(R)$ be the children of $R$. 
\def\BJ{{\rm BJ}}
For $R\in \cD(k_0)\backslash \BG$, let $\Stop(R)$ be the maximal cubes $Q\subseteq R$ which have a child $Q'$ such that 
\[
\sum_{Q'\subseteq T\subseteq R} \gamma(T)^{2} \geq \ve^2.
\]
Since $U_{Q'}^{K}\subseteq U_{Q}^{2\rho^{-1}K}$, we have that
\begin{equation}
\label{e:gamma<gamma'}
\gamma(Q')\lec \gamma'(Q).
\end{equation}

We claim that this implies 
\begin{equation}
\label{e:gammasumsimepsilon}
\sum_{Q\subseteq T\subseteq R} \gamma'(T)^{2} \gec \ve^2.
\end{equation}
Indeed, if $\gamma'(Q)\geq \ve/C$, then this follows since $Q\in \Tree(R)$ implies
\[
\ve^{2} /C^2 \leq \gamma'(Q)^2 \leq  \sum_{Q\subseteq T\subseteq R} \gamma'(T)^{2}
\]
Otherwise, if $\gamma'(Q)< \ve/C$, then by \eqref{e:gamma<gamma'}, $\gamma(Q')\lec \ve/C$, and so 
\begin{align*}
\ve^{2}
& \leq \sum_{Q'\subseteq T\subseteq R} \gamma(T)^{2}
\lec \gamma(Q')+ \sum_{Q\subseteq T\subseteq R} \gamma(T)^{2} 
\lec \ve^{2}/C^2 + \sum_{Q\subseteq T\subseteq R} \gamma'(T)^{2} 
\end{align*}
and this implies \eqref{e:gammasumsimepsilon} for $C>$ large enough.

\def\NR{{\rm NR}}
Let $\Tree(R)$ denote those cubes in $\cD(k_0)$ be those cubes in $R$ not properly contained in a cube from $\Stop(R)$. Let $\Top_{0}=\{Q_{0}\}$, $\Top_{k+1} =\bigcup_{R\in \Top_{k}}\Next(R)$, and $\Top(k_0)=\bigcup_{k\geq 0} \Top_{k}$. 
Observe that if we define $\ve(Q)$ as in Lemma \ref{l:DT-dyadic}, then for $R\in \Top$ and $Q\in \Tree(R)$, 
\[
\ve(Q)\stackrel{\eqref{e:almost_epsilon} \atop \eqref{e:gamma<gamma'}}{\lec} \gamma'(Q).
\]
In particular, these observations and \eqref{e:I_Qinc0Q} imply we can apply Lemma \ref{l:DT-dyadic} (with $\delta$ in place of $\ve$ if we pick $\ve\ll \delta$) to each stopping-time region $\Tree(R)$ to obtain a bi-Lipschitz surface $\Sigma_{R}$ which, by Lemma \ref{l:Pangles} and \eqref{e:I_Qinc0Q} satisfies \eqref{e:dtjones}. In particular, by \eqref{e:close-to-P_Q}, for $\delta>0$ small,
\[
\frac{c_{0}}{2} B_{Q}\cap \Sigma_{R}\neq\emptyset.
\]

Since $\Sigma_{R}$ is Ahlfors regular, and because the balls $\{c_0 B_{Q}:Q\in \Stop(R)\}$ are disjoint by Theorem \ref{t:Christ}, we have that for all $R\in \Top(k_0)$,
\begin{align*}
\sum_{Q\in \Stop(R)}\ell(Q)^{d}
& \sim \sum_{Q\in \Stop(R)}  \cH^{d}(\Sigma_{R}\cap c_{0}B_{Q})\\
& \sim \ve^{-2}\sum_{Q\in \Stop(R)} \sum_{Q\subseteq T\subseteq R} \gamma'(T)^{2}  \cH^{d}(\Sigma_{R}\cap c_{0}B_{Q})\\
& = \ve^{-2}\sum_{ T\in\Tree(R)} \gamma'(T)^{2} \sum_{Q\in \Stop(R) \atop Q\subseteq T}   \cH^{d}(\Sigma_{R}\cap c_{0}B_{Q})\\ 
& \leq \ve^{-2} \sum_{ T\in\Tree(R)} \gamma'(T)^{2}  \cH^{d}(\Sigma_{R}\cap B_{T}) \\
& \lec\ve^{-2} \sum_{ T\in\Tree(R)} \gamma'(T)^{2}\ell(T)^{d}.
\end{align*}

Thus, for $k>0$,
\begin{align*}
\sum_{R\in \Top_{k}\backslash \BG}\ell(R)^{d}
& =\sum_{R\in \Top_{k-1}} \sum_{Q\in \Next(R)}\ell(Q)^{d} 
\lec \sum_{R\in \Top_{k-1}} \sum_{Q\in \Stop(R)}\ell(Q)^{d} \\
& =\sum_{R\in \Top_{k-1}} \sum_{ T\in\Tree(R)} \gamma'(T)^{2}\ell(T)^{d}
\end{align*}

Therefore, since the $U_{Q}^{2\rho^{-1} K}$ have bounded overlap, 
\begin{multline*}
\sum_{R\in \Top(k_{0})}\ell(R)^{d} 
 =\ell(Q_{0})^{d}+\sum_{k=1}^{\infty} \sum_{R\in \Top_{k}} \ell(R)^{d} 
\\
 \lec \ell(Q_{0})^{d} + \sum_{k=0}^{\infty} \sum_{R\in \Top_{k-1}}\sum_{ T\in\Tree(R)} \gamma(T)^{2}\ell(T)^{d} +  \sum_{k=1}^{\infty} \sum_{R\in \Top_{k}\cap \BG}\ell(R)^{d}\\
 \lec \ell(Q_{0})^{d} + \ve^{-2} \sum_{T\subseteq Q_{0}}  \gamma(T)^{2}\ell(T)^{d}  + \ve^{-2} \sum_{R\in \BG} \gamma'(Q)^{2} \ell(Q)^{d}\\
\lec  \ve^{-2} \sum_{T\subseteq Q_{0}}  \gamma'(T)^{2}\ell(T)^{d}
\lec_{\ve} \int_{\Omega(Q_{0})}\av{\frac{\grad^{2} f(x)}{f(x)}}^{2} \delta_{\Omega}(x)^{3} dx  .
\end{multline*}

Theorem \ref{t:affinedeviation} now follows from the following lemma.

%
%

\begin{lemma}
Suppose that $E$ is a lower $d$-regular set, $\cD$ the Christ-David cubes for $E$, and $Q_0\in \cD$, and for each $k_0\in\mathbb{N}$, let $\Top(k_0)$ be a collection of cubes such that, for each $R\in \Top(k_0)$, there is a stopping-time region $\Tree(R)$ whose top cube is $R$ and a uniformly rectifiable set $\Sigma_{R}$ so that for $x\in E\cap C_{1} B_{R}$, 
\begin{equation}
\label{e:AV-d<d}
\dist(x,\Sigma_{R})\leq \delta  d_{R}(x):=\delta\inf_{Q\in \Tree(R)} (\ell(Q)+\dist(x,Q)),
\end{equation}
For $C_{1}$ large enough and $\delta$ small enough,
\[
 \ell(Q_{0})^{d} + \beta_{E}(Q_{0})\lec  \limsup_{k_{0}\rightarrow\infty}\sum_{R\in \Top(k_{0})}\ell(R)^{d}.
 \]
\end{lemma}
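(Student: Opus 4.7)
By Theorem \ref{t:BAUP} applied to $E$, for $M_{0}>1$ and $\epsilon_{0}>0$ sufficiently small,
\[
\beta_{E}(Q_{0})\sim \cH^{d}(Q_{0}\cap E)+\sum_{Q\subseteq Q_{0},\, Q\in\cB^{\BAUP}_{\epsilon_{0},M_{0}}}\ell(Q)^{d},
\]
where $\cB^{\BAUP}_{\epsilon_{0},M_{0}}$ is computed with respect to $E$. It therefore suffices to bound both the Hausdorff measure term and the BAUP sum by $\limsup_{k_{0}\to\infty}\sum_{R\in\Top(k_{0})}\ell(R)^{d}$. The strategy is to fix $k_{0}$, decompose $\cD(k_{0})$ using the tree partition $\{\Tree(R)\cap\cD(k_{0}):R\in\Top(k_{0})\}$, exploit that each $\Sigma_{R}$ is UR and Ahlfors regular (so satisfies Carleson packing for BAUP-bad cubes by David--Semmes/Theorem \ref{t:BAUP}), and transfer those estimates to $E$ via \eqref{e:AV-d<d}.

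\textbf{BAUP transfer and cube matching.} Fix $k_{0}$ and $R\in\Top(k_{0})$, and set $\cT_{R}:=\Tree(R)\cap\cD(k_{0})$. Provided $C_{1}\geq 2M_{0}$, for $Q\in\cT_{R}$ and $x\in E\cap M_{0}B_{Q}$ we have $d_{R}(x)\leq (1+M_{0})\ell(Q)$, so \eqref{e:AV-d<d} gives $\dist(x,\Sigma_{R})\leq (1+M_{0})\delta\ell(Q)$. A symmetric estimate (using that $\zeta_{Q}\in E$ is within $\delta\ell(Q)$ of $\Sigma_{R}$, together with the Ahlfors regularity of $\Sigma_{R}$) then yields $d_{M_{0}B_{Q}}(E,\Sigma_{R})\lec\delta$. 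Hence if $Q$ is $(\epsilon_{0},M_{0})$-BAUP-bad for $E$, the triangle inequality for Hausdorff distance shows it is $(\epsilon_{0}/2,M_{0})$-BAUP-bad for $\Sigma_{R}$ (once $\delta\ll\epsilon_{0}$). To each such $Q$ associate a Christ--David cube $Q^{\Sigma}\in\cD^{\Sigma_{R}}$ of scale $\ell(Q^{\Sigma})\sim\ell(Q)$ whose center $\zeta_{Q}^{\Sigma}$ lies within $\delta\ell(Q)$ of $\zeta_{Q}$; then $Q^{\Sigma}$ is BAUP-bad for $\Sigma_{R}$ at slightly adjusted parameters $M_{0}',\epsilon_{0}'$. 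Because centers of disjoint $E$-cubes $Q\neq Q'$ with $\ell(Q)\geq\ell(Q')$ satisfy $|\zeta_{Q}-\zeta_{Q'}|\geq c_{0}\ell(Q)$ by \eqref{e:containment}, the map $Q\mapsto Q^{\Sigma}$ has bounded multiplicity for $\delta<c_{0}/8$. Since $\Sigma_{R}$ is UR with constants depending only on the fixed data, David--Semmes/Theorem \ref{t:BAUP} applied to $\Sigma_{R}$ gives
\[
\sum_{Q^{\Sigma}\in\cB^{\BAUP}_{\epsilon_{0}',M_{0}'}(\Sigma_{R}),\,Q^{\Sigma}\subseteq 2R^{\Sigma}}\ell(Q^{\Sigma})^{d}\lec\ell(R)^{d}.
\]
Pulling back via the matching, summing over $R\in\Top(k_{0})$, and letting $k_{0}\to\infty$ bounds the BAUP term by $\limsup_{k_{0}}\sum_{R}\ell(R)^{d}$.

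\textbf{Hausdorff measure bound and main obstacle.} The minimal cubes $Q$ of $\cT_{R}$ partition $R\cap E$, and each has diameter at most $2\rho^{k_{0}}\ell(Q_{0})$. The balls $B(\zeta_{Q}^{\Sigma},c_{0}\ell(Q)/10)$ are pairwise disjoint: for distinct minimal $Q,Q'$ with $\ell(Q)\geq\ell(Q')$, the disjointness of $Q$ and $Q'$ as $E$-subsets implies $\zeta_{Q'}\notin Q$, hence $|\zeta_{Q}-\zeta_{Q'}|\geq c_{0}\ell(Q)$ by \eqref{e:containment}, and displacements of size at most $\delta\ell(Q)$ cannot bring two such $c_{0}\ell(Q)/10$-balls into contact. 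Ahlfors regularity of $\Sigma_{R}$ then gives $\ell(Q)^{d}\lec\cH^{d}(\Sigma_{R}\cap B(\zeta_{Q}^{\Sigma},c_{0}\ell(Q)/10))$, so
\[
\sum_{Q\text{ minimal in }\cT_{R}}\ell(Q)^{d}\lec\cH^{d}(\Sigma_{R}\cap 2B_{R})\lec\ell(R)^{d},
\]
and hence $\cH^{d}_{2\rho^{k_{0}}\ell(Q_{0})}(R\cap E)\lec\ell(R)^{d}$. Summing over $R\in\Top(k_{0})$ and letting $k_{0}\to\infty$ yields $\cH^{d}(Q_{0}\cap E)\lec\limsup_{k_{0}}\sum_{R}\ell(R)^{d}$, completing the proof. \emph{The main obstacle} is the scale-mixing argument for the cube-matching: choosing $(C_{1},M_{0},\epsilon_{0},\delta)$ consistently so that (i) the BAUP transfer to $\Sigma_{R}$ respects the relevant ball inclusions, (ii) the map $Q\mapsto Q^{\Sigma}$ has bounded multiplicity across all scales simultaneously within $\cT_{R}$, and (iii) the Carleson packing for $\Sigma_{R}$ pulls back without losing the sum $\sum\ell(Q)^{d}$; the remaining ingredients are standard applications of the David--Semmes/David--Toro machinery already invoked elsewhere in the paper.
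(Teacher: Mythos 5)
Your reduction via Theorem \ref{t:BAUP} and your treatment of the $\cH^{d}$ term (disjoint balls around the minimal cubes, pushed onto $\Sigma_{R}$ and controlled by its Ahlfors regularity) are fine, and the overall skeleton — partition by trees, transfer geometry to $\Sigma_{R}$, use its uniform rectifiability, sum over $\Top(k_{0})$ — is the right one. But the BAUP transfer step has a genuine gap. The hypothesis \eqref{e:AV-d<d} is \emph{unilateral}: it places $E\cap C_{1}B_{R}$ in a small neighbourhood of $\Sigma_{R}$, but says nothing about $\Sigma_{R}$ being close to $E$. Your claimed ``symmetric estimate'' deducing $d_{M_{0}B_{Q}}(E,\Sigma_{R})\lec\delta$ from the position of $\zeta_{Q}$ plus Ahlfors regularity of $\Sigma_{R}$ does not follow: Ahlfors regularity cannot prevent $\Sigma_{R}$ from carrying large pieces far from $E$. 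Since the BAUP condition is a \emph{bilateral} Hausdorff-distance condition, the implication ``$Q$ is BAUP-bad for $E$ $\Rightarrow$ $Q^{\Sigma}$ is BAUP-bad for $\Sigma_{R}$'' fails: take $E$ to be a half-plane $\{x_{d+1}=0,\ x_{1}\geq 0\}$ and $\Sigma_{R}$ the full plane $\{x_{d+1}=0\}$. Then $E\subseteq\Sigma_{R}$, so \eqref{e:AV-d<d} holds with $\delta=0$; yet every cube of $E$ meeting the edge $\{x_{1}=0\}$ is BAUP-bad for $E$ (any union of planes bilaterally close to the half-disk must contain points in $\{x_{1}<0\}$ far from $E$), while every cube of $\Sigma_{R}$ is BAUP-good. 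So the Carleson packing for $\cB^{\BAUP}(\Sigma_{R})$ gives you no control on $\cB^{\BAUP}_{\ve_{0},M_{0}}(E)\cap\Tree(R)$, and the central step of your bound on the BAUP sum collapses.

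The fix — and the route taken in \cite[Lemma 4.6]{AV19}, which is all the paper's proof consists of — is to avoid BAUP for $E$ entirely and compare the content $\beta$-numbers directly. The quantity $\beta_{E}^{d,2}(MB_{Q},L)$ only measures distances \emph{from points of $E$ to} $L$, so it is unilateral, exactly matching the hypothesis: one shows $\beta_{E}^{d,2}(MB_{Q})^{2}\lec\beta_{\Sigma_{R}}^{d,2}(M'B_{Q})^{2}+\mbox{(error)}$, where the Hausdorff content of the portion of $E$ far from the best plane for $\Sigma_{R}$ is dominated by $\cH^{d}|_{\Sigma_{R}}$ of a slightly larger set via a covering argument at scale $\delta d_{R}$ together with the lower regularity of $\Sigma_{R}$, and where the error coming from the scales $t\lec\delta d_{R}/\ell(Q)$ is summable over the tree precisely because the hypothesis is $\dist(x,\Sigma_{R})\leq\delta d_{R}(x)$ rather than $\leq\delta\ell(Q)$. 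One then concludes with the David--Semmes Carleson condition $\sum_{Q\subseteq R'}\beta_{\Sigma_{R}}^{d,2}(3B_{Q})^{2}\ell(Q)^{d}\lec\ell(R')^{d}$ for the UR set $\Sigma_{R}$. If you want to keep your argument, you must either add the reverse containment (as in \eqref{e:close-to-E}) to the hypotheses — which the lemma deliberately does not assume — or switch to this unilateral comparison.
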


The proof is exactly as that of  \cite[Lemma 4.6]{AV19}. \\

Thus, to finish the proof of Theorem \ref{t:affinedeviation}, we just need to verify \eqref{e:AV-d<d}. Let $R\in \Top(k_0)$ and $x\in C_{1} B_{R}\cap \d\Omega$. Let $Q\in \Tree(R)$ be so that 
\[
d_{R}(x) = \ell(Q)+\dist(x,Q). 
\]
Let $\hat{Q}$ be the largest ancestor of $x$ so that $\ell(\hat{Q})<d_{R}(x)$. Then $\ell(\hat{Q})\sim d_{R}(x)>\dist(x,\hat{Q})$, and so for $C_{1}$ large enough, $x\in C_{1} B_{\hat{Q}}$. By \eqref{e:close-to-P_Q} (which holds by Lemma \ref{l:DT-dyadic} and Lemma \ref{l:fml}),
\[
\dist(x,\Sigma_{R})\lec \delta \ell(\hat{Q})\lec \ve d_{R}(x) .
\]

This finishes the proof.

\appendix

\section{David-Reifenberg-Toro Parametrizations}

The rest of this section is dedicated to the proof of Theorem \ref{l:DT-dyadic}. We will frequently cite equations and terminology from \cite{DT12} rather than stating their result here, since it is quite long to state. 

Without loss of generality, we can assume $Q(S)\in \cD_{0}$, $\ell(Q(S))=1$, and $P_{Q(S)}=\R^{d}$. 

Let $r_{k}=10^{-k}$,  and let $X_{k}$ be a maximal $\frac{3}{2}r_{k}$-separated set in 
\[
\Xi_{k}=\{\zeta_{Q}:Q\in S_{k}:=\cD_{s(k)}\cap S\}
\]
where $s(k)$ is such that
\begin{equation}
\label{e:rbetween}
5\rho^{s(k)} \leq r_{k}/4 < 5\rho^{s(k)-1}. 
\end{equation}
Note that this integer is unique and $s(0)=0$. Recall that $\ell(Q)=5\rho^{k}$ if $Q\in \cD_{k}$. 

For $x\in X_{k}$, let $Q_k(x)\in S_{k}$ be so that $\zeta_{Q_{k}(x)}=x$. Also let $x'=\pi_{P_{Q_{k}(x)}}(x)$, $P_{k}(x'):= P_{Q_{k}(x)}$, and 
\[
X_{k}'=\{x':x\in X_{k}\}. \]

Note that $X_{0}$ is a singleton (just the center of $Q(S)$), and thus so is $X_{0}'$. Without loss of generality, we can assume $X_{0}'=\{0\}$.

Enumerate $X_{k}'=\{x_{j,k}\}_{j\in J_{k}}$ and if $x_{j,k}=x'$, set $P_{j,k}=P_{k}(x')$ and $B_{j,k} = B(x_{j,k},r_{k})$. Since $X_0'$ is a singleton, $X_{0}'=\{x_{0,0}\}$ and $P_{0,0}=P_{Q(S)}$.

We will show that this collection of points $X_{k}'$ and planes $P_{j,k}$ satisfy the conditions for David and Toro's theorem \cite[Theorem 2.5]{DT12}. First, we show that they form a coherent collection of balls and planes, which requires verifying equations (2.1) and (2.3-2.10) in \cite{DT12} (see \cite[Definition 2.1]{DT12}).

By assumption \eqref{e:distQtoP_Q},
\[
|x-x'|<\ve \ell(Q_{k}(x))=5\ve \rho^{s(k)}.
\]
Then for all $x',y'\in X_{k}'$ and $\ve$ small,
\begin{equation}
\label{e:DT-separated}
|x'-y'|\geq |x-y|-10\ve \rho^{s(k)}
\stackrel{\eqref{e:rbetween}}{>}\frac{3}{2}r_{k}-\frac{\ve r_{k}}{2}> r_{k}. 
\end{equation}
Thus, $X_{k}'$ is an $r_{k}$ separated set for each $k\geq 0$ (which is \cite[Equation (2.1)]{DT12}). 

Next, we want to show \cite[Equation (2.3)]{DT12}, i.e. that for all $k>0$,

\begin{equation}
\label{e:Vk}
X_{k}'\subseteq V_{k-1}^{2} \mbox{ where } V_{k}^{2}:= \{x:\dist(x,X_{k}')\leq 2r_{k})\}. 
\end{equation}

Let $x'\in X_{k}'$, recall that there is  a corresponding $x\in X_{k}$ and $|x-x'|\leq \ve \ell(Q)<\rho r_{k}$ for $\ve$ small enough. 

{\bf Case 1:} If $r_{k-1}/4 < 5\rho^{s(k)-1}$, then $s(k-1)=s(k)$, so $\Xi_{k-1}=\Xi_{k}$. Since $X_{k-1}$ is a maximally $\frac{3}{2}r_{k-1}$-separated set in $\Xi_{k-1}$ (and hence in $\Xi_{k}$), there is $y\in X_{k-1}$ so that $|x-y|<\frac{3}{2} r_{k-1}$. Thus,
\[
|x'-y'|
\leq |x-y| + |x'-x|+|y'-y|
\leq \frac{3}{2} r_{k-1} + 2\rho r_{k-1} < 2r_{k-1}
\]
and so $x'\in V_{k}^{2}$. 

{\bf Case 2:} If $r_{k-1}/2\geq 5\rho^{s(k)-1}$, then $s(k-1)=s(k)-1$. Let $R\in S_{k-1}$ be the parent of $Q_{k}(x)\in S_{k}$ (so $\ell(R)= 5\rho^{s(k-1)} \leq r_{k-1}/4$). Then there is $y\in X_{k-1}$ so that $|\zeta_{R}-y|<\frac{3}{2} r_{k-1}$. Thus, for $\ve$ small enough,
\begin{align*}
|x'-y'|
& \leq |x'-x|+|x-\zeta_{R}|+|\zeta_{R}-y|+|y-y'|\\
& \leq \ve\ell(Q)+\ell(R)+\frac{3}{2}r_{k-1}+\ve \ell(R)\\
& \leq (2\ve \rho+1)\ell(R)+\frac{3}{2}r_{k-1}\\
& \leq  \ps{2\ve \rho+1}r_{k-1}/4+\frac{3}{2}r_{k-1} <2r_{k-1}. 
\end{align*}
These two cases prove \eqref{e:Vk}. 

Set $\Sigma_0=P_{Q(S)}$. Then equations (2.4-2.7) in \cite{DT12} are trivially satisfied (note in particular that as $s(0)=0$, $X_0=\{\zeta_{Q(S)}\}$, and so $X_{0}'=\{\zeta_{Q(S)}'\}\subseteq P_{Q(S)}=\Sigma_0$).

For $y\in V_{k}^{11}$, let 
\[
\ve_{k}''(y) = \sup \{d_{x_{i,\ell},100 r_{\ell}}(P_{j,k},P_{i,\ell}) : j\in J_{k},\ell\in \{k-1,k\}, i\in J_{\ell}, \mbox{ and }y\in 11 B_{j,k} 12 B_{i,\ell}\}
\]
and let $\ve_{k}''(y) =0$ otherwise. 

Now $\ve_{k}''(x_{j,k}) \lec \ve$ by \eqref{e:dtjones} for $C_{1}$ and $C_{2}$ large enough, which implies \cite[Equations (2.8-2.10)]{DT12}, and so $(\Sigma_0,X_k',\{B_{j,k}\},\{P_{j,k}\})$ are a coherent collection of balls and planes. Hence, \cite[Theorem 2.4]{DT12} implies there is $g_{S}:\R^{n}\rightarrow \R^{n}$ so that $\Sigma_{S}=g_{S}(\R^{d})$ is a $(1+C\ve)$-Reifenberg flat surface and so that \eqref{e:gz-z} holds (recall $B_{Q(S)}=\bB$) by \cite[Equation 2.13]{DT12}.

Now we wish to show \cite[Equation (2.19)]{DT12}, that is,
\begin{equation}
\label{e:jones-claim}
\sum_{k\geq 0} \ve_{k}(g_{S}(z))^{2} \lec \ve^2 \;\; \mbox{ for all }z\in \R^{d}
\end{equation}
since then by \cite[Theorem 2.5]{DT12}, $g_{S}$ is bi-Lipschitz and (after an examination of the details in \cite[Chapter 8]{DT12}), $g_{R}|_{\bR^{d}}$ is $(1+C\ve^2)$-bi-Lipschitz. 

Let $y=g_S(z)$ where $z\in \R^{d}$. Note that if $y\in  (V_{0}^{11})^{c}$, then there are only finitely many $k$ for which $\ve_{k}''(y)\neq 0$, so \eqref{e:jones-claim} holds trivially in this case since $\ve_{k}''\lec C\ve$ whenever it is nonzero. Hence, we can assume $y\in V_{0}^{11}$ by \eqref{e:gz-z}.

Let $k(y)$ be the largest integer for which $y\in V_{k}^{11}$ (so $k(y)\geq 0$ by the previous paragraph), and if such an integer does not exist, let $k(y)=\infty$. Then $\dist(y,X_{k}')\leq 11r_{k}$ for all $0\leq k\leq k(y)$. 

Let $x'\in X_{k}'$ be closest to $y$, $Q=Q_{k}(x')$. Then it is not hard to show
\[
\ve_{j}(y)\lec \ve(R) \;\; \mbox{ for all } j\leq k \mbox{ and }Q\subseteq R\in S_{j}.
\]
Thus, 
\[
\sum_{j=0}^{\infty}\ve_{j}(y)^{2} 
= \sum_{j=0}^{k(y)} \ve_{j}(z)
\lec \sum_{Q\subseteq R\subseteq Q(S)} \ve(R)^{2} <\ve^{2}
\]
and this proves \eqref{e:jones-claim}.

Suppose \eqref{e:C1beta} holds, we will show this implies \eqref{e:close-to-P_Q}. Let $z\in E\cap C_{1}B_{Q}$ for some $Q\in S$. Assume $Q\in S_{k}$ with $k\geq k_0$ where $k_0\geq 0$ we will decide shortly. 

If $\pi_{Q}$ is the projection into $P_{Q}$ and $z'=\pi_{Q}(z)$, then $|z-z'|\leq \ve C_{1}\ell(Q)$ by \eqref{e:C1beta}, so $z'\in 2C_{1} B_{Q}$. 

Let $k=k-k_0\geq 0$, $Q'\in S_{k'}$ contain $Q$, let $x\in \Xi_{k'}$ be so that $|x-\zeta_{Q'}|<\frac{3}{2}r_{k'}$, and let $x_{j,k'}=x'\in X_{k'}'$. Then for $C_{2}$ large enough depending on $k_0$, $Q_{k'}(x)\sim Q$ since $\ell(Q)\sim_{k_{0}}\ell(Q(x))$ and 
\[
\dist(Q,Q_{k'}(x))
\leq \ell(Q')+|\zeta_{Q'}-x|
\leq r_{k-k_{0}}/4 + \frac{3}{2}r_{k'}
\lec_{k_{0}} r_{k}\lec \ell(Q).
\]

Since $z'\in 2C_{1}B_{Q}\cap P_{Q}\subseteq C_{1} B_{Q'}$ and $\ve(Q)<\ve$, this means there is $z''\in P_{Q_{k'}(x)}=P_{j,k'}$ so that $|z'-z''|< C_{1}\ve \ell(Q_{k'}(x))$. Moreover,
\[
|z-z''|\leq |z-z'|+|z'-z''|\leq 2C_{1}\ve \ell(Q_{k'}(x)).
\]
Also note that since $z\in C_{1} B_{Q}\subseteq \frac{4}{3}B_{Q_{k'}(x)}$ for $k_0$ large enough, and since $Q_{k'}(x),Q'\in S_{k'}$,
\begin{align*}
|z''-x_{j,k}|
& = |z''-x'|
\leq |z''-z|+|z-\zeta_{Q'}|+|\zeta_{Q'}-x|+|x-x'|\\
& \leq 2C_{1}\ve \ell(Q_{k'}(x))+\frac{4}{3}\ell(Q') + \frac{3}{2}r_{k'}+\ve \ell(Q')\\
& \leq \ps{2C_{1}\ve + \frac{1}{3} +\frac{3}{2} + \frac{\ve}{4}}r_{k'}
<2r_{k'}
\end{align*}
if $\ve\ll C_{1}^{-1}$. Thus, $z''\in 2B_{j,k'}\cap P_{j,k'}$. 

%
%
%
%
%
%
%
%
%

By \cite[Equation (5.3)]{DT12}, $\dist(z'',\Sigma_{k})\lec \ve r_{k'}\sim \ve \ell(Q)$. Let $w\in \Sigma_{k}$ be closest to $z''$. By \cite[Equation (5.11)]{DT12}, for any $\ell\geq 0$,
\[
|\sigma_{\ell}(y)-y|\lec \ve r_{\ell} \;\; \mbox{ for all }y\in \Sigma_{\ell}
\]
where $\sigma_{\ell}$ is defined in \cite[Equation (4.2)]{DT12} and $\Sigma_{\ell}$ in  \cite[Equation (5.1)]{DT12}. Note that $\Sigma_{S}$ is defined in \cite[Equations (6.1-6.2)]{DT12}. Iterating this from $\ell=k'$ with $y=w$ and taking the limit as $\ell\rightarrow \infty$,  we get that 
\[
\dist(w,\Sigma_{S})\lec \ve r_{k'}\sim_{k_{0}} \ve \ell(Q).
\]
Thus,
\[
\dist(z,\Sigma_{S})
\leq |z-z''| + |z'' - w| + \dist(w,\Sigma_{S})
\lec (C_1\ve + \ve)\ell(Q) + \ve \ell(Q) + \ve\ell(Q)\lec \ve \ell(Q).
\]
This proves \eqref{e:close-to-P_Q} when $k\geq k_0$. If $k=k_0$, then $Q\sim Q(S)$, so if $z\in E\cap C_{1}B_{Q}$, then $z'\in 2C_{1}B_{Q}\cap P_{Q}$ just as before, and \eqref{e:C1beta} implies $|z-z'|<2C_{1}\ve \ell(Q)$. Since $\ve(Q)<\ve$, $|z'-\pi_{P(Q)}(z')|\lec \ve \ell(Q(S))\sim \ve\ell(Q)$, and \eqref{e:gz-z} implies 
\[
\dist(\pi_{P(Q)}(z'),\Sigma_{R})\leq |\pi_{P(Q)}(z')-g_{S}(\pi_{P(Q)}(z'))|\lec \ve \sim \ve\ell(Q).
\]
Thus,
\begin{align*}
\dist(z,\Sigma_{S})
& \leq |z-z'|+|z'-\pi_{P(Q)}(z')|+\dist(\pi_{P(Q)}(z'),\Sigma_{R})\\
& \lec \ve \ell(Q).
\end{align*}

This proves \eqref{e:close-to-P_Q} in every case.\\

Now suppose \eqref{e:C2beta} holds, we will prove \eqref{e:close-to-E}. Let $z\in C_{1}B_{Q}\cap \Sigma_{R}$ where $Q\in S_k$. Then there is $z_0\in \R^{d}$ so that if $z_{k} = \sigma_k(z_{k-1})$, then $z_{k}\in \Sigma_k$ and $z_{k}\rightarrow z$. 

Let $k_0$, $k'$, $Q'$, $x$, and $Q_{k'}(x)\in S_{k'}$ be chosen just as before (assume $k\geq k_0$), so again $Q\sim Q_{k'}(x)$ for $C_2$ large enough. Again,  $z\in C_{1} B_{Q}\subseteq \frac{4}{3}B_{Q_{k'}(x)}$ for $k_0$ large enough, so 
\begin{align*}
|z_{k'}-x'|
& \leq |z_{k'}-z|+|z-\zeta_{Q'}|+|\zeta_{Q'}-x|+|x-x'|\\
& \ve r_{k'}+ \frac{4}{3}\ell(Q') + \frac{3}{2}r_{k'} + \ve \ell(Q')\\
& <2r_{k'}
\end{align*}
Hence, $z_{k'}\in B_{j,k'}$ where $j$ is so that $x_{j,k'}=x'$. 
By  \cite[Equation (5.6)]{DT12}, 
\[
|z_{k'}-\pi_{j,k'}(z_{k'})|=\dist(z_{k'},P_{j,k'})\lec \ve r_{k'} \sim_{k_{0}} \ve \ell(Q),\] 
and by \cite[Equation (5.11)]{DT12}, 
\[
|z_{k'}-z|\lec \ve r_{k'}\sim_{k_{0}} \ve\ell(Q).
\]
Since $P_{j,k'}$ passes through the center of $2B_{j,k'}$ and $z_{k'}\in B_{j,k'}$, we have 
\begin{align*}
\pi_{j,k'}(z_{k'})\in 2B_{j,k'}
& =B(x',2r_{k'})
 \subseteq B(x,2r_{k'}+\ve \ell(Q_{k'}(x)))\\
& \stackrel{\eqref{e:rbetween}}{ \subseteq} B(x,(8\rho+\ve)\ell(Q_{k'}(x))
\subseteq C_{1} B_{Q_{k'}(x)}
\end{align*}
for $C_{1}$ large enough. Thus, by \eqref{e:C2beta},
\[
\dist(\pi_{j,k'}(z_{k'}),E)\lec \ve \ell(Q_{k'}(x))\lec_{k_{0}} \ve \ell(Q).\]
Thus, 
\[
\dist(z,E)
\leq |z-z_{k'}|+|z_{k'}-\pi_{j,k'}(z_{k'})|+\dist(\pi_{j,k'}(z_{k'}),E)
\lec_{k_{0}} \ve \ell(Q).
\]

Now assume  $0\leq k\leq k_0$.  Note that by \eqref{e:gz-z}, if $y=g_{R}^{-1}(z)\in P_{Q(S)}$, since $z\in C_{1}B_{Q}\subseteq C_{1}B_{Q(S)}$, $y\in 2C_{1} B_{Q(S)}$ for $\ve$ small, so \eqref{e:C2beta} implies $\dist(y,E)\lec \ve \ell(Q(S))\sim \ve\ell(Q)$. Thus, by \eqref{e:gz-z} again,
\[
\dist(z,E)
\leq |z-y|+\dist(y,E)
\lec \ve + \ve \ell(Q(S))\sim \ve\ell(Q).
\]

%
%
%
%
%

This finishes the proof of Lemma \ref{l:DT-dyadic}.

\newcommand{\etalchar}[1]{$^{#1}$}
\def\cprime{$'$}

\end{document}